\documentclass[12pt]{amsart}
\usepackage{latexsym}
\usepackage{amssymb}
\usepackage{amsmath}

\usepackage{color}


\oddsidemargin=-0.05in
\evensidemargin=-0.05in
\textwidth6.70in
\topmargin=-0.10in
\textheight8.70in


\baselineskip = 12 pt
\parindent=20pt
\jot=6pt

\newtheorem{Thm}[subsection]{Theorem}
\newtheorem{Lemma}[subsection]{Lemma}
\newtheorem{Rem}[subsection]{Remark}
\newtheorem{Def}[subsection]{Definition}
\newtheorem{Cor}[subsection]{Corollary}
\newtheorem{Prop}[subsection]{Proposition}
\numberwithin{equation}{section}

\newcommand{\ds}{\displaystyle}

\newcommand{\ben}{\begin{enumerate}}
\newcommand{\een}{\end{enumerate}}

\newcommand{\bec}{\begin{center}}
\newcommand{\eec}{\end{center}}

\newcommand{\beq}{\begin{equation}}
\newcommand{\eeq}{\end{equation}}

\newcommand{\bdm}{\begin{displaymath}}
\newcommand{\edm}{\end{displaymath}}

\newcommand{\R}{\mathbb{R}}

\newcommand{\C}{\mathbb{C}}

\newcommand{\Z}{\mathbb{Z}}
\newcommand{\Q}{\mathbb{Q}}
\newcommand{\cS}{\mathcal{S}}
\newcommand{\N}{\mathbb{N}}

\newcommand{\cA}{\mathcal{A}}
\newcommand{\cP}{\mathcal{P}}
\newcommand{\cD}{\mathcal{D}}

\newcommand{\ffi}{\varphi}

\newcommand{\de}{\delta}
\newcommand{\al}{\alpha}

\newcommand{\ga}{\gamma}
\newcommand{\supp}{\mathrm{supp}\,}
\newcommand{\esssup}{\mathop{\rm ess\,sup}}

\newenvironment{proofof}[1]{{\sc Proof of #1}}{\quad\lower0.05cm\hbox{$\square$}\medskip}

\title[Littlewood-Paley Theory with Matrix Weights]
{Littlewood-Paley Theory for\\ Matrix-Weighted Function Spaces}

\author{Michael Frazier}
\address{Mathematics Department, University of Tennessee, Knoxville, Tennessee 37922} 
\email{mfrazie3@utk.edu}

\author{Svetlana Roudenko}
\address{Department of Mathematics \& Statistics, Florida International University, Miami, FL 33199}
\email{sroudenko@fiu.edu}

\subjclass[2010]{Primary 42B35, 47B38, 42B25. Secondary 46A20}

\keywords{matrix weights, $A_p$ class, Triebel-Lizorkin spaces, Littlewood-Paley theory,
reducing operators, wavelets, $\ffi$-transform}

\date{} 

\begin{document}

\begin{abstract}
We define the vector-valued, matrix-weighted function spaces $\dot{F}^{\alpha q}_p(W)$ (homogeneous) and $F^{\alpha q}_p(W)$ (inhomogeneous) on $\R^n$, for $\alpha \in \R$, $0<p<\infty$, $0<q \leq \infty$, with the matrix weight $W$ belonging to the 
$A_p$ class.  For $1<p<\infty$, we show that $L^p(W) = \dot{F}^{0 2}_p(W)$, and, for $k \in \N$, that $F^{k 2}_p(W)$ coincides with the matrix-weighted Sobolev space $L^p_k(W)$, thereby obtaining Littlewood-Paley characterizations of $L^p(W)$ and $L^p_k (W)$.  We show that a vector-valued function belongs to $\dot{F}^{\alpha q}_p(W)$ if and only if its wavelet or $\ffi$-transform coefficients belong to an associated sequence space $\dot{f}^{\alpha q}_p(W)$. We also characterize these spaces in terms of reducing operators associated to $W$.
\end{abstract}

\maketitle

\tableofcontents

\section{Introduction}

Littlewood-Paley theory originated with the development of certain auxiliary integral expressions used in the study of analytic functions and Fourier series (see e.g., \cite{S3}, \cite{S1}, and \cite{FJW} for background).  This theory was extended to $\R^n$ by Stein and others (\cite{S0}, \cite{BCP}) and these auxiliary expressions were found to be useful in studying function spaces.  In the 1970s a systematic approach to function spaces using variants of the classical Littlewood-Paley expressions was developed by Peetre, Triebel, and others (see e.g., \cite{T} for more information).  In particular, most standard function spaces other than $L^1$ or $L^{\infty}$ fit into two scales of spaces, the Besov and Triebel-Lizorkin spaces, which are defined via expressions of Littlewood-Paley type.  This theory meshed perfectly with wavelet theory to provide characterizations of the function spaces in these two scales in terms of the magnitudes of wavelet coefficients (see e.g., \cite{M2} or \cite{FJW}).  

The theory of (scalar) $A_p$ weights originated in Muckenhoupt \cite{Mu} and Hunt, Muckenhoupt, and Wheeden \cite{HMW}.  Much of the Littlewood-Paley theory extends to the case of (scalar) weighted function spaces (see \cite[\S 10]{FJ2} ).  Matrix weights were developed in the 1990s, starting with \cite{TV} and \cite{NT}.  Matrix-weighted Besov spaces were defined and developed in \cite{R}, \cite{R1}, \cite{R2}, and \cite{FR}.  For recent developments on matrix weights see \cite{CUIMPRR}, \cite{CUIM}; for an application of matrix weights to elliptic systems see \cite{IM}.  

Our goal is to adapt Littlewood-Paley theory to matrix-weighted Triebel-Lizorkin spaces, which we will see includes the matrix-weighted $L^p$ and Sobolev spaces, when the weight belongs to the 
matrix $A_p$ class. In particular, we obtain characterizations of these spaces in terms of the magnitudes of wavelet coefficients.

To state results, we first need some notation.  The side length of any cube $Q \subseteq \R^n$ is denoted by $\ell(Q)$.  For $j \in \Z$ and $k = (k_1, k_2, \dots, k_n) \in \Z^n$, let $Q_{j,k}= \Pi_{i=1}^n [2^{-j}k_i, 2^{-j} (k_i +1)]$ be the dyadic cube of side length $\ell(Q_{j,k})= 2^{-j}$ and ``lower left corner" $x_Q= 2^{-j}k$.  Let $\mathcal{D} = \{ Q_{j,k} \}_{j \in \Z, k \in \Z^n}$ denote the collection of all dyadic cubes in $\R^n$, and let $\mathcal{D}_j = \{Q \in \mathcal{D}: \ell(Q)=2^{-j}\}$.  

Let $\cS$ denote Schwartz space, let $\cS'$ be its dual, and let $\mathcal{P}$ be the class of the polynomials, all on $\R^n$.  We fix a positive integer $m$ and consider vector-valued functions $\vec{f} = (f_1,...,f_m)^T$ on $\R^n$.  Generally we require that each component $f_i$ belongs to $\cS'/{\mathcal{P}}$, the space of tempered distributions modulo polynomials; in that case we write $\vec{f} \in \cS'/{\mathcal{P}}$. We will consider sequences $\vec{s} = \{ \vec{s}_Q \}_{Q \in \mathcal{D} }$, where for each $Q \in \mathcal{D}$, $\vec{s}_Q = ( (s_Q)_1, (s_Q)_2, \dots , (s_Q)_m )^T \in \C^m$.

We say that a function $\ffi:\R^n \rightarrow \C$ is admissible, and we write $\ffi \in \mathcal{A}$, if 
\begin{equation} \label{admiss1}
\varphi \in \mathcal{S} (\R^n),
\end{equation}
\begin{equation} \label{admiss2}
\mbox{supp} \,\,\, \hat{\varphi} \subseteq \{ \xi: 1/2 \leq |\xi| \leq 2\}
\end{equation} 
and 
\begin{equation} \label{admiss3}
|\hat{\phi}(\xi)| \geq c >0 \quad \mbox{if} \,\,\,  3/5 \leq |\xi| \leq 5/3.
\end{equation}


For $j \in \Z$, let $\varphi_j (x) = 2^{jn} \varphi (2^j x)$.  We define convolution of the scalar function $\ffi_j$ with $\vec{f}$ componentwise: $\ffi_j \ast \vec{f} = (\ffi_j \ast f_1,..., \ffi_j \ast f_m)^T$.  A matrix weight $W$ is a map on $\R^n$ such that $W(x)$ is a non-negative definite $m\times m$ matrix for each $x \in \R^n$, where $W$ is a.e. invertible and the entries of $W$ are measurable functions on $\R^n$.

For definitions (i)-(iv) below, we suppose $\alpha \in \R$, $0 < p < \infty$, $0< q \leq \infty, \ffi \in \mathcal{A}$, and $W $ is a matrix weight.

(i) The Triebel-Lizorkin space $\ds \dot{F}^{\alpha q}_p(W)$ is the set
of all $\vec{f} \in \cS'/{\mathcal P}(\R^n)$ such that
$$
\Vert \vec{f} \, \Vert_{\dot{F}^{\alpha q}_p(W)} =
\left\Vert \left(  \sum_{j \in \Z}  \left|  2^{j \al}  W^{1/p} \ffi_j \ast \vec{f} \, \right|^q  \right)^{1/q} \right\Vert_{L^p(\R^n)}  < \infty .
$$

(ii) The discrete Triebel-Lizorkin space $\ds \dot{f}^{\alpha q}_p(W)$ is the set
of all sequences $\vec{s} = \{ \vec{s}_Q \}_{Q \in \mathcal{D} }$ such that
$$
\Vert \vec{s} \, \Vert_{\dot{f}^{\alpha q}_p(W)} =
\left\Vert \left(  \sum_{Q \in \mathcal{D} } \left(   |Q|^{-\al /n -1/2} \vert \, W^{1/p} \vec{s}_Q \, \vert \chi_Q \right)^q  \right)^{1/q} \right\Vert_{L^p(\R^n)}  < \infty .
$$

Suppose that for each $Q \in \mathcal{D}$, $A_Q$ is an $m \times m$ non-negative definite matrix. 

(iii)  The $\{ A_Q \}$- Triebel-Lizorkin space $\ds \dot{F}^{\alpha q}_p(\{A_Q \} )$ is the set
of all $\vec{f} \in \cS'/{\mathcal P}(\R^n)$ such that
$$
\Vert \vec{f} \, \Vert_{\dot{F}^{\alpha q}_p(\{A_Q \})} = \left\Vert \left(  \sum_{j \in \Z}   \sum_{Q \in \mathcal{D}_j} \left( 2^{j \al} \vert \, A_Q \, \ffi_j \ast \vec{f} \, \vert \chi_Q \right)^q  \right)^{1/q} \right\Vert_{L^p(\R^n)}   < \infty .
$$

(iv)  The $\{ A_Q \}$-discrete Triebel-Lizorkin space $\ds \dot{f}^{\alpha q}_p(\{A_Q\})$ is the set
of all sequences $\vec{s} = \{ \vec{s}_Q \}_{Q \in \mathcal{D} }$ such that
$$
\Vert \vec{s} \, \Vert_{\dot{f}^{\alpha q}_p(\{A_Q \})} =
\left\Vert \left(  \sum_{Q \in \mathcal{D} } \left(   |Q|^{-\al /n -1/2} \vert \, A_Q \vec{s}_Q \, \vert \chi_Q \right)^q  \right)^{1/q} \right\Vert_{L^p(\R^n)}  < \infty .
$$

In all cases, when $q=\infty$, the $\ell^q$ quasi-norm is replaced with the supremum.  Note that if we set $t_Q = \vert \, A_Q \vec{s}_Q \, \vert$ and $t = \{ t_Q \}_{Q \in \mathcal{Q} }$, then 
\begin{equation} \label{reduction}
 \Vert \vec{s} \, \Vert_{\dot{f}^{\alpha q}_p(\{A_Q \})} = \Vert t \, \Vert_{\dot{f}^{\alpha q}_p},
\end{equation}
where $\dot{f}^{\alpha q}_p$ is the usual scalar, unweighted discrete Triebel-Lizorkin space.  This fact will sometimes allow us to deduce results for the matrix-weighted spaces from the corresponding scalar, unweighted results, such as in Theorem \ref{ADmatrixbnded} below.  

Our goal is to prove equivalences of these spaces, when $\vec{s} = \{ \vec{s}_Q \}_{Q \in \mathcal{Q}}$ is the sequence of $\ffi$-transform coefficients of $\vec{f}$ (and similarly, for wavelet coefficients), and $\{ A_Q \}_{Q \in \mathcal{Q}}$ is a sequence of reducing operators of order $p$ for a matrix weight $W \in A_p$, defined as follows.

Given any matrix weight $W$ and $0< p < \infty$, there exists (see e.g., \cite[Proposition 1.2]{G} for $p>1$ and \cite[p. 1237]{FR} for $0<p\leq 1$) a sequence $\{A_Q\}_{Q \in \mathcal{D}}$ of positive definite $m \times m$ matrices such that 
\[  c_1 | A_Q \vec{y} |\leq \left( \frac{1}{|Q|} \int_Q \Vert W^{1/p} (x) \vec{y} \Vert^p \, dx \right)^{1/p} \leq c_2 | A_Q \vec{y} |, \]
with positive constants $c_1, c_2$ independent of $\vec{y} \in \C^m$ and $Q \in \mathcal{D}$.  In this case, we call $\{A_Q\}$ a sequence of reducing operators of order $p$ for $W$.

The matrix $A_2$ class was first defined in \cite{TV}, and $A_p$, for other $p \in (1, \infty)$, in \cite{NT}.  We use the following characterization, proved in \cite{R}:
$W \in A_p (\R^n)$ ($1<p<\infty$) if and only if
\[ \sup_Q \frac{1}{|Q|} \int_Q \left( \frac{1}{|Q|} \int_Q  \Vert  W^{1/p}(x) W^{-1/p} (y)      \Vert^{p^{\, \prime}}   \, dy   \right)^{p/p^{\, \prime}} \, dx < \infty,  \]
where $\Vert \cdot \Vert$ is the operator norm of the matrix, $p^{\, \prime} = p/(p-1)$ is the conjugate index of $p$, and the supremum is taken over all cubes $Q \subset \R^n$.  For $0<p\leq 1$, we use the definition from \cite{FR}:  $W \in A_p$ if 
\begin{equation} \label{defAp2}
  \sup_Q \,\, \esssup_{y \in Q} \frac{1}{|Q|} \int_Q \Vert W^{1/p} (x) W^{-1/p} (y) \Vert^p \, dx < \infty. 
\end{equation}  

Since $\varphi_j (x) = 2^{jn} \varphi (2^j x)$, we have $\widehat{\varphi_j} (\xi)= \hat{\varphi} (2^{-j} \xi)$.  For $\ffi \in \mathcal{A}$, let $\hat{\psi} = \frac{\hat{\varphi}}{\sum_{j \in \Z} |\widehat{\varphi_j}|^2}$.  Then $\psi \in \mathcal{A}$, and we have $\sum_{j \in \Z} \overline{\widehat{\varphi_j}}(\xi) \widehat{\psi_j} (\xi)  =1$ for all $\xi \neq 0$.  For $Q = Q_{j,k}$, we define 
\begin{equation} \label{defphiQ}
  \varphi_Q (x) = 2^{jn/2} \varphi (2^jx-k) = |Q|^{-1/2} \varphi ((x-x_Q)/\ell(Q)),   
\end{equation}
and similarly for $\psi_Q$.  Recall that $\cS'/{\mathcal{P}}$ is the dual of $\cS_0 = \{ g \in \mathcal{S}: D^{\alpha} \hat{g}(0) =0$ 
{for all multi-indices} $\alpha\}$, see e.g., \cite[p. 237]{T}.  We use the notation $\langle f, g \rangle$ to denote a pairing which is linear in $f$ and conjugate linear in $g$; when this pairing is between a distribution $f$ and a test function $g$, then $\langle f, g \rangle = f(\overline{g})$.  Then we have the ``$\varphi$-transform" identity $f = \sum_{Q \in \mathcal{D}} \langle f, \varphi_Q \rangle \psi_Q $, with convergence in $L^2$ if $f \in L^2$, convergence in $\cS$ if $f \in \cS_0$, and convergence in $\cS'/{\mathcal{P}}$ if $f \in \cS'$ (see \cite{FJ}, \cite{FJ1}, or \cite {BT}, Theorem 2.4 for details about the $\varphi$-transform).  For vector-valued functions $\vec{f}$, we define $\langle \vec{f} , g\rangle = (\langle f_1, g \rangle, \cdots, \langle f_m, g\rangle)^T$.  Then we have 
\begin{equation} \label{phitraniden}
\vec{f} = \sum_{Q \in \mathcal{D}} \langle \vec{f}, \varphi_Q \rangle \psi_Q ,
\end{equation}
with convergence as noted above, in each component.

The notation $\Vert z \Vert_X \approx \Vert z \Vert_Y$, for quasi-normed spaces $X$ and $Y$, will always mean that the quasi-norms are equivalent: $X=Y$ as sets, and there exist positive constants $c_1, c_2$ independent of $z$ such that $c_1 \Vert z \Vert_X \leq  \Vert z \Vert_Y \leq c_2 \Vert z \Vert_X$ for all $z$.   

We now state the results of this paper. The main statement is the following theorem, connecting matrix-weighted Triebel-Lizorkin spaces with their discrete or sequence space analogs. 

\begin{Thm}\label{mainresult} Suppose $\alpha \in \R, 0 < p < \infty, 0 < q \leq \infty, \ffi \in \mathcal{A}, W \in A_p(\R^n)$, and $\{ A_Q \}_{Q \in \mathcal{D}}$ is a sequence of reducing operators of order $p$ for $W$.  For $\vec{f} \in \cS^{\, \prime}/\cP$, let $\vec{s} = \{ \vec{s}_Q \}_{Q \in \mathcal{D}}$, where $\vec{s}_Q = \langle \vec{f}, \ffi_{Q} \rangle$.  Then if any of $\Vert \vec{f} \Vert_{\dot{F}^{\alpha q}_p (W)}$, $\Vert \vec{f} \Vert_{\dot{F}^{\alpha q}_p (\{A_Q \})}$, $\Vert  \vec{s} \Vert_{\dot{f}^{\alpha q}_p ( W )}$, or $\Vert\vec{s} \Vert_{\dot{f}^{\alpha q}_p ( \{ A_Q \} )}$ is finite, then so are the other three, with  
\[ \Vert \vec{f} \Vert_{\dot{F}^{\alpha q}_p (W)} \approx \Vert \vec{f} \Vert_{\dot{F}^{\alpha q}_p (\{A_Q \})} \approx \Vert\vec{s} \Vert_{\dot{f}^{\alpha q}_p ( \{ A_Q \} )} \approx \Vert  \vec{s} \Vert_{\dot{f}^{\alpha q}_p ( W )}. \]
Also, $\dot{F}^{\alpha q}_p (W)$ and $\dot{F}^{\alpha q}_p (\{A_Q \})$ are independent of the choice of $\varphi \in \mathcal{A}$, in the sense that different choices yield equivalent quasi-norms.
\end{Thm}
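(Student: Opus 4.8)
The plan is to establish the chain of two-sided estimates
\[
\Vert\vec f\Vert_{\dot{F}^{\alpha q}_p(W)}\approx\Vert\vec f\Vert_{\dot{F}^{\alpha q}_p(\{A_Q\})}\approx\Vert\vec s\Vert_{\dot{f}^{\alpha q}_p(\{A_Q\})}\approx\Vert\vec s\Vert_{\dot{f}^{\alpha q}_p(W)},
\]
with each link holding with constants independent of $\vec f$, so that finiteness of any one of the four quantities propagates to the other three. The ``$\{A_Q\}$''-spaces serve as a bridge: the middle link is the $\ffi$-transform equivalence, which is essentially the unweighted scalar Frazier--Jawerth theorem transported through the reducing operators, while the outer two links are ``change of weight'' estimates driven by the hypothesis $W\in A_p$ together with the defining inequality for $\{A_Q\}$.

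For the middle link, introduce the analysis operator $S_\ffi\colon\vec f\mapsto\{\langle\vec f,\ffi_Q\rangle\}_{Q\in\cD}$ and the synthesis operator $T_\psi\colon\{\vec s_Q\}\mapsto\sum_{Q\in\cD}\vec s_Q\,\psi_Q$, which by the $\ffi$-transform identity \eqref{phitraniden} satisfy $T_\psi S_\ffi=\mathrm{Id}$ on $\cS'/\cP$. I would show that $S_\ffi\colon\dot{F}^{\alpha q}_p(\{A_Q\})\to\dot{f}^{\alpha q}_p(\{A_Q\})$ and $T_\psi\colon\dot{f}^{\alpha q}_p(\{A_Q\})\to\dot{F}^{\alpha q}_p(\{A_Q\})$ are bounded; combined with $\vec f=T_\psi S_\ffi\vec f$ this gives $\Vert\vec f\Vert_{\dot{F}^{\alpha q}_p(\{A_Q\})}\approx\Vert S_\ffi\vec f\Vert_{\dot{f}^{\alpha q}_p(\{A_Q\})}=\Vert\vec s\Vert_{\dot{f}^{\alpha q}_p(\{A_Q\})}$. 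By \eqref{reduction}, the boundedness of $S_\ffi$, of $T_\psi$, and of their ``off-diagonal'' compositions reduces to the boundedness of suitable scalar \emph{almost diagonal} matrices on the unweighted $\dot{f}^{\alpha q}_p$ (Theorem \ref{ADmatrixbnded}). The only ingredient beyond the unweighted theory is the ``slowly varying'' estimate for reducing operators of an $A_p$ weight --- a bound on $\Vert A_Q A_R^{-1}\Vert$ by a power of $\max(\ell(Q)/\ell(R),\ell(R)/\ell(Q))$ times a power of $1+|x_Q-x_R|/\max(\ell(Q),\ell(R))$, proved in \cite{R},\cite{FR} --- which shows that the $A_Q$-conjugate of an almost diagonal matrix is again almost diagonal.

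For the outer links one compares $|W^{1/p}(x)\vec z|$ with $|A_Q\vec z|$ for $x\in Q$. Writing $|W^{1/p}(x)\vec z|\le\Vert W^{1/p}(x)A_Q^{-1}\Vert\,|A_Q\vec z|$ and, in the reverse direction, $|A_Q\vec z|\le\Vert A_QW^{-1/p}(x)\Vert\,|W^{1/p}(x)\vec z|$, each inequality is reduced to bounding the $\ell^q(j)$--$L^p$ norm of a sum into which a scalar multiplier $h_Q(x)$, equal to $\Vert W^{1/p}(x)A_Q^{-1}\Vert$ or $\Vert A_QW^{-1/p}(x)\Vert$, has been inserted before the $L^p$ norm is taken. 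The decisive fact is that $\frac{1}{|Q|}\int_Q h_Q(x)^p\,dx\lesssim1$ uniformly in $Q$: for the first multiplier this is immediate from the defining inequality of $\{A_Q\}$ applied to each coordinate vector, and for the second it is precisely the matrix $A_p$ condition when $p>1$, and \eqref{defAp2} when $0<p\le1$. Since $h_Q$ is controlled only in $L^p$-average and not pointwise, one dominates it --- by a standard summation over dyadic scales --- by a power of the Hardy--Littlewood maximal operator and then invokes the Fefferman--Stein vector-valued maximal inequality in $L^p(\ell^q)$; when $\min(p,q)\le1$ this is done via the $r$-trick, replacing $M$ by $(M(|\cdot|^r))^{1/r}$ for a fixed $0<r<\min(1,p,q)$. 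The $\dot{F}$-version requires in addition that $2^{j\alpha}\ffi_j\ast\vec f$ is, up to rapidly decaying tails, essentially constant on each $Q\in\cD_j$, which is supplied by a Peetre-type maximal function estimate that is absorbed into the same maximal inequality.

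Finally, the $\ffi$-independence of $\dot{F}^{\alpha q}_p(W)$ and $\dot{F}^{\alpha q}_p(\{A_Q\})$ follows from the three links above: for $\ffi,\tilde\ffi\in\mathcal{A}$ with dual windows $\psi,\tilde\psi$, the coefficient sequences $\{\langle\vec f,\ffi_Q\rangle\}$ and $\{\langle\vec f,\tilde\ffi_Q\rangle\}$ are related by the scalar almost diagonal matrix $(\langle\psi_{Q'},\tilde\ffi_Q\rangle)_{Q,Q'}$ and by its analogue with the roles of $(\ffi,\psi)$ and $(\tilde\ffi,\tilde\psi)$ interchanged, hence have equivalent $\dot{f}^{\alpha q}_p(\{A_Q\})$ norms by Theorem \ref{ADmatrixbnded}; the middle link identifies these with the two versions of the $\dot{F}^{\alpha q}_p(\{A_Q\})$ norm, and the outer links carry the conclusion over to $\dot{F}^{\alpha q}_p(W)$, whose definition involves no $\ffi$. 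I expect the main obstacle to be the outer links. In the Besov setting their analogue is nearly trivial, since there the $\ell^q(j)$ aggregation sits outside a per-cube $L^p$ integral and the reducing-operator identity applies termwise with no use of $A_p$; here the $\ell^q$ sum lies inside the $L^p$ norm, so the pointwise size of $h_Q$ is genuinely uncontrolled and must be tamed by a vector-valued maximal inequality adapted to the matrix weight and uniform over all $0<p<\infty$, $0<q\le\infty$. Making the $r$-trick, the Peetre maximal function, and the two distinct forms of the matrix $A_p$ condition (for $p>1$ and for $0<p\le1$) interlock cleanly --- and in particular verifying $\frac{1}{|Q|}\int_Q h_Q^p\,dx\lesssim1$ in the range $p\le1$, where only \eqref{defAp2} is available --- is the delicate part; the almost-diagonal machinery of the middle link, once the slowly-varying estimate is in hand, is by comparison routine bookkeeping over the known unweighted theory.
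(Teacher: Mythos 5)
Your middle link (analysis/synthesis via the $\varphi$-transform, reduction through \eqref{reduction} to scalar almost diagonal matrices, and the strong doubling estimate for $\Vert A_QA_P^{-1}\Vert$) is essentially the paper's Section 2 (Theorems \ref{firstAQimbed}, \ref{ADmatrixbnded}, \ref{thirdAQest}, \ref{avgnormeq}), and your treatment of the $\varphi$-independence matches as well. Likewise, the ``easy'' outer direction, from the $W$-norms to the $\{A_Q\}$-norms, can be closed along the lines you indicate once you insert a Chebyshev good-set step: on a subset $E_Q\subseteq Q$ with $|E_Q|\geq |Q|/2$ one has $\Vert A_QW^{-1/p}(x)\Vert\lesssim 1$, and then the maximal-function/$r$-trick (this is how Theorem \ref{AQ-Wseqspimbed} runs, via \cite[Prop.\ 2.7]{FJ2}); note, though, that for $p>1$ the natural average bound on $\Vert A_QW^{-1/p}\Vert$ from $A_p$ carries exponent $p'$, not $p$.

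The genuine gap is the converse outer link, $\dot{f}^{\alpha q}_p(\{A_Q\})\hookrightarrow \dot{f}^{\alpha q}_p(W)$ and $\dot{F}^{\alpha q}_p(\{A_Q\})\hookrightarrow \dot{F}^{\alpha q}_p(W)$ (Corollary \ref{W-AQinequality}), where the factor $h_Q(x)=\Vert W^{1/p}(x)A_Q^{-1}\Vert$ must be inserted at every scale inside the $\ell^q$ sum. Your proposed mechanism --- dominate $h_Q$ ``by a power of the Hardy--Littlewood maximal operator'' and apply Fefferman--Stein with the $r$-trick --- does not work: there is no pointwise bound of $h_Q(x)\,|E_j(f_j)(x)|$ by $\bigl(M(|E_jf_j|^r)\bigr)^{1/r}(x)$, since $h_Q$ is unbounded on a small subset of $Q$ and is unrelated to the data $f_j$ (take $f_j=\chi_Q$ to see the failure), and no Chebyshev good-set trick applies here because the bad sets would have to be excised simultaneously for all scales through a given point. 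Moreover, the ``decisive fact'' you cite, the per-cube bound $\frac{1}{|Q|}\int_Q h_Q^p\lesssim 1$, is strictly weaker than what is actually needed: the paper requires the reverse-H\"older self-improvement to exponent $p(1+\delta)$ \eqref{revholder} and, crucially, the nested Carleson-type condition \eqref{chgolineq}, in which the supremum over all dyadic $P$ with $x\in P\subseteq Q$ sits \emph{inside} the cube average. With these in hand the insertion estimate is Theorem \ref{Naz} (Nazarov), whose proof is not a maximal-function domination but a combination of duality (for $q\le p$), a dyadic Carleson embedding (Lemma \ref{carl}) for the $q=\infty$ endpoint, a summation-by-parts argument for $q=1$, and complex interpolation; the structural fact that the multiplied functions are constant on cubes of scale $j$ (i.e.\ $E_j$-invariant, supplied for the $\dot F$-case by the Peetre/sampling estimate you mention) is what makes this machinery applicable. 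Without \eqref{chgolineq} and Theorem \ref{Naz} (or an equivalent substitute), your outer link in this direction does not close, and this is precisely the step the paper singles out as the new difficulty in the Triebel--Lizorkin (as opposed to Besov) setting.
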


The next statement is an adaptation of Theorem \ref{mainresult} to expansions based on wavelets instead of the $\varphi$-transform.  We start by recalling wavelets.  A wavelet basis is an orthonormal basis for $L^2 (\R^n)$ of the form $\{ \psi^{(i)}_Q \}_{Q \in \mathcal{D}, 1 \leq i \leq 2^n -1} $, where $\{ \psi^{(i)} \}_{i=1}^{2^n-1}$ are the generators of the wavelet basis, and $\psi^{(i)}_Q(x) = |Q|^{-1/2} \psi^{(i)} ((x - x_Q)/\ell(Q))$, similarly to (\ref{defphiQ}).   For $W \in A_p$, we obtain a characterization of $\dot{F}^{\alpha q}_p ( W )$ in terms of the wavelet coefficients, for wavelets with appropriate properties.

\begin{Thm}\label{wavechar}
Suppose $\alpha \in \R, 0 < p < \infty, 0 < q \leq \infty$, and $W \in A_p(\R^n)$.  Suppose that for some sufficiently large positive numbers $N_0, R$, and $S$ (depending on $p, q, \alpha, n$, and $W$), the generators $\{ \psi^{(i)} \}_{1 \leq i \leq 2^n -1} $ of a wavelet basis satisfy $\int_{\R^n} x^{\gamma} \psi^{(i)} (x) \, dx =0$ for all multi-indices $\gamma$ with $|\gamma| \leq N_0$, and $|D^{\gamma} \psi^{(i)} (x)| \leq C (1+|x|)^{-R}$ for all $|\gamma| \leq S$.  Then 
\[ \Vert \vec{f} \Vert_{\dot{F}^{\alpha q}_p ( W )} \approx \sum_{i=1}^{2^n -1} \Vert \{ \langle \vec{f}, \psi^{(i)}_Q \rangle \}_{Q \in \mathcal{D}} \Vert_{\dot{f}^{\alpha q}_p ( W )}  . \]
\end{Thm}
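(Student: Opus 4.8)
The plan is to deduce Theorem \ref{wavechar} from Theorem \ref{mainresult} by the same reduction--to--scalar mechanism encapsulated in (\ref{reduction}), using the known unweighted wavelet characterization of $\dot{F}^{\alpha q}_p$ together with the almost diagonality / change-of-frame machinery in the scalar theory. Concretely, fix $\ffi \in \mathcal{A}$ and let $\{A_Q\}$ be a sequence of reducing operators of order $p$ for $W$. By Theorem \ref{mainresult}, $\Vert \vec{f} \Vert_{\dot F^{\alpha q}_p(W)} \approx \Vert \vec s \Vert_{\dot f^{\alpha q}_p(W)} \approx \Vert \vec s \Vert_{\dot f^{\alpha q}_p(\{A_Q\})}$, where $\vec s_Q = \langle \vec f, \ffi_Q\rangle$; and similarly, since $W\in A_p$ forces the reducing operators also to control the wavelet-side averages, one has $\Vert \{\langle \vec f, \psi^{(i)}_Q\rangle\}_Q \Vert_{\dot f^{\alpha q}_p(W)} \approx \Vert \{\langle \vec f, \psi^{(i)}_Q\rangle\}_Q \Vert_{\dot f^{\alpha q}_p(\{A_Q\})}$. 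So it suffices to show
\[
\sum_{i=1}^{2^n-1} \bigl\Vert \{\, A_Q \langle \vec f, \psi^{(i)}_Q\rangle \,\}_{Q\in\mathcal D} \bigr\Vert_{\dot f^{\alpha q}_p} \approx \bigl\Vert \{\, A_Q \langle \vec f, \ffi_Q\rangle \,\}_{Q\in\mathcal D} \bigr\Vert_{\dot f^{\alpha q}_p}.
\]

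The key step is to realize the passage between the $\ffi$-transform coefficients and the wavelet coefficients as composition with an almost diagonal matrix in the scalar sense, then transfer the $A_Q$'s through. Using the $\ffi$-transform identity (\ref{phitraniden}), write $\langle \vec f, \psi^{(i)}_P\rangle = \sum_{Q} \langle \psi_Q, \psi^{(i)}_P\rangle \,\langle \vec f, \ffi_Q\rangle$, so the transformation is given by the scalar matrix $B^{(i)} = (b^{(i)}_{PQ})$ with $b^{(i)}_{PQ} = \langle \psi_Q, \psi^{(i)}_P\rangle$. The moment conditions $\int x^\gamma \psi^{(i)}=0$ for $|\gamma|\le N_0$, the decay $|D^\gamma \psi^{(i)}|\le C(1+|x|)^{-R}$ for $|\gamma|\le S$, and the Schwartz/moment properties of $\psi$ make $B^{(i)}$ almost diagonal on $\dot f^{\alpha q}_p$ in the standard sense (as in \cite{FJ}, \cite{FJ1}) provided $N_0, R, S$ are large enough relative to $p,q,\alpha,n$ — this is exactly why those parameters must be taken large. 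To move the reducing operators past $B^{(i)}$, I would define the conjugated matrix $\widetilde B^{(i)}_{PQ} = A_P\, b^{(i)}_{PQ}\, A_Q^{-1}$ (so that $A_P\langle \vec f,\psi^{(i)}_P\rangle = \sum_Q \widetilde B^{(i)}_{PQ}\, A_Q\langle \vec f,\ffi_Q\rangle$) and show that $\widetilde B^{(i)}$, acting as a scalar almost diagonal matrix via the operator norm $\Vert \widetilde B^{(i)}_{PQ}\Vert \le \Vert A_P A_Q^{-1}\Vert\, |b^{(i)}_{PQ}|$, is still almost diagonal. The membership $W\in A_p$ is precisely what gives the needed control on $\Vert A_P A_Q^{-1}\Vert$ by a power of $\max(\ell(P)/\ell(Q),\ell(Q)/\ell(P))$ times a power of $(1 + \mathrm{dist}(P,Q)/\max(\ell(P),\ell(Q)))$, i.e. a "doubling with geometric decay" estimate on the reducing operators that appears in the matrix-weight literature (\cite{R}, \cite{FR}, and used elsewhere in this paper); absorbing this loss into the extra decay gained from enlarging $N_0, R, S$ keeps $\widetilde B^{(i)}$ almost diagonal, hence bounded on $\dot f^{\alpha q}_p$.

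With that in hand, one direction follows immediately: $\Vert \{A_P\langle \vec f,\psi^{(i)}_P\rangle\}_P\Vert_{\dot f^{\alpha q}_p} \lesssim \Vert\widetilde B^{(i)}\Vert \,\Vert \{A_Q\langle \vec f,\ffi_Q\rangle\}_Q\Vert_{\dot f^{\alpha q}_p} \lesssim \Vert \vec f\Vert_{\dot F^{\alpha q}_p(W)}$, and summing over $i$ gives the bound of the right side of the claimed equivalence by the left side. For the reverse inequality, I would use the reconstruction formula from the orthonormal wavelet expansion $\vec f = \sum_i\sum_P \langle \vec f,\psi^{(i)}_P\rangle \psi^{(i)}_P$ to write $\langle \vec f,\ffi_Q\rangle = \sum_i\sum_P \langle\psi^{(i)}_P,\ffi_Q\rangle\langle \vec f,\psi^{(i)}_P\rangle$, conjugate by the $A$'s in the same way, check that each resulting scalar matrix is almost diagonal (now using the moment and decay conditions on $\psi^{(i)}$ against the Schwartz function $\ffi$), and conclude $\Vert \{A_Q\langle \vec f,\ffi_Q\rangle\}_Q\Vert_{\dot f^{\alpha q}_p} \lesssim \sum_i \Vert \{A_P\langle \vec f,\psi^{(i)}_P\rangle\}_P\Vert_{\dot f^{\alpha q}_p}$. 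Chaining with Theorem \ref{mainresult} and the reducing-operator equivalences finishes the proof.

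The main obstacle is the second step: verifying that conjugating a scalar almost diagonal matrix by the reducing operators $\{A_Q\}$ preserves almost diagonality. This requires the quantitative comparison estimate $\Vert A_P A_Q^{-1}\Vert \lesssim \bigl(\tfrac{\ell(P)}{\ell(Q)}\bigr)^{\pm\theta}\bigl(1+\tfrac{\mathrm{dist}(P,Q)}{\max(\ell(P),\ell(Q))}\bigr)^{\beta}$ for suitable $\theta,\beta$ depending only on the $A_p$ constant of $W$, $p$, and $n$ — an estimate that is the heart of matrix-weight Littlewood--Paley theory and is what forces the thresholds $N_0, R, S$ to depend on $W$. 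Once that estimate is available (it should already be established earlier in the paper in the course of proving Theorem \ref{mainresult}, or be a direct consequence of the definition of $A_p$ via (\ref{defAp2}) and the $p>1$ characterization), the rest is a routine, if bookkeeping-heavy, application of the almost diagonal calculus on $\dot f^{\alpha q}_p$.
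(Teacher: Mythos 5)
Your proposal is correct in substance and, for the most part, follows the same route as the paper: reduce everything to the reducing-operator sequence norms via Theorem \ref{mainresult} (i.e.\ Theorems \ref{avgnormeq} and \ref{nonavnormeq}, the latter applied also to the wavelet coefficient sequences), and transfer between $\ffi$-transform and wavelet coefficients by matrices that remain almost diagonal after accounting for the factor $\Vert A_Q A_P^{-1}\Vert$. The step you single out as the main obstacle is already packaged in the paper: Lemma \ref{doublemma} gives the strong doubling bound \eqref{stdoubest} on $\Vert A_Q A_P^{-1}\Vert$ from $W\in A_p$, Definition \ref{Defalmostdiag} builds the resulting loss $\beta/p$ into the indices of the class ${\bf ad}^{\alpha,q}_p(\beta)$, and Theorem \ref{ADmatrixbnded} proves boundedness on $\dot{f}^{\alpha q}_p(\{A_Q\})$ by exactly the conjugation $\gamma_{QP}=\omega_{QP}\Vert A_QA_P^{-1}\Vert$ you describe, with the entry estimates for the transfer matrices supplied by Lemma \ref{convest} since the wavelets are smooth molecules; this is precisely how the paper proves the direction $\sum_i\Vert\{\langle\vec f,\psi^{(i)}_Q\rangle\}\Vert_{\dot{f}^{\alpha q}_p(\{A_Q\})}\lesssim\Vert\vec f\Vert_{\dot{F}^{\alpha q}_p(\{A_Q\})}$ in Theorem \ref{avgwavechar}. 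Where you genuinely deviate is the opposite direction: the paper bounds $\Vert\vec f\Vert_{\dot{F}^{\alpha q}_p(\{A_Q\})}$ by the wavelet-coefficient norm via the molecular synthesis estimate of Theorem \ref{thirdAQest} applied to the wavelet expansion of $\vec f$, whereas you propose a second coefficient-transfer matrix with entries $\langle\psi^{(i)}_P,\ffi_Q\rangle$ acting on the wavelet coefficients, followed by Theorem \ref{mainresult}. That route works and rests on the same Lemma \ref{convest} estimates, but it requires the identity $\langle\vec f,\ffi_Q\rangle=\sum_{i,P}\langle\psi^{(i)}_P,\ffi_Q\rangle\langle\vec f,\psi^{(i)}_P\rangle$ for $\vec f\in\cS'/\cP$, i.e.\ a justification that the wavelet expansion can be paired term by term against $\ffi_Q$ (nontrivial for finitely smooth, non-Schwartz generators such as Daubechies wavelets); this is the same kind of convergence point that the paper's own use of the identity $\vec f=\sum_{Q,i}\langle\vec f,\psi^{(i)}_Q\rangle\psi^{(i)}_Q$ tacitly relies on, so it is a caveat to be addressed rather than a gap in the overall strategy.
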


Examples of wavelets with the properties in Theorem \ref{wavechar} are Meyer's wavelets (see \cite{M} and \cite{LM}) and Daubechies' $D_N$ wavelets for sufficiently large $N$ (\cite{D}).  

As in the unweighted case, the spaces $L^p(W)$ (defined as the set of measurable $\vec{f}$ such that $\Vert \vec{f} \Vert_{L^p(W)}^p = \int_{\R^n} |W^{1/p} (x) \vec{f}(x)|^p \, dx < \infty$), are contained in the scale of Triebel-Lizorkin spaces.

\begin{Thm}\label{equivwithlp}
Suppose $1 < p < \infty$ and $W \in A_p(\R^n)$.  Then $\dot{F}^{0 2}_p (W) = L^p(W)$, with equivalent norms.
\end{Thm}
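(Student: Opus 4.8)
The plan is to run the matrix-weighted square-function estimate through the discrete level, where Theorem \ref{mainresult} does most of the work, and then reconcile the resulting sequence norm with $L^p(W)$ by means of the reducing operators and the matrix $A_p$ condition. Fix a sequence $\{A_Q\}_{Q\in\mathcal{D}}$ of reducing operators of order $p$ for $W$, let $\vec s_Q=\langle\vec f,\varphi_Q\rangle$, and set $\vec g=W^{1/p}\vec f$, so that $\vec f=W^{-1/p}\vec g$ and $\|\vec f\|_{L^p(W)}=\|\vec g\|_{L^p(\R^n)}$. By Theorem \ref{mainresult} and \eqref{reduction},
\[
\|\vec f\|_{\dot F^{02}_p(W)}\approx\|\vec s\|_{\dot f^{02}_p(W)}\approx\|\vec s\|_{\dot f^{02}_p(\{A_Q\})}=\|t\|_{\dot f^{02}_p},\qquad t_Q:=|A_Q\vec s_Q|,
\]
so it suffices to prove $\|\vec g\|_{L^p(\R^n)}\approx\|t\|_{\dot f^{02}_p}$: the \emph{weighted $\varphi$-transform} $\vec g\mapsto t=\{\,|A_Q\langle W^{-1/p}\vec g,\varphi_Q\rangle|\,\}_{Q\in\mathcal{D}}$ must be bounded above and below between $L^p(\R^n)$ and $\dot f^{02}_p$. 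The model to imitate is the scalar, unweighted fact that $\|h\|_{L^p(\R^n)}\approx\|\{\langle h,\varphi_Q\rangle\}\|_{\dot f^{02}_p}$ for $1<p<\infty$ (classical Littlewood--Paley theory together with the $\varphi$-transform).

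For the bound $\|t\|_{\dot f^{02}_p}\lesssim\|\vec g\|_{L^p(\R^n)}$ (the \emph{analysis} half), one writes, for $x\in Q$,
\[
|Q|^{-1/2}\,|A_Q\vec s_Q|\,\chi_Q(x)\ \le\ |Q|^{-1/2}\Bigl(\int_{\R^n}\|A_QW^{-1/p}(z)\|\,|\vec g(z)|\,|\varphi_Q(z)|\,dz\Bigr)\chi_Q(x).
\]
The contribution of $z$ near $Q$ is dominated by a matrix-weighted maximal operator of Christ--Goldberg type applied to $|\vec g|$, whose $L^p(\R^n)$-boundedness for $W\in A_p$, $1<p<\infty$, rests on the characterization of $A_p$ recalled above (using, if necessary, the openness of the matrix $A_p$ class to gain a small exponent, so that a Fefferman--Stein vector-valued inequality is available); the contribution of $z$ far from $Q$ is summable because $\varphi_Q$ decays rapidly while $\|A_QA_{Q^\ast}^{-1}\|$ grows only polynomially as $Q^\ast$ recedes or changes scale; and the vanishing moments of $\varphi\in\mathcal{A}$ furnish the decay in the scale index $j$ required for the $\ell^2$-sum over $j$ to converge. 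Squaring, summing in $j$, taking $L^p$ norms, and combining the Fefferman--Stein inequality with the classical unweighted Littlewood--Paley estimate yields the bound. The reverse inequality $\|\vec g\|_{L^p(\R^n)}\lesssim\|t\|_{\dot f^{02}_p}$ (the \emph{synthesis} half) runs dually from the reproducing identity $\vec f=\sum_Q\vec s_Q\psi_Q$: for $x$ in the cube $Q'\in\mathcal{D}_j$ one bounds $|W^{1/p}(x)\vec s_Q|\le\|W^{1/p}(x)A_{Q'}^{-1}\|\,\|A_{Q'}A_Q^{-1}\|\,t_Q$, then uses the identity $\frac1{|Q'|}\int_{Q'}\|W^{1/p}(x)A_{Q'}^{-1}\|^p\,dx\approx1$ (immediate from the defining inequality for reducing operators of order $p$), the same growth control on $\|A_{Q'}A_Q^{-1}\|$ and rapid decay of $\psi$, and the classical synthesis bound $\bigl\|\sum_Q r_Q\psi_Q\bigr\|_{L^p}\lesssim\|\{r_Q\}\|_{\dot f^{02}_p}$ for scalar sequences.

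The main obstacle is the mismatch between the pointwise weight $W^{1/p}(x)$ defining $L^p(W)$ and the cube-constant reducing operators $A_Q$ that, through Theorem \ref{mainresult}, govern $\dot F^{02}_p(W)$: every step above rests on passing between $W^{1/p}(x)$ and $A_Q$ for $x\in Q$, and on controlling the drift of $A_Q$ as $Q$ moves or changes scale --- precisely what the matrix $A_p$ hypothesis is designed to supply. Making these weight-localization estimates interlock cleanly with the cancellation of $\varphi$ and $\psi$ and with the Fefferman--Stein inequality (the reason the argument is confined to $q=2$ and $1<p<\infty$) is the technical heart of the proof; the role of Theorem \ref{mainresult} is to let one avoid redoing the underlying unweighted square-function theory.
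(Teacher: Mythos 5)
Your reduction via Theorem \ref{mainresult} is fine as far as it goes, but it leaves the entire difficulty untouched: the two inequalities you are left with, $\Vert t \Vert_{\dot f^{02}_p}\lesssim \Vert \vec g\Vert_{L^p}$ and $\Vert \vec g\Vert_{L^p}\lesssim \Vert t\Vert_{\dot f^{02}_p}$, are (given Theorem \ref{mainresult}) equivalent to the theorem itself, and the mechanisms you sketch for them do not work. In the ``analysis half,'' the claim that the vanishing moments of $\varphi$ ``furnish the decay in $j$ required for the $\ell^2$-sum over $j$ to converge'' is not true: vanishing moments produce decay only when paired against smooth functions (molecules), whereas here $\varphi_Q$ is paired against the rough datum $W^{-1/p}\vec g$ with $\vec g\in L^p$, for which no pointwise decay in the scale index exists. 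If each scale is merely dominated by a Christ--Goldberg-type maximal function with no gain in $j$, the $\ell^2$-sum over $j$ diverges; the finiteness of that sum \emph{is} the matrix-weighted Littlewood--Paley inequality, and it cannot be obtained by termwise pointwise domination --- it requires the orthogonality between scales, i.e.\ genuine square-function/Calder\'on--Zygmund machinery. The ``synthesis half'' has the same defect in mirror image: after you put absolute values inside $\sum_Q\vec s_Q\psi_Q$ and reduce to $\bigl\Vert\sum_Q |r_Q|\,|\psi_Q|\bigr\Vert_{L^p}$, the ``classical synthesis bound'' no longer applies, because that bound is a statement about the signed sum and rests on the cancellation of $\psi_Q$ (equivalently on the square-function characterization of $L^p$). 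The absolute-value version is false: take all dyadic subcubes of a fixed unit cube at scales $j=1,\dots,N$ with $r_Q=|Q|^{1/2}/N$; then $\Vert \{r_Q\}\Vert_{\dot f^{02}_p}\approx N^{-1/2}$ while $\sum_Q |r_Q|\,|\psi_Q|\approx 1$ on that cube, so no such estimate can hold uniformly. Also, the pointwise factor $\Vert W^{1/p}(x)A_{Q'}^{-1}\Vert$ is controlled only in average over $Q'$ (via \eqref{revholder}/\eqref{chgolineq}), not pointwise, so it cannot simply be inserted and discarded inside a pointwise bound.

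For comparison, the paper does not route the proof through the discrete spaces at all. The imbedding $L^p(W)\hookrightarrow\dot F^{02}_p(W)$ (Theorem \ref{firstdirection}) is proved by viewing $\vec f\mapsto\{\varphi_j\ast\vec f\}_{j\in\Z}$ as a Calder\'on--Zygmund operator with kernel valued in $\mathcal B(\C^m,\ell^2_m(\Z))$, establishing $L^2$-boundedness, the weak-type $(1,1)$ bounds for $T$ and $T_*$, and then adapting Goldberg's relative distributional (good-$\lambda$) inequality for matrix $A_p$ weights; the converse (Theorem \ref{seconddirection}) is obtained by duality, using $L^p(W)^*=L^{p'}(W^{-p'/p})$ with $W^{-p'/p}\in A_{p'}$, Lemma \ref{Sbnded}, and the Calder\'on reproducing formula. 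Note the paper itself points out that the discrete route you propose essentially requires, as an external input, a Nazarov--Treil-type estimate ($\Vert\vec f\Vert_{L^p(W)}\approx$ wavelet coefficients in $\dot f^{02}_p(\{A_Q\})$, known for $n=1$), which is precisely the hard estimate your sketch would have to reprove; maximal-function domination plus decay heuristics do not supply it.
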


The interpretation of the equality $\dot{F}^{0 2}_p (W) = L^p(W)$ is that if $\vec{f} \in L^p(W)$, then the equivalence class of $\vec{f}$ in $\cS^{\, \prime}/\cP$ belongs to $\dot{F}^{0 2}_p (W)$, and any equivalence class in $\dot{F}^{0 2}_p (W)$ has a unique representative belonging to $L^p(W)$.

In \cite[Theorem 15.1]{NT}, Nazarov and Treil (see also \cite{V}) prove that for $n=1, W \in A_p$, and a sufficiently nice wavelet system (as in Theorem \ref{wavechar}),  
\[ \Vert \vec{f} \Vert_{L^p(W)} \approx \sum_{i=1}^{2^n -1} \Vert \{ \langle \vec{f}, \psi_{Q,i} \rangle \}_{Q \in \mathcal{Q}} \Vert_{\dot{f}^{0 2}_p ( \{ A_Q \} )}. \]
Assuming this result, Theorem \ref{equivwithlp} for $n=1$ follows from Theorem \ref{wavechar} and Theorem \ref{nonavnormeq} below. 

As in the classical case, there are inhomogeneous analogues, denoted $F^{\alpha q}_p (W)$, of the homogeneous spaces $\dot{F}^{\alpha q}_p (W)$.  For the inhomogeneous spaces, the terms involving $\varphi_j \ast \vec{f}$ for $j <1$ are replaced by a single term $\Phi \ast \vec{f}$.  The corresponding sequence elements $\vec{s}_Q$ are indexed by cubes $Q$ with $\ell(Q) \leq 1$ only.  These inhomogeneous spaces are spaces of tempered distributions rather than tempered distributions modulo polynomials.  The theory for the inhomogeneous spaces is entirely analogous to the theory in the homogeneous case.  In particular, we will see that $F^{0 2}_p (W)\approx L^p(W)$ for $1<p< \infty$.  One advantage of the inhomogeneous spaces is that they include the Sobolev spaces for $1<p<\infty$, defined in the matrix-weighted case as follows.

For $\beta = (\beta_1, \dots, \beta_n)$ a multi-index (so $\beta_i \in \Z$ with $\beta_i \geq 0$ for all $i$), let $|\beta| = \sum_{i=1}^n \beta_i$ and let $D^{\beta}= \partial_1^{\beta_1} \cdots \partial_n^{\beta_n}$.  For $\vec{f} \in \cS^{\, \prime} (\R^n)$, let $D^{\beta}\vec{f} = (D^{\beta}f_1, \dots, D^{\beta}f_m)$.  For $k \in \N, 1<p<\infty$, and $W$ a matrix weight, define the matrix-weighted Sobolev space $L^p_k (W)$ to be the set of all $\vec{f} \in \cS^{\, \prime} (\R^n)$ such that 
\[  \Vert \vec{f} \Vert_{L^p_k (W)} \equiv \sum_{\beta: |\beta|\leq k} \Vert D^{\beta} \vec{f} \Vert_{L^p(W)} < \infty.  \]

\begin{Prop} \label{Sobolevequiv}
Suppose $k \in \N, 1<p<\infty$, and $W \in A_p (\R^n)$.  Then $L^p_k(W) = F^{k\, 2}_p (W)$, with equivalent norms.
\end{Prop}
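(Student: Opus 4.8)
The plan is to deduce Proposition~\ref{Sobolevequiv} from the inhomogeneous analogue of Theorem~\ref{equivwithlp} (that $F^{0\,2}_p(W)=L^p(W)$ for $1<p<\infty$, $W\in A_p$) by expressing each derivative $D^\beta\vec{f}$, $|\beta|\le k$, in terms of the $\varphi$-transform coefficients of $\vec{f}$. Fix $\varphi\in\mathcal A$ with its dual $\psi$ and a sequence $\{A_Q\}$ of reducing operators of order $p$ for $W$, so that the inhomogeneous $\varphi$-transform and the inhomogeneous analogues of Theorems~\ref{mainresult} and \ref{ADmatrixbnded} are available; write $g_Q$ for the dilate/translate of $g$ normalized as in \eqref{defphiQ}. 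Two elementary identities are used. Since $D^\beta$ is real and $D^\beta\varphi_Q=|Q|^{-|\beta|/n}(D^\beta\varphi)_Q$, integration by parts gives $\langle D^\beta\vec{f},\varphi_Q\rangle=(-1)^{|\beta|}|Q|^{-|\beta|/n}\langle\vec{f},(D^\beta\varphi)_Q\rangle$. Second, since $\supp\hat\varphi\subseteq\{1/2\le|\xi|\le 2\}$ avoids the origin and $\sum_{|\beta|=k}\frac{k!}{\beta!}|\xi^\beta|^2=|\xi|^{2k}$, one may write
\[
\hat\varphi(\xi)=\sum_{|\beta|=k}\overline{(i\xi)^\beta}\,\hat\tau_\beta(\xi),\qquad \hat\tau_\beta(\xi):=\frac{k!}{\beta!}\,\frac{(i\xi)^\beta}{|\xi|^{2k}}\,\hat\varphi(\xi),
\]
with each $\tau_\beta\in\cS$, $\supp\hat\tau_\beta$ in the same annulus, $\varphi=(-1)^k\sum_{|\beta|=k}D^\beta\tau_\beta$, and hence, rescaling and integrating by parts, $\langle\vec{f},\varphi_Q\rangle=|Q|^{k/n}\sum_{|\beta|=k}\langle D^\beta\vec{f},(\tau_\beta)_Q\rangle$ for $\ell(Q)<1$. (Here $D^\beta\varphi$ has Fourier support in the annulus but need not satisfy \eqref{admiss3}, which is why the $\tau_\beta$ are introduced.) Finally, I will use the scaling of the discrete norms: multiplying a sequence by $|Q|^{\pm\mu/n}$ is an isometry $f^{\gamma\,2}_p(\{A_Q\})\to f^{\gamma\mp\mu\,2}_p(\{A_Q\})$, and since all relevant cubes have $\ell(Q)\le 1$ one has the norm-decreasing embedding $f^{a\,2}_p(\{A_Q\})\hookrightarrow f^{b\,2}_p(\{A_Q\})$ for $a\ge b$.

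For $\|\vec{f}\|_{L^p_k(W)}\lesssim\|\vec{f}\|_{F^{k\,2}_p(W)}$, fix $|\beta|\le k$. Inserting $\vec{f}=\sum_P\langle\vec{f},\varphi_P\rangle\psi_P$ (with the usual coarsest-scale term) into $\langle\vec{f},(D^\beta\varphi)_Q\rangle$ exhibits $\{\langle\vec{f},(D^\beta\varphi)_Q\rangle\}_Q$ as the image of $\{\langle\vec{f},\varphi_P\rangle\}_P$ under an almost diagonal operator, since $(D^\beta\varphi)_Q$ and $\psi_P$ are band-limited Schwartz functions with ample vanishing moments. By the inhomogeneous analogue of Theorem~\ref{ADmatrixbnded} this operator is bounded on $f^{k\,2}_p(\{A_Q\})$, and by Theorem~\ref{mainresult} its input has norm $\approx\|\vec{f}\|_{F^{k\,2}_p(W)}$; multiplying by $(-1)^{|\beta|}|Q|^{-|\beta|/n}$ and using the scaling isometry and the embedding $f^{k-|\beta|\,2}_p(\{A_Q\})\hookrightarrow f^{0\,2}_p(\{A_Q\})$ (valid since $k-|\beta|\ge 0$) yields $\|\{\langle D^\beta\vec{f},\varphi_Q\rangle\}\|_{f^{0\,2}_p(\{A_Q\})}\lesssim\|\vec{f}\|_{F^{k\,2}_p(W)}$. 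The inhomogeneous analogues of Theorem~\ref{mainresult} at $\alpha=0$ and Theorem~\ref{equivwithlp} then give $\|D^\beta\vec{f}\|_{L^p(W)}\approx\|D^\beta\vec{f}\|_{F^{0\,2}_p(W)}\approx\|\{\langle D^\beta\vec{f},\varphi_Q\rangle\}\|_{f^{0\,2}_p(\{A_Q\})}$, and summing over the finitely many $\beta$ with $|\beta|\le k$ finishes this direction.

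For the reverse inequality, use the inhomogeneous Theorem~\ref{mainresult} to write $\|\vec{f}\|_{F^{k\,2}_p(W)}\approx\|\{\langle\vec{f},\varphi_Q\rangle\}\|_{f^{k\,2}_p(\{A_Q\})}$, the sequence consisting of fine-scale coefficients $\langle\vec{f},\varphi_Q\rangle$ ($\ell(Q)<1$) and the single coarsest-scale term $\langle\vec{f},\Phi_Q\rangle$ ($\ell(Q)=1$). For the fine-scale part, the identity $\langle\vec{f},\varphi_Q\rangle=|Q|^{k/n}\sum_{|\beta|=k}\langle D^\beta\vec{f},(\tau_\beta)_Q\rangle$, the scaling isometry (the factor $|Q|^{k/n}$ carries $f^{0\,2}_p$ to $f^{k\,2}_p$), and — exactly as in the previous paragraph, now with $\tau_\beta$ in place of $D^\beta\varphi$ — the almost-diagonal bound together with Theorem~\ref{equivwithlp} give
\[
\bigl\|\{\langle\vec{f},\varphi_Q\rangle\}_{\ell(Q)<1}\bigr\|_{f^{k\,2}_p(\{A_Q\})}\;\lesssim\;\sum_{|\beta|=k}\|D^\beta\vec{f}\|_{F^{0\,2}_p(W)}\;\approx\;\sum_{|\beta|=k}\|D^\beta\vec{f}\|_{L^p(W)}\;\le\;\|\vec{f}\|_{L^p_k(W)}.
\]
The coarsest-scale term is bounded directly: it is one of the pieces of the coefficient sequence of $\vec{f}$ at smoothness $0$, so $\|\{\langle\vec{f},\Phi_Q\rangle\}_{\ell(Q)=1}\|\lesssim\|\vec{f}\|_{F^{0\,2}_p(W)}=\|\vec{f}\|_{L^p(W)}\le\|\vec{f}\|_{L^p_k(W)}$ by Theorems~\ref{mainresult} and \ref{equivwithlp}. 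This gives the norm equivalence; the statement about representatives follows as in the remark after Theorem~\ref{equivwithlp}, since $\vec{f}\in F^{k\,2}_p(W)\subseteq F^{0\,2}_p(W)=L^p(W)$ has an $L^p(W)$ representative whose distributional derivatives of order $\le k$ again lie in $F^{0\,2}_p(W)=L^p(W)$.

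I expect the main work to be bookkeeping rather than new analysis: one must check that the operators above ($\{\langle\vec{f},\varphi_P\rangle\}\mapsto\{\langle\vec{f},(D^\beta\varphi)_Q\rangle\}$, and likewise with $\tau_\beta$) really satisfy the matrix-weighted almost diagonality hypotheses of the inhomogeneous version of Theorem~\ref{ADmatrixbnded}, including at the coarsest scale, and that the powers of $|Q|$ line up with the smoothness indices of the sequence spaces; the underlying matrix-$A_p$ harmonic analysis is exactly that already developed for Theorems~\ref{mainresult} and \ref{equivwithlp}, and the restriction to $q=2$, $1<p<\infty$ enters only through the single invocation of Theorem~\ref{equivwithlp}. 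An alternative route avoiding the sequence spaces — writing $2^{jk}\varphi_j\ast\vec{f}=(-1)^k\sum_{|\beta|=k}(\tau_\beta)_j\ast D^\beta\vec{f}$ and running one-sided Littlewood--Paley and Fefferman--Stein estimates for the Christ--Goldberg maximal operator directly — is available and of comparable length.
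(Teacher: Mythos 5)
Your argument is correct in outline, and it takes a genuinely different route from the paper's. The paper deduces Proposition \ref{Sobolevequiv} from a chain of lifting results: Lemma \ref{comparison} ($\Vert\vec f\Vert_{F^{\alpha q}_p(W)}\approx\Vert\vec f\Vert_{L^p(W)}+\Vert\vec f\Vert_{\dot F^{\alpha q}_p(W)}$ for $\alpha>0$, which rests on Lemma \ref{convolutionest}), Proposition \ref{homogred} (first-order derivatives lower the homogeneous smoothness index by one, with the converse handled by the Riesz potentials of Proposition \ref{Rieszpot} via $\vec f=I_2I_{-2}\vec f$), its inhomogeneous counterpart Proposition \ref{inhomogred}, Theorem \ref{equivwithlp}, and induction on $k$. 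You instead stay entirely inside the inhomogeneous theory and treat all derivatives of order $k$ at once: dividing $\hat\varphi$ by $|\xi|^{2k}$ on the annulus (your $\tau_\beta$'s) plays the role the Riesz potential plays in the paper, integration by parts against $(D^\beta\varphi)_Q$ replaces Proposition \ref{homogred} in the other direction, and everything reduces to the inhomogeneous versions of Theorems \ref{mainresult}, \ref{ADmatrixbnded}, and \ref{equivwithlp}. What the paper's route buys is that its intermediate statements (Propositions \ref{Rieszpot}, \ref{homogred}, \ref{inhomogred}, Lemma \ref{comparison}) are of independent interest and make the induction on $k$ immediate; what your route buys is that it bypasses the homogeneous spaces, the comparison lemma, and the induction entirely, at the cost of the bookkeeping you yourself flag (almost diagonality of the matrices built from $(D^\beta\varphi)_Q$, $(\tau_\beta)_Q$, and the coarsest-scale terms involving $D^\beta\Phi$, which does go through via Lemma \ref{convest} since these are band-limited Schwartz families, with vanishing moments needed only at scales $\ell(Q)<1$). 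One small correction: your stated scaling rule has the signs reversed --- multiplication by $|Q|^{\pm\mu/n}$ is an isometry $f^{\gamma\,2}_p(\{A_Q\})\to f^{\gamma\pm\mu,\,2}_p(\{A_Q\})$, not $f^{\gamma\mp\mu,\,2}_p(\{A_Q\})$ --- but in both places where you invoke it (the factor $|Q|^{-|\beta|/n}$ taking smoothness $k$ to $k-|\beta|$, and $|Q|^{k/n}$ taking $0$ to $k$) you apply it with the correct signs, so this is only a typo in the statement, not a gap.
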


The paper is organized as follows: we prove the equivalence between the averaging spaces $\dot{F}^{\alpha q}_p(\{A_Q \} )$ and $\dot{f}^{\alpha q}_p(\{A_Q \} )$ in Theorem \ref{avgnormeq}; this is be done by variations on the methods used for the scalar theory and is discussed in Section 2. The equivalences between the weighted spaces and averaged spaces, in both the function case and the sequence case, are stated and proved in Theorem \ref{nonavnormeq}; the proofs involve some less familiar techniques, which are discussed in Section 3. Theorem \ref{mainresult} follows from Theorems \ref{avgnormeq} and \ref{nonavnormeq}. Theorem \ref{wavechar} follows from Theorem \ref{mainresult} and Theorem \ref{avgwavechar}.  Theorem \ref{equivwithlp} is proved in Section \ref{Lpequiv}. We define and discuss the inhomogeneous spaces $F^{\alpha q}_p (W)$ in Section \ref{inhomog}. Finally, the equivalence with Sobolev spaces (Proposition \ref{Sobolevequiv}) is proved in Section \ref{Sobolev}.

\vspace{0.1in}

\noindent{{\bf Acknowledgments.}}  We thank Fedor Nazarov, who provided us with the formulation and proof of Theorem \ref{Naz}. S.R. was partially supported by the NSF-DMS CAREER grant \# 1151618/1929029.

\section{Equivalence of the averaging spaces}\label{normeqavg}

We show 
that the equivalence of the averaging spaces $\dot{F}^{\alpha q}_p(\{A_Q \} )$ and $\dot{f}^{\alpha q}_p(\{A_Q\})$ holds under just the strong doubling assumption on $\{A_Q\}$, defined as follows (see \cite[Definition 1.3]{R2}).

\begin{Def} \label{AQdoubling}  Let $\{ A_Q \}_{Q \in \mathcal{D}}$ be a sequence of nonnegative-definite matrices and let $\beta, p >0$.  We say that $\{ A_Q \}$ is strongly doubling of order $(\beta, p)$ if there exists $c>0$ such that 
\beq  
\Vert A_Q A_P^{-1} \Vert^p \leq c \max \left\{ \left( \frac{\ell(P)}{\ell(Q)} \right)^n , \left( \frac{\ell(Q)}{\ell(P)} \right)^{\beta -n}  \right\}  \left( 1 + \frac{|x_Q - x_P|}{\max \{ \ell(P), \ell(Q) \}} \right)^{\beta},  \label{stdoubest}
\eeq
for all $Q,P \in \mathcal{D}$.  We say $\{ A_Q \}$ is weakly doubling of order $r>0$ if there exists $c >0$ such that 
\beq 
 \Vert A_{Q_{j k}} A_{Q_{j \ell}}^{-1} \Vert \leq c ( 1 + |k - \ell|)^{r},  \label{weakdoubest}
\eeq 
for all $k, \ell \in \Z^n$ and all $j \in \Z$. 
\end{Def}

A strongly doubling sequence of order $(\beta, p)$ is weakly doubling of order $r = \beta/p$, because (\ref{weakdoubest}) is just the restriction of (\ref{stdoubest}) to the case when $\ell(P)= \ell(Q)$.  

A  matrix weight $W$ is called a {\em doubling matrix weight of order} $p>0$ if the scalar measures $w_{\vec{y}} (x) = |W^{1/p} (x) \vec{y}|^p$, for $\vec{y} \in \C^m$, are uniformly doubling: there exists $c>0$ such that for all cubes $Q \subseteq \R^n$ and all $\vec{y} \in \C^m$,
$\int_{2Q} w_{\vec{y}} (x) \, dx \leq c \int_{Q} w_{\vec{y}} (x) \, dx$, where $2Q$ is the cube concentric with $Q$, having twice the side length of $Q$.  If $c=2^{\beta}$ is the smallest constant for which this inequality holds, we say that $\beta$ is the doubling exponent of $W$.  If $W \in A_p$, then $W$ is a doubling matrix weight (for $0<p\leq 1$, see \cite {FR}, Lemma 2.1; for $p>1$, this fact follows because the scalar weights $w_{\vec{y}}$ are uniformly in the scalar $A_p$ class (\cite{V}, Lemma 5.3), and hence, are uniformly doubling, \cite[p. 196]{S2}).

The following lemma explains the connection between doubling weights $W$ and doubling sequences $\{ A_Q \}$.

\begin{Lemma}\label{doublemma}
Let $W$ be a doubling matrix weight of order $p>0$ with doubling exponent $\beta$ and suppose $\{ A_Q \}_{Q \in \mathcal{D}}$ is a sequence of reducing operators of order $p$ for $W$.  Then $\{ A_Q \}$ is strongly doubling of order $(\beta, p)$.
\end{Lemma}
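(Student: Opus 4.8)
The plan is to estimate $\Vert A_Q A_P^{-1}\Vert^p$ directly from the reducing operator property, reducing the matrix-weight statement to a purely scalar doubling estimate for each of the measures $w_{\vec y}(x) = |W^{1/p}(x)\vec y|^p$. First, I would record the consequence of the defining inequality for reducing operators: for any cube $R$ and any $\vec z \in \C^m$,
\[
|A_R \vec z|^p \approx \frac{1}{|R|}\int_R |W^{1/p}(x)\vec z|^p\,dx = \frac{1}{|R|}\, w_{\vec z}(R),
\]
with constants depending only on $c_1, c_2, p$. Hence for any two dyadic cubes $Q, P$, writing $\vec z = A_P^{-1}\vec y$ and optimizing over $\vec y$,
\[
\Vert A_Q A_P^{-1}\Vert^p = \sup_{\vec y \neq 0} \frac{|A_Q A_P^{-1}\vec y|^p}{|\vec y|^p} \approx \sup_{\vec z \neq 0} \frac{|R|^{-1}_{Q}\, w_{\vec z}(Q)}{|A_P \vec z|^p} \approx \sup_{\vec z \neq 0} \frac{|Q|^{-1}\, w_{\vec z}(Q)}{|P|^{-1}\, w_{\vec z}(P)}.
\]
So the whole estimate reduces to controlling $w_{\vec z}(Q)/w_{\vec z}(P)$ uniformly in $\vec z$, which is exactly a statement about scalar doubling measures with a uniform doubling constant $2^\beta$.

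The heart of the argument is then the following standard observation about scalar doubling measures, applied with uniform constant to the family $\{w_{\vec z}\}_{\vec z}$: if $\mu$ is a doubling measure on $\R^n$ with doubling exponent $\beta$ (i.e. $\mu(2R) \leq 2^\beta \mu(R)$ for all cubes $R$), then for any cubes $Q, P$,
\[
\frac{\mu(Q)}{\mu(P)} \leq c\,\max\!\left\{\Big(\tfrac{\ell(Q)}{\ell(P)}\Big)^{\beta},\ 1\right\}\Big(1 + \tfrac{|x_Q - x_P|}{\max\{\ell(P),\ell(Q)\}}\Big)^{\beta}.
\]
This is proved by iterating the doubling condition: one finds an integer $N$ with $2^N \gtrsim \ell(Q)/\ell(P) + |x_Q-x_P|/\max\{\ell(P),\ell(Q)\} + 1$ so that the dilate $2^N P$ (centered appropriately, after first enlarging $P$ a bounded number of times to contain $x_Q$, then dilating) contains $Q$; then $\mu(Q) \leq \mu(2^N P) \leq 2^{N\beta}\mu(P)$. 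The lower bound $\mu(P) \lesssim \mu(Q)$ when $\ell(P) \le \ell(Q)$ (the source of the $(\ell(Q)/\ell(P))^\beta$ factor, which after multiplying through the definition's normalization becomes the $(\ell(Q)/\ell(P))^{\beta-n}$ appearing in \eqref{stdoubest} once one accounts for the $|Q|^{-1}$ vs $|P|^{-1}$ factors) follows the same way with the roles reversed, using $|Q|^{-1} w_{\vec z}(Q) \le |P|^{-1} w_{\vec z}(P)\cdot(|P|/|Q|)\cdot 2^{N\beta}$. Assembling: when $\ell(Q) \ge \ell(P)$ one gets the $(\ell(P)/\ell(Q))^n$ branch of the max after the $|Q|/|P|$ bookkeeping, and when $\ell(Q) < \ell(P)$ one gets the $(\ell(Q)/\ell(P))^{\beta - n}$ branch; in both cases the translation factor $(1 + |x_Q-x_P|/\max\{\ell(P),\ell(Q)\})^\beta$ comes out of the iteration count $N$.

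The main obstacle is purely bookkeeping: making sure the powers of $|Q|/|P|$ from the normalization $|R|^{-1}w_{\vec z}(R) \approx |A_R \cdot|^p$ combine correctly with the iterated-doubling bound on $w_{\vec z}(Q)/w_{\vec z}(P)$ to produce precisely the two-sided max in \eqref{stdoubest} rather than a weaker or differently-shaped estimate, and verifying that the doubling constant genuinely is uniform in $\vec z$ — but that last point is immediate, since $W$ being a doubling matrix weight of order $p$ with exponent $\beta$ means by definition that every $w_{\vec z}$ satisfies $w_{\vec z}(2R) \le 2^\beta w_{\vec z}(R)$ with the same $\beta$. I would carry out the steps in this order: (1) restate the reducing operator property in the averaged-measure form; (2) reduce $\Vert A_Q A_P^{-1}\Vert^p$ to $\sup_{\vec z}\, |Q|^{-1}w_{\vec z}(Q)\big/\big(|P|^{-1}w_{\vec z}(P)\big)$; (3) prove the scalar iterated-doubling lemma; (4) apply it uniformly in $\vec z$ and do the normalization bookkeeping to land on \eqref{stdoubest}.
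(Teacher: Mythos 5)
Your proposal is correct and follows essentially the same route as the paper's proof: restate the reducing-operator property as $|A_R\vec z|^p \approx |R|^{-1}w_{\vec z}(R)$, reduce $\Vert A_Q A_P^{-1}\Vert^p$ to a ratio of averaged scalar measures via the substitution $\vec y = A_P^{-1}\vec z$, iterate the (uniform in $\vec z$) doubling of $w_{\vec z}$ over the smallest dilate $2^jP \supseteq Q$ with $2^j \lesssim \max\{1,\ell(Q)/\ell(P)\}\bigl(1+|x_Q-x_P|/\max\{\ell(P),\ell(Q)\}\bigr)$, and absorb the $|P|/|Q|$ volume factor to produce the two branches of the max in \eqref{stdoubest}. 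The only quibble is expository: the factor $(\ell(Q)/\ell(P))^{\beta-n}$ does not come from a ``lower bound $\mu(P)\lesssim\mu(Q)$'' but from the same one-sided estimate $w_{\vec z}(Q)\le 2^{j\beta}w_{\vec z}(P)$ combined with the volume normalization, which is exactly what your displayed inequality says.
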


\begin{proof}
For $\vec{y} \in \C^m$, let $w_{\vec{y}} (x) = \vert W^{1/p}(x) \vec{y} \vert^p$.  Fix $P, Q \in \mathcal{D}$ and let $j $ be the smallest nonnegative integer such that $Q \subseteq 2^j P$.  Then 
\beq
 2^j \leq c \max \left\{1, \frac{\ell(Q)}{\ell(P)}    \right\} \left(1 + \frac{|x_P - x_Q|}{\max \{\ell(P), \ell(Q) \} }  \right) . \label{2jest}
\eeq
By the doubling property,
\[ w_{\vec{y}} (Q) \leq w_{\vec{y}} (2^j P)  \leq 2^{\beta j} w_{\vec{y}} ( P) . \]
Therefore, 
\[ \vert A_Q \vec{y} \vert^p \leq c  \frac{1}{|Q|} \int_Q  \vert W^{1/p}(x) \vec{y} \vert^p \, dx  = c  \frac{1}{|Q|}  w_{\vec{y}} (Q)  \]
\[ \leq c  \frac{1}{|Q|} 2^{\beta j} w_{\vec{y}} ( P)  = c \frac{|P|}{|Q|} 2^{\beta j} \vert A_P \vec{y} \vert^p . \]
Substituting $\vec{y} = A_P^{-1}\vec{z}$ for arbitrary $\vec{z}$ and applying \eqref{2jest} yields the conclusion.
\end{proof}

\begin{Thm}\label{avgnormeq}
Let $0 < p < \infty$, $0 < q \leq \infty$, $\alpha \in \R$ and $\ffi \in \mathcal{A}$.  Suppose $\{ A_Q \}_{Q \in \mathcal{D}}$ is a strongly doubling sequence of order $(\beta, p)$ of non-negative definite matrices, and $\vec{f} \in \mathcal{S}^{\prime}/\mathcal{P}$.  Then 
\beq
\Vert \vec{f} \Vert_{\dot{F}^{\alpha q}_p ( \{ A_Q \} )} \approx \Vert \{ \langle \vec{f}, \ffi_{Q} \rangle \}_{Q \in \mathcal{D}} \Vert_{\dot{f}^{\alpha q}_p ( \{ A_Q \} )} .  \label{avgequiv}
\eeq 
Moreover, $\dot{F}^{\alpha q}_p ( \{ A_Q \} )$ is independent of the choice of $\varphi \in \mathcal{A}$, in the sense that the spaces defined for two such $\varphi$ are the same, with equivalent quasi-norms.
\end{Thm}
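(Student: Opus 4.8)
The plan is to obtain both the equivalence \eqref{avgequiv} and the independence of $\dot{F}^{\alpha q}_p(\{A_Q\})$ from the choice of $\varphi$ from three ingredients; throughout, $\|\vec{f}\|_{\dot{F}^{\alpha q}_p(\{A_Q\}),\chi}$ denotes the quasinorm of definition (iii) computed with $\chi\in\mathcal{A}$ in place of $\varphi$. The ingredients are: (a) a \emph{synthesis} bound $\bigl\|\sum_{Q}\vec{s}_Q\psi_Q\bigr\|_{\dot{F}^{\alpha q}_p(\{A_Q\}),\chi}\lesssim\|\vec{s}\|_{\dot{f}^{\alpha q}_p(\{A_Q\})}$, for all $\chi\in\mathcal{A}$ and all admissible $\psi$; (b) an \emph{analysis} bound $\|\{\langle\vec{f},\varphi_Q\rangle\}\|_{\dot{f}^{\alpha q}_p(\{A_Q\})}\lesssim\|\vec{f}\|_{\dot{F}^{\alpha q}_p(\{A_Q\}),\chi}$, for all $\varphi,\chi\in\mathcal{A}$; and (c) the $\varphi$-transform identity \eqref{phitraniden}. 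Granting these, for every $\chi\in\mathcal{A}$,
\[ \|\vec{f}\|_{\dot{F}^{\alpha q}_p(\{A_Q\}),\chi}=\Bigl\|\sum_{Q}\langle\vec{f},\varphi_Q\rangle\psi_Q\Bigr\|_{\dot{F}^{\alpha q}_p(\{A_Q\}),\chi}\overset{\text{(a)}}{\lesssim}\|\{\langle\vec{f},\varphi_Q\rangle\}\|_{\dot{f}^{\alpha q}_p(\{A_Q\})}\overset{\text{(b)}}{\lesssim}\|\vec{f}\|_{\dot{F}^{\alpha q}_p(\{A_Q\}),\chi}; \]
taking $\chi=\varphi$ gives \eqref{avgequiv}, letting $\chi$ vary gives the asserted $\varphi$-independence, and finiteness of either end propagates along the chain. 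Both (a) and (b) rest on two observations: that $\widehat{\chi_k}$ and $\widehat{\psi_Q}$ (and $\widehat{\varphi_Q}$) have Fourier supports in dyadic annuli meeting only when the two scales differ by at most $2$---so every sum over scales collapses to a bounded band and the factors $2^{j\alpha}$ may be moved between comparable scales---and that, the scales being comparable, \eqref{stdoubest} degenerates to $\|A_QA_P^{-1}\|\le c\,(1+2^j|x_Q-x_P|)^{\beta/p}$ whenever $\ell(Q)\sim\ell(P)\sim 2^{-j}$, a polynomial factor that can be absorbed into the rapidly decaying kernels coming from the Schwartz decay of $\chi,\psi,\varphi$.

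For (a) I would fix $R\in\mathcal{D}_k$ and $x\in R$, write $A_R\vec{s}_Q=(A_RA_Q^{-1})(A_Q\vec{s}_Q)$, use the bound $|\chi_k\ast\psi_Q(x)|\le C_M\,2^{jn/2}(1+2^j|x-x_Q|)^{-M}$ for $Q\in\mathcal{D}_j$ with $|j-k|\le2$ together with the two observations above, and---setting $h_j:=\sum_{Q\in\mathcal{D}_j}|Q|^{-1/2}|A_Q\vec{s}_Q|\chi_Q$ and $\vec{g}=\sum_{Q}\vec{s}_Q\psi_Q$---reduce to
\[ |A_R(\chi_k\ast\vec{g})(x)|\lesssim\sum_{|j-k|\le2}\int 2^{jn}(1+2^j|x-y|)^{-M'}h_j(y)\,dy. \]
Since $h_j$ is constant on the cubes of $\mathcal{D}_j$, the right-hand side is $\lesssim\sum_{|j-k|\le2}\bigl(\cM(h_j^{r})(x)\bigr)^{1/r}$ for any $0<r$ with $M'>n/r$, where $\cM$ is the Hardy--Littlewood maximal operator. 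Taking $r<\min(p,q)$, then the $2^{j\alpha}$-weighted $\ell^q$- and the $L^p$-quasinorms, extracting the $r$-th power, and applying the Fefferman--Stein inequality for $\cM$ on $L^{p/r}(\ell^{q/r})$ (valid since $p/r,q/r>1$) bound everything by $\bigl\|(\sum_j(2^{j\alpha}h_j)^q)^{1/q}\bigr\|_{L^p}$, which is $\|\vec{s}\|_{\dot{f}^{\alpha q}_p(\{A_Q\})}$ (cf.\ \eqref{reduction}).

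For (b) I would run this backwards. Using the Calder\'on reproducing formula attached to $\chi$ and the band restriction, $|Q|^{-1/2}|\langle\vec{f},\varphi_Q\rangle|$ for $Q\in\mathcal{D}_j$ is controlled by a sum over $|j'-j|\le2$ of integrals $\int 2^{jn}(1+2^j|x_Q-y|)^{-M}|\chi_{j'}\ast\vec{f}(y)|\,dy$; applying $A_Q$ inside, splitting the integral over the cubes $P\in\mathcal{D}_{j'}$, inserting $\|A_QA_P^{-1}\|\lesssim(1+2^j|x_Q-x_P|)^{\beta/p}$ and then $(1+2^j|y-x_Q|)^{-M}\le C(1+2^j|y-x|)^{-M}$ for $x\in Q$, one reduces matters to single-scale weighted averages $\int 2^{jn}(1+2^j|x-z|)^{-M''}g_{j'}(z)\,dz$ of $g_{j'}(z):=|A_{P_{j'}(z)}(\chi_{j'}\ast\vec{f})(z)|$, where $P_{j'}(z)$ is the dyadic cube of side $2^{-j'}$ containing $z$. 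Since on each cube of $\mathcal{D}_{j'}$ the vector $A_P(\chi_{j'}\ast\vec{f})$ is band-limited at scale $2^{j'}$, and strong doubling compares $A_P$ with its neighbors up to a constant, the Plancherel--P\'olya maximal estimate for band-limited functions bounds each such average by $\bigl(\cM(g_{j'}^{r})(x)\bigr)^{1/r}$ for $r<\min(p,q)$; as $\sum_{j'}(2^{j'\alpha}g_{j'}(x))^q$ is exactly the $L^p$-integrand of $\|\vec{f}\|_{\dot{F}^{\alpha q}_p(\{A_Q\}),\chi}$, the Fefferman--Stein step closes the estimate.

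The step I expect to be the main obstacle is the low-integrability range $0<p\le1$ or $0<q\le1$: there the Fefferman--Stein inequality is available only after passing to $\bigl(\cM(|\cdot|^{r})\bigr)^{1/r}$ with $r<\min(p,q)$, and the needed domination of the weighted kernel integrals by local $L^{r}$-averages is false for arbitrary functions, holding here only because---after stripping off a \emph{single} matrix factor $A_P$---the surviving vector is band-limited at the matching scale. Getting the order of operations right (peel off $\|A_QA_P^{-1}\|$, absorb its polynomial growth in $2^j|x_Q-x_P|$ into extra kernel decay, \emph{then} invoke the band-limited maximal estimate) is the crux, and it is the only point where the strong doubling hypothesis \eqref{stdoubest} is genuinely needed; it also determines how large the kernel decay exponents---hence the smoothness and decay demanded of $\varphi$ and $\psi$---must be, in terms of $\beta,p,q,n$. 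Everything else---$\mathcal{S}'/\mathcal{P}$-convergence of $\sum_Q\langle\vec{f},\varphi_Q\rangle\psi_Q$, the interchange of sums and integrals under the a priori finiteness assumption, and the precise form of the Calder\'on pair---is routine and parallels the scalar $\varphi$-transform theory of \cite{FJ}.
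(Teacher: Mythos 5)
Your proposal is correct in substance and reaches both conclusions (the equivalence \eqref{avgequiv} and the $\varphi$-independence) by the same underlying estimates as the paper, but with a different modularization that is worth noting. The paper splits the work into three reusable pieces: the analysis bound for the \emph{same} $\varphi$ (Theorem \ref{firstAQimbed}, proved exactly by your crux mechanism -- the sampling identity for band-limited functions, weak doubling to swap $A_{Q_{jk}}$ for $A_{Q_{j\ell}}$, the discrete kernel/maximal lemma \eqref{newmaxineq}, and Fefferman--Stein); a boundedness theorem for almost diagonal matrices on $\dot{f}^{\alpha q}_p(\{A_Q\})$ obtained by reducing, via \eqref{reduction} and strong doubling, to the scalar result of \cite{FJ2} (Theorem \ref{ADmatrixbnded}); and a synthesis estimate for general smooth molecules (Theorem \ref{thirdAQest}, via Lemma \ref{convest}). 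The change of test function is then handled by expanding $\gamma_Q$ in the $\tilde{\varphi}_P$ system and observing that the resulting coefficient matrix is almost diagonal. You instead prove the mixed analysis and synthesis bounds (your (a) and (b)) directly for an arbitrary pair $\varphi,\chi\in\mathcal{A}$, exploiting the compact Fourier supports to collapse to the band $|j-k|\le 2$, absorbing $\Vert A_QA_P^{-1}\Vert\lesssim(1+2^j|x_Q-x_P|)^{\beta/p}$ into the kernel decay, and closing with the $r$-trick maximal lemma (take $r<\min(1,p,q)$, not just $r<\min(p,q)$, so the $\ell^r\hookrightarrow\ell^1$ step is available) and Fefferman--Stein; this bypasses the molecular and almost-diagonal machinery entirely, at the cost of re-proving inline the same scalar estimates that \cite{FJ2} packages. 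Your identification of the delicate point in (b) -- that $g_{j'}(z)=|A_{P_{j'}(z)}(\chi_{j'}\ast\vec{f})(z)|$ is neither band-limited nor constant on cubes, so one must first peel off a single matrix, use the Plancherel--P\'olya sampling estimate on the band-limited vector, and only then convert matrices between same-scale cubes with polynomial loss -- is exactly the content of the paper's \eqref{estforlater}. The trade-off: your route is more self-contained for this theorem, while the paper's molecular/almost-diagonal formulation is what later extends the argument to wavelet systems (Theorem \ref{avgwavechar}), whose generators are not band-limited, so the extra generality is not wasted there.
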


One direction of Theorem \ref{avgnormeq} is based on the following variation of the classical techniques involving the sampling theorem for functions of exponential type, as in, for example, \cite[p. 781]{FJ}.

\begin{Thm}\label{firstAQimbed}
Let $\ffi \in \mathcal{A}$.  Let $\tilde{\ffi} (x)= \overline{\ffi (-x)}$.  Suppose $\{ A_Q \}_{Q \in \mathcal{D}}$ is a weakly doubling sequence (of any order $r >0$) of non-negative definite matrices.  Then for $0<p<\infty$, $0 < q \leq \infty$, and $\alpha \in \R$, there exists $c$ depending on $\alpha, p, q, r, \varphi$ and the constant in (\ref{weakdoubest}) such that  for all $\vec{f} \in \cS^{'} / \cP (\R^n)$,
\beq
\left\Vert \left( \sum_{j \in \Z} \sum_{Q \in \mathcal{D}_j} \left( 2^{j\alpha} \sup_{x \in Q} \vert A_Q \ffi_j \ast \vec{f} (x)  \vert \chi_Q(x)  \right)^q \right)^{1/q} \right\Vert_{L^p (dx)} \leq c \Vert \vec{f} \Vert_{\dot{F}^{\alpha q}_p (\{ A_Q \} )} , \label{suptriebest}
\eeq
and 
\beq
\left\Vert  \left\{  \langle \vec{f}, \tilde{\ffi}_{Q} \rangle \right\} \right\Vert_{\dot{f}^{\alpha q}_p (\{ A_Q \})} \leq c \Vert \vec{f} \Vert_{\dot{F}^{\alpha q}_p (\{ A_Q \} )} . \label{vectortriebest}
\eeq
\end{Thm}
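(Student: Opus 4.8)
The plan is to deduce (\ref{vectortriebest}) from (\ref{suptriebest}), and to prove (\ref{suptriebest}) by combining the sampling theorem for band-limited functions with the weak doubling hypothesis on $\{A_Q\}$ and the Fefferman--Stein vector-valued maximal inequality. First, the reduction: for $\vec f \in \cS'/\cP$ the convolution $\ffi_j * \vec f$ is a well-defined smooth function of polynomial growth, since $\hat\ffi$ is compactly supported and vanishes near the origin; and a direct computation with $\tilde\ffi(x) = \overline{\ffi(-x)}$ gives $\langle \vec f, \tilde\ffi_Q\rangle = |Q|^{1/2}(\ffi_j * \vec f)(x_Q)$ for $Q = Q_{j,k} \in \mathcal{D}_j$. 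Since $x_Q \in Q$, this yields
\[ |Q|^{-\al/n - 1/2}\, |A_Q \langle \vec f, \tilde\ffi_Q\rangle| = 2^{j\al}\, |A_Q (\ffi_j * \vec f)(x_Q)| \le 2^{j\al} \sup_{y \in Q} |A_Q (\ffi_j * \vec f)(y)|, \]
and taking the $\ell^q$-then-$L^p$ quasi-norm of both sides, using that for each fixed $j$ the cubes in $\mathcal{D}_j$ tile $\R^n$, shows that the left side of (\ref{vectortriebest}) is dominated by the left side of (\ref{suptriebest}). So it suffices to prove (\ref{suptriebest}).

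For that, I would fix an auxiliary exponent $\eta$ with $0 < \eta \le 1$ and $\eta < \min\{p,q\}$ (reading $\min\{p,\infty\} = p$), and an integer $N$ so large that $N - r\eta > n$. Writing $g_j = \ffi_j * \vec f$, the transform $\hat g_j$ is supported in the annulus $\{2^{j-1} \le |\xi| \le 2^{j+1}\}$, so for any constant $m \times m$ matrix $A$ the $\C^m$-valued function $A g_j$ is band-limited of exponential type at most $2^{j+1}$. The key input is the sampling estimate
\[ \sup_{y \in Q} |A g_j(y)|^\eta \le C \sum_{P \in \mathcal{D}_j} \Bigl(1 + \tfrac{|x_Q - x_P|}{\ell(Q)}\Bigr)^{-N} \frac{1}{|P|} \int_P |A g_j(z)|^\eta\, dz \qquad (\star) \]
valid for all $Q \in \mathcal{D}_j$ and all such $A g_j$, a standard consequence of the Plancherel--Pólya--Nikol'skii sampling theorem for functions of exponential type (cf.\ \cite[p.\ 781]{FJ}). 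Applying $(\star)$ with $A = A_Q$ and then transferring $A_Q$ past the integral by weak doubling --- for $z \in P = Q_{j,\ell}$ and $Q = Q_{j,k}$ one has $|A_Q g_j(z)| \le \|A_Q A_P^{-1}\|\,|A_P g_j(z)| \le c\,(1 + |x_Q - x_P|/\ell(Q))^r\,|A_P g_j(z)|$, since $|x_Q - x_P|/\ell(Q) = |k-\ell|$ --- the kernel exponent in $(\star)$ improves to $-N + r\eta < -n$. Hence, writing $h_j(z) := |A_{Q_j(z)}\, g_j(z)|$ with $Q_j(z) \in \mathcal{D}_j$ the cube containing $z$, the resulting dyadic annular average is controlled by the Hardy--Littlewood maximal function $M$: for every $x \in Q$,
\[ \sup_{y \in Q} |A_Q g_j(y)|^\eta \le C \sum_{P \in \mathcal{D}_j} \Bigl(1 + \tfrac{|x_Q - x_P|}{\ell(Q)}\Bigr)^{-N + r\eta} \frac{1}{|P|}\int_P h_j(z)^\eta\, dz \le C\, M(h_j^\eta)(x). \]

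Next I would absorb the factor $2^{j\al\eta}$ into $M$ and set $G_j := 2^{j\al} h_j$, so that $G_j(x) = 2^{j\al}|A_{Q_j(x)}\ffi_j * \vec f(x)|$. By the tiling of $\R^n$, the left side of (\ref{suptriebest}) equals $\Vert (\sum_j (2^{j\al}\sup_{y \in Q_j(x)}|A_{Q_j(x)} g_j(y)|)^q)^{1/q}\Vert_{L^p(dx)}$, which by the last display is at most $C^{1/\eta}\,\Vert (\sum_j (M(G_j^\eta))^{q/\eta})^{\eta/q}\Vert_{L^{p/\eta}}^{1/\eta}$. Since $p/\eta > 1$ and $q/\eta > 1$, the Fefferman--Stein inequality (in its $\ell^\infty$ form when $q = \infty$) bounds this by a constant times $\Vert (\sum_j G_j^q)^{\eta/q}\Vert_{L^{p/\eta}}^{1/\eta} = \Vert (\sum_j G_j^q)^{1/q}\Vert_{L^p} = \Vert \vec f\Vert_{\dot F^{\al q}_p(\{A_Q\})}$, again by the tiling. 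This proves (\ref{suptriebest}), and with it (\ref{vectortriebest}).

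I expect the main obstacle to be the order of operations forced by having only \emph{weak} doubling: $\|A_Q A_P^{-1}\|$ can grow like a fixed positive power of $|k - \ell|$, so one cannot pass to the maximal function until $A_Q$ has been transferred to $A_P$ and that growth absorbed into the (freely large) decay exponent $N$ of the sampling kernel. This is exactly why $N$ in $(\star)$ must be chosen depending on $r$ and $\eta$, and why $\eta$ must be taken below $\min\{1, p, q\}$ so that $p/\eta, q/\eta > 1$ and Fefferman--Stein applies; getting $(\star)$ itself in the $\C^m$-valued, $L^\eta$-averaged form is the only other point requiring care, and it is classical.
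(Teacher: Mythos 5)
Your proposal is correct and takes essentially the same route as the paper: the paper likewise reduces (\ref{vectortriebest}) to (\ref{suptriebest}) via $\langle \vec{f}, \tilde{\ffi}_Q \rangle = |Q|^{1/2}\,\ffi_j \ast \vec{f}(x_Q)$, and proves (\ref{suptriebest}) by a Plancherel--P\'olya-type sampling estimate for the band-limited functions $\ffi_j \ast \vec{f}$ (derived there from \cite[Lemma 6.10]{FJW} by averaging over translated sample points), transferring $A_Q$ to $A_P$ with the weak doubling bound (\ref{weakdoubest}), dominating the resulting discrete convolution by the Hardy--Littlewood maximal function, and finishing with Fefferman--Stein at exponents $p/A, q/A > 1$. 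The only differences are cosmetic: you invoke the discretized sampling inequality $(\star)$ as classical rather than rederiving it, and your annulus grouping needs kernel decay only $> n$ where the paper's cube-by-cube bound uses $> 2n$; both are harmless since the decay exponent is freely large.
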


\begin{proof}
Let $\gamma \in \cS$ satisfy $\hat{\gamma} (\xi) = 1 $ for $|\xi| \leq 2$ and $\supp \hat{\gamma} \subseteq \{ \xi \in \R^n: |\xi| < \pi \}$.  Let $\gamma_j (x) = 2^{j n} \gamma (2^j x)$.  Then $\hat{\gamma}_j (\xi) = \hat{\gamma}(2^{- j} \xi)$.  Hence, for any $\vec{g} = (g_1, g_2, \dots , g_m)^T$ with $\supp \hat{g}_i \subseteq \{ \xi \in \R^n: |\xi| \leq 2^j  \}$ for $1 \leq i \leq m$, we have $\vec{g} = \vec{g} \ast \gamma_j$.  By \cite[Lemma 6.10]{FJW}, we have the identity
\[  \vec{g} (t) = \sum_{\ell \in \Z^n} \vec{g} (2^{- j} \ell) \,\,\, 2^{- j n} \gamma_j  ( t - 2^{- j} \ell) = \sum_{\ell \in \Z^n} \vec{g} (2^{- j} \ell) \,\,\,  \gamma  (2^j t - \ell)  . \]
We apply this identity with $\vec{g}(t) = \ffi_j \ast \vec{f} (t+ 2^{-j} y)$, for an arbitrary $ y \in \R^n$, to obtain
\[ \ffi_j \ast \vec{f} ( t + 2^{- j} y) = \sum_{\ell \in \Z^n} \ffi_j \ast \vec{f}  (2^{- j} \ell + 2^{- j}y ) \,\,\,  \gamma  (2^j t - \ell). \] 
We take $w \in Q_{00}$ and let $t = 2^{- j} k - 2^{- j} y + 2^{-j}w$, to obtain
\[ \ffi_j \ast \vec{f} ( 2^{- j} k + 2^{-j} w) = \sum_{\ell \in \Z^n} \ffi_j \ast \vec{f}  (2^{- j} \ell + 2^{- j}y ) \,\,\,  \gamma  (k - y +w - \ell),   \]
for $k \in \Z^n$.  Hence, 
\beq
A_{Q_{j k} } \ffi_j \ast \vec{f} ( 2^{- j} k + 2^{-j } w) = \sum_{\ell \in \Z^n} A_{Q_{j k}} \ffi_j \ast \vec{f}  (2^{- j} \ell + 2^{- j}y ) \,\,\,  \gamma  (k - y + w - \ell) . \label{vecsamplident}
\eeq
For $w, y \in Q_{00}$, we have $|\gamma (k - y+ w - \ell)| \leq c_{R} (1 + |k - \ell|)^{-R}$, for any $R>0$.  Pick $A$ with $0 < A \leq 1$ such that $p/A > 1$ and $q /A >1$.  Then by equation (\ref{vecsamplident}), 
\[ \sup_{x \in Q_{j k}} \left\vert A_{Q_{j k}}  \ffi_j \ast \vec{f} (x) \right\vert = \sup_{w \in Q_{00}}  \left|A_{Q_{j k}}  \ffi_j \ast \vec{f} ( 2^{- j} k + 2^{-j} w) \right| \]
\[ \leq c \sum_{\ell \in \Z^n} (1 + |k - \ell|)^{-R} \left| A_{Q_{j k}} \ffi_j \ast \vec{f}  (2^{- j} \ell + 2^{- j}y )  \right| . \]
The trivial imbedding $\ell^A \rightarrow \ell^1$ yields 
\[ \sup_{x \in Q_{j k}} \left\vert A_{Q_{j k}}  \ffi_j \ast \vec{f} (x) \right\vert^A  \leq c  \sum_{\ell \in \Z^n} (1 + |k - \ell|)^{-RA} \left|A_{Q_{j k}} \ffi_j \ast \vec{f}  (2^{- j} \ell + 2^{- j}y ) \right|^A  . \]
We average over $y \in Q_{00}$ to obtain 
\[  \sup_{x \in Q_{j k}} \left\vert A_{Q_{j k}}  \ffi_j \ast \vec{f} (x) \right \vert^A  \leq c  \sum_{\ell \in \Z^n} (1 + |k - \ell|)^{-RA} \int_{Q_{00}} \left|A_{Q_{j k}} \ffi_j \ast \vec{f}  (2^{- j} \ell + 2^{- j} y ) \right|^A \, dy \]
\[ = c \sum_{\ell \in \Z^n} (1 + |k - \ell|)^{-RA} 2^{j n} \int_{Q_{j \ell}} \left|A_{Q_{j k}} \ffi_j \ast \vec{f}  (s ) \right|^A \, ds . \]
By the weak doubling estimate (\ref{weakdoubest}),
\[  \left|A_{Q_{j k}} \ffi_j \ast \vec{f}  (s ) \right| \leq   \Vert A_{Q_{j k}} A_{Q_{j \ell}}^{-1} \Vert \left|A_{Q_{j \ell}} \ffi_j \ast \vec{f}  (s ) \right| \leq c ( 1 + |k - \ell|)^r \left|A_{Q_{j \ell}} \ffi_j \ast \vec{f}  (s ) \right| . \]
Therefore, we have
\beq
 \sup_{x \in Q_{j k}} \left\vert A_{Q_{j k}}  \ffi_j \ast \vec{f} (x) \right\vert^A  \leq c \sum_{\ell \in \Z^n} (1 + |k - \ell|)^{-A(R -r)} 2^{j n} \int_{Q_{j \ell}} \left|A_{Q_{j \ell}} \ffi_j \ast \vec{f}  (s ) \right|^A \, ds .  \label{estforlater}
\eeq
Thus,
\[ \sum_{Q \in \mathcal{D}_j } \left( 2^{j \alpha}  \sup_{ Q} \left\vert A_{Q}  \ffi_j \ast \vec{f}  \right\vert \chi_{Q} \right)^q
 = \sum_{k \in \Z^n} 2^{j\alpha q} \sup_{x \in Q_{jk}} \left\vert A_{Q_{j k}}  \ffi_j \ast \vec{f} (x)  \right\vert^q \chi_{Q_{j k}}(x) \]
\[ \leq c   \left| \sum_{k \in \Z^n} \sum_{\ell \in \Z^n} (1 + |k - \ell|)^{-A(R-r)} 2^{j n} \int_{Q_{j \ell}} \left| 2^{j \alpha} A_{Q_{j \ell}} \ffi_j \ast \vec{f}  (s ) \right|^A \, ds   \chi_{Q_{j k}} \right|^{q/A}, \]
where in the last step we used the disjointness of the cubes $Q_{j k}$ for $k \in \Z^n$ to take the exponent $q/A$ outside the sum on $k$.
We claim that for any locally integrable function $h$, 
\beq
\sum_{k \in \Z^n} \sum_{\ell \in \Z^n} (1 + |k - \ell|)^{-A(R-r)} 2^{j n} \int_{Q_{j \ell}} | h(s)| \, ds   \chi_{Q_{j k} }  \leq c M( h)  ,  \label{newmaxineq}
\eeq
where $M$ is the Hardy-Littlewood maximal function, if we choose $A(R-r)>2n$, which we may.  Assuming inequality \eqref{newmaxineq} momentarily, and applying it above with $\ds h = \sum_{Q \in \mathcal{D}_j}  \left( 2^{j \al} \vert A_{Q} \ffi_j \ast \vec{f} \vert \chi_Q \right)^A $, we obtain
\[ \sum_{Q \in \mathcal{D}_j } \left( 2^{j \al} \sup_{Q} \left\vert A_{Q}  \ffi_j \ast \vec{f}  \right\vert \chi_{Q} \right)^q \leq c \left(  M \left( \sum_{Q \in \mathcal{D}_j}  \left( 2^{j \al} \vert A_{Q} \ffi_j \ast \vec{f} \vert \chi_Q \right)^A  \right) \right)^{q/A}  . \]
Then 
\[  \left\Vert \left(  \sum_{j \in \Z} \sum_{Q \in \mathcal{D}_j } \left( 2^{j \al } \sup_Q \left\vert A_{Q}  \ffi_j \ast \vec{f}  \right\vert \chi_{Q} \right)^q  \right)^{1/q} \right\Vert_{L^p (\R^n)} \]
\[ \leq c \left\Vert \left(  \sum_{j\in \Z} \left(  M \left( \sum_{Q \in \mathcal{D}_j}  \left( 2^{j \al} \vert A_{Q} \ffi_j \ast \vec{f} \vert \chi_Q \right)^A  \right)  \right)^{q/A} \right)^{A/q} \right\Vert_{L^{p/A} (\R^n)}^{1/A} . \]
Applying the Fefferman-Stein vector-valued maximal inequality (\cite{FS}) with indices $p/A, q/A >1$, we remove $M$ and untangle the indices to obtain
(\ref{suptriebest}). 

It remains to prove (\ref{newmaxineq}).  For a fixed $x$, let $Q_{j k}$ be the dyadic cube of length $2^{-j}$ containing $x$.  Let $B_{\ell}$ be the smallest ball containing $x$ and the cube $Q_{j \ell}$.  The radius of $B_{\ell}$ is equivalent to $2^{- j} (1 + |k-\ell|)$.  Hence, 
\[ \int_{Q_{j \ell}} | h(s) | \, ds \leq \int_{B_{\ell}} |h(s)| \, ds \leq c 2^{-j n} (1 + |k - \ell|)^n M (h)(x) . \]
For this $x$, the left side of (\ref{newmaxineq}) is
\[ \sum_{\ell \in \Z^n} (1 + |k - \ell|)^{-A(R-r)} 2^{j n} \int_{Q_{j \ell}} | h(s)| \, ds \]
\[ \leq  c \sum_{\ell \in \Z^n} (1 + |k - \ell|)^{-A(R-r) +n } M (h)(x) \leq c M (h)(x), \]
since we have chosen $A(R-r)-n>n$.

Finally, \eqref{vectortriebest} follows from \eqref{suptriebest} because $|Q_{j k}|^{-1/2} \langle \vec{f}, \tilde{\ffi}_{Q_{j k}} \rangle = \ffi_j \ast \vec{f} (x_{Q_{jk}})$, so 
\[ \Vert  \left\{  \langle f, \tilde{\ffi}_{Q} \rangle \right\}_{Q \in \mathcal{Q}} \Vert_{\dot{f}^{\alpha q}_p (\{ A_Q \})} = \left\Vert \left(\sum_{j \in \Z}  \sum_{Q \in \mathcal{D}_j} \left( |Q|^{-\alpha/n }  \vert A_Q \ffi_j \ast \vec{f} (x_Q)  \vert \chi_Q  \right)^q \right)^{1/q} \right\Vert_{L^p }, \]
which is obviously dominated by the left side of \eqref{suptriebest}.
\end{proof}

Heading toward an estimate converse to \eqref{vectortriebest}, we first introduce almost diagonal matrices.  

\begin{Def} \label{Defalmostdiag}
Let $0<p<\infty, 0<q\leq \infty, \alpha \in \R$, and $\beta>0$.  A matrix $B= \{ b_{QP} \}_{Q, P \in \mathcal{D}} $ is almost diagonal, written $B \in {\bf ad}^{\alpha, q}_p (\beta)$, if there exists $C>0$ such that $|b_{QP}| \leq C \omega_{QP}$ for all $Q, P \in \mathcal{D}$, where
\[ \omega_{QP} = \min \left\{ \left( \frac{\ell(P)}{\ell(Q)} \right)^{\alpha_1} , \left( \frac{\ell(Q)}{\ell(P)} \right)^{\alpha_2}   \right\} \left( 1 + \frac{|x_Q-x_P|}{\max (\ell(Q), \ell(P))}  \right)^{-R}, \]
for some $\alpha_1 > -\alpha -\frac{n}{2} + \frac{\beta -n}{p} + \frac{n}{\min(1, p, q)} , \alpha_2 > \alpha + \frac{n}{2} + \frac{n}{p}$, and $R > \frac{n}{\min(1, p, q)} + \frac{\beta}{p}$.  
\end{Def}

A matrix $B= \{ b_{QP} \}_{Q, P \in \mathcal{D}} $ acts on a sequence $\vec{s}= \{ \vec{s}_Q \}_{Q \in \mathcal{D}}$ by matrix multiplication in each component:  $B\vec{s}=\vec{t} = \{  \vec{t}_Q \}_{Q \in \mathcal{D}}$, where $\vec{t}_Q = \sum_{P \in \mathcal{Q}} b_{QP} \vec{s}_P$, if that series converges absolutely for all $Q$.  The following result can be reduced to the classical case using (\ref{reduction}).  

\begin{Thm} \label{ADmatrixbnded}
Let $0<p<\infty, 0<q\leq \infty$, $\alpha \in \R$, and $\beta>0$. Suppose $\{ A_Q \}_{Q \in \mathcal{D}}$ is a sequence of non-negative definite matrices, which is strongly doubling of order $(\beta, p)$ for some $\beta>0$.  Suppose $B \in {\bf ad}^{\alpha, q}_p (\beta)$.  Then $B$ defines a bounded operator on $\ds \dot{f}^{\alpha q}_p(\{A_Q\})$.
\end{Thm}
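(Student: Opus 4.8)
The plan is to reduce the statement to the classical, scalar, unweighted case by means of the identity (\ref{reduction}). First I would fix $\vec{s}=\{\vec{s}_Q\}_{Q\in\cD}\in\dot{f}^{\alpha q}_p(\{A_Q\})$, set $u_P=|A_P\vec{s}_P|$ and $u=\{u_P\}_{P\in\cD}$, so that $\|\vec{s}\|_{\dot{f}^{\alpha q}_p(\{A_Q\})}=\|u\|_{\dot{f}^{\alpha q}_p}$ by (\ref{reduction}), and let $\vec{t}=B\vec{s}$, i.e., $\vec{t}_Q=\sum_{P\in\cD}b_{QP}\vec{s}_P$. Absolute convergence of this series is routine, given the off-diagonal decay built into Definition \ref{Defalmostdiag} together with the polynomial size control that membership in $\dot{f}^{\alpha q}_p(\{A_Q\})$ imposes on $u_P$ (and the polynomial growth of $\|A_P^{-1}\|$, a consequence of strong doubling); alternatively one first proves the estimate for finitely supported $\vec{s}$ and passes to a limit. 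By (\ref{reduction}) once more, the target inequality $\|\vec{t}\|_{\dot{f}^{\alpha q}_p(\{A_Q\})}\le C\|\vec{s}\|_{\dot{f}^{\alpha q}_p(\{A_Q\})}$ is exactly $\bigl\|\{|A_Q\vec{t}_Q|\}\bigr\|_{\dot{f}^{\alpha q}_p}\le C\|u\|_{\dot{f}^{\alpha q}_p}$.

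By the triangle inequality and the definition of the operator norm,
\[
|A_Q\vec{t}_Q|\;\le\;\sum_{P\in\cD}|b_{QP}|\,\|A_QA_P^{-1}\|\,u_P .
\]
Taking $p$-th roots in the strong doubling estimate (\ref{stdoubest}) bounds $\|A_QA_P^{-1}\|$ by a constant times
\[
\max\bigl\{(\ell(P)/\ell(Q))^{n},\,(\ell(Q)/\ell(P))^{\beta-n}\bigr\}^{1/p}\,\bigl(1+|x_Q-x_P|/\max\{\ell(P),\ell(Q)\}\bigr)^{\beta/p}.
\]
Substituting this, together with the bound $|b_{QP}|\le C\,\omega_{QP}$ from Definition \ref{Defalmostdiag}, into the previous display gives $|A_Q\vec{t}_Q|\le C\,(\widetilde{B}u)_Q$, where $\widetilde{B}=\{\tilde{b}_{QP}\}_{Q,P\in\cD}$ is a scalar nonnegative matrix with $0\le\tilde{b}_{QP}\le C\,\widetilde{\omega}_{QP}$, and $\widetilde{\omega}_{QP}$ is of the same structural form as $\omega_{QP}$ but with the exponent triple $(\alpha_1,\alpha_2,R)$ replaced by $\bigl(\alpha_1-\tfrac{\beta-n}{p},\ \alpha_2-\tfrac{n}{p},\ R-\tfrac{\beta}{p}\bigr)$. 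One verifies this by splitting into the cases $\ell(Q)\le\ell(P)$ and $\ell(P)\le\ell(Q)$, matching the active branch of the minimum in $\omega_{QP}$ against the active branch of the maximum in the strong doubling factor and against the off-diagonal factor (and using $\beta\ge n$, which holds since $\beta$ is a doubling exponent). The thresholds imposed on $\alpha_1,\alpha_2,R$ in Definition \ref{Defalmostdiag} are precisely what forces the shifted exponents to satisfy $\alpha_1-\tfrac{\beta-n}{p}>-\alpha-\tfrac{n}{2}+\tfrac{n}{\min(1,p,q)}$, $\alpha_2-\tfrac{n}{p}>\alpha+\tfrac{n}{2}$, and $R-\tfrac{\beta}{p}>\tfrac{n}{\min(1,p,q)}$; that is, $\widetilde{B}$ lies in the classical (scalar, unweighted) almost diagonal class for $\dot{f}^{\alpha q}_p$ (\cite{FJ}; see also \cite{FJW}).

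It then remains to invoke the classical boundedness theorem: a matrix in that almost diagonal class maps $\dot{f}^{\alpha q}_p$ boundedly into itself. Since $0\le|A_Q\vec{t}_Q|\le C(\widetilde{B}u)_Q$ for every $Q$, $u$ is a nonnegative sequence, and the quasi-norm of $\dot{f}^{\alpha q}_p$ is monotone with respect to the absolute values of its entries, we conclude
\[
\|\vec{t}\|_{\dot{f}^{\alpha q}_p(\{A_Q\})}=\bigl\|\{|A_Q\vec{t}_Q|\}\bigr\|_{\dot{f}^{\alpha q}_p}\le C\,\|\widetilde{B}u\|_{\dot{f}^{\alpha q}_p}\le C\,\|u\|_{\dot{f}^{\alpha q}_p}=C\,\|\vec{s}\|_{\dot{f}^{\alpha q}_p(\{A_Q\})},
\]
which is the assertion. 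I expect the main obstacle to be the exponent bookkeeping in the second paragraph --- carrying out the case split, combining the minimum from $\omega_{QP}$ with the maximum from strong doubling and with the off-diagonal decay, and checking that the result lands strictly inside the admissible range of the classical almost diagonal class, which is exactly why Definition \ref{Defalmostdiag} carries the extra terms $\tfrac{\beta-n}{p}$, $\tfrac{n}{p}$, $\tfrac{\beta}{p}$. Everything else --- the reduction via (\ref{reduction}), the operator-norm estimate, the convergence of $\sum_P b_{QP}\vec{s}_P$, the monotonicity of the quasi-norm, and the appeal to the scalar theorem --- is routine.
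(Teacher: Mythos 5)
Your argument is correct and is essentially the paper's own proof: both reduce via (\ref{reduction}) to the scalar sequence $\{|A_P\vec{s}_P|\}_{P\in\mathcal{D}}$, dominate $|A_Q(B\vec{s})_Q|$ by the action of the scalar matrix with entries $\omega_{QP}\Vert A_QA_P^{-1}\Vert$, use the strong doubling estimate (\ref{stdoubest}) to check that this matrix satisfies the classical unweighted almost diagonality condition (condition (3.1) of \cite{FJ2}, which is exactly what the shifted thresholds in Definition \ref{Defalmostdiag} are designed for), and then invoke the classical boundedness theorem (Theorem 3.3 of \cite{FJ2}). The only quibble is your parenthetical appeal to $\beta\geq n$: this is neither part of the hypotheses (here $\beta$ is just the strong doubling order, not necessarily a doubling exponent of a weight) nor needed, since the branch matching in the exponent bookkeeping goes through for any $\beta>0$.
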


\begin{proof} Define $\vec{t} = B \vec{s}$ as above, for $\vec{s} \in \ds \dot{f}^{\alpha q}_p(\{A_Q\})$, and $B= \{ b_{QP} \}_{Q, P \in \mathcal{D}}$.  To employ (\ref{reduction}), define a scalar sequence $t_A = \{ t_{A,Q} \}_{Q \in \mathcal{D} } $, where $t_{A, Q} = |A_Q \vec{t}_Q|$, and similarly define $s_A$.  Then $\Vert \vec{t} \, \Vert_{\dot{f}^{\alpha q}_p(\{A_Q \})} = \Vert t_A \, \Vert_{\dot{f}^{\alpha q}_p} $ and similarly for $\vec{s}$, where $\dot{f}^{\alpha q}_p$ is the scalar, unweighted space as in \cite{FJ2}.  Let $\gamma_{QP} = \omega_{QP} \Vert A_Q A_P^{-1} \Vert$. Then 
\[ t_{A,Q} = |A_Q \vec{t}_Q| = \left| A_Q \sum_{P \in \mathcal{D}} b_{QP} \vec{s}_P \right| \leq \sum_{P \in \mathcal{D}} |b_{QP}| |A_Q \vec{s}_P| \]
\[ \leq C \sum_{P \in \mathcal{D}} \omega_{QP} \Vert A_Q A_P^{-1} \Vert |A_P \vec{s}_P |= C \sum_{P \in \mathcal{D}} \gamma_{QP} s_{A,P} .  \]
That is, if $G = \{ \gamma_{QP} \}$, then $t_{A,Q} \leq C (G (s_A))_Q $ for each $Q \in \mathcal{D}$. By (\ref{stdoubest}), $\gamma_{QP}$ satisfies the scalar, unweighted almost diagonality condition (3.1) in \cite{FJ2}.  Thus, by Theorem 3.3 in \cite{FJ2}, $G$ is bounded on $\dot{f}^{\alpha q}_p$.  Therefore,  
\[ \Vert \vec{t} \, \Vert_{\dot{f}^{\alpha q}_p(\{A_Q \})} = \Vert t_A \, \Vert_{\dot{f}^{\alpha q}_p}  \leq  C  \Vert s_A \, \Vert_{\dot{f}^{\alpha q}_p}= C \Vert \vec{s} \, \Vert_{\dot{f}^{\alpha q}_p(\{A_Q \})} . \] 
\end{proof} 

We need the notion of smooth molecules, as in \cite{FJ2} or \cite[Section 5]{R1}. Unlike the case of $\varphi_Q$ or $\psi_Q$ in \eqref{defphiQ}, the notation $m_Q$ in the following definition is not meant to imply that each $m_Q$ is obtained from a fixed $m$ by translation and dilation; here, $Q$ is merely an index.  

\begin{Def}  Let $0<\de \leq 1$, $M>0$ and $N, K \in \Z$.  We say $\{m_Q \}_{Q \in \mathcal{D}}$ is a family of  smooth
$(N, K, M, \delta)$-molecules if there exists $\epsilon>0$ and $C>0$ such that, for all $Q \in \mathcal{D}$, 
\begin{enumerate}
\item[(M1)]
$\ds \int x^{\ga} m_Q (x) \, dx = 0, ~~\mbox{for} ~~|\ga| \leq N$,

\item[(M2)]
$\ds |m_Q(x)| \leq C |Q|^{-1/2} \left(1+ \frac{|x-x_Q|}{l(Q)}\right)^{-\max(M,N + 1 +n+ \epsilon )},$

\item[(M3)]
$\ds |D^{\ga} m_Q(x)| \leq C |Q|^{-1/2-|\ga|/n} \left(1+ \frac{|x-x_Q|}{l(Q)}\right)^{-M}
\mbox{if} ~~|\ga| \leq K,$

\item[(M4)]
$\ds |D^{\ga} m_Q(x) - D^{\ga} m_Q(y)| \leq C |Q|^{-\frac12-\frac{|\ga|}{n}-\frac{\de}n}
|x-y|^{\de}$\\

$\ds \times \sup_{|z|\leq|x-y|} \left(1+ \frac{|x-z-x_Q|}{l(Q)}\right)^{-M}\,
\mbox{if} ~|\ga|=K.$
\end{enumerate}
It is understood that (M1) is void if $N<0$ and (M3), (M4) are void if $K<0$.
\end{Def}

We need the following estimates from \cite[Appendix B]{FJ2}.

\begin{Lemma}\label{convest}
Suppose $\ffi \in \mathcal{A}$ and $\{m_Q \}_{Q \in \mathcal{D}}$ is a family of  smooth $(N, K, M, \delta)$-molecules.  Then there exists $c>0$ such that 

\vspace{0.1in}

\noindent{(i)}: for all $P \in \mathcal{D}$ with $\ell(P) = 2^{-k} \geq 2^{-j}$, we have
\beq
| \ffi_{j} \ast m_P (x)|  \leq c 2^{kn/2}  2^{-(j - k) (K + \delta)} \left(1 + 2^k|x-x_P| \right)^{-M}, \label{muleqnuest}
\eeq 
\noindent{and}

\vspace{0.1in}

\noindent{(ii)}: for all $P \in \mathcal{D}$ with $\ell(P) = 2^{-k} \leq 2^{-j}$, we have
\beq
| \ffi_{j} \ast m_P (x)| \leq c 2^{kn/2} 2^{-(k - j) (N+1+n)} \left(1 + 2^j |x-x_P|  \right)^{-M}   . \label{mugeqnuest}
\eeq
\end{Lemma}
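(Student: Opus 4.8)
The plan is to prove both estimates by the standard molecule--mollifier pairing argument, exploiting the cancellation of one object against the smoothness of the other, depending on whether the molecule $m_P$ lives at a coarser scale ($2^{-k}\geq 2^{-j}$) or a finer scale ($2^{-k}\leq 2^{-j}$) than $\ffi_j$. Write the convolution as $\ffi_j\ast m_P(x)=\int_{\R^n}\ffi_j(x-y)\,m_P(y)\,dy$, and recall that $\ffi_j(z)=2^{jn}\ffi(2^jz)$ is Schwartz with $\int z^\gamma\ffi_j(z)\,dz=0$ for \emph{all} multi-indices $\gamma$ (since $\supp\widehat{\ffi}$ is bounded away from $0$), while $m_P$ has vanishing moments only up to order $N$. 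I would carry out the two cases separately.

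\emph{Case (i), $\ell(P)=2^{-k}\geq 2^{-j}$ (molecule coarser, $\ffi_j$ oscillates faster).} Here I would use the cancellation of $\ffi_j$ against the smoothness of $m_P$. Fix $x$ and Taylor-expand $m_P$ around the point $x$ (or around $x_P$, whichever is cleaner) to order $K$: write $m_P(y)=T_{K-1}(y)+\big(m_P(y)-T_{K-1}(y)\big)$ where $T_{K-1}$ is the degree-$(K-1)$ Taylor polynomial. Since $\ffi_j$ annihilates polynomials, $\int\ffi_j(x-y)T_{K-1}(y)\,dy=0$, so only the remainder survives. Using (M4), the remainder of order $K$ is controlled by $|y-x|^{K+\delta}$ times $|P|^{-1/2-K/n-\delta/n}$ times the sup of the $(1+|\cdot-x_P|/\ell(P))^{-M}$ weight; combining with the rapid decay of $\ffi_j$ and integrating $\int 2^{jn}|2^j(x-y)|^{K+\delta}|\ffi(2^j(x-y))|\,(\text{weight})\,dy$ produces the gain $2^{-(j-k)(K+\delta)}$ and leaves the weight $(1+2^k|x-x_P|)^{-M}$, after the routine splitting $|y-x_P|\leq|y-x|+|x-x_P|$ and a convolution-of-weights estimate. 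This yields \eqref{muleqnuest}.

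\emph{Case (ii), $\ell(P)=2^{-k}\leq 2^{-j}$ (molecule finer, $m_P$ oscillates faster).} Now I would use the cancellation of $m_P$ against the smoothness of $\ffi_j$. Taylor-expand $\ffi_j(x-y)$ in $y$ around $y=x_P$ to order $N$; by (M1) the polynomial part integrates to zero against $m_P$, so only the order-$(N+1)$ remainder remains, bounded by $|y-x_P|^{N+1}\sup_z|D^{N+1}\ffi_j(x-z-x_P)|\leq 2^{jn}2^{j(N+1)}|y-x_P|^{N+1}\sup_z(1+2^j|x-z-x_P|)^{-\text{(large)}}$. Pairing this against the decay bound (M2) for $m_P$ with exponent $\max(M,N+1+n+\epsilon)$ guarantees $\int|y-x_P|^{N+1}|m_P(y)|\,dy\lesssim 2^{kn/2}2^{-k(N+1)}$, and the $x$-dependence assembles into $(1+2^j|x-x_P|)^{-M}$ via the usual weight manipulation. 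Collecting the scale factors gives the gain $2^{-(k-j)(N+1+n)}$ as in \eqref{mugeqnuest}.

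\emph{Remarks on the obstacle and on invoking \cite{FJ2}.} These estimates are, verbatim, Lemma B.1 (and its proof in Appendix B) of \cite{FJ2}, since the computation is purely scalar and pointwise in each component; the cleanest route is simply to cite \cite[Appendix B]{FJ2} and note that the vector case follows componentwise. If one insists on reproducing the argument, the only genuinely delicate point is bookkeeping the weight factors: one must repeatedly use the elementary inequalities $(1+2^j|x-x_P|)^{-M}\lesssim (1+2^j|x-z-x_P|)^{-M}(1+2^j|z|)^{M}$ and absorb powers of $(1+2^j|z|)$ or $(1+2^j|x-y|)$ against the (arbitrarily fast) decay of $\ffi$ and its derivatives, and to keep track that the surviving weight has the scale $2^{\min(j,k)}=2^j$ in case (ii) and $2^k$ in case (i). Choosing the decay exponents in $\ffi$ large enough (which is free, as $\ffi\in\cS$) makes every such step legitimate; the hypotheses on $M,N,K,\delta,\epsilon$ are exactly what is needed so that the integrals converge and the stated weights emerge. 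I expect this weight-chasing to be the main — though entirely routine — obstacle.
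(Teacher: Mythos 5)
Your proposal matches the paper's treatment: the paper does not prove Lemma \ref{convest} at all but simply imports it from \cite[Appendix B]{FJ2}, and your sketch is exactly the standard argument behind that citation (cancellation of $\ffi_j$ against the smoothness (M3)--(M4) of $m_P$ when $\ell(P)\geq 2^{-j}$, and the vanishing moments (M1) of $m_P$ against the smoothness and decay of $\ffi_j$ when $\ell(P)\leq 2^{-j}$, plus the routine weight bookkeeping). One small slip: in case (ii) the intermediate bound should be $\int |y-x_P|^{N+1}|m_P(y)|\,dy \lesssim 2^{kn/2}2^{-k(N+1+n)}$ (you dropped a factor $2^{-kn}$), though your final exponent $2^{-(k-j)(N+1+n)}$ already reflects the correct value.
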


\vspace{0.1in}

\begin{Thm}\label{thirdAQest} Let $0<p<\infty$, $0 < q \leq \infty$, $\alpha \in \R$.  Suppose $\{ A_Q \}_{Q \in \mathcal{D}}$ is a strongly doubling sequence of order $(\beta, p)$ of non-negative definite matrices.  Suppose $N  \in \Z$, $K \in \Z$, $M>0$ and $\delta \in (0,1]$ satisfy $N > -\alpha + \frac{\beta -n}{p}  + \frac{n}{\min(1,p,q)}  -n -1, K + \delta > \alpha + \frac{n}{p}$, and $M > \frac{n}{\min(1,p,q)} + \frac{\beta}{p}$.  Suppose $\{m_Q \}_{Q \in \mathcal{D}}$ is a family of  smooth $(N, K, M, \delta)$-molecules.  Suppose $\vec{s} = \{ \vec{s}_Q \}_{Q \in \mathcal{D}} \in \dot{f}^{\alpha q}_p (\{ A_Q \})$.  Then $\vec{f} = \sum_{Q \in \mathcal{D}} \vec{s}_Q m_Q \in \dot{F}^{\alpha q}_p (\{ A_Q \} )$ and 
\[ \Vert \vec{f} \Vert_{\dot{F}^{\alpha q}_p (\{ A_Q \} )} \leq c \Vert   \vec{s} \Vert_{\dot{f}^{\alpha q}_p (\{ A_Q \})} . \]
In particular, for $\varphi \in \mathcal{A}$, we have 
\begin{equation}
\Vert \vec{f} \Vert_{\dot{F}^{\alpha q}_p (\{ A_Q \} )} \leq c \Vert  \{ \langle \vec{f}, \varphi_Q \rangle \}_{Q \in \mathcal{D}} \Vert_{\dot{f}^{\alpha q}_p (\{ A_Q \})} . \label{convtriebelest}
\end{equation}
\end{Thm}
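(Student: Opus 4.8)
The plan is to run the scalar molecular synthesis argument of \cite{FJ2}, carrying the matrix weights along via the reduction \eqref{reduction} in the same way as in the proof of Theorem \ref{ADmatrixbnded}: the convolution bounds of Lemma \ref{convest} will furnish the ``almost diagonal'' decay of a scalar matrix, while the strong doubling hypothesis \eqref{stdoubest} will absorb the factors $\Vert A_P A_Q^{-1} \Vert$. I would first set $t_Q := |A_Q \vec{s}_Q|$; by \eqref{reduction}, $t = \{ t_Q \}_{Q \in \mathcal{D}}$ lies in the scalar space with $\Vert t \Vert_{\dot{f}^{\alpha q}_p} = \Vert \vec{s} \Vert_{\dot{f}^{\alpha q}_p(\{A_Q\})} < \infty$. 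Since $|\vec{s}_Q| \leq \Vert A_Q^{-1} \Vert \, t_Q$, and since comparing $A_Q$ to a fixed reference operator $A_{Q_0}$ through \eqref{stdoubest} bounds $\Vert A_Q^{-1} \Vert$ by a fixed power of $\ell(Q) + \ell(Q)^{-1} + |x_Q|$, the series $\vec{f} = \sum_{Q \in \mathcal{D}} \vec{s}_Q m_Q$ converges in $\cS'/\cP$ exactly as in the scalar theory, and $\ffi_j \ast \vec{f} = \sum_{Q \in \mathcal{D}} \vec{s}_Q \, (\ffi_j \ast m_Q)$ for each $j \in \Z$.

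The main step is a pointwise estimate. Fix $j \in \Z$, $P \in \mathcal{D}_j$ and $x \in P$. Writing $A_P \vec{s}_Q = (A_P A_Q^{-1})(A_Q \vec{s}_Q)$, so that $|A_P \vec{s}_Q| \leq \Vert A_P A_Q^{-1} \Vert \, t_Q$, one has
\[ 2^{j\alpha} \, |A_P \, \ffi_j \ast \vec{f} (x)| \;\leq\; c \, 2^{j\alpha} \sum_{Q \in \mathcal{D}} \Vert A_P A_Q^{-1} \Vert \; t_Q \; \sup_{x' \in P} | \ffi_j \ast m_Q (x') | . \]
Applying Lemma \ref{convest} --- part (i) when $\ell(Q) \geq \ell(P)$, part (ii) when $\ell(Q) < \ell(P)$ --- and noting that, for $x' \in P$ and $\ell(Q) = 2^{-k}$, the localization factor $(1 + 2^{\min(j,k)} |x' - x_Q|)^{-M}$ is comparable, uniformly in $x' \in P$, to $(1 + |x_P - x_Q| / \max(\ell(P), \ell(Q)))^{-M}$, one obtains $\sup_{x' \in P} | \ffi_j \ast m_Q (x') | \leq c \, |Q|^{-1/2} \, \omega_{PQ}$, where
\[ \omega_{PQ} \;=\; \min \left\{ \left( \tfrac{\ell(P)}{\ell(Q)} \right)^{K + \delta} , \left( \tfrac{\ell(Q)}{\ell(P)} \right)^{N + 1 + n} \right\} \left( 1 + \tfrac{|x_P - x_Q|}{\max(\ell(P), \ell(Q))} \right)^{-M} . \]
Using the identity $2^{j\alpha} |Q|^{-1/2} = |P|^{-\alpha/n - 1/2} (\ell(P)/\ell(Q))^{n/2}$, this rearranges into
\[ \chi_P(x) \, 2^{j\alpha} \, |A_P \, \ffi_j \ast \vec{f} (x)| \;\leq\; c \, \chi_P(x) \, |P|^{-\alpha/n - 1/2} \, (B t)_P , \qquad (B t)_P := \sum_{Q \in \mathcal{D}} b_{PQ} \, t_Q , \]
with $b_{PQ} = \Vert A_P A_Q^{-1} \Vert \, (\ell(P)/\ell(Q))^{n/2} \, \omega_{PQ}$; that is, $b_{PQ}$ is $\Vert A_P A_Q^{-1} \Vert$ times $\min \{ (\ell(P)/\ell(Q))^{K + \delta + n/2}, (\ell(Q)/\ell(P))^{N + 1 + n/2} \} (1 + |x_P - x_Q|/\max(\ell(P),\ell(Q)))^{-M}$.

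The hypotheses on $N, K, \delta, M$ are calibrated so that, after stripping off $\Vert A_P A_Q^{-1} \Vert$, the remaining weight belongs to ${\bf ad}^{\alpha,q}_p(\beta)$: its exponents $\alpha_1 = N + 1 + n/2$, $\alpha_2 = K + \delta + n/2$, and $R = M$ satisfy the inequalities of Definition \ref{Defalmostdiag} precisely because $N > -\alpha + \frac{\beta-n}{p} + \frac{n}{\min(1,p,q)} - n - 1$, $K + \delta > \alpha + \frac{n}{p}$, and $M > \frac{n}{\min(1,p,q)} + \frac{\beta}{p}$. Hence, by the computation in the proof of Theorem \ref{ADmatrixbnded} --- which uses \eqref{stdoubest} to show that $\Vert A_P A_Q^{-1} \Vert$ times an ${\bf ad}^{\alpha,q}_p(\beta)$-weight satisfies the scalar, unweighted almost diagonality condition (3.1) of \cite{FJ2} --- the matrix $B = \{ b_{PQ} \}$ is bounded on $\dot{f}^{\alpha q}_p$ by \cite[Theorem 3.3]{FJ2}. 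Taking the $\ell^q$-sum over $Q \in \mathcal{D}_j$ and $j \in \Z$ and then the $L^p(\R^n)$-norm of both sides of the pointwise bound yields
\[ \Vert \vec{f} \Vert_{\dot{F}^{\alpha q}_p(\{A_Q\})} \;\leq\; c \, \Vert B t \Vert_{\dot{f}^{\alpha q}_p} \;\leq\; c \, \Vert t \Vert_{\dot{f}^{\alpha q}_p} \;=\; c \, \Vert \vec{s} \Vert_{\dot{f}^{\alpha q}_p(\{A_Q\})} . \]
The displayed estimate \eqref{convtriebelest} is then the special case $m_Q = \psi_Q$, $\vec{s}_Q = \langle \vec{f}, \ffi_Q \rangle$: by \eqref{phitraniden}, $\vec{f} = \sum_{Q \in \mathcal{D}} \langle \vec{f}, \ffi_Q \rangle \psi_Q$, and since $\psi \in \cS$ has $\hat{\psi}$ supported away from the origin, $\{ \psi_Q \}$ is a family of smooth $(N, K, M, \delta)$-molecules for every admissible choice of $N, K, M, \delta$.

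I expect the main obstacle to be the bookkeeping behind the formula for $b_{PQ}$: one has to keep track carefully of the $|Q|^{\pm 1/2}$ normalizations built into the molecules and into the $\dot{f}^{\alpha q}_p$-quasinorm, while matching the decay rates $K + \delta$ and $N + 1 + n$ coming out of Lemma \ref{convest} against the almost diagonality exponents of Definition \ref{Defalmostdiag}, so as to confirm that the stated lower bounds on $N$, $K + \delta$, and $M$ are exactly the ones required. Secondary technical points are the uniformity in $P$ of the pointwise-to-discrete comparison (replacing $x'$ by $x_P$ in the localization factor) and the $\cS'/\cP$-convergence of $\sum_Q \vec{s}_Q m_Q$, which rests on the polynomial bound for $\Vert A_Q^{-1} \Vert$ drawn from strong doubling.
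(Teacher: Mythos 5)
Your proposal is correct and follows essentially the same route as the paper: the pointwise bound $|Q|^{1/2}|\ffi_j \ast m_P(x)| \leq C\,\omega_{QP}$ from Lemma \ref{convest} (with the localization factor compared at $x_P$ versus $x\in P$), reduction to the scalar sequence $t_Q=|A_Q\vec{s}_Q|$, absorption of $\Vert A_P A_Q^{-1}\Vert$ via strong doubling exactly as in Theorem \ref{ADmatrixbnded}, boundedness of the resulting almost diagonal matrix by \cite[Theorem 3.3]{FJ2}, and \eqref{phitraniden} with $\{\psi_Q\}$ as molecules to get \eqref{convtriebelest}. Your exponent bookkeeping ($\alpha_1=N+1+n/2$, $\alpha_2=K+\delta+n/2$, $R=M$) matches the hypotheses, and the added remark on $\cS'/\cP$-convergence is a harmless refinement the paper leaves implicit.
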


\begin{proof}
For $Q \in \mathcal{D}_j$, let $g_Q = |Q|^{1/2} | A_Q \varphi_j \ast \sum_{P \in \mathcal{D} } \vec{s}_P m_P | $, so that 
\[  \Vert \vec{f} \Vert_{\dot{F}^{\alpha q}_p ( \{ A_Q \} )} = \left\Vert \left(   \sum_{j \in \Z} \sum_{Q \in \mathcal{D}_j} \left( |Q|^{-\alpha/n -1/2} g_Q \chi_Q   \right)^q  \right)^{1/q} \right\Vert_{L^p(\R^n)} .\]
Note that for any $P, Q \in \mathcal{D}$ and $x \in Q$, 
\[ 1 + \frac{|x - x_P|}{\max\{\ell(P), \ell(Q)\}} \approx 1 + \frac{|x_Q - x_P|}{\max\{\ell(P), \ell(Q)\}} . \]
Hence, by Lemma \ref{convest}, $|Q|^{1/2} |\varphi_j \ast m_P(x)| \leq C \omega_{QP}$, for all $x \in Q$, where $\omega_{QP}$ is as in Definition \ref{Defalmostdiag}.  Therefore, 
\[ g_Q \chi_Q \leq \sum_{P \in \mathcal{D}} |Q|^{1/2} |  \varphi_j \ast  m_P | | A_Q \vec{s}_P| \chi_Q  \leq C \sum_{P \in \mathcal{D}} \omega_{QP} \Vert A_Q A_P^{-1} \Vert |A_P \vec{s}_P| \chi_Q  .\]
Let $G$ and $s_A$ be defined as in the proof of Theorem \ref{ADmatrixbnded}.  Then $G$ is bounded on the scalar, unweighted space $\dot{f}^{\alpha q}_p$.  Substituting $g_Q \chi_Q \leq C (G (s_A))_Q \chi_Q$ above gives
\[  \Vert \vec{f} \Vert_{\dot{F}^{\alpha q}_p ( \{ A_Q \} )}  \leq C \Vert G(s_A) \Vert_{\dot{f}^{\alpha q}_p} \leq C \Vert s_A \Vert_{\dot{f}^{\alpha q}_p} =  C \Vert \vec{s} \Vert_{\dot{f}^{\alpha q}_p ( \{ A_Q \} )}  . \]
Then \eqref{convtriebelest} follows since $\vec{f} = \sum_{Q \in \mathcal{D}} \langle \vec{f}, \varphi_Q \rangle \psi_Q$ by \eqref{phitraniden}, and $\{ \psi_Q \}_{Q \in \mathcal{D}}$ is a family of smooth $(N,K M, \delta)$ molecules for any possible $N,K,M$, and $\delta$.  
\end{proof}

\begin{proofof} Theorem \ref{avgnormeq}.  
We first prove the independence of the spaces on the choice of test function $\varphi \in \mathcal{A}$. Suppose $\varphi, \gamma \in \mathcal{A}$.  For the duration of this proof, we label spaces defined by $\varphi$ as $\dot{F}^{\alpha q}_p ( \{ A_Q \}, \varphi )$, and similarly for $\gamma$.   We can select $\psi, \tau \in \mathcal{A}$ such that $\sum_{j \in \Z} \overline{\widehat{\varphi_j}}(\xi) \widehat{\psi_j} (\xi)  =1$ and $\sum_{j \in \Z} \overline{\widehat{\gamma_j}}(\xi) \widehat{\tau_j} (\xi)  =1$ for all $\xi \neq 0$.  Define $\vec{\tilde{s}} = \{ \vec{\tilde{s}}_Q \}_{Q \in \mathcal{D}}$ by $\vec{\tilde{s}}_Q = \langle \vec{f}, \tilde{\varphi}_Q \rangle$ and $\vec{t} = \{ \vec{t}_Q \}_{Q \in \mathcal{D}}$ by $\vec{t}_Q = \langle \vec{f}, \gamma_Q \rangle$. We have $ \Vert \vec{f} \Vert_{\dot{F}^{\alpha q}_p ( \{ A_Q \}, \gamma )} \leq C \Vert \vec{t} \Vert_{\dot{f}^{\alpha q}_p ( \{ A_Q \})}$ by \eqref{convtriebelest} with $\gamma$ in place of $\ffi$. Notice that $\sum_{j \in \Z} \overline{\widehat{\tilde{\psi}_j}}(\xi) \widehat{\tilde{\varphi}_j} (\xi) = \sum_{j \in \Z} \overline{\widehat{\varphi_j}}(\xi) \widehat{\psi_j} (\xi)  = 1$ for all $\xi \neq 0$.  So, applying \eqref{phitraniden} with $\varphi, \psi$, and $\vec{f}$ replaced by $\tilde{\psi}, \tilde{\varphi}$, and $\gamma_Q$, respectively, we have $\gamma_Q = \sum_{P \in \mathcal{D}} \langle \gamma_Q, \tilde{\psi}_P \rangle \tilde{\varphi}_P$.  Note that $\gamma_Q \in \cS_0$, so $\sum_{P \in \mathcal{D}} \langle \gamma_Q, \tilde{\psi}_P \rangle \tilde{\varphi}_P$ converges in $\cS$. Therefore, since $\vec{f} \in \cS'/\mathcal{P} (\R^n)$,
\[ \vec{t}_Q = \langle \vec{f}, \sum_{P \in \mathcal{D}} \langle \gamma_Q, \tilde{\psi}_P \rangle \tilde{\varphi}_P \rangle =  \sum_{P \in \mathcal{D}} \langle \gamma_Q, \tilde{\psi}_P \rangle \tilde{s}_P .   \]
Notice that, for $\ell(Q) = 2^{-j}$,  $\langle \gamma_Q, \tilde{\psi}_P \rangle = |Q|^{1/2} \gamma_j \ast \psi_P (x_Q)$.  Since $\{ \psi_Q \}_{Q \in \mathcal{D}}$ is a family of smooth $(N,K, M, \delta)$-molecules for all possible $N,K,M$, and $\delta$, Lemma \ref{convest} implies that the matrix $B = \{ b_{QP} \}_{Q, P \in \mathcal{D}}$ defined by $b_{QP} = \langle \gamma_Q, \tilde{\psi}_P \rangle$ is almost diagonal, i.e., $B \in \bf{ad}^{\alpha, q}_p (\beta)$, for all possible $\alpha, q, p,$ and $\beta$.  By Theorem \ref{ADmatrixbnded}, $B$ is bounded on $\dot{f}^{\alpha q}_p ( \{ A_Q \} )$. Thus,
\[ \Vert \vec{t} \Vert_{\dot{f}^{\alpha q}_p ( \{ A_Q \})}  \leq C  \Vert B\vec{\tilde{s}} \Vert_{\dot{f}^{\alpha q}_p ( \{ A_Q \})}\leq   C  \Vert \vec{\tilde{s}} \Vert_{\dot{f}^{\alpha q}_p ( \{ A_Q \})}  \leq C \Vert \vec{f} \Vert_{\dot{F}^{\alpha q}_p ( \{ A_Q \}, \varphi )} ,\]
where the last step is by Theorem \ref{firstAQimbed}.  Hence, we have $\Vert \vec{f} \Vert_{\dot{F}^{\alpha q}_p ( \{ A_Q \}, \gamma )} \leq C  \Vert \vec{f} \Vert_{\dot{F}^{\alpha q}_p ( \{ A_Q \}, \ffi )} $, which implies equivalence by interchanging $\gamma$ and $\varphi$.

To prove \eqref{avgequiv}, first apply Theorem \ref{firstAQimbed} with $\varphi$ replaced by $\tilde{\ffi}$ to obtain $\Vert \{ \langle \vec{f}, \ffi_{Q} \rangle \} \Vert_{\dot{f}^{\alpha q}_p ( \{ A_Q \} )} \leq c \Vert \vec{f} \Vert_{\dot{F}^{\alpha q}_p ( \{ A_Q \}, \tilde{\ffi} )} $.  For $\varphi \in \mathcal{A}$, we have $\tilde{\ffi} \in \mathcal{A}$, so we have just proved that the last norm is equivalent to the one with $\ffi$ in place of $\tilde{\ffi}$.  Then applying \eqref{convtriebelest} completes the proof. 
\end{proofof}

\begin{Thm}\label{avgwavechar}
Let $0 < p < \infty$, $0 < q \leq \infty$, and $\alpha \in \R$.  Suppose $\{ A_Q \}_{Q \in \mathcal{Q}}$ is a strongly doubling sequence of order $(\beta, p)$ of non-negative definite matrices.   Suppose that for some sufficiently large positive numbers $N_0, R$, and $S$ (depending on $p, q, \alpha, n$, and $\beta$), the generators $\{ \psi^{(i)} \}_{1 \leq i \leq 2^n -1} $ of a wavelet basis satisfy $\int_{\R^n} x^{\gamma} \psi^{(i)} (x) \, dx =0$ for all multi-indices $\gamma$ with $|\gamma| \leq N_0$ and $|D^{\gamma} \psi^{(i)} (x)| \leq C (1+|x|)^{-R}$ for all $|\gamma| \leq S$.  Then 
\beq \label{avgwaveequiv}
\Vert \vec{f} \Vert_{\dot{F}^{\alpha q}_p ( \{A_Q\}} \approx \sum_{i=1}^{2^n -1} \Vert \{ \langle \vec{f}, \psi^{(i)}_Q \rangle \}_{Q \in \mathcal{D}} \Vert_{\dot{f}^{\alpha q}_p ( \{A_Q\} )}  . 
\eeq
\end{Thm}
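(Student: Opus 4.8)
The plan is to run the argument of Theorem \ref{avgnormeq} with the wavelet system replacing the $\varphi$-transform, exploiting the fact that wavelets which are smooth enough and have enough vanishing moments behave like smooth molecules and, when paired against the functions $\psi_Q$, produce almost diagonal change-of-basis matrices. I would fix $\varphi \in \mathcal{A}$ and let $\psi \in \mathcal{A}$ be the associated dual function, so that the $\varphi$-transform identity \eqref{phitraniden} holds and $\{ \psi_Q \}_{Q \in \mathcal{D}}$ is a family of smooth $(N, K, M, \delta)$-molecules for every admissible $N, K, M, \delta$ (note $\psi \in \mathcal{A}$ gives $\psi_Q$ vanishing moments of every order and Schwartz decay).

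For the bound $\sum_{i=1}^{2^n-1} \Vert \{ \langle \vec{f}, \psi^{(i)}_Q \rangle \}_{Q\in\mathcal{D}} \Vert_{\dot{f}^{\alpha q}_p(\{A_Q\})} \leq c \Vert \vec{f} \Vert_{\dot{F}^{\alpha q}_p(\{A_Q\})}$, I would test \eqref{phitraniden} against $\psi^{(i)}_Q$ to get $\langle \vec{f}, \psi^{(i)}_Q \rangle = \sum_{P \in \mathcal{D}} b^{(i)}_{QP} \langle \vec{f}, \varphi_P \rangle$ with $b^{(i)}_{QP} = \langle \psi_P, \psi^{(i)}_Q \rangle$. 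Since $\psi_P$ has vanishing moments of every order and Schwartz decay, while $\psi^{(i)}$ has $N_0$ vanishing moments and derivatives of order $\leq S$ decaying like $(1+|x|)^{-R}$, the kernel estimates of \cite[Appendix B]{FJ2} (integrate the more oscillatory factor against a Taylor remainder of the other, centered at $x_P$ or $x_Q$) show that, provided $N_0, R, S$ are large enough in terms of $\alpha, p, q, n$ and $\beta$, the matrix $B^{(i)} = \{ b^{(i)}_{QP} \}_{Q,P\in\mathcal{D}}$ lies in ${\bf ad}^{\alpha, q}_p(\beta)$. Theorem \ref{ADmatrixbnded} then makes each $B^{(i)}$ bounded on $\dot{f}^{\alpha q}_p(\{A_Q\})$, so $\Vert \{ \langle \vec{f}, \psi^{(i)}_Q \rangle \} \Vert_{\dot{f}^{\alpha q}_p(\{A_Q\})} \leq c \Vert \{ \langle \vec{f}, \varphi_P \rangle \} \Vert_{\dot{f}^{\alpha q}_p(\{A_Q\})}$; exactly as in the proof of Theorem \ref{avgnormeq} (apply \eqref{vectortriebest} with $\tilde{\varphi}$ in place of $\varphi$ and use the $\varphi$-independence proved there), the right side is at most $c \Vert \vec{f} \Vert_{\dot{F}^{\alpha q}_p(\{A_Q\})}$, and I would sum over $i$.

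For the reverse bound, I would start from the wavelet reconstruction identity $\vec{f} = \sum_{i=1}^{2^n - 1} \sum_{Q \in \mathcal{D}} \langle \vec{f}, \psi^{(i)}_Q \rangle \psi^{(i)}_Q$, valid in $\cS'/\cP$ for wavelets with the stated decay, smoothness, and vanishing moments (cf. \cite{M}, \cite{FJW}). For each fixed $i$, the hypotheses on $\psi^{(i)}$ make $\{ \psi^{(i)}_Q \}_{Q \in \mathcal{D}}$ a family of smooth $(N, K, M, \delta)$-molecules with $\delta = 1$ and with $N, K, M$ as large as desired once $N_0, R, S$ are large. Hence Theorem \ref{thirdAQest}, applied with $m_Q = \psi^{(i)}_Q$ and $\vec{s}_Q = \langle \vec{f}, \psi^{(i)}_Q \rangle$, gives $\Vert \sum_{Q} \langle \vec{f}, \psi^{(i)}_Q \rangle \psi^{(i)}_Q \Vert_{\dot{F}^{\alpha q}_p(\{A_Q\})} \leq c \Vert \{ \langle \vec{f}, \psi^{(i)}_Q \rangle \} \Vert_{\dot{f}^{\alpha q}_p(\{A_Q\})}$. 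Summing these over $i = 1, \dots, 2^n - 1$, with the finite quasi-triangle inequality for the $\dot{F}^{\alpha q}_p(\{A_Q\})$ quasi-norm costing only a constant depending on $n, p, q$, completes the proof.

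The main obstacle is the bookkeeping that converts ``$N_0, R, S$ sufficiently large'' into, on one hand, the inequalities on $\alpha_1, \alpha_2$ and $R$ in Definition \ref{Defalmostdiag} forcing each $B^{(i)} \in {\bf ad}^{\alpha, q}_p(\beta)$, and, on the other, admissible values of $N, K, M$ for Theorem \ref{thirdAQest}; in both cases the thresholds carry the $\beta$-dependence coming from the strong doubling hypothesis, and both reduce to the estimates of Lemma \ref{convest} and \cite[Appendix B]{FJ2}. A secondary point needing care is the meaning and $\cS'/\cP$-convergence of the wavelet expansion of a general $\vec{f}$ (the wavelet coefficients are interpreted via the representative furnished by the $\varphi$-transform), which is where the decay of the $\psi^{(i)}$ themselves, not merely of their derivatives, enters.
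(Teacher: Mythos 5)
Your proposal is correct and follows essentially the same route as the paper: one direction via the wavelet reconstruction identity plus the molecular synthesis estimate of Theorem \ref{thirdAQest}, the other via an almost diagonal change-of-basis matrix (bounded by Theorem \ref{ADmatrixbnded} using Lemma \ref{convest}) composed with the already-proved equivalence of Theorem \ref{avgnormeq}. The only cosmetic difference is that you take $b^{(i)}_{QP}=\langle \psi_P,\psi^{(i)}_Q\rangle$ acting on the $\varphi$-coefficients, whereas the paper uses $b^{(i)}_{QP}=\langle \psi^{(i)}_Q,\varphi_P\rangle$ acting on the $\psi$-coefficients; both reduce to the same molecule estimates, and your closing caveats about the parameter bookkeeping and the interpretation of the wavelet pairing match the level of detail in the paper.
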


\begin{proof}
If $N_0 \geq N$, $S \geq K+ \delta$, and $R > \max(M, N+1+n)$, then $\{  \psi_{Q,i} \}_{Q \in \mathcal{D}}$ is a family of smooth $(N,K,M,\delta)$ molecules for each $i$, so the estimate $\Vert \vec{f} \Vert_{\dot{F}^{\alpha q}_p ( \{ A_Q \} )} \leq c \sum_{i=1}^{2^n -1} \Vert \{ \langle \vec{f}, \psi^{(i)}_{Q} \rangle \}_{Q \in \mathcal{D}} \Vert_{\dot{f}^{\alpha q}_p ( \{ A_Q \} )}$ follows from Theorem \ref{thirdAQest} and the wavelet identity $\vec{f} = \sum_{Q,i} \langle \vec{f}, \psi^{(i)}_Q \rangle \psi^{(i)}_Q$. The proof of the converse estimate is similar to the proof of Theorem \ref{avgnormeq}.  For each $i$, define $\vec{s}^{\,(i)}= \{ \vec{s}^{\,(i)}_Q  \}_{Q \in \mathcal{D}}$ by $\vec{s}^{\,(i)}_Q = \langle \vec{f} , \psi^{(i)}_Q \rangle$.  Using (\ref{phitraniden}), we have
\[  \vec{s}^{\,(i)}_Q = \langle \vec{f} , \sum_{P \in \mathcal{D}} \langle  \psi^{(i)}_Q    , \ffi_P  \rangle \psi_P \rangle  = \sum_{P\in \mathcal{D}} b_{QP}^{(i)} \vec{s}_P = (B^{(i)} \vec{s})_Q,\]
where $B^{(i)}= \{ b^{(i)}_{QP} \}_{Q,P \in \mathcal{D}} $ is defined by $b^{(i)}_{QP} = \langle \psi_Q^{(i)} , \ffi_P \rangle $ and $\vec{s} = \{ \vec{s}_Q \}_{Q \in \mathcal{D}} $ is defined by $\vec{s}_Q = \langle \vec{f}, \psi_Q \rangle$.  Since $\psi \in \mathcal{A}$, Theorem \ref{avgnormeq} gives the equivalence $ \Vert  \vec{s} \Vert_{\dot{f}^{\alpha q}_p ( \{ A_Q \} )} \approx   \Vert \vec{f} \Vert_{\dot{F}^{\alpha q}_p ( \{ A_Q \} )}  $.  We claim that for $N_0, R$, and $S$ sufficiently large, $B^{(i)} \in \bf{ad}^{\alpha,q}_p (\beta)$, and thus, $B^{(i)}$ is bounded on $\dot{f}^{\alpha q}_p ( \{ A_Q \} )$ by Theorem \ref{ADmatrixbnded}.  Assuming this claim, we have 
\[ \Vert \vec{s}^{\,(i)} \Vert_{\dot{f}^{\alpha q}_p ( \{A_Q\} )}  =  \Vert B^{(i)} \vec{s} \Vert_{\dot{f}^{\alpha q}_p ( \{A_Q\} )} \leq c \Vert  \vec{s} \Vert_{\dot{f}^{\alpha q}_p ( \{A_Q\} )} \leq c   \Vert \vec{f} \Vert_{\dot{F}^{\alpha q}_p ( \{ A_Q \} )}   , \]
yielding (\ref{avgwaveequiv}).  To show that $B^{(i)} \in \bf{ad}^{\alpha,q}_p (\beta)$, note that for $\ell(P) = 2^{-j}$, we have $b^{(i)}_{QP} = 2^{-jn/2} \tilde{\ffi}_j \ast \psi^{(i)}_Q (x_P)$.  Applying Lemma \ref{convest} with $P$ replaced by $Q$ and $\tilde{\ffi} \in \mathcal{A}$ in place of $\ffi$, we see that $B^{(i)} \in \bf{ad}^{\alpha,q}_p (\beta)$ if $\{ \psi^{(i)}_Q  \}_{Q \in \mathcal{D}}$ is a family of smooth $(N,K, M, \delta)$-molecules for some $N> N_1= \alpha -1 + n/p, K + \delta> S_1= -\alpha + \frac{n}{\min(1,p,q)} -n + \frac{\beta -n}{p}$, and $M> M_1 = \frac{n}{\min(1,p,q)} + \frac{\beta}{p}$, which in turn holds if $\psi^{(i)}$ satisfies the conditions in the statement of the theorem for $N_0>   N_1, S >S_1$, and $R> R_1$.
\end{proof}

\section{Equivalence of the averaging and non-averaging spaces}\label{relavgnonavg}

Although the results in Section \ref{normeqavg} required only the strong doubling condition on $\{ A_Q\}$, we now assume the $A_p$ condition on $W$ to obtain the equivalence between the weighted sequence and function spaces and their averaged counterparts, as follows.  

\begin{Thm}\label{nonavnormeq} Suppose $0 < p < \infty, 0 < q \leq \infty, \alpha \in \R$ and $\ffi \in \mathcal{A}$.  Suppose $W \in A_p$, and $\{ A_Q \}_{Q \in \mathcal{Q}}$ is a sequence of reducing operators of order $p$ for $W$.  
Then for any sequence $\vec{s} = \{ \vec{s}_Q \}_{Q \in \mathcal{D}}$, 
\[ \Vert \vec{s} \Vert_{\dot{f}^{\alpha q}_p (W)} \approx \Vert \vec{s} \Vert_{\dot{f}^{\alpha q}_p (\{A_Q \})}  \]
and, for any $\vec{f} \in \mathcal{S}^{\prime}/ \mathcal{P}$, 
\[ \Vert \vec{f} \Vert_{\dot{F}^{\alpha q}_p (W)} \approx \Vert \vec{f} \Vert_{\dot{F}^{\alpha q}_p (\{A_Q \})} . \]
\end{Thm}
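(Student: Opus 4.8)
The plan is to prove the sequence-space equivalence first and then to deduce the function-space equivalence from it together with the machinery of Section~\ref{normeqavg}. For the sequence statement, the starting observation is that, fixing $j\in\Z$, each $x$ lies in exactly one $Q_j(x)\in\mathcal{D}_j$, so the inner sums over $\mathcal{D}_j$ collapse: $\Vert\vec s\Vert_{\dot f^{\alpha q}_p(W)}=\Vert(\sum_{j\in\Z}(2^{j(\alpha+n/2)}|W^{1/p}(\cdot)\vec s_{Q_j(\cdot)}|)^q)^{1/q}\Vert_{L^p}$, and likewise for the $\{A_Q\}$-norm with $A_{Q_j(\cdot)}$ in place of $W^{1/p}(\cdot)$. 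Thus everything reduces to comparing the ``honest'' weight $W^{1/p}(x)$ with the ``averaged'' matrix $A_{Q_j(x)}$, both applied to the same vector at the same point, measured in $L^p(\ell^q)$. When $q=p$ this is immediate from the defining inequality of the reducing operators and Tonelli: $\Vert\vec s\Vert_{\dot f^{pp}_p(W)}^p=\sum_Q|Q|^{-(\alpha/n+1/2)p}\int_Q|W^{1/p}(x)\vec s_Q|^p\,dx$ is comparable to $\sum_Q|Q|^{-(\alpha/n+1/2)p}|Q|\,|A_Q\vec s_Q|^p=\Vert\vec s\Vert_{\dot f^{pp}_p(\{A_Q\})}^p$. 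No $A_p$ hypothesis enters here; the whole difficulty lies in carrying this through for $q\neq p$.

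For $q\neq p$ there are two inputs. One is soft: the inequality $\Vert\vec s\Vert_{\dot f^{\alpha q}_p(\{A_Q\})}\lesssim\Vert\vec s\Vert_{\dot f^{\alpha q}_p(W)}$ for $0<p\le1$ is pointwise, since \eqref{defAp2} gives, for a.e.\ $x\in Q$, the bound $|A_Q\vec y|\le C|W^{1/p}(x)\vec y|$ uniformly in $\vec y$, which dominates one integrand by the other before integration; for $1<p<\infty$ one argues from the integral form of the $A_p$ condition. The second input is reverse Hölder: $W\in A_p$ forces the scalar weights $w_{\vec y}(x)=|W^{1/p}(x)\vec y|^p$ to lie \emph{uniformly} in $A_\infty$ (by \cite{V} for $p>1$, and directly from \eqref{defAp2}, which even yields uniform $A_1$, for $p\le1$), hence to satisfy a uniform reverse Hölder inequality with a fixed exponent $s>p$; combined with the reducing-operator inequality this reads $\frac1{|Q|}\int_Q\Vert W^{1/p}(x)A_Q^{-1}\Vert^s\,dx\le C$ for all $Q$. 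Writing $|W^{1/p}(x)\vec s_{Q_j(x)}|\le V_j(x)\,|A_{Q_j(x)}\vec s_{Q_j(x)}|$ with $V_j(x)=\Vert W^{1/p}(x)A_{Q_j(x)}^{-1}\Vert$, the residual estimate becomes the $L^p(\ell^q)$-boundedness of the multiplication $\{g_j\}\mapsto\{V_jg_j\}$ on sequences $\{g_j\}$ that are constant on each cube of $\mathcal{D}_j$, the $V_j$ obeying the per-scale bound $\frac1{|Q|}\int_Q V_j^s\le C$ for $Q\in\mathcal{D}_j$.

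The hard part is this last multiplier bound. For $q=p$ it is trivial (Tonelli plus the per-scale bound), but for $q\neq p$ it does not follow from Hölder and reverse Hölder alone---those only reproduce an $L^p(\ell^q)$ bound of the same shape for another equally-Carleson family---nor from the naive maximal-function route, which lands at $L^1(\ell^{q/p})$, where the Fefferman--Stein inequality fails for $q>p$. What closes the gap is the cross-scale coherence of the $V_j$: they are all built from the single weight $W^{1/p}$, with the $A_{Q_j(x)}$ running over the \emph{nested} chain $\{Q_j(x)\}_j$, so by the strong doubling of $\{A_Q\}$ (Lemma~\ref{doublemma}) the consecutive ratios $V_j(x)/V_{j-1}(x)$ are bounded above and below by fixed constants, and the $V_j(x)$ cannot oscillate freely in $j$. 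Quantifying this and deducing the $L^p(\ell^q)$ multiplier bound is precisely the step we attribute to F.~Nazarov (Theorem~\ref{Naz}). Granting it, one sums the per-scale estimates, removes the Hardy--Littlewood maximal functions by Fefferman--Stein, and untangles the indices exactly as in the proof of Theorem~\ref{firstAQimbed}, obtaining $\Vert\vec s\Vert_{\dot f^{\alpha q}_p(W)}\approx\Vert\vec s\Vert_{\dot f^{\alpha q}_p(\{A_Q\})}$.

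Finally, the function-space equivalence can be obtained by re-running the proofs of Theorems~\ref{firstAQimbed} and~\ref{thirdAQest} with the pointwise weight $W^{1/p}(x)$ in the role of the constant matrix $A_Q$. Expanding $\vec f=\sum_Q\langle\vec f,\ffi_Q\rangle\psi_Q$ into smooth molecules and applying Lemma~\ref{convest} produces, in place of the scalar almost-diagonal operator $G$ of the proof of Theorem~\ref{ADmatrixbnded}, the composition of $G$ with multiplication by exactly the factors $V_j(x)$ above: the change of base point of $W^{1/p}$ from $Q$ to $P$ is effected by $\Vert W^{1/p}(x)A_Q^{-1}\Vert\cdot\Vert A_QA_P^{-1}\Vert$, i.e.\ by $V_j$ times a strong-doubling factor that is absorbed into the almost-diagonal kernel. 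Theorem~\ref{Naz} absorbs the $V_j$; Theorem~\ref{avgnormeq} and the sequence-space equivalence just proved then close both inequalities, with the near-constancy of $\ffi_j\ast\vec f$ at scale $2^{-j}$ handled by the Peetre/sampling estimate \eqref{estforlater} as in Theorem~\ref{firstAQimbed}. The stated independence of $\dot F^{\alpha q}_p(W)$ from $\ffi\in\mathcal{A}$ then follows just as in the proof of Theorem~\ref{avgnormeq}.
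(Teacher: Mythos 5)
Your overall architecture is the same as the paper's: the easy direction from the reducing-operator/$A_p$ inequalities (pointwise for $p\le 1$; for $p>1$ the paper makes this precise with the majority sets $E_Q=\{x\in Q:\Vert A_QW^{-1/p}(x)\Vert\le 2C_1\}$ and \cite[Prop.\ 2.7]{FJ2}, a step you leave as ``argue from the integral form of $A_p$''), the hard direction reduced to the $L^p(\ell^q)$ boundedness of multiplication by $\gamma_j(x)=\Vert W^{1/p}(x)A_{Q_j(x)}^{-1}\Vert$ on scale-$j$ cube-constant functions (Corollaries \ref{imbedd} and \ref{W-AQinequality}), and the function-space statements via the sampling estimate \eqref{estforlater} and molecular synthesis (Theorems \ref{AQ-Wfuncspimbed}, \ref{firstAQimbed}, \ref{thirdAQest}). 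So far this matches.

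The genuine gap is in how you invoke Theorem \ref{Naz} when $q>p$. You supply only the per-scale reverse H\"older bound $\sup_{Q\in\mathcal D_j}\frac1{|Q|}\int_Q\gamma_j^{\,s}\le C$ (i.e.\ \eqref{firstgamcond}, coming from \eqref{revholder}) and claim the rest is ``cross-scale coherence'': bounded consecutive ratios $V_j/V_{j-1}$ from strong doubling (Lemma \ref{doublemma}). But that is not the hypothesis of Theorem \ref{Naz}, and it does not suffice: part (i) of that theorem covers only $q\le p$ under \eqref{firstgamcond}, while for $q>p$ part (ii) requires the strictly stronger Carleson-type condition \eqref{secgamcond}, namely a uniform bound on $\frac1{|Q|}\int_Q\sup_{j:2^{-j}\le\ell(Q)}\gamma_j^{p(1+\delta)}$. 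Bounded consecutive ratios cannot produce this: the ratio constants compound over scales, so they give no control of $\sup_{j\ge j_Q}\gamma_j$ on $Q$. What actually closes the case $q>p$ in the paper is \eqref{chgolineq} of Lemmas \ref{GoldbergLemma} and \ref{GoldbergLemmaple1} --- the $L^{p+\delta}$ bound on $\sup_{P\in\mathcal D:\,x\in P\subseteq Q}\Vert W^{1/p}(x)A_P^{-1}\Vert$, a genuine consequence of $A_p$ (for $p\le1$ it rests on $\Vert A_QA_P^{-1}\Vert\le c$ for $P\subseteq Q$, which doubling alone does not give) --- together with the proof of Theorem \ref{Naz}(ii) via the Carleson embedding Lemma \ref{carl}, an $\ell^1$ estimate by summation by parts, and interpolation; and for $0<p\le1<q$ one also needs the small-power reduction ($\gamma_j\mapsto\gamma_j^A$, $p\mapsto p/A$, $q\mapsto q/A$) of Corollary \ref{imbedd}. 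As written, your proposal neither verifies \eqref{secgamcond} nor identifies \eqref{chgolineq}, so the multiplier bound --- and with it both ``converse'' inequalities --- is unproved exactly in the regime $q>p$ that you yourself single out as the hard case.
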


We build up to the proof of Theorem \ref{nonavnormeq} by first discussing some of the consequences of the $A_p$ condition. We will use the following results from \cite{G}, pp. 207-8 and p. 210; see \cite{CG} for $p=2$.

\begin{Lemma} \label{GoldbergLemma}
Suppose $1<p< \infty, p^{\, \prime} = p/(p-1), W \in A_p$, and $\{ A_Q \}_{Q \in \mathcal{D}}$ is a sequence of reducing operators of order $p$ for $W$.  Then there exists $\delta >0$ (depending on $W$) and constants $C_r >0$ such that
\beq \label{equivApcond}
\sup_{Q \in \mathcal{D}} \frac{1}{|Q|} \int_Q \Vert A_Q W^{-1/p} (x) \Vert^r \, dx \leq C_r  \,\,\, \mbox{for} \,\,\, r < p^{\, \prime} + \delta, 
\eeq
\begin{equation} \label{revholder}
\sup_{Q \in \mathcal{D}}  \frac{1}{|Q|} \int_Q \Vert W^{1/p} (x) A_Q^{-1} \Vert^{r} \, dx  \leq C_r \,\,\, \mbox{for} \,\,\, r < p + \delta,  
\end{equation}
and 
\begin{equation} \label{chgolineq}
\sup_{Q \in \mathcal{D}} \frac{1}{|Q|} \int_Q   \sup_{P \in \mathcal{D}: x \in  P \subseteq Q} \Vert W^{1/p}(x) A_P^{-1} \Vert^{r} \, dx \leq C_r \,\,\, \mbox{for} \,\,\, r < p + \delta.    
\end{equation}
\end{Lemma}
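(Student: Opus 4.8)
These are quoted from \cite{G} (see also \cite{CG} for $p=2$); my plan for proving them is as follows. Four facts drive everything. \emph{(a)} The defining property of reducing operators turns $|A_Q\vec y|$ into the $L^p$-average $\bigl(\frac{1}{|Q|}\int_Q |W^{1/p}(x)\vec y|^p\,dx\bigr)^{1/p}$, up to fixed constants. \emph{(b)} Since $m<\infty$, a bound $\frac{1}{|Q|}\int_Q\|M(x)\|^r\,dx\le C$ follows from the $m$ scalar bounds $\frac{1}{|Q|}\int_Q|M(x)\vec e_i|^r\,dx\le C$, because $\|M\|\le m^{1/2}\max_i|M\vec e_i|$. \emph{(c)} For each fixed $\vec y$ the scalar weight $w_{\vec y}(x)=|W^{1/p}(x)\vec y|^p$ lies in the scalar Muckenhoupt $A_p$ class with constant independent of $\vec y$ (the result of Volberg \cite{V} quoted above), and a uniformly-$A_p$ family has a uniform reverse H\"older inequality: $\bigl(\frac{1}{|Q|}\int_Q w_{\vec y}^{1+\epsilon}\bigr)^{1/(1+\epsilon)}\le C\,\frac{1}{|Q|}\int_Q w_{\vec y}$ for some $\epsilon>0$ and all $Q,\vec y$. \emph{(d)} The matrix Muckenhoupt duality $W\in A_p\Leftrightarrow W^{-p'/p}\in A_{p'}$ (see \cite{R}, \cite{G}), together with the characterization $W\in A_p\Leftrightarrow \sup_Q\|A_QB_Q\|<\infty$, where $\{B_Q\}$ is a sequence of reducing operators of order $p'$ for $W^{-p'/p}$.

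I would prove \eqref{revholder} first. Fix $Q$ and $\vec y$ with $|A_Q\vec y|=1$, so $\frac{1}{|Q|}\int_Q w_{\vec y}\approx1$ by \emph{(a)}. By \emph{(c)} and H\"older's inequality (or Jensen when $r<p$), $\frac{1}{|Q|}\int_Q|W^{1/p}(x)\vec y|^r\,dx=\frac{1}{|Q|}\int_Q w_{\vec y}^{r/p}\,dx\le C$ whenever $r<p+\delta_1$ with $\delta_1:=p\epsilon$, uniformly over such $\vec y$; applying this to $\vec y=A_Q^{-1}\vec e_i$ and summing in $i$ as in \emph{(b)} gives \eqref{revholder}. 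Note that this argument proves \eqref{revholder} for \emph{any} matrix weight in \emph{any} class $A_s$, $1<s<\infty$ --- a generality I will use for the next step.

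For \eqref{equivApcond}: by \emph{(d)}, $W^{-p'/p}\in A_{p'}$, and its $(1/p')$-th power is $W^{-1/p}$, with reducing operators of order $p'$ given by $\{B_Q\}$; so \eqref{revholder}, in the generality just noted, yields $\frac{1}{|Q|}\int_Q\|W^{-1/p}(x)B_Q^{-1}\|^r\,dx\le C$ for $r<p'+\delta_2$. Since $W^{-1/p}(x)$ and $B_Q^{-1}$ are positive definite, $\|W^{-1/p}(x)B_Q^{-1}\|=\|B_Q^{-1}W^{-1/p}(x)\|$, and from the factorization $A_QW^{-1/p}(x)=(A_QB_Q)\bigl(B_Q^{-1}W^{-1/p}(x)\bigr)$ together with $\sup_Q\|A_QB_Q\|<\infty$ from \emph{(d)} we get $\|A_QW^{-1/p}(x)\|\le C\,\|B_Q^{-1}W^{-1/p}(x)\|$. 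Integrating over $Q$ gives \eqref{equivApcond}. (One then takes $\delta=\min(\delta_1,\delta_2)$, and, for the next estimate, intersects with its own reverse-H\"older improvement.)

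Finally \eqref{chgolineq} --- which I expect to be the main obstacle. Taking $P=Q$ shows it implies \eqref{revholder}, and the supremum over nested cubes $P$ is the real difficulty: one cannot simply bound $\int_Q(\cdots)$ by $\sum_{P\subseteq Q}\int_P\|W^{1/p}(\cdot)A_P^{-1}\|^r$, since that overcounts. I would linearize --- for each $x$ choose a nearly extremal $P(x)$ and unit vector $\vec z(x)$, so that $\sup_{P}\|W^{1/p}(x)A_P^{-1}\|^r$ is comparable to $w_{\vec y(x)}(x)^{r/p}$ with $\vec y(x)=A_{P(x)}^{-1}\vec z(x)$ and $|A_{P(x)}\vec y(x)|=1$ --- and then run a stopping-time decomposition of $Q$: stop at the maximal subcubes on which the relevant weighted averages have dropped by a fixed factor (equivalently, on which $\|A_QA_P^{-1}\|$ exceeds a threshold), bound the supremum on each non-stopping region by the single-cube quantity already controlled by \eqref{revholder}, recurse into the stopping cubes, and use the reverse H\"older inequality \emph{(c)} to force the total measure of the stopping cubes to decay geometrically from one generation to the next. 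Making this geometric gain precise is the crux, and is the computation on p.~210 of \cite{G}.
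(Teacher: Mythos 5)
Your treatment matches the paper's: the paper does not prove this lemma at all but quotes it directly from \cite{G}, pp.~207--208 and p.~210 (with \cite{CG} for $p=2$), exactly as you do. Your supplementary derivations --- \eqref{revholder} from Volberg's uniform scalar $A_p$ bound plus the uniform reverse H\"older inequality applied to $\vec y=A_Q^{-1}\vec e_i$, and \eqref{equivApcond} by duality through reducing operators $B_Q$ of order $p'$ for $W^{-p'/p}$ together with $\sup_Q\Vert A_QB_Q\Vert<\infty$ --- are sound and consistent with Goldberg's arguments, and your deferral of the stopping-time computation for \eqref{chgolineq} to \cite[p.~210]{G} is precisely what the paper itself relies on.
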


We need the following analogue of Lemma \ref{GoldbergLemma} for $0<p\leq 1$.  

\begin{Lemma} \label{GoldbergLemmaple1}
Suppose $0<p\leq 1, W \in A_p$, and $\{ A_Q \}_{Q \in \mathcal{D}}$ is a sequence of reducing operators of order $p$ for $W$.  Then 
\beq \label{equivApcondp<1}
\sup_{Q \in \mathcal{D}} \esssup_{x \in Q}  \Vert A_Q W^{-1/p} (x) \Vert  < \infty, 
\eeq
and \eqref{revholder}, \eqref{chgolineq} hold for some $\delta >0$.
\end{Lemma}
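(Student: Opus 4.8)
The plan is to derive all three assertions from the observation that, for $0<p\le1$ and $W\in A_p$, the scalar weights $w_{\vec{y}}(x)=|W^{1/p}(x)\vec{y}|^p$ $(\vec{y}\in\C^m)$ lie uniformly in the scalar Muckenhoupt class $A_1$. Writing $\kappa$ for the finite supremum in \eqref{defAp2}, the inequality $|W^{1/p}(x)\vec{y}|\le\|W^{1/p}(x)W^{-1/p}(y)\|\,|W^{1/p}(y)\vec{y}|$, raised to the power $p$ and averaged over $x\in Q$, gives $\frac1{|Q|}\int_Q w_{\vec{y}}\le\kappa\,w_{\vec{y}}(y)$ for a.e.\ $y\in Q$, hence $[w_{\vec{y}}]_{A_1}\le\kappa$ uniformly in $\vec{y}$. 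Two consequences will be used: first, the self-improving property of $A_1$ weights (whose reverse Hölder exponent and constant depend only on $n$ and on the $A_1$ constant) supplies $\delta_0>0$ and $C$ with $\left(\frac1{|Q|}\int_Q w_{\vec{y}}^{1+\delta_0}\right)^{1/(1+\delta_0)}\le C\,\frac1{|Q|}\int_Q w_{\vec{y}}$ for all $Q$ and all $\vec{y}$; second, the defining inequality of the reducing operators reads $\frac1{|Q|}\int_Q w_{\vec{y}}\approx|A_Q\vec{y}|^p$, and by homogeneity one may restrict to $|\vec{y}|=1$. With these, \eqref{equivApcondp<1} is immediate: for $|\vec{z}|=1$ put $\vec{y}=W^{-1/p}(x)\vec{z}$; the lower reducing bound and \eqref{defAp2} (with $x$ in the role of the essential-supremum variable) give $|A_QW^{-1/p}(x)\vec{z}|^p\le c_1^{-p}\frac1{|Q|}\int_Q\|W^{1/p}(t)W^{-1/p}(x)\|^p\,dt\le c_1^{-p}\kappa$ for a.e.\ $x\in Q$, uniformly in $Q$ and $\vec{z}$; taking the supremum over $\vec{z}$ yields \eqref{equivApcondp<1}.

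For \eqref{revholder} I would bound the operator norm by the Euclidean norms of the columns: with $\vec{y}_i=A_Q^{-1}\vec{e}_i$ ($\{\vec{e}_i\}$ the standard basis of $\C^m$),
\[
\|W^{1/p}(x)A_Q^{-1}\|^r\lesssim\sum_{i=1}^m|W^{1/p}(x)A_Q^{-1}\vec{e}_i|^r=\sum_{i=1}^m w_{\vec{y}_i}(x)^{r/p}.
\]
Averaging over $Q$ and using the reverse Hölder inequality when $p<r\le p(1+\delta_0)$, together with Jensen's inequality when $0<r\le p$, gives $\frac1{|Q|}\int_Q w_{\vec{y}_i}^{r/p}\le C\left(\frac1{|Q|}\int_Q w_{\vec{y}_i}\right)^{r/p}\le C\,c_2^r|A_Q\vec{y}_i|^r=C\,c_2^r$, since $A_Q\vec{y}_i=\vec{e}_i$. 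Summing over $i$ proves \eqref{revholder} with $\delta=p\delta_0$.

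Finally, \eqref{chgolineq} is reduced to \eqref{revholder} through the pointwise bound
\[
\sup_{P:\ x\in P\subseteq Q}\|W^{1/p}(x)A_P^{-1}\|^r\ \lesssim\ \|W^{1/p}(x)A_Q^{-1}\|^r ,\qquad x\in Q .
\]
To prove it, fix a dyadic $P$ with $x\in P\subseteq Q$ and let $\{\vec{v}_{P,i}\}_{i=1}^m$ be an orthonormal eigenbasis of $A_P$ with eigenvalues $\lambda_{P,i}$. Then $\|W^{1/p}(x)A_P^{-1}\|^2\le\sum_i|W^{1/p}(x)A_P^{-1}\vec{v}_{P,i}|^2=\sum_i\lambda_{P,i}^{-2}|W^{1/p}(x)\vec{v}_{P,i}|^2$, and since $\lambda_{P,i}^p=|A_P\vec{v}_{P,i}|^p\approx\frac1{|P|}\int_P w_{\vec{v}_{P,i}}$, this is comparable to $\sum_i(w_{\vec{v}_{P,i}}(x)/\frac1{|P|}\int_P w_{\vec{v}_{P,i}})^{2/p}$. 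Now for $P\subseteq Q$ the uniform $A_1$ bound gives, for every unit vector $\vec{v}$,
\[
\frac1{|P|}\int_P w_{\vec{v}}\ \ge\ \mathop{\rm ess\,inf}_{P}w_{\vec{v}}\ \ge\ \mathop{\rm ess\,inf}_{Q}w_{\vec{v}}\ \ge\ \kappa^{-1}\frac1{|Q|}\int_Q w_{\vec{v}} ,
\]
while $\frac1{|Q|}\int_Q w_{\vec{v}}\approx|A_Q\vec{v}|^p$; hence $w_{\vec{v}}(x)/\frac1{|P|}\int_P w_{\vec{v}}\lesssim|W^{1/p}(x)\vec{v}|^p/|A_Q\vec{v}|^p\le\|W^{1/p}(x)A_Q^{-1}\|^p$, with a constant independent of $\vec{v}$ and of $P$. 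Substituting this into each of the $m$ terms above gives $\|W^{1/p}(x)A_P^{-1}\|^2\lesssim\|W^{1/p}(x)A_Q^{-1}\|^2$, hence the displayed bound after raising to the power $r/2$; integrating it over $Q$ and invoking \eqref{revholder} completes the proof, with the same $\delta=p\delta_0$.

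The step I expect to be the crux is this last reduction, specifically the supremum over $P$ in \eqref{chgolineq}: one cannot simply write $\|W^{1/p}(x)A_P^{-1}\|\le\|W^{1/p}(x)A_Q^{-1}\|\,\|A_QA_P^{-1}\|$, because $\|A_QA_P^{-1}\|$ need not stay bounded as $P$ shrinks inside $Q$. The diagonalization above circumvents this: in any direction $\vec{v}$ in which $A_P^{-1}$ is large---equivalently, in which $\frac1{|P|}\int_P w_{\vec{v}}$ is small---the $A_1$ condition forces $\frac1{|Q|}\int_Q w_{\vec{v}}$ to be comparably small, so $A_Q^{-1}$ is already at least as large in that same direction. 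Apart from this point, the argument parallels the bookkeeping of the $p>1$ case in \cite{G}.
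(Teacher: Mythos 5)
Your argument is correct, and for the first two assertions it is essentially the paper's proof: \eqref{equivApcondp<1} comes straight from the definition of the reducing operators together with \eqref{defAp2} (the paper tests on the basis vectors $e_i$ rather than on arbitrary unit vectors, which is only a cosmetic difference), and \eqref{revholder} comes from the uniform $A_1$ property of the scalar weights $w_{\vec{y}}$ (which the paper quotes from \cite{FR}, Lemma 2.1, and you rederive in one line), the ensuing uniform reverse H\"{o}lder inequality, and testing on $\vec{y}_i=A_Q^{-1}e_i$. Where you genuinely diverge is \eqref{chgolineq}. The paper's route is exactly the ``naive'' one you dismiss: it first shows $\Vert A_Q A_P^{-1}\Vert\leq c$ uniformly over $P\subseteq Q$, using the characterization $|A_P^{-1}\vec{y}|\approx\esssup_{x\in P}|W^{-1/p}(x)\vec{y}|$ valid for $p\leq 1$ (\cite{FR}, Lemma 5.4), the monotonicity of the essential supremum in $P$, and \eqref{equivApcondp<1}; then it writes $\Vert W^{1/p}(x)A_P^{-1}\Vert\leq\Vert W^{1/p}(x)A_Q^{-1}\Vert\,\Vert A_QA_P^{-1}\Vert$ and reduces \eqref{chgolineq} to \eqref{revholder}. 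Your diagonalization of $A_P$ combined with the $A_1$ bound $\frac{1}{|P|}\int_P w_{\vec{v}}\geq\kappa^{-1}\frac{1}{|Q|}\int_Q w_{\vec{v}}$ is a correct alternative; its advantage is that it avoids invoking the $\esssup$ characterization of $A_P^{-1}$, so the proof is self-contained modulo the standard $A_1\Rightarrow$ reverse H\"{o}lder fact, while the paper's argument is shorter once that external lemma is granted.

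One remark on your closing assessment of the ``crux'': the claim that $\Vert A_QA_P^{-1}\Vert$ need not stay bounded as $P$ shrinks inside $Q$ is false in this regime, and in fact your own estimate disproves it. The inequality $\frac{1}{|P|}\int_P w_{\vec{v}}\geq\kappa^{-1}\frac{1}{|Q|}\int_Q w_{\vec{v}}$, read through the reducing-operator equivalence, says precisely that $|A_P\vec{v}|\geq c\,|A_Q\vec{v}|$ for all $\vec{v}$, i.e.\ $\Vert A_QA_P^{-1}\Vert\leq c^{-1}$ uniformly for $P\subseteq Q$ (this boundedness is special to $p\leq1$ with $W\in A_p$, which is why Goldberg's $p>1$ argument in \cite{G} is organized differently). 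So your diagonalization, while valid, is a detour around an obstruction that is not actually there; the submultiplicative bound you set aside would have finished the proof in one line.
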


\begin{proof}  We use the fact that $\Vert B \Vert \leq c_m \sum_{i=1}^m |B e_i |$ for any $m \times m $ matrix $B$, where $\{e_i \}_{i=1}^m$ are the standard unit Euclidean basis vectors in $\C^m$.  To prove (\ref{equivApcondp<1}), note that for a.e. $x \in Q$ we have 
\[   \vert A_Q W^{-1/p} (x) e_i \vert  
\leq c  \left( \frac{1}{|Q|} \int_Q  \vert W^{1/p} (y) W^{-1/p}(x) e_i \vert^p \, dy \right)^{1/p} \]
\[\leq c \left( \frac{1}{|Q|} \int_Q  \Vert W^{1/p} (y) W^{-1/p}(x)  \Vert^p \, dy \right)^{1/p} \leq c, \]
by definition (\ref{defAp2}).   

To prove (\ref{revholder}), the assumption $W \in A_p$ implies that for all $\vec{y} \in \C^m$, the scalar weights $w_{\vec{y}} (x) = |W^{1/p} (x) \vec{y}|^p$ are uniformly in $A_1$, by \cite{FR}, Lemma 2.1.  Hence (see e.g., \cite{Gr}, Theorem 9.2.2), they satisfy a uniform reverse H\"{o}lder condition: there exists $\gamma >0$ such that 
\begin{equation} \label{firstrevholder} 
 \left( \frac{1}{|Q|} \int_Q \vert W^{1/p} (x) \vec{y} |^{p(1+\gamma)} \right)^{1/(1 + \gamma)} \, dx \leq c  \frac{1}{|Q|} \int_Q \vert W^{1/p} (x) \vec{y} |^{p} \, dx, 
\end{equation}  
with $c$ independent of $\vec{y}$ and $Q$.  Applying (\ref{firstrevholder}) with $\vec{y} = A_Q^{-1} e_i$,  
\[ \frac{1}{|Q|} \int_Q \Vert W^{1/p} (x) A_Q ^{-1} \Vert^{p(1+\gamma)} \, dx \leq c \frac{1}{|Q|} \int_Q \left( \sum_{i=1}^m \vert W^{1/p} (x) A_Q ^{-1} e_i \vert \right)^{p(1+\gamma)} \, dx \]
\[ \leq c \sum_{i=1}^m \frac{1}{|Q|} \int_Q \vert W^{1/p} (x) A_Q ^{-1} e_i \vert^{p(1+\gamma)} \, dx \leq  c  \sum_{i=1}^m  \left( \frac{1}{|Q|} \int_Q  \vert W^{1/p} (x) A_Q ^{-1} e_i \vert^p \, dx \right)^{(1+ \gamma)} \]
\[ = c \sum_{i=1}^m  \left( \vert A_Q A_Q^{-1} e_i \vert \right)^{p(1+ \gamma)} \leq  c .  \]
Letting $\delta= p\gamma$, we have (\ref{revholder}).

For (\ref{chgolineq}), we use the fact that, for $p\leq 1$,  
\[ \vert A_Q^{-1} \vec{y} \vert \approx  \esssup_{x \in Q} \vert W^{-1/p} (x) \vec{y} \vert    ,\]
with equivalence constants independent of $Q$ and $\vec{y}$, by \cite{FR}, Lemma 5.4. Recall that $\Vert AB\Vert = \Vert BA \Vert$ for any self-adjoint $A$ and $B$.  Therefore, for $P,Q \in \mathcal{D}$ with $P \subseteq Q$, we have
\[ \Vert A_Q A_P^{-1} \Vert \leq c \sum_{i =1}^m \vert A_P^{-1} A_Q e_i \vert \leq c \sum_{i =1}^m \esssup_{x \in P} \vert  W^{-1/p}(x) A_Q e_i \vert  \]
\[ \leq c \esssup_{ x \in P} \Vert W^{-1/p}(x) A_Q \Vert = c \esssup_{ x \in Q} \Vert A_Q W^{-1/p}(x)  \Vert \leq c, \]
by (\ref{equivApcondp<1}).  Hence, for a.e. $x \in P$, 
\[ \Vert W^{1/p}(x) A_P^{-1} \Vert \leq \Vert W^{1/p}(x) A_Q^{-1} \Vert \Vert A_Q A_P^{-1} \Vert \leq c \Vert W^{1/p}(x) A_Q^{-1} \Vert .  \]
Thus, $\sup_{P \in \mathcal{D}: x \in  P \subseteq Q} \Vert W^{1/p}(x) A_P^{-1} \Vert \leq c \Vert W^{1/p}(x) A_Q^{-1} \Vert$ a.e., so (\ref{chgolineq}) follows from (\ref{revholder}).
\end{proof}

\begin{Thm}\label{AQ-Wseqspimbed}
Suppose $\alpha \in \R, 0 < p < \infty, 0 < q \leq \infty, W \in A_p$, and $\{ A_Q \}_{Q \in \mathcal{D}}$ is a sequence of reducing operators of order $p$ for $W$. Then there exists $c>0$ such that for all $\vec{s} = \{ \vec{s}_Q \}_{Q \in \mathcal{Q}}$, 
\beq \label{AQWseqimbed}
\Vert \vec{s} \Vert_{\dot{f}^{\alpha q}_p ( \{ A_Q \} )} \leq c \Vert \vec{s} \Vert_{\dot{f}^{\alpha q}_p ( W )} .  
\eeq 
\end{Thm}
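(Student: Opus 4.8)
The inequality to prove, $\Vert \vec{s} \Vert_{\dot{f}^{\alpha q}_p(\{A_Q\})} \leq c \Vert \vec{s} \Vert_{\dot{f}^{\alpha q}_p(W)}$, compares the quantity $|A_Q \vec{s}_Q|$ (a constant on each $Q$) with the quantity $|W^{1/p}(x)\vec{s}_Q|$ (a function of $x \in Q$). The natural idea is pointwise: for each $Q$ and a.e.\ $x \in Q$, write $A_Q \vec{s}_Q = (A_Q W^{-1/p}(x))(W^{1/p}(x)\vec{s}_Q)$, so that
\[ |A_Q \vec{s}_Q| \leq \Vert A_Q W^{-1/p}(x) \Vert \, |W^{1/p}(x)\vec{s}_Q|. \]
When $0 < p \leq 1$ this is essentially all that is needed, since \eqref{equivApcondp<1} gives $\Vert A_Q W^{-1/p}(x)\Vert \leq c$ uniformly in $x \in Q$ and $Q \in \mathcal{D}$; hence $|A_Q \vec{s}_Q| \chi_Q(x) \leq c |W^{1/p}(x)\vec{s}_Q|\chi_Q(x)$ pointwise, and summing the $q$-th powers over $Q$ (with the weights $|Q|^{-\alpha/n-1/2}$, which are common to both sides) and taking the $L^p$ norm gives the result directly, with no maximal function needed.

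For $1 < p < \infty$ the factor $\Vert A_Q W^{-1/p}(x)\Vert$ is no longer bounded pointwise, only controlled in an averaged sense by \eqref{equivApcond}: $\frac{1}{|Q|}\int_Q \Vert A_Q W^{-1/p}(x)\Vert^{p'} dx \leq C$. So the plan is to distribute this $L^{p'}$ control against the $|W^{1/p}(x)\vec{s}_Q|$ term via Hölder's inequality \emph{inside each cube}. Concretely, fix $Q$ and estimate
\[ |A_Q\vec{s}_Q| = \frac{1}{|Q|}\int_Q |A_Q \vec{s}_Q|\,dx \leq \frac{1}{|Q|}\int_Q \Vert A_Q W^{-1/p}(x)\Vert\,|W^{1/p}(x)\vec{s}_Q|\,dx, \]
and apply Hölder with exponents $p'$ and $p$ to get $|A_Q\vec{s}_Q| \leq C\left(\frac{1}{|Q|}\int_Q |W^{1/p}(x)\vec{s}_Q|^p dx\right)^{1/p}$. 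Thus $(|A_Q\vec{s}_Q|\chi_Q)^q$ is dominated by $c$ times $\left(M(|W^{1/p}(\cdot)\vec{s}_Q|^p \chi_Q)\right)^{q/p}\chi_Q$, where $M$ is the Hardy--Littlewood maximal operator; or, more cleanly, by the $L^p$-averaged-over-$Q$ surrogate. To sum over $Q$ and then take the $L^p$ norm, the cleanest route is to pass to the maximal function: $|A_Q\vec{s}_Q|^A \chi_Q \leq c\, M(|W^{1/p}(\cdot)\vec{s}_Q|^A\chi_Q)$ for a suitable $A \in (0,p]$ (taking $A = p$ works and keeps the maximal operator acting on an $L^1_{\mathrm{loc}}$ function), then raise to the power $q/A$, sum over $Q$, apply the Fefferman--Stein vector-valued maximal inequality in $L^{p/A}(\ell^{q/A})$ exactly as in the proof of Theorem \ref{firstAQimbed}, and untangle the indices. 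The weights $|Q|^{-\alpha/n-1/2}$ ride along unchanged since they are the same constant on the support $Q$ of each term.

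The main obstacle is the $1 < p < \infty$ case, and specifically making sure the exponents line up so that Fefferman--Stein applies: we need both the outer Lebesgue exponent and the inner $\ell$-exponent strictly above $1$ after dividing by $A$. Choosing $A = p$ would force the inner index $q/p$, which can be $\leq 1$; so instead one should pick $A < \min(1,p,q)$ (or $A < \min(p,q)$), use the refined averaged bound $|A_Q\vec{s}_Q| \leq C(\frac{1}{|Q|}\int_Q|W^{1/p}(x)\vec{s}_Q|^p dx)^{1/p} \leq C (\frac{1}{|Q|}\int_Q |W^{1/p}(x)\vec{s}_Q|^A dx)^{1/A}$ (by the reverse inclusion of averages, legitimate since $A \le p$), and then run the maximal-function argument with exponent $A$, so that Fefferman--Stein is applied with indices $p/A, q/A > 1$. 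The $q = \infty$ case is handled by the usual modification (replace $\ell^q$ sums by suprema); one uses the weak form of Fefferman--Stein or argues termwise. The only other point requiring a line of care is that both $\dot{f}$-quasinorms are built from the \emph{same} sequence $\vec{s}$ and the \emph{same} dyadic geometric weights, so once the cube-by-cube domination $|A_Q\vec{s}_Q|\chi_Q \lesssim (M(\dots))^{1/A}$ is in hand, the rest is the standard machinery already deployed earlier in the paper.
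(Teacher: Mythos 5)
Your treatment of $0<p\le 1$ is fine and is exactly the paper's argument. For $1<p<\infty$, however, there is a genuine gap at the crucial exponent-lowering step. You correctly note that taking $A=p$ leaves you with Fefferman--Stein indices $p/A=1$ and $q/p$ (possibly $\le 1$), so you need a small exponent $A<\min(1,p,q)$; but the inequality you invoke to get there,
\[
\left(\frac{1}{|Q|}\int_Q |W^{1/p}(x)\vec{s}_Q|^{p}\,dx\right)^{1/p}\le C\left(\frac{1}{|Q|}\int_Q |W^{1/p}(x)\vec{s}_Q|^{A}\,dx\right)^{1/A},\qquad A\le p,
\]
is \emph{not} "legitimate since $A\le p$": Jensen/H\"older gives precisely the opposite comparison, $\bigl(\mathrm{avg}_Q\,g^{A}\bigr)^{1/A}\le\bigl(\mathrm{avg}_Q\,g^{p}\bigr)^{1/p}$, and for a general weight the displayed reverse inequality is false. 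It is a reverse-Jensen inequality, and it is exactly here that the hypothesis $W\in A_p$ must do work: it does hold because the scalar weights $w_{\vec{y}}(x)=|W^{1/p}(x)\vec{y}|^{p}$ are uniformly in scalar $A_p$ (so $\mathrm{avg}_Q\,w_{\vec y}\approx(\mathrm{avg}_Q\,w_{\vec y}^{-p'/p})^{-p/p'}\le C(\mathrm{avg}_Q\,w_{\vec y}^{\sigma})^{1/\sigma}$ for any $\sigma>0$), but that needs to be stated and proved, not asserted as an inclusion of averages. An alternative repair, closer to what the paper does for the function-space analogue (Theorem \ref{AQ-Wfuncspimbed}), is to start from $|A_Q\vec{s}_Q|^{A}\le \mathrm{avg}_Q\,\Vert A_QW^{-1/p}(x)\Vert^{A}\,|W^{1/p}(x)\vec{s}_Q|^{A}\,dx$ and apply H\"older with exponents $p'/A$ and $p'/(p'-A)$, using \eqref{equivApcond} with $r=p'$ for the first factor; this yields control by the $L^{At}$-average with $t=p'/(p'-A)$, and $At$ can be made smaller than $\min(1,p,q)$ by shrinking $A$, after which your maximal-function/Fefferman--Stein scheme (as in Theorem \ref{firstAQimbed}) goes through.

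For comparison, the paper's own proof for $p>1$ avoids the maximal function altogether: using \eqref{equivApcond} with $r=1$ and Chebyshev, it finds for each $Q$ a subset $E_Q\subseteq Q$ with $|E_Q|\ge |Q|/2$ on which $\Vert A_QW^{-1/p}(x)\Vert\le 2C_1$, and then invokes the "substantial subset" equivalence \cite[Proposition 2.7]{FJ2} (replacing $\chi_Q$ by $\chi_{E_Q}$ gives an equivalent $\dot f^{\alpha q}_p$ quasi-norm) to conclude in two lines. So your overall strategy is repairable, but as written the key step for $p>1$ is unjustified and stated in the false direction.
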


\begin{proof}
For $0<p\leq 1$, inequality (\ref{equivApcondp<1}) implies that 
\[ |A_Q \vec{s}_Q| \chi_{Q} \leq  \Vert A_Q W^{-1/p} \Vert | W^{1/p} \vec{s}_Q| \chi_{Q} \leq  c_p  | W^{1/p} \vec{s}_Q| \chi_{Q} \,\, a.e., \]
which implies (\ref{AQWseqimbed}). 

Now suppose $p>1$.  Let $C_1$ be the constant from (\ref{equivApcond}) when $r=1$.  For each $Q \in \mathcal{D}$, let 
\[ E_Q = \{x \in Q: \Vert A_Q W^{-1/p} (x) \Vert \leq 2 C_1 .\} \]
By Chebychev's inequality and (\ref{equivApcond}),
\[ 2C_1 |Q \setminus E_Q |  \leq \int_{Q \setminus E_Q}   \Vert A_Q W^{-1/p} (x) \Vert \, dx \leq \int_{Q}   \Vert A_Q W^{-1/p} (x) \Vert \, dx \leq C_1 |Q|. \]
Thus, $|Q \setminus E_Q| \leq |Q|/2$, so $|E_Q| \geq |Q|/2$.
By \cite[Proposition 2.7]{FJ2} and the inequality $|A_Q \vec{s}_Q|  \leq  \Vert A_Q W^{-1/p} \Vert | W^{1/p} \vec{s}_Q|$,
\[ \Vert \vec{s} \Vert_{\dot{f}^{\alpha q}_p ( \{ A_Q \} )} \leq c \left\Vert \left(  \sum_{Q \in \mathcal{D} } \left(   |Q|^{-\al /n -1/2} \vert \, A_Q \vec{s}_Q \, \vert \chi_{E_Q} \right)^q  \right)^{1/q} \right\Vert_{L^p} \]
\[ \leq 2c \, C_1 \left\Vert \left(  \sum_{Q \in \mathcal{D} } \left(   |Q|^{-\al /n -1/2} \vert \, W^{1/p} \vec{s}_Q \, \vert \chi_{E_Q} \right)^q  \right)^{1/q} \right\Vert_{L^p} \leq 2c\, C_1 \Vert \vec{s} \Vert_{\dot{f}^{\alpha q}_p ( W )}. \]
\end{proof}

If we assume that $W \in A_p$, the proof of Theorem \ref{firstAQimbed} can be modified slightly to estimate $\Vert  \left\{  \langle \vec{f}, \tilde{\ffi}_{Q} \rangle \right\}_{Q \in \mathcal{Q}} \Vert_{\dot{f}^{\alpha q}_p (\{ A_Q \})}$ by $\Vert \vec{f} \Vert_{\dot{F}^{\alpha q}_p (W )}$.


\begin{Thm}\label{AQ-Wfuncspimbed}
Suppose $0<p<\infty, 0 < q \leq \infty, \alpha \in \R, \ffi \in \mathcal{A}$, and $W \in A_p$. Suppose $\{ A_Q \}_{Q \in \mathcal{Q}}$ is sequence of reducing operators of order $p$ for $W$. Then there exists $c>0$ such that for all $\vec{f}  \in \dot{F}^{\alpha q}_p ( W )$,
\beq \label{AQWest}
\Vert \vec{f} \, \Vert_{\dot{F}^{\alpha q}_p ( \{ A_Q \} )} \leq c \Vert \vec{f} \, \Vert_{\dot{F}^{\alpha q}_p ( W )} .
\eeq 
\end{Thm}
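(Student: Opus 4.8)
The plan is to mimic the proof of Theorem~\ref{firstAQimbed}, changing only the single step in which the weak doubling estimate is used. In that proof, after the sampling identity, the trivial $\ell^A\hookrightarrow\ell^1$ embedding, and averaging over $y\in Q_{00}$, one arrives at
\[ \sup_{x\in Q_{jk}}\left|A_{Q_{jk}}\ffi_j\ast\vec f(x)\right|^A \le c\sum_{\ell\in\Z^n}(1+|k-\ell|)^{-RA}\,2^{jn}\int_{Q_{j\ell}}\left|A_{Q_{jk}}\ffi_j\ast\vec f(s)\right|^A\,ds, \]
after which weak doubling is invoked to replace $A_{Q_{jk}}$ by $A_{Q_{j\ell}}$ inside the integral. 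Instead, for $s\in Q_{j\ell}$ I would insert $W^{-1/p}(s)W^{1/p}(s)$ and split
\[ \left|A_{Q_{jk}}\ffi_j\ast\vec f(s)\right| \le \Vert A_{Q_{jk}}A_{Q_{j\ell}}^{-1}\Vert\,\Vert A_{Q_{j\ell}}W^{-1/p}(s)\Vert\,\left|W^{1/p}(s)\ffi_j\ast\vec f(s)\right|. \]
Since $W\in A_p$ is a doubling matrix weight, Lemma~\ref{doublemma} shows that $\{A_Q\}$ is strongly doubling of order $(\beta,p)$ for the doubling exponent $\beta$ of $W$, hence weakly doubling of order $r=\beta/p$ by \eqref{weakdoubest}, so $\Vert A_{Q_{jk}}A_{Q_{j\ell}}^{-1}\Vert\le c(1+|k-\ell|)^r$; the remaining factor $\Vert A_{Q_{j\ell}}W^{-1/p}(s)\Vert$ is controlled by Lemma~\ref{GoldbergLemmaple1} (if $0<p\le1$) or Lemma~\ref{GoldbergLemma} (if $p>1$). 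The bound that results is now in terms of $W$ rather than $\{A_Q\}$, and since the left side of \eqref{suptriebest} majorizes $\Vert\vec f\Vert_{\dot F^{\alpha q}_p(\{A_Q\})}$, this yields \eqref{AQWest}; in the same way one gets the sequence bound $\Vert\{\langle\vec f,\tilde\ffi_Q\rangle\}\Vert_{\dot f^{\alpha q}_p(\{A_Q\})}\le c\Vert\vec f\Vert_{\dot F^{\alpha q}_p(W)}$.

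For $0<p\le1$ the substitution is immediate: by \eqref{equivApcondp<1}, $\Vert A_{Q_{j\ell}}W^{-1/p}(s)\Vert\le c$ for a.e.\ $s\in Q_{j\ell}$, so the splitting above gives, for a.e.\ $s\in Q_{j\ell}$,
\[ \left|A_{Q_{jk}}\ffi_j\ast\vec f(s)\right|\le c(1+|k-\ell|)^{r}\left|W^{1/p}(s)\ffi_j\ast\vec f(s)\right|, \]
whence inequality \eqref{estforlater} holds with $|W^{1/p}(s)\ffi_j\ast\vec f(s)|$ in place of $|A_{Q_{j\ell}}\ffi_j\ast\vec f(s)|$ and the exponent $A(R-r)$ unchanged. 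From there one repeats the proof of Theorem~\ref{firstAQimbed} word for word, choosing $R$ with $A(R-r)>2n$, applying \eqref{newmaxineq} with $h=\sum_{Q\in\mathcal D_j}(2^{j\alpha}|W^{1/p}\ffi_j\ast\vec f|\chi_Q)^A$, and then the Fefferman--Stein inequality with indices $p/A,q/A>1$, to conclude \eqref{AQWest}.

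For $p>1$ the factor $\Vert A_{Q_{j\ell}}W^{-1/p}(\cdot)\Vert$ is no longer bounded on $Q_{j\ell}$, only in $L^{r}(Q_{j\ell},dx/|Q_{j\ell}|)$ for $r<p'+\delta$ by \eqref{equivApcond}; I would absorb it by two applications of H\"older's inequality. First, in $s$ with exponents $2$ and $2$,
\[ 2^{jn}\int_{Q_{j\ell}}\left|A_{Q_{jk}}\ffi_j\ast\vec f(s)\right|^A\,ds \le c(1+|k-\ell|)^{rA}\left(2^{jn}\int_{Q_{j\ell}}\left|W^{1/p}(s)\ffi_j\ast\vec f(s)\right|^{2A}\,ds\right)^{1/2}, \]
using $\left(2^{jn}\int_{Q_{j\ell}}\Vert A_{Q_{j\ell}}W^{-1/p}(s)\Vert^{2A}\,ds\right)^{1/2}\le c$ from \eqref{equivApcond}, which is legitimate once $A$ is chosen small enough that $2A<p'+\delta$. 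Second, in the sum over $\ell$ with exponents $2$ and $2$, using $\sum_\ell(1+|k-\ell|)^{-A(R-r)}<\infty$; this produces
\[ \sup_{x\in Q_{jk}}\left|A_{Q_{jk}}\ffi_j\ast\vec f(x)\right|^{2A} \le c\sum_{\ell\in\Z^n}(1+|k-\ell|)^{-A(R-r)}\,2^{jn}\int_{Q_{j\ell}}\left|W^{1/p}(s)\ffi_j\ast\vec f(s)\right|^{2A}\,ds, \]
which is exactly \eqref{estforlater} with the exponent $A$ replaced by $2A$ and $|W^{1/p}\ffi_j\ast\vec f|$ in place of $|A_{Q_{j\ell}}\ffi_j\ast\vec f|$. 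Choosing $R$ large so that $A(R-r)>2n$, one then finishes as before via \eqref{newmaxineq} (with $h=\sum_{Q\in\mathcal D_j}(2^{j\alpha}|W^{1/p}\ffi_j\ast\vec f|\chi_Q)^{2A}$) and Fefferman--Stein with indices $p/(2A),q/(2A)>1$, the final step untangling via $\sum_{Q\in\mathcal D_j}\chi_Q\equiv1$; this needs $A\le1$, $2A<p'+\delta$, and $2A<\min(p,q)$, all arranged by taking $A$ sufficiently small.

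I expect the only real obstacle to be the bookkeeping in the $p>1$ case: the two H\"older steps raise the exponent on $|W^{1/p}\ffi_j\ast\vec f|$ from $A$ to $2A$, and one must check that $A$ can be chosen $\le1$ (for the $\ell^A\hookrightarrow\ell^1$ embedding in the proof of Theorem~\ref{firstAQimbed}), with $2A<p'+\delta$ (for the reverse-H\"older bound \eqref{equivApcond}), and with $2A<\min(p,q)$ (for the Fefferman--Stein step) all at once; since $\delta>0$, such an $A$ plainly exists. Everything else is a line-by-line transcription of the proof of Theorem~\ref{firstAQimbed}.
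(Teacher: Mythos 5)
Your proposal is correct and takes essentially the same route as the paper: you modify the sampling argument of Theorem \ref{firstAQimbed} at the weak-doubling step by splitting off $\Vert A_{Q_{j\ell}}W^{-1/p}(s)\Vert$, controlling it via \eqref{equivApcondp<1} when $0<p\le 1$ and via \eqref{equivApcond} plus H\"older when $p>1$, and then finishing with \eqref{newmaxineq} and Fefferman--Stein, exactly as the paper does. The only (harmless) differences are that the paper uses H\"older with exponents $p^{\,\prime}/(p^{\,\prime}-A)$ and $p^{\,\prime}/A$ (raising the exponent to $At$ with $t$ near $1$) where you use Cauchy--Schwarz (exponent $2A$, requiring $2A<p^{\,\prime}+\delta$, which small $A$ gives), and that the paper deduces \eqref{AQWest} from the coefficient estimate \eqref{triebestforW} combined with \eqref{convtriebelest}, whereas you read it off directly from the sup-version estimate, which indeed dominates $\Vert \vec{f}\Vert_{\dot{F}^{\alpha q}_p(\{A_Q\})}$ pointwise.
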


\begin{proof}
Let $\ffi \in \mathcal{A}$ be the test function in the definition of $\dot{F}^{\alpha q}_p ( \{ A_Q \} )$ and $\dot{F}^{\alpha q}_p ( W )$.  Let $\tilde{\ffi} (x)= \overline{\ffi(-x)}$. We will show 
\beq \label{triebestforW}
\left\Vert  \left\{  \langle \vec{f}, \tilde{\ffi}_{Q} \rangle \right\}_{Q \in \mathcal{Q}} \right\Vert_{\dot{f}^{\alpha q}_p (\{ A_Q \})} \leq c \Vert \vec{f} \Vert_{\dot{F}^{\alpha q}_p (W )} . 
\eeq
Then (\ref{AQWest}) follows from this estimate and (\ref{convtriebelest}) with $\varphi$ replaced by $\tilde{\varphi} \in \mathcal{A}$, noting that $W \in A_p$ implies that any sequence of reducing operators $\{ A_Q \}_{Q \in \mathcal{Q}}$ of order $p$ for $W$ is strongly doubling of order $(\beta, p)$ for some $\beta>0$, by Lemma \ref{doublemma}.

In particular, $\{ A_Q\}$ is weakly doubling of some order $r>0$. Therefore, we have the estimate (\ref{estforlater}), obtained in the proof of Theorem \ref{firstAQimbed}, where $A \in (0,1]$ can be taken arbitrarily small and $R$ can be taken arbitrarily large, depending on $A$ if necessary.  

For $0 < p \leq 1$, we obtain 
\[\left|  A_{Q_{j \ell}} \ffi_j \ast \vec{f} (x)   \right|^A   \leq c \left|  W^{1/p}(x) \ffi_j \ast \vec{f} (x) \right|^A   \,\,\, a.e. \,\, \mbox{on} \,\, Q_{j \ell},  \]
by applying (\ref{equivApcondp<1}).  Substituting this estimate on the right side of (\ref{estforlater}),  
\[ \sup_{x \in Q_{j k}} \left|A_{Q_{j k}} \ffi_j \ast \vec{f} (x) \right|^A  \leq c \sum_{\ell \in \Z^n} (1 + |k - \ell|)^{-A(R-r)} 2^{j n} \int_{Q_{j \ell}} \left|W^{1/p} \ffi_j \ast \vec{f}  \, \right|^A \, dx . \]
Proceeding as in the proof of Theorem \ref{firstAQimbed}, letting $h = \left|2^{j \al} W^{1/p} \ffi_j \ast \vec{f} \,  \right|^A$ in (\ref{newmaxineq}), we obtain (\ref{triebestforW}). 

For $1< p < \infty$, we apply H\"{o}lder's inequality with exponents $t= p^{\, \prime}/(p^{\, \prime}-A)$ and $t^{\, \prime} = p^{\, \prime}/A$, where $p^{\, \prime} = p/(p-1)$, to obtain, for $Q = Q_{j \ell}$,
\[  \int_Q \left| A_Q \ffi_j \ast \vec{f}   \, \right|^A \, dx \leq 
 \int_Q \Vert A_Q W^{-1/p} \Vert^A \left|  W^{1/p} \ffi_j\ast \vec{f} \,  \right|^A \, dx  \]
\[ \leq c \left(  \int_Q \Vert A_Q W^{-1/p} \Vert^{p^{\prime}} \, dx \right)^{A/p^{\prime}}   \left( \int_Q \left|  W^{1/p} \ffi_j \ast \vec{f} \, \right|^{Ap^{\prime}/(p^{\prime}-A)}\, dx \right)^{(p^{\prime}-A)/p^{\prime}} \]
\[ \leq c 2^{-jn/t^{\, \prime}} \left(  \int_Q \left| W^{1/p} \ffi_j \ast \vec{f} \,  \right|^{Ap^{\prime}/(p^{\prime}-A)} \, dx \right)^{(p^{\prime}-A)/p^{\prime}}, \]
by (\ref{equivApcond}), which is exactly why we need $W \in A_p$.   From (\ref{estforlater}) we obtain
\[ \sup_{x \in Q_{j k}} \left|A_{Q_{j k}} \ffi_j \ast \vec{f} ( x) \right|^A  \leq c \sum_{\ell \in \Z^n} (1 + |k - \ell|)^{-A(R-r)} \left(  2^{j n} \int_{Q_{j \ell}} \left|W^{1/p} \ffi_j \ast \vec{f}   \right|^{At} \, dx \right)^{1/t} .  \]
Applying H\"{o}lder's inequality with indices $t$ and $t^{\prime}$ gives
\[ \sup_{x \in Q_{j k}} \left|A_{Q_{j k}} \ffi_j \ast \vec{f} ( x) \right|^{At} \leq c \left( \sum_{\ell \in \Z^n} (1 + |k - \ell|)^{-A(R-r)} \right)^{t/t^{\prime}} \]
\[ \times \sum_{\ell \in \Z^n} (1 + |k - \ell|)^{-A(R-r)}   2^{j n} \int_{Q_{j \ell}} \left|W^{1/p} \ffi_j \ast \vec{f}  \, \right|^{At} \, dx   \]
\[ \leq c \sum_{\ell \in \Z^n} (1 + |k - \ell|)^{-A(R-r)}   2^{j n} \int_{Q_{j \ell}} \left|W^{1/p} \ffi_j \ast \vec{f} \,  \right|^{At} \, dx , \]
for $R$ sufficiently large.  Now the proof proceeds as for $0 < p \leq 1$, except with $A$ replaced by $At$.  We note that by decreasing $A$ if necessary, we can guarantee that $t = p^{\prime}/(p^{\prime}-A)$ is sufficiently close to $1$ that we still have $p/(At) >1$ and $q / (At) >1$.  This allows us to use the Fefferman-Stein vector-valued maximal inequality as before to obtain (\ref{triebestforW}).  
\end{proof}

The inequalities converse to those in Theorems \ref{AQ-Wseqspimbed} and \ref{AQ-Wfuncspimbed} require some preparatory lemmas.

\begin{Lemma}\label{carl}
Suppose $\alpha = \{ \alpha_j \}_{j\in \Z}$ is a sequence of non-negative measurable functions on $\R^n$ such that 
\[ \Vert \alpha \Vert_{\mathcal{C}} = \sup_{Q \in \mathcal{D}} \frac{1}{|Q|} \int_Q \sup_{j \in \Z: 2^{- j} \leq \ell(Q) } \alpha_j (x) \, dx < \infty . \]
Then, for any sequence $\{ g_j \}_{j \in \Z}$ of functions on $\R^n$ such that for every $j \in \Z$, $g_j$ is constant on each dyadic cube $Q$ with $\ell(Q) = 2^{-j}$, we have
\begin{equation} \label{alphaineq}
\Vert \sup_{j \in \Z} |\alpha_j g_j | \Vert_{L^1} \leq \Vert \alpha \Vert_{\mathcal{C}} \Vert \sup_{j \in \Z} | g_j | \Vert_{L^1} . 
\end{equation}
\end{Lemma}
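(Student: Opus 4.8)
The plan is to prove \eqref{alphaineq} by a layer--cake (Cavalieri) decomposition in the ``height'' variable, arranged so that the Carleson--type hypothesis $\Vert\alpha\Vert_{\mathcal{C}}<\infty$ is used exactly once, through a stopping--cube argument, yielding the sharp constant $1$. We may assume $\Vert\sup_j|g_j|\Vert_{L^1}<\infty$, since otherwise there is nothing to prove. Set $G(x)=\sup_{j\in\Z}|g_j(x)|$ and, for $\lambda>0$, put $T_\lambda(x)=\sup\{\alpha_j(x): j\in\Z,\ |g_j(x)|>\lambda\}$, with the convention $T_\lambda(x)=0$ when this index set is empty; note $T_\lambda\equiv 0$ off $E_\lambda:=\{x: G(x)>\lambda\}$.

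First I would establish the pointwise bound
\[ \sup_{j\in\Z}\alpha_j(x)|g_j(x)| \ \le\ \int_0^\infty T_\lambda(x)\,d\lambda, \qquad x\in\R^n. \]
Indeed, for each fixed $j$ one has $\alpha_j(x)|g_j(x)|=\int_0^\infty \alpha_j(x)\,\chi_{\{\lambda<|g_j(x)|\}}\,d\lambda$, and for every $\lambda$ in the range of integration the index $j$ belongs to $\{j':|g_{j'}(x)|>\lambda\}$, so the integrand is $\le T_\lambda(x)$; since the right-hand side does not depend on $j$, taking the supremum over $j$ gives the claim. Integrating over $\R^n$ and applying Tonelli's theorem (all functions in sight being countable suprema of jointly measurable functions), and using that $T_\lambda$ vanishes off $E_\lambda$,
\[ \bigl\Vert \sup_j |\alpha_j g_j|\bigr\Vert_{L^1} \ \le\ \int_0^\infty \Bigl(\int_{E_\lambda} T_\lambda(x)\,dx\Bigr)\,d\lambda. \]

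The geometric core is the estimate $\int_{E_\lambda}T_\lambda\le \Vert\alpha\Vert_{\mathcal{C}}\,|E_\lambda|$ for each $\lambda>0$. Since each $|g_j|$ is constant on the dyadic cubes of side $2^{-j}$, the set $\{|g_j|>\lambda\}$ is, up to a null set, a union of such cubes, hence so is $E_\lambda$; as $|E_\lambda|\le \lambda^{-1}\Vert G\Vert_{L^1}<\infty$, every dyadic cube inside $E_\lambda$ lies in a maximal one, and the maximal cubes $\{Q_{\lambda,m}\}_m$ cover $E_\lambda$ with pairwise disjoint interiors. Now if $x\in Q_{\lambda,m}$ and $|g_j(x)|>\lambda$, then the dyadic cube $P\ni x$ of side $2^{-j}$ satisfies $P\subseteq E_\lambda$ (by constancy of $|g_j|$ on $P$), so by maximality $P\subseteq Q_{\lambda,m}$, i.e.\ $2^{-j}\le \ell(Q_{\lambda,m})$. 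Hence $T_\lambda(x)\le \sup_{j:\,2^{-j}\le \ell(Q_{\lambda,m})}\alpha_j(x)$ for $x\in Q_{\lambda,m}$, and the definition of $\Vert\alpha\Vert_{\mathcal{C}}$ gives $\int_{Q_{\lambda,m}}T_\lambda\le \Vert\alpha\Vert_{\mathcal{C}}\,|Q_{\lambda,m}|$; summing over $m$ proves the claim. Substituting this back and invoking the layer--cake formula $\int_0^\infty|E_\lambda|\,d\lambda=\Vert G\Vert_{L^1}=\Vert\sup_j|g_j|\Vert_{L^1}$ completes the proof with constant $1$.

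The routine ingredients --- the pointwise height bound and the two appeals to Cavalieri's/Tonelli's theorem --- are straightforward. The step requiring care, and the main obstacle, is the stopping--cube argument: showing that for $x$ in a maximal cube $Q$ of $E_\lambda$ only the scales $2^{-j}\le \ell(Q)$ contribute to $T_\lambda(x)$. This is precisely where the dyadic structure and the hypothesis that each $g_j$ is constant on dyadic cubes of side $2^{-j}$ enter, and it is what allows the single lossless use of the Carleson condition that produces the constant $\Vert\alpha\Vert_{\mathcal{C}}$.
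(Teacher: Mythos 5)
Your proof is correct, and it reaches the stated inequality with the same constant $\Vert\alpha\Vert_{\mathcal{C}}$, but by a genuinely different route than the paper. The paper first monotonizes in $j$ (replacing $\alpha_j$ by $\beta_j=\sup_{j'\ge j}\alpha_{j'}$ and $g_j$ by its running maximum, truncated below scale $2^{N}$), and then runs an induction on the finest scale: writing $g_{\ell+1}=g_\ell+\sum_k d_{\ell+1,k}\chi_{Q_{\ell+1,k}}$, it applies the Carleson hypothesis to each new bump at scale $2^{-(\ell+1)}$ and removes the truncations by monotone convergence. You instead slice in the height variable: with $G=\sup_j|g_j|$, $E_\lambda=\{G>\lambda\}$ and $T_\lambda(x)=\sup\{\alpha_j(x):|g_j(x)|>\lambda\}$, the pointwise bound $\sup_j\alpha_j|g_j|\le\int_0^\infty T_\lambda\,d\lambda$ plus Tonelli reduces everything to $\int_{E_\lambda}T_\lambda\le\Vert\alpha\Vert_{\mathcal{C}}|E_\lambda|$, which you get from the maximal (stopping) dyadic cubes of $E_\lambda$: the constancy of $g_j$ at scale $2^{-j}$ forces every contributing scale at a point of a maximal cube $Q$ to satisfy $2^{-j}\le\ell(Q)$, so the Carleson condition is applied once per stopping cube, and the layer-cake identity $\int_0^\infty|E_\lambda|\,d\lambda=\Vert G\Vert_{L^1}$ finishes. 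The small points you should keep straight are exactly the ones you flag: $|E_\lambda|\le\lambda^{-1}\Vert G\Vert_{L^1}<\infty$ (after the harmless reduction to $\Vert G\Vert_{L^1}<\infty$) is what guarantees maximal cubes exist, and the boundary overlaps of closed dyadic cubes are null sets, so all nesting/covering statements need only hold a.e., which suffices for the integral estimates. Your argument is the classical Carleson-embedding/stopping-cube proof: it avoids the paper's reductions and induction and is arguably more transparent, at the cost of invoking Tonelli and the level-set formalism; the paper's proof is more hands-on and purely discrete, building the estimate scale by scale. Either is a complete proof of the lemma.
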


\begin{proof}
Let $\beta_j (x) = \sup_{j^{\, \prime} \in \Z: j^{\, \prime} \geq j} \alpha_{j^{\, \prime}} (x)$, for all $j \in \Z$.  Then
\begin{equation}
\sup_{Q \in \mathcal{D}_j } \frac{1}{|Q|} \int_Q \beta_{j} (x) \, dx \leq \Vert \alpha \Vert_{\mathcal{C}} ,  \label{betacarl}
\end{equation}
for every $j \in \Z$.  We will prove 
\begin{equation}
\Vert \sup_{j \in \Z} |\beta_j  g_j | \Vert_{L^1} \leq \Vert \alpha \Vert_{\mathcal{C}} \Vert \sup_{j \in \Z} | g_{j} | \Vert_{L^1} , \label{betaineq}
\end{equation}
which implies (\ref{alphaineq}).  Note that $\beta_{j + 1} \leq \beta_{j}$ for all $j \in \Z$.  

To prove (\ref{betaineq}), we assume $g_j \geq 0$ for all $j \in \Z$.  We also assume that there exists $N >0$ such that $g_j = 0$ for all $j< -N$.  To see this reduction, suppose we have (\ref{betaineq}) under this extra assumption.  Then for general $\{ g_j \}_{j \in \Z}$, let $g_j^{(N)} = g_j$ if $j\geq - N$ and $g_j^{(N)} = 0$ if $j<- N$.  Then $\sup_{j \in \Z}  \beta_j g_j^{(N)} (x)$ is nondecreasing in $N$ and converges to $\sup_{j \in \Z}  \beta_j g_j (x)$ for each $x$ as $N \rightarrow \infty$.  Applying the monotone convergence theorem to both sides of the inequality $\Vert \sup_{j \in \Z} (\beta_j g_j^{(N)})  \Vert_{L^1}  \leq \Vert \alpha \Vert_{\mathcal{C}} \Vert \sup_{j \in \Z}  g_j^{(N)}  \Vert_{L^1}$ yields (\ref{betaineq}).  We also assume that $g_{j + 1} \geq g_{j}$ for all $j \in \Z$.  If the result is known in this case, then for general $\{ g_j \}_{j \in \Z}$, we let $h_j = \sup_{j^{\, \prime} \in \Z: j^{\, \prime} \leq j} g_{j^{\, \prime}}$, so that the sequence $\{ h_j \}_{j \in \Z}$ is nondecreasing, and still satisfies the condition that $h_j$ is constant on dyadic cubes of side length $2^{-j}$.  Then 
\[  \Vert \sup_{j \in \Z} ( \beta_j g_j ) \Vert_{L^1} \leq  \Vert \sup_{j \in \Z} ( \beta_j h_j ) \Vert_{L^1} \leq  \Vert \alpha \Vert_{\mathcal{C}} \Vert \sup_{j \in \Z}  h_j  \Vert_{L^1} = \Vert \alpha \Vert_{\mathcal{C}} \Vert \sup_{j \in \Z}  g_j  \Vert_{L^1} . \]

We will show that 
\begin{equation}
 \Vert \max_{j: -N \leq j \leq \ell} ( \beta_j g_j ) \Vert_{L^1} \leq  \Vert \alpha \Vert_{\mathcal{C}} \Vert g_{\ell}  \Vert_{L^1}  \label{finiteest} 
\end{equation}
by induction on $\ell$ starting with $\ell = -N$. Then letting $\ell \rightarrow \infty$ and applying the monotone convergence theorem completes the proof.  The case $\ell = -N$ is easy; writing $g_{-N} = \sum_{k \in \Z^n} c_{-N, k} \chi_{Q_{-N, k}}$, where each $c_{-N,k}$ is a non-negative constant, we have
\[ \Vert  \beta_{-N} g_{-N}  \Vert_{L^1} = \sum_{k \in \Z^n} c_{-N, k} \int_{Q_{-N, k}} \beta_{-N} \]
\[ \leq \Vert \alpha \Vert_{\mathcal{C}} \sum_{k \in \Z^n} c_{-N, k} | Q_{-N, k} | = \Vert \alpha \Vert_{\mathcal{C}} \Vert   g_{-N}  \Vert_{L^1}, \]
by (\ref{betacarl}).  

Now we assume (\ref{finiteest}) for $\ell$. To prove it for $\ell +1$, note that because the $g_j$'s are nondecreasing and constant on dyadic cubes of side length $2^{-j}$, we can write 
\[ g_{\ell +1}(x) = g_{\ell}(x) + \sum_{k \in \Z^n} d_{\ell +1, k} \chi_{Q_{\ell +1, k}}(x) , \]
where each $d_{\ell +1, k}$ is a non-negative constant.  Hence, 
\[ \beta_{\ell +1} g_{\ell +1} = \beta_{\ell +1} g_{\ell} + \beta_{\ell +1} \sum_{k} d_{\ell +1, k} \chi_{Q_{\ell +1, k}} \leq  \beta_{\ell } g_{\ell} + \beta_{\ell + 1} \sum_{k} d_{\ell +1, k} \chi_{Q_{\ell +1, k}}, \]
because the $\beta_j$ are nonincreasing.  Therefore,
\[ \max_{j: -N \leq j \leq \ell + 1} \beta_j g_j \leq \beta_{\ell +1} \sum_{k} d_{\ell +1, k} \chi_{Q_{\ell +1, k}} + \max_{j: -N \leq j \leq \ell} \beta_j g_j . \]
Consequently, 
\[ \Vert \max_{j: -N \leq j \leq \ell + 1} ( \beta_j g_j ) \Vert_{L^1} = \sum_{m \in \Z^n} \int_{Q_{\ell, m}} \max_{j: -N \leq j \leq \ell + 1} ( \beta_j g_j ) \]
\[  \leq \sum_{m \in \Z^n} \sum_{k: Q_{\ell +1, k} \subseteq Q_{\ell, m}} d_{\ell +1, k} \int_{Q_{\ell +1, k}} \beta_{\ell +1} +  \sum_{m \in \Z^n} \int_{Q_{\ell, m}} \max_{j: -N \leq j \leq \ell } ( \beta_j g_j ) \]
\[ \leq \Vert \alpha \Vert_{\mathcal{C}} \sum_{m \in \Z^n} \sum_{k: Q_{\ell +1, k} \subseteq Q_{\ell, m}} d_{\ell +1, k} |Q_{\ell +1, k}| + \int_{\R^n} \max_{j: -N \leq j \leq \ell } ( \beta_j g_j )  \]
\[\leq \Vert \alpha \Vert_{\mathcal{C}} \sum_{m \in \Z^n} \int_{Q_{\ell, m}} ( g_{\ell +1} - g_{\ell} ) + \Vert \alpha \Vert_{\mathcal{C}} \int_{\R^n} g_{\ell}    = \Vert \alpha \Vert_{\mathcal{C}} \Vert g_{\ell + 1}  \Vert_{L^1}, \]
by (\ref{betacarl}) and the induction hypothesis.  This completes the induction step and hence the proof.
\end{proof}

For $0<p,q \leq \infty$, we define the space $L^p (\ell^q)$ to consist of all sequences $\{ f_j \}_{j \in \Z}$ of scalar-valued measurable functions on $\R^n$ such that
\[ \Vert \{ f_j \} \Vert_{L^p (\ell^q)}  =  \Vert \Vert \{ f_j \} \Vert_{\ell^q(\Z)} \Vert_{L^p(\R^n)} < \infty .  \]
We define $E_j$, the averaging operator at level $j$, acting on a locally integrable function $f$ on $\R^n$, by
\[ E_j (f) = \sum_{Q \in \mathcal{D}_j} \left( \frac{1}{|Q|} \int_Q f  \right) \chi_Q . \]

\begin{Thm}\label{Naz} (Nazarov)  Suppose $\{ \gamma_j \}_{j \in \Z}$ is a sequence of non-negative measurable functions on $\R^n$. 
  
\vspace{0.1in}  
  
(i)  Suppose $0 < q \leq p < \infty$, and $\{ \gamma_j \}_{j \in \Z}$ satisfies 
\begin{equation}
\sup_{Q \in \mathcal{D}_j} \frac{1}{|Q|} \int_{Q} \gamma_j^{p (1+\delta)} \leq c,  \label{firstgamcond}
\end{equation}
for some $c, \delta >0$, independent of $j \in \Z$.  Then there exists $c>0$ such that for any sequence $\{ f_j \}_{j \in \Z}$ of measurable functions on $\R^n$,
\begin{equation}
\Vert \{ \gamma_j E_j (f_j) \} \Vert_{L^p ( \ell^q)} \leq c \Vert \{  E_j (f_j) \} \Vert_{L^p ( \ell^q)}.  \label{firstgammaest}
\end{equation}
If $1 \leq p < \infty$, we also have 
\begin{equation}
\Vert \{ \gamma_j E_j (f_j) \} \Vert_{L^p ( \ell^1)} \leq c \Vert \{  f_j \} \Vert_{L^p ( \ell^1)}.  \label{secgammaest}
\end{equation}

\vspace{0.1in}

(ii) Suppose $1 < p < \infty, 1 \leq q \leq \infty$, and $\{ \gamma_j \}_{j \in \Z}$ satisfies 
\begin{equation}
\sup_{Q \in \mathcal{D}} \frac{1}{|Q|} \int_{Q} \sup_{j \in \Z: 2^{- j} \leq \ell(Q)} \gamma_j^{p(1 + \delta) } < \infty ,  \label{secgamcond}
\end{equation}
for some $\delta >0$.  Then there exists $c>0$ such that for any sequence $\{ f_j \}_{j \in \Z}$ of measurable functions on $\R^n$,
\begin{equation}
\Vert \{ \gamma_j E_j (f_j) \} \Vert_{L^p ( \ell^q)} \leq c \Vert \{  f_j \} \Vert_{L^p ( \ell^q)}.  \label{thirdgammaest}
\end{equation}
\end{Thm}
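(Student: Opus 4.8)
The plan is to prove the three inequalities \eqref{firstgammaest}, \eqref{secgammaest}, \eqref{thirdgammaest} by one recurring mechanism. The key feature is that $E_j(f_j)$ is constant on each cube $Q \in \mathcal{D}_j$; this lets me pass to the dual side when $q < \infty$, or to Lemma~\ref{carl} when $q = \infty$, and in either case the problem reduces to comparing cube-averages of $\gamma_j$ with cube-averages of the dual function via H\"older's inequality, the latter controlled by a Hardy--Littlewood maximal function. The scheme works precisely because the exponent $\delta > 0$ leaves exactly the room needed for that maximal operator to be bounded on the Lebesgue space that appears.

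For \eqref{firstgammaest} I would put $g_j = E_j(f_j)$, constant on each $Q \in \mathcal{D}_j$, and use $q \le p$ to write $\Vert \{\gamma_j g_j\}\Vert_{L^p(\ell^q)}^q = \big\Vert \sum_j (\gamma_j|g_j|)^q \big\Vert_{L^{p/q}}$. Dualizing against $0 \le h$ with $\Vert h\Vert_{(p/q)'} \le 1$ and using the constancy of $|g_j|^q$ on the cubes of $\mathcal{D}_j$, the matter reduces to estimating $\frac{1}{|Q|}\int_Q \gamma_j^q h$ for $Q \in \mathcal{D}_j$; I would bound this by H\"older with exponents $p(1+\delta)/q$ and its conjugate, controlling the $\gamma_j$-factor by \eqref{firstgamcond} and the $h$-factor by an $r$-th order maximal function of $h$ with $r = (p(1+\delta)/q)'$, the powers of $|Q|$ recombining into a single $|Q|$. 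Summing back over $Q$ and $j$ and applying H\"older on $\R^n$, the proof closes once $M$ is bounded on $L^{(p/q)'/r}$, and the needed inequality $(p/q)' > r$ turns out to be equivalent to $\delta > 0$. (For $q = p$ one integrates directly.) Estimate \eqref{secgammaest} is the same argument with $q = 1$: dualize $\Vert \sum_j \gamma_j E_j(|f_j|)\Vert_{L^p}$ against $0 \le h \in L^{p'}$, estimate $\frac{1}{|Q|}\int_Q \gamma_j h$ by H\"older with exponents $p(1+\delta)$ and its conjugate together with a maximal function, sum in $j$, and finish via boundedness of $M$ on $L^{p'/r}$ with $r = (p(1+\delta))'$ — again equivalent to $\delta > 0$ (the case $p = 1$ being trivial).

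For part (ii) I would first observe that \eqref{secgamcond} implies, by Jensen's inequality inside the average together with the commutation of powers and suprema, both the per-level condition \eqref{firstgamcond} and the finiteness of the Carleson-type quantity of Lemma~\ref{carl} applied to $\{\gamma_j^p\}$. Hence for $1 \le q \le p$, \eqref{thirdgammaest} follows from \eqref{firstgammaest} combined with $\Vert \{E_j(f_j)\}\Vert_{L^p(\ell^q)} \le c \Vert \{f_j\}\Vert_{L^p(\ell^q)}$ (the Fefferman--Stein inequality \cite{FS} for $q > 1$, and the two-line duality bound $\int \sum_j E_j(f_j)\,h = \sum_j \int f_j\, E_j h \le \int (\sum_j f_j)\,Mh$ for $q = 1$). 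For $q = \infty$ I would set $F = \sup_k |f_k|$, note $\sup_j \gamma_j |E_j(f_j)| \le \sup_j \gamma_j E_j(F)$, and apply Lemma~\ref{carl} with $\alpha_j = \gamma_j^p$ and the level-constant functions $(E_j F)^p$ to obtain $\Vert \sup_j \gamma_j E_j(F)\Vert_{L^p}^p \le c \Vert \sup_j E_j(F)\Vert_{L^p}^p \le c\Vert M F\Vert_{L^p}^p \le c\Vert F\Vert_{L^p}^p$, which is \eqref{thirdgammaest} for $q = \infty$. Finally, the operator $S : \{f_j\} \mapsto \{\gamma_j E_j(f_j)\}$ is linear and, by the last two facts, bounded on $L^p(\ell^p)$ and on $L^p(\ell^\infty)$; complex interpolation of this couple, using the standard identification $[L^p(\ell^p), L^p(\ell^\infty)]_\theta = L^p(\ell^q)$, then yields \eqref{thirdgammaest} in the remaining range $p < q < \infty$.

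The routine-but-delicate part is the exponent bookkeeping — checking that each H\"older split leaves precisely $|Q|^1$, so the per-cube averages become genuine maximal-function values, and that the residual Lebesgue exponent for $M$ exceeds $1$ exactly when $\delta > 0$. The one genuinely structural obstacle is the range $p < q < \infty$ in part (ii): there neither duality (since $p/q < 1$) nor Lemma~\ref{carl} applies directly, and I handle it by interpolating the single operator $S$ between its $\ell^p$ and $\ell^\infty$ endpoints; this also clarifies why part (ii) needs the Carleson hypothesis \eqref{secgamcond} rather than merely the per-level \eqref{firstgamcond}, as \eqref{secgamcond} is exactly what powers the $L^p(\ell^\infty)$ endpoint through Lemma~\ref{carl}. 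Standard preliminary reductions (truncating the range of $j$, taking $f_j \ge 0$, monotone convergence) allow all quantities to be assumed finite while the above estimates are carried out.
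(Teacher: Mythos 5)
Your argument is correct, and for the main estimate \eqref{firstgammaest} it coincides with the paper's proof (dualize the $L^{p/q}$ norm, use constancy of $E_j(f_j)$ on cubes of $\mathcal{D}_j$, H\"older with exponent $p(1+\delta)/q$ against \eqref{firstgamcond}, absorb the dual function into a maximal function, and use that $\delta>0$ leaves room for $M$ to be bounded on the residual Lebesgue space). Where you genuinely diverge is the $\ell^1$ bound and the organization of part (ii). For \eqref{secgammaest} the paper first proves the unweighted inequality $\Vert \{E_j(f_j)\}\Vert_{L^p(\ell^1)} \leq c \Vert \{f_j\}\Vert_{L^p(\ell^1)}$ (its \eqref{fourthgammaest}) by the elementary summation-by-parts inequality \eqref{sumbyparts} together with the identity $\int_Q |(E_j f_j)_Q| \leq \int_Q |f_j|$, an argument that ends with a division step and therefore needs the a priori finiteness reductions; it then combines this with \eqref{firstgammaest} at $q=1$. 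You instead fold everything into a single duality estimate against $h\in L^{p^{\,\prime}}$, using the self-adjointness of the dyadic averages and boundedness of $M$ on $L^{p^{\,\prime}/r}$; this is shorter and avoids the division/finiteness issue, at the cost of invoking $L^{p^{\,\prime}}$ duality and maximal-function boundedness rather than a purely elementary manipulation. In part (ii) the skeleton is the same (Lemma \ref{carl} with $\alpha_j=\gamma_j^p$ for the $q=\infty$ endpoint, then complex interpolation of the linear operator $\{f_j\}\mapsto\{\gamma_j E_j(f_j)\}$), but the paper interpolates the couple $\bigl(L^p(\ell^1),L^p(\ell^\infty)\bigr)$, with the $\ell^1$ endpoint supplied by \eqref{secgammaest}, and thereby never needs the Fefferman--Stein inequality, whereas you treat $1\leq q\leq p$ directly from part (i) plus Fefferman--Stein (or a duality bound at $q=1$) and interpolate $\bigl(L^p(\ell^p),L^p(\ell^\infty)\bigr)$ only for $p<q<\infty$; both interpolation identifications are standard and valid here, so the difference is one of economy rather than substance.
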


\begin{proof}
We begin with (\ref{firstgammaest}).  Let $t = p/q \geq 1$, and let $t^{\prime}$ be the conjugate index to $t$.  Observe that $E_j(f_j)$ is constant on any $Q \in \mathcal{D}_j$; we denote that constant value by $(E_j(f_j))_Q$.  Then
\[ \Vert \{ \gamma_j E_j (f_j) \} \Vert_{L^p ( \ell^q)}^q = \left\Vert \sum_{j \in \Z} |\gamma_j E_j (f_j) |^q  \right\Vert_{L^t}   = \sup_{\Vert g \Vert_{L^{t^{\prime}}} \leq 1} \left|\int_{\R^n} \sum_{j \in \Z} |\gamma_j E_j (f_j) |^q g  \right|  \]
\[ =  \sup_{\Vert g \Vert_{L^{t^{\prime}}} \leq 1} \sum_{j \in \Z}  \sum_{Q \in \mathcal{D}_j} | (E_j (f_j) )_Q |^q \int_{Q}    \gamma_j^q  | g  | . \]
We use H\"{o}lder's inequality with exponents $t_1 = p (1+\delta)/q = t(1 + \delta) > t$ and $t_1^{\prime}= t_1/(t_1 -1)$ to obtain, for $Q \in \cD_j$,
\[ \frac{1}{|Q|} \int_Q \gamma_j^q  | g  | \leq \left( \frac{1}{|Q|}  \int_Q \gamma_j^{p(1 + \delta)} \right)^{1/t_1} \left( \frac{1}{|Q|}  \int_Q |g|^{t_1^{\prime}} \right)^{1/t_1^{\prime}}  \leq \frac{c}{|Q|} \int_Q \left( M (|g|^{t_1^{\prime}}) \right)^{1/t_1^{\prime}} , \]
by (\ref{firstgamcond}) and because $\left( \frac{1}{|Q|} \int_Q |g|^{t_1^{\prime} } \right)^{1/t_1^{\, \prime}} \leq \left( M (|g|^{t_1^{\prime}})  \right)^{1/t_1^{\, \prime}} (x)$ for all $x \in Q$.  Substituting above gives
\[ \Vert \{ \gamma_j E_j (f_j) \} \Vert_{L^p ( \ell^q)}^q  \leq c \sup_{\Vert g \Vert_{L^{t^{\prime}}} \leq 1} \sum_{j \in \Z}  \sum_{Q \in \mathcal{D}_j} | (E_j (f_j) )_Q |^q \int_Q \left( M (|g|^{t_1^{\prime}}) \right)^{1/t_1^{\prime}} \]
\[ = c \sup_{\Vert g \Vert_{L^{t^{\prime}}} \leq 1}  \int_{\R^n} \sum_{j \in \Z}  \sum_{Q \in \mathcal{D}_j} | (E_j (f_j) )_Q |^q  \chi_Q \left( M (|g|^{t_1^{\prime}}) \right)^{1/t_1^{\prime}}  \]
\[ = c \sup_{\Vert g \Vert_{L^{t^{\prime}}} \leq 1}  \int_{\R^n} \sum_{j \in \Z}   |E_j (f_j)|^q   \left( M (|g|^{t_1^{\prime}}) \right)^{1/t_1^{\prime}} \]
\[ \leq c \sup_{\Vert g \Vert_{L^{t^{\prime}}} \leq 1} \left\Vert \sum_{j \in \Z}   |E_j (f_j)|^q \right\Vert_{L^t} \left\Vert \left( M (|g|^{t_1^{\prime}}) \right)^{1/t_1^{\prime}} \right\Vert_{L^{t^{\prime}}}. \]
Noting that $t_1^{\prime} < t^{\prime}$, and applying the boundedness of the maximal operator on $L^{t^{\prime}/t_1^{\prime}}$, we have $\left\Vert \left( M (|g|^{t_1^{\prime}}) \right)^{1/t_1^{\prime}} \right\Vert_{L^{t^{\prime}}} \leq c \Vert g \Vert_{L^{t^{\prime}}}$.  Recalling that $t = p/q$, we obtain
\[ \Vert \{ \gamma_j E_j (f_j) \} \Vert_{L^p ( \ell^q)}^q  \leq c \left\Vert \sum_{j \in \Z}   |E_j (f_j)|^q \right\Vert_{L^t}  = c \Vert \{  E_j (f_j) \} \Vert_{L^p ( \ell^q)}^q, \]
as desired.

We now consider (\ref{secgammaest}).  It will follow from (\ref{firstgammaest}) for $q=1$ and the claim:
\begin{equation}
\Vert \{ E_j (f_j) \} \Vert_{L^p ( \ell^1)} \leq c \Vert \{  f_j \} \Vert_{L^p ( \ell^1)}  \label{fourthgammaest}
\end{equation}
for $1 \leq p < \infty$.
To prove (\ref{fourthgammaest}), we can assume, by the monotone convergence theorem, that all but finitely many $f_j$ are identically $0$.  Of course we can assume that each $f_j \in L^p (\R^n)$, hence, $E_j (f_j) \in L^p (\R^n)$.  We use the elementary inequality 
\begin{equation}
 \left( \sum_{j \in \Z} |E_j (f_j)| \right)^p \leq p \sum_{j \in \Z} |E_j (f_j)| \left( \sum_{j^{\, \prime} \leq j } |E_{j^{\, \prime}} (f_{j^{\, \prime}} )| \right)^{p-1} \label{sumbyparts} .  
\end{equation}
To prove (\ref{sumbyparts}), note that for any finitely non-zero sequence of non-negative numbers $\{a_j \}_{j \in \Z}$, 
\[ \left( \sum_{j \in \Z} a_j \right)^p = \sum_{j \in \Z} \left( \left( \sum_{j^{\, \prime} = -\infty}^{j} a_{j^{\, \prime}} \right)^p -   \left( \sum_{j^{\, \prime} = -\infty}^{j-1} a_{j^{\, \prime}} \right)^p  \right), \]
and apply $A^p - B^p \leq p (A-B) A^{p-1}$ for $A\geq B \geq 0$.  

Applying (\ref{sumbyparts}), we have 
\[ \Vert \{ E_j (f_j) \} \Vert_{L^p ( \ell^1)}^p \leq p \int_{\R^n} \sum_{j \in \Z} |E_j (f_j)| \left( \sum_{j^{\, \prime} \leq j } |E_{j^{\, \prime}} (f_{j^{\, \prime}} )| \right)^{p-1}  \]
\[ = p \sum_{j \in \Z} \int_{\R^n} \sum_{ Q \in \mathcal{D}_j } | (E_j (f_j))_Q | \chi_Q \left( \sum_{j^{\, \prime} \leq j } |E_{j^{\, \prime}} (f_{j^{\, \prime}} )| \right)^{p-1}  \]
\[ = p \sum_{j \in \Z} \sum_{ Q \in \mathcal{D}_j } \left( \sum_{j^{\, \prime} \leq j } |(E_{j^{\, \prime}} (f_{j^{\, \prime}} ))_Q| \right)^{p-1}  \int_{Q} | E_j (f_j)_Q | , \]
because each $E_{j^{\, \prime}} (f_{j^{\, \prime}})$ is constant on each $Q \in \mathcal{D}_j$ for $j^{\, \prime} \leq j$.  But $\int_{Q} |(E_j(f_j))_Q | = |Q|| (E_j (f_j))_Q | = \left| \int_Q f_j \right| \leq \int_Q |f_j|$.  Substituting above, we have
\[ \Vert \{ E_j (f_j) \} \Vert_{L^p ( \ell^1)}^p  \leq p \sum_{j \in \Z} \sum_{ Q \in \mathcal{D}_j } \left( \sum_{j^{\, \prime} \leq j } |(E_{j^{\, \prime}} (f_{j^{\, \prime}} ))_Q| \right)^{p-1}  \int_{Q} |f_j|  \]
\[ = p \sum_{j \in \Z} \sum_{ Q \in \mathcal{D}_j }  \int_{Q} |f_j| \left( \sum_{j^{\, \prime} \leq j } |(E_{j^{\, \prime}} (f_{j^{\, \prime}} ))_Q| \right)^{p-1}  \]
\[ = p  \int_{\R^n} \sum_{j \in \Z}  \sum_{ Q \in \mathcal{D}_j } |f_j| \chi_Q  \left( \sum_{j^{\, \prime} \leq j } |E_{j^{\, \prime}} (f_{j^{\, \prime}} )| \right)^{p-1}   = p   \int_{\R^n} \sum_{j \in \Z} |f_j|  \left( \sum_{j^{\, \prime} \leq j } |E_{j^{\, \prime}} (f_{j^{\, \prime}} )| \right)^{p-1}  \]
\[ \leq p \left\Vert \sum_{j \in \Z} |f_j| \right\Vert_{L^p} \left\Vert \left(  \sum_{j \in \Z} |E_j (f_j)| \right)^{p-1} \right\Vert_{L^{p/(p-1)}}  = p  \Vert \{  f_j \} \Vert_{L^p ( \ell^1)} \Vert \{ E_j (f_j) \} \Vert_{L^p ( \ell^1)}^{p-1} .   \]
Our assumptions guarantee that $\Vert \{ E_j ( f_j) \} \Vert_{L^p ( \ell^1)} < \infty$, so by dividing, we obtain (\ref{fourthgammaest}).  

We first prove (\ref{thirdgammaest}) in the case $q = \infty$:
\begin{equation} \label{q=inftyest}
\Vert \sup_{j \in \Z} |\gamma_j E_j (f_j) | \Vert_{L^p} \leq c \Vert \sup_{j \in \Z}  |f_j | \Vert_{L^p} .  
\end{equation}
For $j \in \Z$, let $\alpha_j = \gamma_j^p$.  By (\ref{secgamcond}) and Jensen's inequality, $\Vert \alpha \Vert_{\mathcal{C}} < \infty$.  Hence, by Lemma \ref{carl}, 
\[ \Vert \sup_{j \in \Z} | \gamma_j E_j (f_j) | \Vert_{L^p}^p = \Vert \sup_{j \in \Z} \alpha_j |E_j (f_j)|^p \Vert_{L^1} \]
\[ \leq \Vert \alpha \Vert_{\mathcal{C}} \Vert \sup_{j \in \Z}  |E_j (f_j)|^p \Vert_{L^1} = \Vert \alpha \Vert_{\mathcal{C}} \Vert \sup_{j \in \Z} |  E_j (f_j) | \Vert_{L^p}^p  . \]
For $x \in \R^n$, let $Q$ be the dyadic cube of length $2^{- j}$ containing $x$.  Then $|E_j (f_j) (x)| = \left|  |Q|^{-1} \int_Q f_j \right| \leq M(f_j) (x)$.  Thus, 
\[ \Vert \sup_{j \in \Z} |  E_j (f_j) | \Vert_{L^p} \leq \Vert \sup_{j \in \Z} M  (f_j)  \Vert_{L^p} \leq \Vert M ( \sup_{j \in \Z}   (f_j) ) \Vert_{L^p} \leq c \Vert  ( \sup_{j \in \Z}   (f_j) ) \Vert_{L^p} , \]
since $1 < p < \infty$; i.e., (\ref{q=inftyest}) holds. 

The operator taking $\{ f_j \}_{j \in \Z} $ to $\{ \gamma_j E_j (f_j) \}_{j \in \Z}$ is linear.  By (\ref{q=inftyest}), it is bounded on $L^p ( \ell^{\infty})$.  By (\ref{secgammaest}), which holds because (\ref{firstgamcond}) is weaker than (\ref{secgamcond}), this operator is bounded on $L^p (\ell^1)$.  Hence, by complex interpolation (see e.g. \cite{BL}, Theorem 5.12), it is bounded on $L^p (\ell^q)$ for $1 \leq q \leq \infty$, which gives (\ref{thirdgammaest}).  
\end{proof}

\begin{Cor}\label{imbedd}
Suppose $0<p<\infty, 0<  q \leq \infty, W \in A_p (\R^n)$, and let $\{ A_Q \}_{Q \in \mathcal{D}}$ be a sequence of reducing operators of order $p$ for $W$.  For $j \in \Z$, let 
\[ \gamma_{j}(x) = \sum_{Q \in \mathcal{D}_j} \Vert W^{1/p}(x) A_Q^{-1} \Vert \chi_Q (x). \] 
Then there exists $c>0$ such that for any sequence $\{ f_j \}_{j \in \Z}$ of measurable functions on $\R^n$, 
\begin{equation} \label{fifthgammaest}
\Vert \{ \gamma_j E_j (f_j) \} \Vert_{L^p ( \ell^q)} \leq c \Vert \{  E_j (f_j) \} \Vert_{L^p ( \ell^q)}.  
\end{equation}
\end{Cor}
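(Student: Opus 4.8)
The plan is to deduce (\ref{fifthgammaest}) from Theorem \ref{Naz}(ii), after rescaling the integrability and summability exponents so that the hypotheses of that theorem (in particular that its first exponent exceed $1$) are met. Two elementary observations about the family $\{\gamma_j\}$ make the Carleson-type hypothesis available. First, on each $Q\in\mathcal{D}_j$ one has $\gamma_j=\Vert W^{1/p}(\cdot)A_Q^{-1}\Vert$. Second, for $x\in Q\in\mathcal{D}$, as $j$ runs over the integers with $2^{-j}\le\ell(Q)$ the level-$j$ dyadic cube containing $x$ runs exactly over the dyadic cubes $P$ with $x\in P\subseteq Q$, so that
\[\sup_{j:\,2^{-j}\le\ell(Q)}\gamma_j(x)\,\chi_Q(x)=\sup_{P\in\mathcal{D}:\,x\in P\subseteq Q}\Vert W^{1/p}(x)A_P^{-1}\Vert .\]
Lemmas \ref{GoldbergLemma} (for $p>1$) and \ref{GoldbergLemmaple1} (for $0<p\le1$) provide $\delta_0>0$ such that (\ref{revholder}) and (\ref{chgolineq}) hold for all $r<p+\delta_0$. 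Combining the second observation with (\ref{chgolineq}) shows that, for every $\rho>0$ and every $\delta>0$ small enough that $p(1+\delta)<p+\delta_0$, the family $\{\gamma_j^{p/\rho}\}$ satisfies condition (\ref{secgamcond}) of Theorem \ref{Naz}(ii) with exponent $\rho$, because $(\gamma_j^{p/\rho})^{\rho(1+\delta)}=\gamma_j^{p(1+\delta)}$.

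Concretely, I would fix $\tilde p$ with $0<\tilde p<\min(p,q)$ (and $0<\tilde p<p$ when $q=\infty$), so that $\rho:=p/\tilde p\in(1,\infty)$ and $\tilde q:=q/\tilde p\in(1,\infty]$. For every sequence $\{u_j\}$ of measurable functions one has $\Vert\{u_j\}\Vert_{L^p(\ell^q)}=\Vert\{|u_j|^{\tilde p}\}\Vert_{L^{\rho}(\ell^{\tilde q})}^{1/\tilde p}$. Applying this with $u_j=\gamma_j E_j f_j$, and using that $|E_j f_j|^{\tilde p}$ is constant on each $Q\in\mathcal{D}_j$ and hence equals $E_j(|E_j f_j|^{\tilde p})$, one gets
\[\Vert\{\gamma_j E_j f_j\}\Vert_{L^p(\ell^q)}^{\tilde p}=\Vert\{\gamma_j^{\tilde p}\,E_j(|E_j f_j|^{\tilde p})\}\Vert_{L^{\rho}(\ell^{\tilde q})} .\]
Now Theorem \ref{Naz}(ii), applied with exponents $\rho,\tilde q$, weights $\eta_j=\gamma_j^{\tilde p}$ (which satisfy (\ref{secgamcond}) with exponent $\rho$ by the previous paragraph, since $\eta_j^{\rho(1+\delta)}=\gamma_j^{p(1+\delta)}$), and input sequence $\{|E_j f_j|^{\tilde p}\}$, will bound the right-hand side by $c\,\Vert\{|E_j f_j|^{\tilde p}\}\Vert_{L^{\rho}(\ell^{\tilde q})}=c\,\Vert\{E_j f_j\}\Vert_{L^p(\ell^q)}^{\tilde p}$. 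Raising to the power $1/\tilde p$ then yields (\ref{fifthgammaest}).

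The hard part is really just this exponent bookkeeping: one is forced to route through Theorem \ref{Naz}(ii), whose requirement that its first exponent exceed $1$ fails here when $0<p\le1$, and the $\tilde p$-th power reduction repairs this while keeping the averaging operator $E_j$ in play, precisely because $|E_j f_j|^{\tilde p}$ is again constant on dyadic cubes of side $2^{-j}$ and hence fixed by $E_j$. Everything else is a direct substitution of the two observations above into (\ref{revholder}) and (\ref{chgolineq}). For the sub-range $0<q\le p$ one could alternatively invoke Theorem \ref{Naz}(i) directly, checking its weaker hypothesis (\ref{firstgamcond}) from (\ref{revholder}); but the argument above handles all $q\in(0,\infty]$ uniformly.
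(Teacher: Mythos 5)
Your argument is correct, and it is essentially the paper's method: reduce to Nazarov's Theorem \ref{Naz} by checking the Carleson-type hypothesis through the Goldberg estimates, using the observation that $\sup_{j:2^{-j}\leq \ell(Q)}\gamma_j = \sup_{P:\,x\in P\subseteq Q}\Vert W^{1/p}(x)A_P^{-1}\Vert$ on $Q$ together with (\ref{chgolineq}), and repairing small exponents by passing to $\tilde p$-th powers, which commute with $E_j$ since $|E_jf_j|^{\tilde p}$ is again constant on cubes of $\mathcal{D}_j$. The only difference is organizational: the paper splits into three cases, invoking Theorem \ref{Naz}(i) via the weaker condition (\ref{firstgamcond}) (from (\ref{revholder})) when $0<q\leq p$, Theorem \ref{Naz}(ii) directly when $p>1$ and $q\geq 1$, and the power-rescaling trick only in the remaining range $0<p\leq 1<q/p$, whereas you run the rescaling uniformly for all $(p,q)$ and route everything through part (ii). Your version buys a single uniform argument resting only on (\ref{chgolineq}); the paper's case split is slightly more economical in the range $q\leq p$, where the elementary duality/maximal-function part (i) of Theorem \ref{Naz} suffices and the interpolation-based part (ii) (and hence Lemma \ref{carl}) is not needed. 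Both deductions are valid as written; you may wish to note, as the paper implicitly does, that (\ref{chgolineq}) is supplied by Lemma \ref{GoldbergLemma} for $p>1$ and Lemma \ref{GoldbergLemmaple1} for $0<p\leq 1$, which you indeed do.
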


\begin{proof}
Suppose first that $0< q \leq p < \infty$.  Then (\ref{firstgamcond}) holds for some $\delta >0$ by (\ref{revholder}) and Lemma \ref{GoldbergLemmaple1}. Then (\ref{fifthgammaest}) follows from case (i) of Theorem \ref{Naz}.  If $1 < p < \infty, 1 \leq q \leq \infty$, then (\ref{secgamcond}) holds by (\ref{chgolineq}) and Lemma \ref{GoldbergLemmaple1}.  Replacing $f_j$ by $E_j (f_j)$ in (\ref{thirdgammaest}), and noting that $E_j^2 = E_j$, we obtain (\ref{fifthgammaest}) in this case.  

It remains to prove (\ref{fifthgammaest}) for $0 < p \leq 1$, $p < q \leq \infty$.  Pick $A >0$ sufficiently small that $p/A >1$ (and hence, $q/A >1)$.  Then
\[ \Vert \{ \gamma_j  E_j (f_j) \} \Vert_{L^p ( \ell^q)}^A = \Vert \{ \gamma_j^A | E_j (f_j)|^A \} \Vert_{L^{p/A} ( \ell^{q/A})} . \] 
By Lemma \ref{GoldbergLemma}, the sequence $\{ \gamma_j^A \}_{j \in \Z}$ satisfies 
\[ \sup_{Q \in \mathcal{D}} \frac{1}{|Q|} \int_{Q} \sup_{j \in \Z: 2^{- j} \leq \ell(Q)} \left(\gamma_j^A\right)^{p(1 + \delta)/A } < \infty . \]
Thus, applying (\ref{thirdgammaest}) with $p,q$ replaced by $p/A, q/A >1, \gamma_j$ replaced by $\gamma_j^A$, and $f_j$ replaced by $| E_j (f_j)|^A$, noting that $E_j \left( | E_j (f_j)|^A \right) = | E_j (f_j)|^A$, we obtain 
\[ \Vert \{ \gamma_j^A | E_j (f_j)|^A \} \Vert_{L^{p/A} ( \ell^{q/A})} \leq c \Vert \{| E_j (f_j)|^A \} \Vert_{L^{p/A} ( \ell^{q/A})} = c \Vert \{  E_j (f_j) \} \Vert_{L^p ( \ell^q)}^A ,  \]
as desired.
\end{proof}

\begin{Cor}\label{W-AQinequality}
Suppose $\alpha \in \R, 0<p<\infty, 0<  q \leq \infty, W \in A_p (\R^n)$, and $\{ A_Q \}_{Q \in \mathcal{D}}$ is a sequence of reducing operators of order $p$ for $W$.   
Then for any sequence $\vec{s} = \{ \vec{s}_Q \}_{Q \in \mathcal{D}}$, 
\begin{equation}
\Vert \vec{s} \Vert_{\dot{f}^{\alpha q}_p (W)} \leq c \Vert \vec{s} \Vert_{\dot{f}^{\alpha q}_p (\{A_Q \})}  \label{seqW-AQineq}
\end{equation}
and, for any $\vec{f} \in \mathcal{S}^{\prime}/ \mathcal{P}$, 
\begin{equation}
\Vert \vec{f} \Vert_{\dot{F}^{\alpha q}_p (W)} \leq c \Vert \vec{f} \Vert_{\dot{F}^{\alpha q}_p (\{A_Q \})}  \label{funcW-AQineq} .
\end{equation}
\end{Cor}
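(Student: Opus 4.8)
The plan is to derive both inequalities from Corollary \ref{imbedd}. First I would record two preliminary facts. Since $W\in A_p$, Lemma \ref{doublemma} (together with the fact, noted above, that an $A_p$ matrix weight is doubling) shows that $\{A_Q\}$ is strongly doubling of some order $(\beta,p)$, and in particular weakly doubling, so Theorem \ref{firstAQimbed} and its estimate \eqref{suptriebest} will be available. Second, by the definition of $\gamma_j$ in Corollary \ref{imbedd}, for the unique $Q\in\mathcal D_j$ containing $x$ we have $\gamma_j(x)=\Vert W^{1/p}(x)A_Q^{-1}\Vert$; since each $A_Q$ is positive definite, this gives the pointwise bound $|W^{1/p}(x)\vec y|\le\gamma_j(x)\,|A_Q\vec y|$ for all $\vec y\in\C^m$ and all $x\in Q\in\mathcal D_j$.

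For the sequence estimate \eqref{seqW-AQineq}, I would set, for each $j\in\Z$, $g_j=\sum_{Q\in\mathcal D_j}|Q|^{-\alpha/n-1/2}\,|A_Q\vec s_Q|\,\chi_Q$. Each $g_j$ is constant on the dyadic cubes of side length $2^{-j}$, hence $E_j(g_j)=g_j$. Using the pointwise bound above and the disjointness of the cubes in $\mathcal D_j$ to handle the $\ell^q$ sum levelwise, one gets $\Vert\vec s\Vert_{\dot f^{\alpha q}_p(W)}\le\Vert\{\gamma_j g_j\}\Vert_{L^p(\ell^q)}$, while one identifies $\Vert\{g_j\}\Vert_{L^p(\ell^q)}=\Vert\vec s\Vert_{\dot f^{\alpha q}_p(\{A_Q\})}$. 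Applying \eqref{fifthgammaest} with $f_j=g_j$, so that $E_j(f_j)=g_j$, then yields $\Vert\{\gamma_j g_j\}\Vert_{L^p(\ell^q)}\le c\,\Vert\{g_j\}\Vert_{L^p(\ell^q)}$, which is \eqref{seqW-AQineq}.

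For the function estimate \eqref{funcW-AQineq}, the complication is that $\varphi_j\ast\vec f$ is not locally constant, so I would first pass to its supremum over each dyadic cube. I would set $h_j=\sum_{Q\in\mathcal D_j}\bigl(\sup_{y\in Q}|A_Q\varphi_j\ast\vec f(y)|\bigr)\chi_Q$, which is again constant on the cubes of $\mathcal D_j$, so $E_j(2^{j\alpha}h_j)=2^{j\alpha}h_j$. For $x\in Q\in\mathcal D_j$ the pointwise bound gives $|2^{j\alpha}W^{1/p}(x)\varphi_j\ast\vec f(x)|\le2^{j\alpha}\gamma_j(x)\,|A_Q\varphi_j\ast\vec f(x)|\le2^{j\alpha}\gamma_j(x)h_j(x)$, so $\Vert\vec f\Vert_{\dot F^{\alpha q}_p(W)}\le\Vert\{\gamma_j(2^{j\alpha}h_j)\}\Vert_{L^p(\ell^q)}$. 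By \eqref{fifthgammaest} this is at most $c\,\Vert\{2^{j\alpha}h_j\}\Vert_{L^p(\ell^q)}$, and, using the disjointness of the level-$j$ cubes once more, that last quantity is exactly the left-hand side of \eqref{suptriebest}; Theorem \ref{firstAQimbed} then bounds it by $c\,\Vert\vec f\Vert_{\dot F^{\alpha q}_p(\{A_Q\})}$. The case $q=\infty$ is the same with the $\ell^q$ sums replaced by suprema throughout. The main obstacle — and the only step that is not a formal manipulation — is exactly this last reduction: replacing the convolution $\varphi_j\ast\vec f$ by a level-$j$ locally constant majorant without losing $L^p(\ell^q)$ control, which is what the sampling estimate \eqref{suptriebest} of Theorem \ref{firstAQimbed} is designed to accomplish.
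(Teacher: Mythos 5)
Your proposal is correct and follows essentially the same route as the paper: both inequalities are reduced to Corollary \ref{imbedd} by majorizing the $W$-quantities pointwise with $\gamma_j$ times level-$j$ locally constant functions (using $|A_Q\vec s_Q|\chi_Q$ for the sequence case and $\sup_{Q}|A_Q\varphi_j\ast\vec f|\,\chi_Q$ for the function case), with the final bound in the function case supplied by the sampling estimate \eqref{suptriebest} of Theorem \ref{firstAQimbed}. Your explicit check that $W\in A_p$ yields the doubling hypotheses needed for Theorem \ref{firstAQimbed} is a detail the paper leaves implicit, but the argument is the same.
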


\begin{proof}
Let $\{ \gamma_j \}_{j \in \Z}$ be as in Corollary \ref{imbedd}.  For $\vec{s} = \{ \vec{s}_Q \}_{Q \in \mathcal{D}}$, define $f_j = \sum_{Q \in \mathcal{D}_j} |Q|^{-\alpha/n -1/2} |A_Q \vec{s}_Q | \chi_Q$.  Note that $f_j$ is constant on each $Q \in \mathcal{D}_j$, hence, $E_j (f_j) = f_j$.  Also define $g_j = \sum_{Q \in \mathcal{D}_j} |Q|^{-\alpha/n -1/2} |W^{1/p} \vec{s}_Q | \chi_Q$ for $j \in \Z$.  Observe that 
\[ g_j \leq \sum_{Q \in \mathcal{D}_j} |Q|^{-\alpha/n -1/2} \Vert W^{1/p} A_Q^{-1} \Vert \vert A_Q \vec{s}_Q \vert \chi_Q = \gamma_j f_j = \gamma_j E_j (f_j) .\]
Thus, by Corollary \ref{imbedd},
\[ \Vert \vec{s} \Vert_{\dot{f}^{\alpha q}_p (W)} = \Vert \{ g_j \} \Vert_{L^p (\ell^q) } \leq \Vert \{ \gamma_j E_j (f_j) \} \Vert_{L^p (\ell^q) } \]
\[ \leq c \Vert \{  E_j (f_j) \} \Vert_{L^p (\ell^q) } = c \Vert \{  f_j \} \Vert_{L^p (\ell^q) } = c \Vert \vec{s} \Vert_{\dot{f}^{\alpha q}_p (\{A_Q \})} . \]
This proves (\ref{seqW-AQineq}).  

To prove (\ref{funcW-AQineq}), define $h_j (x) = 2^{j \alpha} |W^{1/p}(x) \ffi_j \ast \vec{f} (x) |$ and $k_j = \sum_{Q \in \mathcal{D}_j } |Q|^{-\alpha/n} \left( \sup_{x \in Q} \vert A_Q \ffi_j \ast \vec{f} (x) \vert  \right) \chi_Q$ for $j \in \Z$.  Each $k_j$ is constant on cubes $Q \in \cD_j$. Then
\[ h_j \leq \sum_{Q \in \mathcal{D}_j} |Q|^{-\alpha/n} \Vert W^{1/p} A_Q^{-1} \Vert \, \vert A_Q \ffi_j \ast \vec{f} \vert \chi_Q \leq \gamma_j k_j . \]
Hence, by Corollary \ref{imbedd},  
\[ \Vert \vec{f} \Vert_{\dot{F}^{\alpha q}_p (W)} = \Vert \{ h_j \} \Vert_{L^p (\ell^q) } \leq \Vert \{ \gamma_j k_j \} \Vert_{L^p (\ell^q) }
= \Vert \{ \gamma_j E_j (k_j) \} \Vert_{L^p (\ell^q) } \]
\[ \leq c \Vert \{ E_j (k_j) \} \Vert_{L^p (\ell^q) } =  c \Vert \{ k_j \} \Vert_{L^p (\ell^q) } \leq c  \Vert \vec{f} \Vert_{\dot{F}^{\alpha q}_p (\{A_Q \})} ,  \]
where the last step is by Theorem \ref{firstAQimbed}.
\end{proof}

Theorem \ref{AQ-Wseqspimbed}, Theorem \ref{AQ-Wfuncspimbed}, and Corollary \ref{W-AQinequality} yield Theorem \ref{nonavnormeq}.

We make a few remarks about completeness of the spaces we are considering.  If we assume each $A_Q$ is invertible, then $\dot{f}^{\alpha q}_p (\{A_Q \})$ is complete, as follows.  If $\vec{s}^{\,(n)}= \{ \vec{s}^{\,(n)}_Q \}_{Q \in \cD}$ is a Cauchy sequence in $\dot{f}^{\alpha q}_p (\{A_Q \})$, then $\{ A_Q \vec{s}^{\,(n)}_Q \}$ is Cauchy in $\C^m$, hence so is $\vec{s}^{\,(n)}_Q$. Therefore, $\vec{s}^{\,(n)}_Q$ converges to some $\vec{s}_Q$.  Letting $\vec{s}= \{ \vec{s}_Q \}_{Q \in \cD}$, then Fatou's lemma shows that $\vec{s} \in \dot{f}^{\alpha q}_p (\{A_Q \})$ and $\vec{s}^{\,(n)}$ converges to $\vec{s}$ in $\dot{f}^{\alpha q}_p (\{A_Q \})$.  If $W \in A_p$ and we let $\{A_Q \}_{Q \in \cD}$ be a sequence of reducing operators for $W$, then the first equivalence in Theorem \ref{nonavnormeq} shows that $\dot{f}^{\alpha q}_p (W)$ is complete. It follows then from Theorem \ref{wavechar} that for $W \in A_p$, $\dot{F}^{\alpha q}_p (W)$ is complete. Indeed, a Cauchy sequence $\vec{f}_n$ in $\dot{F}^{\alpha q}_p (W)$ has wavelet coefficients, which are Cauchy, and thus, converge in $\dot{f}^{\alpha q}_p (W)$. If we let $\vec{f}$ have the wavelet coefficients of the limit sequence, then $\vec{f} \in \dot{F}^{\alpha q}_p (W)$ and $\vec{f}_n$ converges to $\vec{f}$ in $\dot{F}^{\alpha q}_p (W)$. Finally, if $\{ A_Q\}$ is a sequence of reducing operators for some $W \in A_p$, then Theorem \ref{mainresult} implies that $\dot{F}^{\alpha q}_p (\{A_Q\})$ is complete.

\section{Equivalence of $\dot{F}^{02}_p(W)$ and $L^p(W)$, $1 < p < \infty$} \label{Lpequiv}

One way to prove the classical unweighted Littlewood-Paley characterization of $L^p(\R^n)$ is to demonstrate the boundedness of appropriate Calder\'{o}n-Zygmund operators whose kernels take values in $\mathcal{B} (H_1, H_2)$, the bounded linear transformations from one Hilbert space to another.  This approach originated in \cite{BCP}, and was explicated in \cite{S1}, Ch. II.5 and IV.1 and \cite{S2}, Ch. 6.3-4.  The boundedness of standard (i.e., scalar valued) Calder\'{o}n-Zygmund operators on $L^p(w)$, where $w$ is a scalar $A_p$ weight, was proved by Hunt, Muckenhoupt, and Wheeden in \cite{HMW} in one dimension, and by Coifman and Fefferman in general in \cite{CF}.  The boundedness of Calder\'{o}n-Zygmund operators on $L^p(W)$, where $W$ is a matrix $A_p$ weight, was proved by Christ and Goldberg in \cite{CG} for $p=2$ and by Goldberg in \cite{G}.  Goldberg's proof is an adaptation to the matrix-weight context of Coifman and Fefferman's argument. Theorem \ref{equivwithlp} will be proved by adapting the proof in \cite{G} to the case of kernels with values in $\mathcal{B} (H_1, H_2)$, thus, going from Calder\'{o}n-Zygmund to Littlewood-Paley theory in the matrix-weight setting just as in \cite{BCP} classically. We begin with some unweighted results that we will require. 

Define
\[ \ell^2_m (\Z) = \left\{ w= \{\vec{w}_j \}_{j \in \Z} : \vec{w}_j \in \C^m \,\, \mbox{for all} \,\, j \in \Z, \,\, \mbox{and} \,\, \Vert w \Vert_{\ell^2_m (\Z)} < \infty\right\}, \] 
where $\Vert w \Vert_{\ell^2_m (\Z)} = \left( \sum_{j} |\vec{w}_{j}|^2 \right)^{1/2}.$  
For each $x \in \R^n \setminus \{0\}$, define $K(x) \in \mathcal{B} (\C^m , \ell^2_m (\Z))$ by 
\[  K(x) \vec{z} = \{ \varphi_j (x) \vec{z} \}_{j \in \Z} , \]
for $\ffi$ satisfying (\ref{admiss1}) and (\ref{admiss2}).  We interpret $K$ as the kernel of a convolution operator $T$, where the integration is carried out on each component: for $\vec{f}:\R^n \rightarrow \C^m$, define
\[  T \vec{f} (x) = \int_{\R^n} K(x-y) \vec{f} (y) \, dy = \left\{ \int_{\R^n} \varphi_j (x-y) \vec{f} (y) \, dy  \right\}_{j \in \Z} = \{ \varphi_j \ast \vec{f} (x) \}_{j\in \Z}.   \]
Then 
\[ |T\vec{f}(x)| = \Vert \{ \varphi_j \ast \vec{f} (x) \}_{j\in \Z} \Vert_{\ell^2 (\Z)} = \left( \sum_{j \in \Z} |\varphi_j \ast f (x)|^2  \right)^{1/2} \]
and 
\[  \Vert T\vec{f} \Vert_{L^p (\R^n)} = \left( \int_{\R^n} |Tf|^p \, dx \right)^{1/p} = \left\Vert \left( \sum_{j \in \Z} |\varphi_j \ast f |^2  \right)^{1/2} \right\Vert_{L^p(\R^n)} .\]
Let $|K(x)|$ denote the operator norm of $K(x)$ in $\mathcal{B} (\C^m , \ell^2_m (\Z))$. Then 
\begin{equation} \label{phisizeest}
|K(x)| = \left(\sum_{j \in \Z} |\varphi_j (x)|^2 \right)^{1/2} \leq \sum_{j \in \Z} |\varphi_j (x)|  \leq  \frac{C_{\varphi}}{|x|^n},
\end{equation}
by letting $k \in \Z$ be such that $2^{-k} < |x| \leq 2^{-k+1}$, breaking the sum on $j$ at $-k$, using the estimate $|\ffi_j (x)|\leq C_{\varphi} 2^{jn}$ for $j\leq -k$, the estimate $|\ffi_j (x)|\leq C_{\varphi} 2^{jn}(2^j|x|)^{-n-1}$ for $j>-k$, and summing the resulting geometric series.  Hence, $K$ satisfies the usual Calder\'{o}n-Zygmund size estimate.  

Using (\ref{admiss1}) and (\ref{admiss2}), Plancherel's theorem easily shows that $T$ is $L^2$-bounded, i.e., $\Vert T \vec{f}  \Vert_{L^2(\R^n)} \leq C_{\ffi} \Vert |\vec{f}| \Vert_{L^2(\R^n)}$.  The next step is to prove that $T$ is weak-type 1-1:
\begin{equation} \label{Tweaktypeunweighted}
|\{ x \in \R^n: \Vert T \vec{f} (x)\Vert  > \alpha \}| \leq \frac{C}{\alpha} \int_{\R^n} |\vec{f} (x) |\, dx  .
\end{equation}
This is done, following the now standard approach, as in \cite{S1}, Chapter II.2-3, by applying the Calder\'{o}n-Zygmund decomposition at height $\alpha$ to the scalar function $|\vec{f}|$, obtaining disjoint cubes $\{ Q_k\}_k$.  Define $\vec{g} = \vec{f}$ on $F= \R^n \setminus \cup_k Q_k$ and let $\vec{g}$ be the average of $\vec{f}$ on each $Q_k$.  Let $\vec{b} = \vec{f} - \vec{g}$.  Then $\vec{g} \in L^2 (\R^n)$, and the appropriate weak-type inequality for $\vec{g}$ follows from the $L^2$-boundedness of $T$ and Chebychev's inequality.  
We use the cancellation on $Q_k$ of each component of $\vec{b}_k$, the restriction of $\vec{b}$ to $Q_k$, to subtract within each integral defining $\varphi_j \ast \vec{b_k}$.  The estimate needed then is that for all $y \in Q_k$,
\begin{equation} \label{Hormanderest}
\int_{\R^n \setminus 3Q_k} |K(x-y)-K(x-y_k)| \, dx \leq C,
\end{equation}
where $y_k$ is the center of $Q_k$.  To prove (\ref{Hormanderest}), for each $j$ we apply the mean-value theorem and a standard geometric estimate to obtain 
\[   |\varphi_j (x-y) - \varphi_j (x-y_k)| \leq C  \frac{2^{j(n+1)}\ell(Q_k)}{(1 + 2^j |x-y_k|)^{n+2} }  . \]
Then we apply the imbedding of $\ell^1$ into $\ell^2$, and break the sum on $j$ at $k$, where $|x - y_k| \approx 2^{-k}$, similarly to the proof of (\ref{phisizeest}). Replacing $1 + 2^j |x-y_k|$ by $1$ for $j<k$ and by $2^j |x-y_k|$ for $j\geq k$ and evaluating the resulting geometric series yields 
\begin{equation} \label{Kest}
|K(x-y)-K(x-y_k)| \leq C \ell(Q_k) |x-y_k|^{-n-1},
\end{equation}
which implies (\ref{Hormanderest}).     

Next we need the weak type 1-1 estimate for the maximal operator.  For $\epsilon >0$, let $\varphi_{j, \epsilon} (x) = \varphi_j (x) \chi_{\{x : |x|>\epsilon\}}$.  For each $x \in \R^n \setminus \{0\}$, define $K_{\epsilon}(x) \in \mathcal{B} (\C^m , \ell^2_m (\Z))$ by $ K_{\epsilon}(x) \vec{z} = \{ \varphi_{j,\epsilon} (x) \vec{z} \}_{j \in \Z}$, for $\ffi$ satisfying (\ref{admiss1}) and (\ref{admiss2}).  Then the corresponding operator is $T_{\epsilon}$, which takes $\vec{f}$ to the vector function 
$ T_{\epsilon} \vec{f} = \{\varphi_{j,\epsilon} \ast \vec{f} \, \}_{j \in \Z}$.  Define $|T_{\epsilon} \vec{f}(x)|$ and $\Vert T_{\epsilon} \vec{f} \Vert_{L^p (\R^n)}$ as for $T$ above.  The maximal operator is 
\[ T_* \vec{f}(x) = \sup_{\epsilon >0} | T_{\epsilon} \ast \vec{f} (x) |. \] 
We will need to know that $T_*$ is weak-type 1-1:
\begin{equation} \label{Tstarweaktype}
|\{x \in \R^n :  T_* \vec{f} (x)  > \alpha  \}|  \leq \frac{C}{\alpha} \Vert \vec{f} \Vert_{L^1 (\R^n)} .  
\end{equation}
For the proof of \eqref{Tstarweaktype}, we follow \cite[pp. 34-35]{S2}.  As in \cite{S2}, (\ref{Tstarweaktype}) follows from the inequality
\begin{equation} \label{pointestforTstar}
T_* \vec{f} (x) \leq C  \left(M (| T \vec{f} |^r) (x)\right)^{1/r} +C M (|\vec{f}|) (x) ,
\end{equation} 
for all $x \in \R^n, r>0$, and $\vec{f} \in L^1_{loc} (\R^n)$, where $M$ is the Hardy-Littlewood maximal operator.  To prove (\ref{pointestforTstar}) at a point $\overline{x}$, let $\vec{f}_1 = \vec{f} \chi_{B(\overline{x}, \epsilon)}$ and $\vec{f}_2 = \vec{f}- \vec{f}_1$.  Note that $T_{\epsilon} \vec{f} (\overline{x}) = T \vec{f}_2 (\overline{x})$.  
We first observe that for $x \in B(\overline{x}, \epsilon/2)$ and $y \not\in B(\overline{x}, \epsilon)$, we have $\sum_{j\in \Z} |\varphi_j (x-y) - \varphi_j (\overline{x} -y)|  \leq \frac{C \epsilon}{|y-\overline{x}|^{n+1}}$, by the same argument as for (\ref{Kest}).  Let $A_k = B(\overline{x}, 2^{k} \epsilon ) \setminus B(\overline{x}, 2^{k-1}\epsilon)$ for $k \geq 1$.  Then 
\begin{equation} \label{diffTest}
 | T \vec{f}_2 (x) - T \vec{f}_2  (\overline{x}) | \leq C \epsilon \sum_{k=1}^{\infty} \int_{A_k}  \frac{|\vec{f}(y)|}{|y-\overline{x}|^{n+1}} \, dy \leq  C M (|\vec{f}|) (\overline{x}) . 
\end{equation}
With this estimate and (\ref{Tweaktypeunweighted}), the rest of the proof of (\ref{pointestforTstar}) is just as in \cite{S2}.  Hence, we have (\ref{Tstarweaktype}).

Now let 
\[ W^{1/p} T \vec{f} =  \{W^{1/p} \varphi_j \ast \vec{f} \, \}_{j \in \Z}; \]
i.e., $W^{1/p}$ acts on each component $\varphi_j \ast \vec{f} $.  Note that $\Vert W^{1/p} T \vec{f} \Vert_{L^p(\ell^2)} = \Vert \vec{f} \Vert_{\dot{F}^{0,2}_p (W)}$. 

\begin{Thm} \label{firstdirection}
Suppose $ 1 < p < \infty, \varphi$ satisfies (\ref{admiss1}) and (\ref{admiss2}), and $W \in A_p$.  If $\vec{f} \in L^p(W)$, then $f \in \dot{F}^{0,2}_p (W)$ with 
\begin{equation}\label{leftineq}
 \Vert \vec{f} \Vert_{\dot{F}^{0,2}_p (W)} \leq C \Vert \vec{f} \Vert_{L^p(W)}, 
\end{equation}
where $C$ depends only on $p, \varphi,$ and $W$. 
\end{Thm}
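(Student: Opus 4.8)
The plan is to adapt the argument of Goldberg \cite{G} (itself a matrix-weighted version of the Coifman--Fefferman method \cite{CF}) for the boundedness of Calder\'{o}n--Zygmund operators on matrix-weighted $L^p$, now for a kernel $K$ taking values in $\B(\C^m,\ell^2_m(\Z))$. All of the kernel-theoretic input has already been assembled: the size bound \eqref{phisizeest}, the H\"{o}rmander-type estimates \eqref{Hormanderest} and \eqref{Kest}, the $L^2$-boundedness of $T$, and the weak-type $(1,1)$ bounds \eqref{Tweaktypeunweighted} for $T$ and \eqref{Tstarweaktype} for $T_*$. Writing $h(y)=|W^{1/p}(y)\vec f(y)|$, so that $\|h\|_{L^p}=\|\vec f\|_{L^p(W)}$ and $\|\vec f\|_{\dot F^{0,2}_p(W)}=\|W^{1/p}T\vec f\|_{L^p(\ell^2)}$, the goal becomes $\|W^{1/p}T\vec f\|_{L^p(\ell^2)}\le C\|h\|_{L^p}$.

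First I would reduce to a dense subclass, namely bounded $\vec f$ of compact support; these are dense in $L^p(W)$ because $W$ is locally integrable (since $\|W^{1/p}(\cdot)\|^p$ is dominated by a finite sum of the scalar $A_p$ weights $w_{e_i}$). For such $\vec f$ one checks, using \eqref{phisizeest} and the growth of scalar $A_p$ weights for $|x|$ large, and \eqref{pointestforTstar}, the reverse H\"{o}lder bound \eqref{revholder}, and the unweighted $L^r$-boundedness of $T_*$ for $|x|$ bounded, that
\[ \mathcal{T}_*\vec f(x):=\sup_{\ep>0}\big|W^{1/p}(x)T_\ep\vec f(x)\big|\in L^p(\R^n). \]
Since $\mathcal{T}_*\vec f\ge|W^{1/p}T\vec f|$ pointwise, it suffices to bound $\|\mathcal{T}_*\vec f\|_{L^p}$; the a priori finiteness just recorded is what legitimates the absorption step below, and the general case then follows by density.

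The core is two estimates. The first is the bound for the matrix-weighted maximal function
\[ N\vec f(x)=\sup_{Q\ni x}\frac1{|Q|}\int_Q\|W^{1/p}(x)W^{-1/p}(y)\|\,h(y)\,dy,\qquad\|N\vec f\|_{L^p}\le C\|h\|_{L^p}. \]
Via $\|W^{1/p}(x)W^{-1/p}(y)\|\le\|W^{1/p}(x)A_Q^{-1}\|\,\|A_Q W^{-1/p}(y)\|$ and the passage to three dyadic grids, this reduces to a dyadic maximal/Carleson estimate governed by \eqref{chgolineq} (with H\"{o}lder and \eqref{equivApcond}), which is precisely the kind of inequality established in Section~\ref{relavgnonavg} through Nazarov's Theorem~\ref{Naz} --- compare Corollary~\ref{imbedd} and the proof of Corollary~\ref{W-AQinequality} --- so I would invoke that machinery (equivalently, one cites Goldberg's matrix maximal-function theorem). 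The second is a good-$\la$ inequality for $\mathcal{T}_*\vec f$ relative to $N\vec f+(M(h^p))^{1/p}$, $M$ the Hardy--Littlewood maximal operator: for each small $\ep_0>0$ there is $\ga>0$ with
\[ \big|\{\,\mathcal{T}_*\vec f>2\la,\ N\vec f+(M(h^p))^{1/p}\le\ga\la\,\}\big|\le C\ep_0\,\big|\{\,\mathcal{T}_*\vec f>\la\,\}\big|. \]
As in \cite[Ch.~6.3--4]{S2} and \cite{G}, this comes from a Calder\'{o}n--Zygmund decomposition of $\{\mathcal{T}_*\vec f>\la\}$ into Whitney cubes $\{Q_k\}$, splitting $\vec f$ on each $Q_k$ into its restriction to $3Q_k$ and the complement. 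The far part is controlled by comparing $\mathcal{T}_*$ at $x\in Q_k$ with a nearby point where $\mathcal{T}_*\vec f\le\la$ (Whitney property), the comparison made through \eqref{Kest}, summation over dyadic annuli (each average producing an $N\vec f$), and the reducing operator $A_{Q_k}$. The near part uses the weak-type $(1,1)$ bound \eqref{Tstarweaktype} applied after replacing $W^{1/p}$ by the constant matrix $A_{Q_k}$, so that $A_{Q_k}T(\vec f\chi_{3Q_k})=T(A_{Q_k}\vec f\chi_{3Q_k})$, with the mean bound $\frac1{|Q_k|}\int_{3Q_k}|A_{Q_k}\vec f|\le C(M(h^p))^{1/p}$ on $Q_k$ (from H\"{o}lder, \eqref{equivApcond}, and doubling) and the discrepancy factor $\|W^{1/p}(x)A_{Q_k}^{-1}\|$, whose mean over $Q_k$ is bounded by \eqref{revholder}, absorbed by a further Chebyshev splitting. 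Multiplying the good-$\la$ inequality by $p\la^{p-1}$, integrating in $\la$, and choosing $\ep_0$ small to absorb the resulting $C\ep_0\|\mathcal{T}_*\vec f\|_{L^p}^p$ on the left then gives $\|\mathcal{T}_*\vec f\|_{L^p}\le C\|N\vec f\|_{L^p}+C\|(M(h^p))^{1/p}\|_{L^p}\le C\|h\|_{L^p}$ by the first estimate and the $L^p$-boundedness of $M$, which is \eqref{leftineq}.

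The main obstacle --- and what makes the matrix case substantially harder than the scalar $A_p$ theory of \cite{CF} --- is the freezing of $W^{1/p}$ to the reducing operators $A_Q$ inside the good-$\la$ argument: since $W^{1/p}(x)$ is not translation invariant and commutes with nothing, the naive comparison of $W^{1/p}(x)T(\cdots)$ at two nearby points fails, and one must route the far- and near-part estimates through the $A_Q$'s and control the arising discrepancies $\|W^{1/p}(x)A_Q^{-1}\|$ and $\|A_Q W^{-1/p}(y)\|$ uniformly over the Whitney cubes. This is exactly where the full force of $W\in A_p$ is used, in the form of the reverse-H\"{o}lder self-improvements \eqref{revholder}--\eqref{equivApcond} of Lemma~\ref{GoldbergLemma}.
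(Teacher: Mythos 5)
Your architecture is the same as the paper's: an adaptation of Goldberg's Coifman--Fefferman-type relative distributional (good-$\lambda$) inequality to the $\mathcal{B}(\C^m,\ell^2_m(\Z))$-valued kernel, using the prepared estimates \eqref{phisizeest}, \eqref{Kest}, \eqref{Tstarweaktype}, freezing $W^{1/p}$ to reducing operators on the covering cubes, controlling the discrepancies via Lemma \ref{GoldbergLemma}, and finishing by integration in $\lambda$, absorption on a dense class with a priori finiteness, and Fatou/density. However, one step fails as written: your choice of controlling function. You majorize the near-part averages by $(M(h^p))^{1/p}$, $h=|W^{1/p}\vec f\,|$, and conclude with $\Vert (M(h^p))^{1/p}\Vert_{L^p}\le C\Vert h\Vert_{L^p}$ ``by the $L^p$-boundedness of $M$''. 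But $\Vert (M(h^p))^{1/p}\Vert_{L^p}^p=\Vert M(h^p)\Vert_{L^1}$, and here $M$ is applied to $h^p\in L^1$, on which it is not bounded; this norm is infinite whenever $h\not\equiv 0$, so the integrated good-$\lambda$ inequality is vacuous and the absorption step yields nothing.

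The repair is exactly the $\delta$-improvement you already cite but do not exploit: apply H\"older on each cube with exponent $r=p'+\delta/2$ in \eqref{equivApcond} (not the endpoint $r=p'$), so the local average of $h$ enters at the power $r'<p$ and the controlling function becomes $(M(h^{r'}))^{1/r'}$, which is bounded on $L^p$ since $p/r'>1$. Equivalently, replace your pair $N\vec f+(M(h^p))^{1/p}$ by Goldberg's maximal operators $M_w$ and $M_w'$, whose $L^p$-boundedness is precisely what the paper imports from \cite{G}, Section 3. The same issue touches your far-part sketch: the frozen averages $\frac{1}{|2^kB|}\int_{2^kB}\Vert A_{Q_k}W^{-1/p}(y)\Vert h(y)\,dy$ are not pointwise dominated by $N\vec f(\overline y)$ at the good point (one only has $\Vert A_{Q_k}W^{-1/p}(y)\Vert\lesssim\Vert W^{1/p}(\cdot)W^{-1/p}(y)\Vert$ on average, not pointwise), which is exactly why Goldberg, and the paper following him, carry the second, frozen-operator maximal function $M_w'$ alongside $M_w$. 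With the controlling functions replaced accordingly, your proof coincides with the paper's.
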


Since our proof follows \cite{G} line-for-line with only a few changes necessary to deal with the Hilbert-space valued kernel involved, we only describe the modifications needed, referring to \cite{G} as much as possible.  For $\epsilon >0$, define  
\[ W^{1/p} T_{\epsilon} \vec{f} =  \{W^{1/p} \varphi_{j, \epsilon} \ast \vec{f} \, \}_{j \in \Z}. \]
Define the associated maximal operator 
\[ (W^{1/p} T)_* \vec{f} (x) = \sup_{\epsilon >0} | W^{1/p}(x) T_{\epsilon} \vec{f} (x) |.   \]
The essence of the proof of Theorem \ref{firstdirection} is the relative distributional inequality in equation (19) of Proposition 4.1 of \cite{G}; we only require the case $q=p$ of that result.
We apply the covering lemma in \cite{G}, p. 212, to the set $E$ defined for our $T$, reducing (19) in \cite{G} to its local version for each cube $Q$ in the covering; i.e., (20)  in \cite{G}.  We select $\overline{x}$ and $B= B(\overline{x}, 3 \,\, \mbox{diam} (Q))$ as in \cite{G}.  We obtain a point $\overline{y} \in Q$ such that 
\[ \max \left(M_w (W^{1/p} \vec{f})(\overline{y}) , M_w^{\, \prime} (W^{1/p} \vec{f})(\overline{y}) \,\right)  \leq c \alpha \,\, \mbox{and} \,\, \Vert V_B W^{-1/p}(\overline{y}) \Vert < \frac{C}{b},    \]
(which is what is intended on p. 213, line 3 of \cite{G}), where $V_B$ is the reducing operator for $B$ and $M_w$ and $M_{w^{\, \prime}}$ are as in \cite{G}, equations (13) and (14).  We let $\vec{f}_1= \vec{f} \chi_B$ and $\vec{f}_2= \vec{f} \chi_{B^c}$.  The proof of the appropriate distributional inequality for $\vec{f}_1$ depends only on the facts (i): $T_*$ commutes with constant matrices, which is true for our $T_*$ as well, since it is true for each component $\varphi_{j, \epsilon} \ast \vec{f}$ of $T_{\epsilon} \vec{f}$, and (ii):  $T_*$ is weak-type 1-1, which is (\ref{Tstarweaktype}) above in our case.  Therefore, we obtain the estimate (21) in \cite{G}. 

For $\vec{f}_2$, we require the estimate 
\begin{equation} \label{pointTest}
| T_{\epsilon} \vec{f}_2 (x) - T_{\epsilon} \vec{f}_2  (\overline{x}) | \leq C M (|\vec{f}|) (\overline{y})
\end{equation}     
for $x \in Q$ and $\epsilon >0$ (compare to \cite[p. 208]{S2}). To obtain \eqref{pointTest}, we have 
\[ | T_{\epsilon} \vec{f}_2  (x) - T_{\epsilon} \vec{f}_2  (\overline{x}) | \leq \sum_{i=1}^3 \int_{E_i}  \sum_{j\in \Z} |\varphi_{j,\epsilon} (x-y) - \varphi_{j, \epsilon} (\overline{x} -y) | |f(y)| \, dy , \]
where $E_1 = \{ y \in B^c: |x-y| > \epsilon, |\overline{x} -y|>\epsilon \}$, $E_2 = \{ y \in B^c: |x-y| \leq \epsilon, |\overline{x} -y| > \epsilon \}$, and 
$E_3 = \{ y \in B^c: |x-y| > \epsilon, |\overline{x} -y| \leq \epsilon \}$.  On the complement of $\cup_{i=1}^3 E_i$, the integrand is $0$.  The integral over $E_1$ is dominated by $C M (|\vec{f}|) (\overline{y})$, by the same argument that established (\ref{diffTest}).  For $y \in E_2$, we have $\varphi_{j, \epsilon}(x-y)=0$ and $|\overline{x}-y| \approx |x-y| \approx |\overline{y}-y| \approx \epsilon$.  Thus, using (\ref{phisizeest}), the integral over $E_2$ above is bounded by
\[ C \int_{E_2}  \frac{|f(y)|}{|\overline{x} -y |^n}  \, dy \leq \frac{C}{\epsilon^n} \int_{B(\overline{y}, C\epsilon)} |f(y)| \, dy \leq C M f (\overline{y}) . \]
The integral over $E_3$ satisfies the same estimate by symmetry.  Hence, (\ref{pointTest}) holds.  Replacing $\vec{f}$ with $V_B \vec{f}$, commuting $V_B$ and $T_{\epsilon}$, and applying the triangle inequality, we obtain 
\[ \Vert V_B T_{\epsilon} \vec{f}_2 (x) \Vert \leq  \Vert V_B T_{\epsilon} \vec{f}_2 (\overline{x}) \Vert +  C M (|V_B \vec{f}|) (\overline{y}) .  \]
Let $\epsilon^{\, \prime} = \max (\epsilon, 3 \ell (Q))$ and note that $T_{\epsilon} \vec{f}_2 (\overline{x}) = T_{\epsilon^{\, \prime}} \vec{f} (\overline{x})$.  This allows us to conclude estimate (22) in \cite{G}.  The remainder of the proof is the same as in \cite{G}, establishing (19) of \cite{G}.  

In the standard way (see \cite{S2}, \S 3.5), the boundedness of $M_w$ and $M_{w^{\, \prime}}$ (\cite{G}, \S 3) and the relative distributional inequality, applied for $\vec{f} \in C^{\infty}_0 (\R^n)$, which then satisfies $( W^{1/p} T )_*\vec{f}  \in L^p (\R^n)$, lead to the inequality 
\[ \Vert  (W^{1/p} T )_*\vec{f} \, \Vert_{L^p(\R^n)} \leq C \Vert W^{1/p} \vec{f} \, \Vert_{L^p(\R^n)} , \]
for $\vec{f} \in C^{\infty}_0 (\R^n)$. For general Calder\'{o}n-Zygmund operators, one has only $|Tf(x)| \leq T_* f(x) + c |f(x)|$, but because of the explicit nature of our operator and the trivial observation that $\lim_{\epsilon \rightarrow 0^+} \varphi_{j, \epsilon} \ast \vec{f}= \varphi_j \ast \vec{f}$, Fatou's lemma yields the simpler conclusion $|W^{1/p} T \vec{f} (x) | \leq (W^{1/p} T)_* \vec{f} (x)$.  Since
\[  \Vert \vec{f} \Vert_{\dot{F}^{0,2}_p (W)}^p = \int_{\R^n}  \Vert \{  W^{1/p} \varphi_j \ast \vec{f} (x)\}_{j \in \Z}   \Vert^p_{\ell^2 (\Z) } \, dx = \int_{\R^n} |W^{1/p} T \vec{f} (x) |^p \, dx , \]
we obtain (\ref{leftineq}) for $\vec{f} \in C^{\infty}_0 (\R^n)$, and a routine density argument as in \cite{G}, p. 215 yields the result for all $\vec{f} \in L^p(W)$.

Theorem \ref{equivwithlp} is a consequence of Theorem \ref{firstdirection} and

\begin{Thm} \label{seconddirection}
Suppose $ 1 < p < \infty, \varphi \in \mathcal{A}$, and $W \in A_p$.  If $\vec{f} \in \dot{F}^{0,2}_p (W) $, then $\vec{f} \in L^p(W)$ and
\[  \Vert \vec{f} \Vert_{L^p(W)} \leq C \Vert \vec{f} \Vert_{\dot{F}^{0,2}_p (W)} , \]
where $C$ depends only on $p, \varphi,$ and $W$. 
\end{Thm}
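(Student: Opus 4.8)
The plan is to obtain Theorem \ref{seconddirection} from Theorem \ref{firstdirection} by duality, using the latter with the conjugate exponent $p' = p/(p-1)$ in place of $p$ and the dual weight $W' := W^{-p'/p}$ in place of $W$. Two standard facts set this up. First, the matrix $A_p$ condition is self-dual: $W \in A_p$ if and only if $W' \in A_{p'}$; this follows from the characterization of $A_p$ quoted in the Introduction, since $(W')^{1/p'} = W^{-1/p}$, $(W')^{-1/p'} = W^{1/p}$, and $\Vert W^{1/p}(x) W^{-1/p}(y) \Vert = \Vert W^{-1/p}(y) W^{1/p}(x) \Vert$ for these self-adjoint matrices (see also \cite{R}, \cite{V}). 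Second, $(L^p(W))^* = L^{p'}(W')$ under the unweighted pairing $\langle \vec u, \vec v\rangle = \int_{\R^n} \vec u(x) \cdot \overline{\vec v(x)}\, dx$: indeed $\langle \vec u, \vec v\rangle = \int_{\R^n} W^{1/p} \vec u \cdot \overline{W^{-1/p} \vec v}\, dx$ with $W^{-1/p} = (W')^{1/p'}$, so $|\langle \vec u, \vec v\rangle| \le \Vert \vec u \Vert_{L^p(W)} \Vert \vec v \Vert_{L^{p'}(W')}$ and this pairing realizes the full dual.

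Given $\vec f \in \dot{F}^{0,2}_p(W) \subseteq \cS'/\cP$, I would bound the pairing $\langle \vec f, \vec g\rangle$ for $\vec g$ whose components lie in $\cS_0$; such $\vec g$ form a dense subspace of $L^{p'}(W')$ (using $W' \in A_{p'}$). Choose $\psi \in \mathcal{A}$ with $\sum_{j \in \Z} \overline{\widehat{\varphi_j}}(\xi) \widehat{\psi_j}(\xi) = 1$ for $\xi \neq 0$, as in the Introduction. By Parseval's identity and the theory of the $\varphi$-transform, for $\vec g$ with components in $\cS_0$ one has the reproducing formula
\[ \langle \vec f, \vec g\rangle = \sum_{j \in \Z} \langle \psi_j \ast \vec f, \ \varphi_j \ast \vec g\rangle = \sum_{j \in \Z} \int_{\R^n} (\psi_j \ast \vec f)(x) \cdot \overline{(\varphi_j \ast \vec g)(x)}\, dx, \]
with absolute convergence. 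Since $W^{1/p}(x)$ is self-adjoint and $W^{1/p}(x) W^{-1/p}(x) = I$ a.e., each integrand equals $\langle W^{1/p}(x)(\psi_j \ast \vec f)(x),\ W^{-1/p}(x)(\varphi_j \ast \vec g)(x)\rangle_{\C^m}$. Applying the Cauchy--Schwarz inequality in $j$ and then H\"older's inequality in $x$ with exponents $p$ and $p'$ gives
\[ |\langle \vec f, \vec g\rangle| \le \left\Vert \Bigl( \sum_{j \in \Z} |W^{1/p}\, \psi_j \ast \vec f|^2 \Bigr)^{1/2} \right\Vert_{L^p(\R^n)} \cdot \left\Vert \Bigl( \sum_{j \in \Z} |W^{-1/p}\, \varphi_j \ast \vec g|^2 \Bigr)^{1/2} \right\Vert_{L^{p'}(\R^n)}. \]

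The first factor is the $\dot{F}^{0,2}_p(W)$ norm of $\vec f$ computed with the admissible test function $\psi$ rather than $\varphi$; since $\dot{F}^{0,2}_p(W)$ is independent of the choice of admissible test function by Theorem \ref{mainresult}, it is $\le C \Vert \vec f \Vert_{\dot{F}^{0,2}_p(W)}$. The second factor is exactly $\Vert \vec g \Vert_{\dot{F}^{0,2}_{p'}(W')}$ because $(W')^{1/p'} = W^{-1/p}$; since $W' \in A_{p'}$, Theorem \ref{firstdirection} applied with $p', W'$ in place of $p, W$ bounds it by $C \Vert \vec g \Vert_{L^{p'}(W')}$. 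Combining the last three displays, $|\langle \vec f, \vec g\rangle| \le C \Vert \vec f \Vert_{\dot{F}^{0,2}_p(W)} \Vert \vec g \Vert_{L^{p'}(W')}$ for all admissible $\vec g$. By density, $\vec g \mapsto \langle \vec f, \vec g\rangle$ extends to a bounded functional on $L^{p'}(W')$, so by the duality above it is given by integration against a (unique) $\vec h \in L^p(W)$ with $\Vert \vec h \Vert_{L^p(W)} \le C \Vert \vec f \Vert_{\dot{F}^{0,2}_p(W)}$; since $\cS'/\cP$ is the dual of $\cS_0$, we get $\vec f = \vec h$ in $\cS'/\cP$, so $\vec f$ has a representative in $L^p(W)$ with the stated bound (uniqueness of the representative uses that $L^p(W)$, for $W \in A_p$, contains no nonzero polynomial).

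The genuinely hard analysis is entirely inside Theorem \ref{firstdirection}, which is used here as a black box; the rest is bookkeeping. The points that need care are the matrix $A_p$ self-duality and the duality $(L^p(W))^* = L^{p'}(W')$, the density of the $\vec g$ with $\cS_0$-components in $L^{p'}(W')$ together with the injectivity of $L^p(W) \hookrightarrow \cS'/\cP$ (both used to identify the functional obtained by duality with $\vec f$), and the term-by-term justification of the reproducing formula for such $\vec g$. I do not expect any of these to be a serious obstacle; in particular the test-function independence invoked for the $\vec f$-factor is already available from Theorem \ref{mainresult}, whose proof does not depend on the present theorem, so no circularity arises.
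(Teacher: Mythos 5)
Your argument is correct, and its analytic core is the same as the paper's: the estimate $|\langle \vec f, \vec g\rangle| \le \Vert \vec f\Vert_{\dot F^{0,2}_p(W)}\Vert \vec g\Vert_{\dot F^{0,2}_{p'}(W^{-p'/p})}$ via Cauchy--Schwarz in $j$ and H\"older in $x$, followed by Theorem \ref{firstdirection} applied to the dual weight $W^{-p'/p}\in A_{p'}$, is exactly the computation the paper isolates as Lemma \ref{Sbnded}. Where you genuinely diverge is in how the $L^p(W)$ representative of $\vec f$ is produced. The paper applies Lemma \ref{Sbnded} to the truncated Calder\'on sums $\vec F_N=\sum_{|j|\le N}\psi_j\ast\varphi_j\ast\vec f$ and runs a two-norm limiting argument ($\vec F_N\to\vec f$ in $\dot F^{0,2}_p(W)$, $\vec F_N$ Cauchy in $L^p(W)$, then Theorem \ref{firstdirection} to identify the two limits); you instead dualize $\vec f$ directly, extend the functional from test vectors with components in $\cS_0$, and invoke the representation $(L^{p'}(W^{-p'/p}))^*=L^p(W)$ plus the injectivity of $L^p(W)\hookrightarrow\cS'/\cP$. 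Your route is shorter and avoids the $\vec F_N$ convergence discussion, at the price of two points you correctly flag but should not treat as free: (i) the density of $\cS_0$-vectors in $L^{p'}(W^{-p'/p})$ is not quoted anywhere in the paper and needs an argument (e.g. dominate the low-frequency tail $\vec g\ast\Phi_\delta$ by Goldberg's maximal function as in Lemma \ref{convolutionest} and use dominated convergence), or else you can bypass density entirely by Hahn--Banach, since you only ever test the resulting $\vec h$ against $\cS_0$; and (ii) the identification $\vec f=\vec h$ requires noting that elements of $L^p(W)$ are locally integrable with polynomially growing local integrals (via H\"older and local integrability of $\Vert W^{-1/p}\Vert^{p'}$ from the $A_p$ condition), so that they define elements of $(\cS_0)^*$. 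With those details supplied, your proof is complete; the paper's version buys in exchange the explicit statement that $\vec F_N\to\vec f$ in $L^p(W)$, which it reuses implicitly in the inhomogeneous section, while yours buys a cleaner functional-analytic bookkeeping with no new convergence lemmas.
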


Since $\dot{F}^{0,2}_p (W)$ is an equivalence class of tempered distributions modulo polynomials, Theorem \ref{seconddirection} is interpreted as follows: given $\vec{f} \in \dot{F}^{0,2}_p (W)$, there is a unique element of the equivalence class of $\vec{f}$ that belongs to $L^p(W)$. 

The proof of Theorem \ref{seconddirection} uses duality.  It is elementary that the dual of $L^p(W)$ is $L^{p^{\, \prime}} (W^{-p^{\, \prime}/p})$ in the sense that 
for each $\vec{g} \in L^{p^{\, \prime}} (W^{-p^{\, \prime}/p})$, the mapping $T_{\vec{g}}: L^p (W) \rightarrow \C$ defined by 
\[  T_{\vec{g}} \vec{f} = \int_{\R^n} \langle \vec{f} (x),  \vec{g} (x) \rangle \, dx  \]
defines a bounded linear functional on $L^p(W)$ with operator norm equal to $\Vert \vec{g} \Vert_{L^{p^{\, \prime}} (W^{-p^{\, \prime}/p})}$, and every bounded linear functional on $L^p(W)$ is of this form, where $\langle \vec{f}(x), \vec{g}(x) \rangle = \sum_{i=1}^m f_i(x) \overline{g_i (x)}$ is the usual dot product of vectors in $\C^m$.  Applying this result to $W^{-p^{\, \prime}/p}$ shows that the dual of $L^{p^{\, \prime}} (W^{-p^{\, \prime}/p})$ is $L^p(W)$ under the same pairing.  

We will consider $\psi$ satisfying the same conditions as $\varphi$ in (\ref{admiss1}) and (\ref{admiss2}).  For $j \in \Z$, let $\psi_j (x) = 2^{jn} \psi (2^j x)$.
For each $j \in \Z$ and $x \in \R^n$, let $\vec{g}_j (x)$ be a vector of length $m$, and assume that each component of $\vec{g}_j$ is a measurable function on $\R^n$.  Define $G = \{ \vec{g}_j \}_{j \in \Z}$.  Define $L^p_W(\ell^2)$ to be the set of all $G = \{\vec{g}_j \}_{j \in \Z}$ such that 
\[  \Vert G \Vert_{L^p_W (\ell^2)} = \left( \int_{\R^n} \left( \sum_{j \in \Z} | W^{1/p} \vec{g}_j (x)|^2 \right)^{p/2} \, dx \right)^{1/p} < \infty. \]
Let $S$ be the operator taking $G$ to the vector function $S(G)$ on $\R^n$ defined by
\[ S(G) = \sum_{j \in \Z} \psi_j \ast \vec{g}_j .\]

\begin{Lemma} \label{Sbnded}
Suppose $ 1 < p < \infty$, and $W \in A_p$.  If $G = \{\vec{g}_j \}_{j \in \Z} \in L^p_W(\ell^2)$, then $S(G) = \sum_{j \in \Z} \psi_j \ast \vec{g}_j \in L^p(W)$ with 
\[ \left\Vert \sum_{j \in \Z} \psi_j \ast \vec{g}_j \right\Vert_{L^p(W)} \leq C \Vert G \Vert_{L^p_W(\ell^2)}, \]
where $C$ depends only on $p, \varphi,$ and $W$. 
\end{Lemma}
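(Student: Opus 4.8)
The plan is to prove Lemma~\ref{Sbnded} by duality, realizing $S$ as (essentially) the adjoint of an operator to which Theorem~\ref{firstdirection} applies. Recall from the discussion just before the lemma that the dual of $L^p(W)$ is $L^{p^{\, \prime}}(W^{-p^{\, \prime}/p})$ under the pairing $\ip{\vec{f}}{\vec{h}} = \int_{\R^n} \ip{\vec{f}(x)}{\vec{h}(x)}\,dx$; thus it suffices to produce a constant $C$, depending only on $p, \psi, W$, so that
\[ \left| \int_{\R^n} \ip{S(G)(x)}{\vec{h}(x)}\,dx \right| \leq C\,\Vert G\Vert_{L^p_W(\ell^2)}\,\Vert \vec{h}\Vert_{L^{p^{\, \prime}}(W^{-p^{\, \prime}/p})} \]
for all $G = \{\vec{g}_j\}_{j\in\Z}$ and all $\vec{h}$. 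By the same kind of routine density argument used in Theorem~\ref{firstdirection} (reducing to $G$ with finitely many nonzero, smooth, compactly supported components and $\vec{h}\in C^\infty_0$), all manipulations below are justified by Fubini's theorem, and the resulting inequality then yields $S(G)\in L^p(W)$ with the stated bound for general $G$. Setting $\tilde{\psi}(x) = \overline{\psi(-x)}$ and $\tilde{\psi}_j(x) = 2^{jn}\tilde{\psi}(2^j x)$, the standard computation of the adjoint of a convolution gives $\int \ip{\psi_j\ast\vec{g}_j(x)}{\vec{h}(x)}\,dx = \int \ip{\vec{g}_j(y)}{\tilde{\psi}_j\ast\vec{h}(y)}\,dy$, and therefore, inserting $W^{1/p}(y)W^{-1/p}(y)=I$ (valid a.e.) and using self-adjointness of $W^{\pm1/p}$,
\[ \int_{\R^n} \ip{S(G)(x)}{\vec{h}(x)}\,dx = \sum_{j\in\Z}\int_{\R^n} \ip{W^{1/p}(y)\vec{g}_j(y)}{W^{-1/p}(y)\,\tilde{\psi}_j\ast\vec{h}(y)}\,dy . \]

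Next I would estimate the right-hand side by the Cauchy--Schwarz inequality in $\C^m$, then in $\ell^2(\Z)$, and then H\"older's inequality in $y$ with exponents $p$ and $p^{\, \prime}$, obtaining the bound
\[ \Vert G\Vert_{L^p_W(\ell^2)}\,\left\Vert\left(\sum_{j\in\Z}\left|W^{-1/p}(y)\,\tilde{\psi}_j\ast\vec{h}(y)\right|^2\right)^{1/2}\right\Vert_{L^{p^{\, \prime}}(dy)} . \]
Since $W^{-1/p} = (W^{-p^{\, \prime}/p})^{1/p^{\, \prime}}$, the second factor is exactly the $\dot{F}^{0,2}_{p^{\, \prime}}(W^{-p^{\, \prime}/p})$-quasinorm of $\vec{h}$ formed with the test function $\tilde{\psi}$ in place of $\varphi$. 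Now $\tilde{\psi}$ satisfies (\ref{admiss1}) and (\ref{admiss2}), since $\tilde{\psi}\in\mathcal{S}$ and $|\widehat{\tilde{\psi}}| = |\hat{\psi}|$; moreover $W\in A_p$ implies $W^{-p^{\, \prime}/p}\in A_{p^{\, \prime}}$ (matrix $A_p$ duality; see, e.g., \cite{R}), and $1<p^{\, \prime}<\infty$. Hence Theorem~\ref{firstdirection}, applied with $(p,\varphi,W)$ replaced by $(p^{\, \prime},\tilde{\psi},W^{-p^{\, \prime}/p})$, bounds the second factor by $C\,\Vert\vec{h}\Vert_{L^{p^{\, \prime}}(W^{-p^{\, \prime}/p})}$. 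Combining the displays gives the desired estimate, and the density/approximation step then finishes the proof.

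The main obstacle is bookkeeping rather than a new idea: one must correctly identify the adjoint of $S$ and the dual pairing while respecting the conjugate-linear convention for $\ip{\cdot}{\cdot}$, and one must carry out the approximation carefully enough that the bounded functional on $L^{p^{\, \prime}}(W^{-p^{\, \prime}/p})$ produced at the end really is represented by $S(G)$ itself. The only inputs beyond Theorem~\ref{firstdirection} are the $A_p$--$A_{p^{\, \prime}}$ duality for matrix weights and the elementary observation that $\tilde{\psi}$ inherits the frequency-support hypotheses of $\psi$.
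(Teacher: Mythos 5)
Your proposal is correct and follows essentially the same route as the paper: dualize against $\vec{h}\in L^{p^{\,\prime}}(W^{-p^{\,\prime}/p})$, pass the convolution to $\tilde{\psi}_j\ast\vec{h}$, apply Cauchy--Schwarz in $\C^m$ and in $\ell^2$, then H\"older, and bound the resulting $\dot{F}^{0,2}_{p^{\,\prime}}(W^{-p^{\,\prime}/p})$ factor by Theorem \ref{firstdirection} using $W^{-p^{\,\prime}/p}\in A_{p^{\,\prime}}$. Your added remarks on Fubini and the density reduction only make explicit what the paper leaves implicit.
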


\begin{proof}
Let $\tilde{\psi} (x) = \overline{\psi (-x)}$, and for each $j \in \Z$, let $\tilde{\psi}_j (x) = 2^{jn} \tilde{\psi} (2^j x)$.  Suppose $G = \{ \vec{g}_j \}_{j \in \Z} \in L^p_W (\ell^2)$ and $\vec{h} \in L^{p^{\, \prime}} (W^{-p^{\, \prime}/p})$.  Since $W^{-1/p}$ is self-adjoint, 
\[  \left| \left\langle \psi_j \ast \vec{g}_j (x) , \vec{h} (x) \right\rangle \right| = \left| \left\langle  W^{1/p} (x) \vec{g}_j (x) ,W^{-1/p} (x) \tilde{\psi}_j \ast \vec{h} (x) \right\rangle \right| . \]
Bringing absolute values inside the integral and the sum on $j$, using the previous identity, then the Cauchy-Schwarz inequality first for $\langle, \rangle$, then for the sum on $j$, and finally, H\"{o}lder's inequality with indices $p$ and $p^{\, \prime}$ yields 

\vspace{0.2in}

\[  \left| \int_{\R^n} \left\langle \sum_{j \in \Z} \psi_j \ast \vec{g}_j (x) , \vec{h} (x) \right\rangle \, dx   \right| \leq
  \int_{\R^n}  \sum_{j \in \Z} \left| \left\langle \psi_j \ast \vec{g}_j (x) , \vec{h} (x) \right\rangle \right| \, dx  \]
\[  \leq \left( \int_{\R^n}  \left( \sum_{j \in \Z} \left|   W^{1/p} (x) \vec{g}_j (x)\right|^2 \right)^{p/2} \, dx \right)^{1/p}
\left( \int_{\R^n} \left( \sum_{j \in \Z} \left| W^{-1/p} (x) \tilde{\psi}_j \ast \vec{h} (x)  \right|^2 \right)^{p^{\, \prime}/2}  \, dx \right)^{1/p^{\, \prime}} \]
\[ =  \Vert G \Vert_{L^p_W (\ell^2)}  \Vert \vec{h} \Vert_{\dot{F}^{0,2}_{p^{\, \prime}} (W^{-p^{\, \prime}/p})} , \]
where $\dot{F}^{0,2}_{p^{\, \prime}} (W^{-p^{\, \prime}/p})$ is defined with respect to $\tilde{\psi}$, which satisfies the conditions on $\varphi$ in (\ref{admiss1}) and (\ref{admiss2}).  Note that $W^{-p^{\, \prime}/p} \in A_{p^{\, \prime}}$, since $W \in A_p$.  Hence, by Theorem \ref{firstdirection}, 
\[   \Vert \vec{h} \Vert_{\dot{F}^{0,2}_{p^{\, \prime}} (W^{-p^{\, \prime}/p})} \leq C \Vert \vec{h} \Vert_{L^{p^{\, \prime}} (W^{-p^{\, \prime}/p})}. \]

By duality, then,
\[  \left\Vert \sum_{j \in \Z} \psi_j \ast \vec{g}_j \right\Vert_{L^p(W)}= \sup_{\Vert \vec{h} \Vert_{L^{p^{\, \prime}} (W^{-p^{\, \prime}/p})} \leq 1}  \left|  \int_{\R^n} \langle \sum_{j \in \Z} \psi_j \ast \vec{g}_j(x),  \vec{h} (x) \rangle \, dx \right| \]
\[ \leq C  \Vert G \Vert_{L^p_W(\ell^2)} . \]
\end{proof}

To prove Theorem \ref{seconddirection}, given admissible $\ffi$, we define $\psi$ by $\hat{\psi} = \frac{\overline{\hat{\varphi}}}{\sum_{j \in \Z} |\widehat{\varphi_j}|^2}$.  
Then $\psi$ satisfies the conditions on $\ffi$ in (\ref{admiss1}) and (\ref{admiss2}) (this is where the non-degeneracy condition (\ref{admiss3}) is needed), and we have $\sum_{j \in \Z} \widehat{\psi_j} (\xi) \widehat{\varphi_j}(\xi) =1$ for all $\xi \neq 0$.  Roughly, then, the discrete Calder\'{o}n formula $\vec{f} = \sum_{j \in \Z} \psi_j \ast \varphi_j \ast \vec{f} = S (T ( \vec{f}))$ implies 
\[ \Vert \vec{f} \Vert_{L^p(W)} =  \left\Vert S(T(\vec{f})) \right\Vert_{L^p(W)}\leq C \Vert T(\vec{f}) \Vert_{L^p_W (\ell^2)} = C \Vert \vec{f} \Vert_{\dot{F}^{0, 2}_p (W)}.   \]
We detail the convergence issues involved to justify this conclusion as follows.

\begin{proofof} Theorem \ref{seconddirection}.
Let $\vec{f} \in \dot{F}^{0,2}_p (W)$.  For a positive integer $N$, define
$  \vec{F}_N = \sum_{j =-N}^N \psi_j \ast \varphi_j \ast \vec{f}$.  
Applying Lemma \ref{Sbnded} with $\vec{g}_j = \varphi_j \ast \vec{f}$ for $|j|\leq N$ and $\vec{g}_j= \vec{0}$ for $|j| >N$ gives 
\begin{equation} \label{FNlemma}
  \Vert \vec{F}_N \Vert_{L^p(W)} \leq C \left\Vert  \left( \sum_{j=-N}^N |W^{1/p} \varphi_j \ast \vec{f}|^2     \right)^{1/2}  \right\Vert_{L^p(\R^n)} \leq C \Vert \vec{f} \Vert_{\dot{F}^{0 2}_p (W)} . 
  \end{equation}
Hence, $\vec{F}_N \in L^p (W)$ for each $N$.  Using the fact that the supports of $\hat{\psi}_j$ and $\hat{\psi}_k$ overlap only for $|j-k|\leq 1$, it is not difficult to see that $\vec{F}_N$ converges to $\vec{f}$ in $\dot{F}^{0,2}_p(W)$ norm.  

Using Lemma \ref{Sbnded} again and the dominated convergence theorem, we see that $\vec{F}_N$ is Cauchy in $L^p(W)$.  Therefore, $\vec{F}_N$ converges in $L^p(W)$ to some $\vec{H} \in L^p(W)$.  From the imbedding of $L^p (W)$ into $\dot{F}^{0,2}_p (W)$ (Theorem \ref{firstdirection}), it follows that $\vec{H} \in \dot{F}^{0,2}_p(W)$ and  $\vec{F}_N$ converges in $\vec{H}$ in $\dot{F}^{0,2}_p(W)$.  But we know that $\vec{F}_N$ converges in $\vec{f}$ in $\dot{F}^{0,2}_p(W)$.  Hence, $\vec{H} = \vec{f}$, so $\vec{f} \in  L^p(W)$ and $\vec{F}_N$ converges to $\vec{f}$ in $L^p(W)$.  Now we take the limit as $N \rightarrow \infty$ in (\ref{FNlemma}) to obtain $\Vert \vec{f} \Vert_{L^p(W)} \leq  C \Vert \vec{f} \Vert_{\dot{F}^{0 2}_p (W)}  $.
\end{proofof}

\section{Inhomogeneous spaces} \label{inhomog}

As in the unweighted case, there are useful inhomogeneous versions of the spaces under consideration.  The relation between the homogeneous and inhomogeneous spaces is familiar, as in \cite{FJ2}, Section 12, or \cite{R1}, Section 11.  We choose $\Phi \in \cS (\R^n)$ such that supp $\hat{\Phi} \subseteq \{ \xi \in \R^n: |\xi| \leq 2 \}$ and $|\hat{\Phi}(\xi)| \geq c >0$ for $|\xi| \leq 5/3$.  If $\Phi$ satisfies these two conditions and $\varphi \in \mathcal{A}$, we say $(\Phi, \ffi) \in \cA_+$. If $(\Phi, \ffi) \in \cA_+$ we can find $(\Psi, \psi) \in \cA_+$ such that 
\[  \overline{\hat{\Phi}}(\xi) \hat{\Psi} (\xi)+ \sum_{j=1}^{\infty} \overline{\widehat{\varphi_j}}(\xi) \widehat{\varphi_j} (\xi) =1 \,\,\, \mbox{for all} \,\,\, \xi.  \]
For $Q= Q_{0,k}$, for $k \in \Z^n$, define $\Psi_{Q} (x) = \Phi (x-k)$, and similarly for $\Psi$ (which is consistent with (\ref{defphiQ})).  Then the following inhomogeneous $\varphi$-transform identity holds:
\begin{equation} \label{phitranideninhomo}
  \vec{f} =  \sum_{Q \in \cD_0} \langle \vec{f}, \Phi_{Q} \rangle \Psi_Q + \sum_{j=1}^{\infty} \sum_{Q \in \cD_j} \langle \vec{f}, \ffi_{Q} \rangle \psi_Q,   
\end{equation}
where, as usual, the inner product $\langle \vec{f} , \Phi_Q \rangle$ is defined componentwise.  In this case, we have convergence of (\ref{phitranideninhomo}) in $L^2$ if $\vec{f} \in L^2$, in $\mathcal{S}$ if $\vec{f} \in \cS$, and in $\cS^{\, \prime}$ if $\vec{f} \in \cS^{\, \prime}$ (which means that each component of $\vec{f}$ belongs to $L^2, \cS$, or $\cS^{\, \prime}$, respectively).  We note that we don't have to work modulo polynomials because $\widehat{\Phi}(0) \neq 0$.  

For $\alpha \in \R, 0<p<\infty, 0<q \leq \infty$, and $W$ a matrix weight, let $F^{\alpha q}_p(W)$ be the set of all $\vec{f} \in \cS^{\, \prime} (\R^n)$  such that
\[  \Vert \vec{f} \Vert_{F^{\alpha q}_p(W)} = \Vert \Phi \ast \vec{f} \Vert_{L^p(W)} +   \left\Vert \left(  \sum_{j =1}^{\infty} \left|   2^{j \al}  \, W^{1/p} \ffi_j \ast \vec{f} \,  \right|^q  \right)^{1/q} \right\Vert_{L^p(\R^n)} < \infty . \]
If we adopt the convention that $\phi_j = \varphi_j$ for $j \geq 1$, but $\phi_0 = \Phi$, then the equivalence
\[   \Vert \vec{f} \Vert_{F^{\alpha q}_p(W)} \approx  \left\Vert \left(  \sum_{j =0}^{\infty} \left|   2^{j \al}  \, W^{1/p} \phi_j \ast \vec{f} \, \right|^q  \right)^{1/q} \right\Vert_{L^p(\R^n)} \]
shows that $F^{\alpha q}_p(W)$ is obtained by substituting $\Phi$ for $\varphi_0$ and then truncating the expression in the quasi-norm.  

Let $\mathcal{D}_+ = \{ Q \in \cD: \ell(Q)\leq 1 \}$, and suppose $\{A_Q\}_{Q \in \cD_+}$ is a sequence of non-negative $m\times m$ matrices.  Let $F^{\alpha q}_p(\{A_Q \})$ be the set
of all $\vec{f} \in \cS'(\R^n)$ such that
\[ \Vert \vec{f} \, \Vert_{F^{\alpha q}_p(\{A_Q \})} = \left\Vert \sum_{Q \in \cD_0} |A_Q \Phi \ast \vec{f}| \chi_Q \right\Vert_{L^p(\R^n)} \]
\[  + \left\Vert \left(  \sum_{j =1}^{\infty}   \sum_{Q \in \mathcal{D}_j} \left( 2^{j \al} \vert \, A_Q \, \ffi_j \ast \vec{f} \, \vert \chi_Q \right)^q  \right)^{1/q} \right\Vert_{L^p(\R^n)}   .\]
For a sequence $\vec{s} = \{ \vec{s}_Q \}_{Q \in \cD_+}$, where $\vec{s}_Q \in \C^m$ for each $Q \in \cD_+$, we define the quasi-norms $\Vert \vec{s} 
\Vert_{f^{\alpha q}_p(W)} $, for a matrix weight $W$, and $\Vert \vec{s} \Vert_{f^{\alpha q}_p(\{A_Q\})} $, for a sequence $\{ A_Q \}_{Q \in \cD_+}$ of non-negative definite matrices, by replacing the sum over $Q \in \cD$ in the definitions of the corresponding homogeneous quasi-norms by the sum over $Q \in \cD_+$.  Alternatively, define the map $E$ taking $\vec{s} = \{ \vec{s}_Q \}_{Q \in \cD_+}$ to $E\vec{s} = \{ (E\vec{s})_Q \}_{Q \in \cD}$ by $(E \vec{s})_Q = \vec{s}_Q$ if $\ell(Q) \leq 1$, and $(E\vec{s})_Q = \vec{0}$ if $\ell(Q)>1$.  Then $\Vert \vec{s} \Vert_{f^{\alpha q}_p(W)}= \Vert \vec{Es} \Vert_{\dot{f}^{\alpha q}_p(W)}$ and $\Vert \vec{s} \Vert_{f^{\alpha q}_p(\{A_Q\})} = \Vert \vec{Es} \Vert_{\dot{f}^{\alpha q}_p(\{A_Q\})} $.  Then $f^{\alpha q}_p(W)$ and $f^{\alpha q}_p(\{A_Q\})$ are the set of all sequences $\vec{s}$ with finite quasi-norm, respectively.

We have the following analogues for the inhomogeneous spaces $F^{\alpha q}_p (W)$ of our results above.  

\begin{Thm}\label{mainresultinhomo} Suppose $\alpha \in \R, 0 < p < \infty, 0 < q \leq \infty, (\Phi, \ffi) \in \mathcal{A}_+, W \in A_p(\R^n)$, and $\{ A_Q \}_{Q \in \mathcal{D}}$ is a sequence of reducing operators of order $p$ for $W$.  For $\vec{f} \in \cS^{\, \prime}$, let $\vec{s} = \{ \vec{s}_Q \}_{Q \in \mathcal{D}_+}$, where $\vec{s}_Q = \langle \vec{f}, \Phi_{Q} \rangle$ if $\ell(Q)=1$ and $\vec{s}_Q = \langle \vec{f}, \ffi_{Q} \rangle$ if $\ell(Q)<1$.  Then if any of $\Vert \vec{f} \Vert_{F^{\alpha q}_p (W)}$, $\Vert \vec{f} \Vert_{F^{\alpha q}_p (\{A_Q \})}$, $\Vert  \vec{s} \Vert_{f^{\alpha q}_p ( W )}$, or $\Vert\vec{s} \Vert_{f^{\alpha q}_p ( \{ A_Q \} )}$ is finite, then so are the other three, with  
\[ \Vert \vec{f} \Vert_{F^{\alpha q}_p (W)} \approx \Vert \vec{f} \Vert_{F^{\alpha q}_p (\{A_Q \})} \approx \Vert\vec{s} \Vert_{f^{\alpha q}_p ( \{ A_Q \} )} \approx \Vert  \vec{s} \Vert_{f^{\alpha q}_p ( W )}. \]
Also, $F^{\alpha q}_p (W)$ and $F^{\alpha q}_p (\{A_Q \})$ are independent of the choice of $(\Phi, \varphi) \in \mathcal{A}_+$, in the sense that different choices yield equivalent quasi-norms.
\end{Thm}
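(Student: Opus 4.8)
The plan is to rerun the proof of Theorem~\ref{mainresult} (that is, to establish the inhomogeneous analogues of Theorems~\ref{avgnormeq} and \ref{nonavnormeq}), treating the low-frequency term $\Phi\ast\vec f$ as one extra term $\phi_0\ast\vec f$ at ``level $j=0$'' (writing $\phi_0=\Phi$, $\phi_j=\varphi_j$ for $j\ge 1$) and replacing the index set $\mathcal D$ by $\mathcal D_+=\{Q\in\mathcal D:\ell(Q)\le 1\}$ throughout. The sequence-space equivalence $\Vert\vec s\Vert_{f^{\alpha q}_p(W)}\approx\Vert\vec s\Vert_{f^{\alpha q}_p(\{A_Q\})}$ is immediate: since $\Vert\vec s\Vert_{f^{\alpha q}_p(W)}=\Vert E\vec s\Vert_{\dot f^{\alpha q}_p(W)}$ and $\Vert\vec s\Vert_{f^{\alpha q}_p(\{A_Q\})}=\Vert E\vec s\Vert_{\dot f^{\alpha q}_p(\{A_Q\})}$ for the extension-by-zero map $E$, and since $W\in A_p$ forces $\{A_Q\}$ to be strongly doubling of some order $(\beta,p)$ by Lemma~\ref{doublemma}, the claim follows from the sequence-space half of Theorem~\ref{nonavnormeq}.

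For the equivalence $\Vert\vec f\Vert_{F^{\alpha q}_p(\{A_Q\})}\approx\Vert\vec s\Vert_{f^{\alpha q}_p(\{A_Q\})}$ I would prove inhomogeneous versions of Theorems~\ref{firstAQimbed} and \ref{thirdAQest}. In the first, the band-limited sampling identity from \cite[Lemma~6.10]{FJW} applies verbatim to $\Phi\ast\vec f$ because $\supp\hat\Phi\subseteq\{|\xi|\le 2\}$, so estimate~\eqref{estforlater}, inequality~\eqref{newmaxineq}, and the Fefferman--Stein inequality go through with the sum over $j\in\Z$ replaced by the sum over $j\ge 0$, controlling $\Vert\vec f\Vert_{F^{\alpha q}_p(\{A_Q\})}$ and the coefficient norm by each other. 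In the second, $\{\Psi_Q\}_{\ell(Q)=1}\cup\{\psi_Q\}_{\ell(Q)<1}$ is a family of smooth molecules (with $\Psi_Q$ having $N=-1$, i.e.\ no vanishing moments), and the inhomogeneous convolution estimates of \cite[Appendix~B]{FJ2} bound $|\phi_j\ast m_P(x)|$ for all $P\in\mathcal D_+$ and $j\ge 0$, producing a matrix in ${\bf ad}^{\alpha,q}_p(\beta)$ to which Theorem~\ref{ADmatrixbnded} applies. Independence of $(\Phi,\varphi)\in\mathcal A_+$ is then obtained exactly as in the proof of Theorem~\ref{avgnormeq}, passing between two admissible pairs via the reproducing identity~\eqref{phitranideninhomo}; the change-of-system matrix lies in ${\bf ad}^{\alpha,q}_p(\beta)$ by the same convolution estimates (its new low-frequency entries handled by the $\ell(P)=\ell(Q)=1$ case), and Theorem~\ref{ADmatrixbnded} finishes the argument.

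For $\Vert\vec f\Vert_{F^{\alpha q}_p(W)}\approx\Vert\vec f\Vert_{F^{\alpha q}_p(\{A_Q\})}$ I would reprove Theorems~\ref{AQ-Wseqspimbed}, \ref{AQ-Wfuncspimbed} and Corollary~\ref{W-AQinequality} in the inhomogeneous setting. The $\{A_Q\}\to W$ direction for functions uses~\eqref{estforlater} applied to $\phi_0\ast\vec f$ at level $j=0$ together with the same H\"older-against-$A_p$ bookkeeping; the $W\to\{A_Q\}$ direction uses Corollary~\ref{imbedd} (hence Lemma~\ref{carl} and Theorem~\ref{Naz}) applied to the truncated sequence $\{\gamma_j\}_{j\ge 0}$, which still satisfies the hypotheses of Theorem~\ref{Naz} since restriction to $j\ge 0$ only weakens them. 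Chaining the three equivalences yields $\Vert\vec f\Vert_{F^{\alpha q}_p(W)}\approx\Vert\vec f\Vert_{F^{\alpha q}_p(\{A_Q\})}\approx\Vert\vec s\Vert_{f^{\alpha q}_p(\{A_Q\})}\approx\Vert\vec s\Vert_{f^{\alpha q}_p(W)}$, and independence of $F^{\alpha q}_p(W)$ in the choice of $(\Phi,\varphi)$ follows from that of $F^{\alpha q}_p(\{A_Q\})$.

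The one genuinely new point is that the molecule/convolution estimates must still hold when $\Phi$, which has no cancellation, is one of the $\phi_j$; this is exactly the content of the inhomogeneous half of \cite[Appendix~B]{FJ2}, and it causes no difficulty because in the regime $\ell(P)\le 1$ one is always either in the case $\ell(P)\ge 2^{-j}$, where no cancellation of $\phi_j$ is used, or in the case $\ell(P)<2^{-j}$, where the molecule $m_P$ supplies the needed vanishing moments. I expect this verification, rather than any conceptual issue, to be the only (mildly technical) obstacle. Finally, since $\hat\Phi(0)\ne 0$, the inhomogeneous spaces are spaces of genuine tempered distributions and~\eqref{phitranideninhomo} converges in $\cS'$, so no ``modulo polynomials'' subtlety arises and the completeness statements transfer from the homogeneous case just as in the remarks following Theorem~\ref{nonavnormeq}.
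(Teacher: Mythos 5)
Your proposal is correct and follows essentially the same route as the paper: the paper proves Theorem \ref{mainresultinhomo} by rerunning the homogeneous arguments with $\varphi_0$ replaced by $\Phi$, indices restricted to $j\geq 0$ and $Q\in\mathcal{D}_+$, almost-diagonal matrices and truncated sequences handled via extension by zero (so Theorem \ref{ADmatrixbnded}, Theorem \ref{AQ-Wseqspimbed}, and Corollary \ref{imbedd} apply), and the observation that vanishing moments are only ever needed for the function at the smaller scale, so neither $\Phi$ nor the scale-one molecules require them. One small wording caution: in Lemma \ref{convest}(i) (the case $\ell(P)\geq 2^{-j}$) it is precisely the cancellation of $\phi_j$ that is used, not avoided; your conclusion still stands because for $\phi_0=\Phi$ that case occurs only when $\ell(P)=1=2^{-j}$, where equal scales make cancellation unnecessary, while for $j\geq 1$ the functions $\varphi_j$ have vanishing moments of all orders.
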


For MRA wavelet systems, that is, those obtained from a multi-resolution analysis, such as Meyer's wavelets and Daubechies' $D_N$ wavelets, there exists a scaling function, which we call $\Phi_0$, such that $$\{ \Phi_0 (x-k)\}_{k \in \Z^n} \cup \{ \psi^{(i)}_Q \}_{Q \in \mathcal{D}, \ell(Q)<1, 1 \leq i \leq 2^n -1} $$ is an orthonormal basis for $L^2 (\R^n)$, where $\{ \psi^{(i)} \}_{i=1}^{2^n-1}$ are the wavelet generators.  

\begin{Thm}\label{wavecharinhomo}
Suppose $\alpha \in \R, 0 < p < \infty, 0 < q \leq \infty$, and $W \in A_p(\R^n)$.  Suppose that for some sufficiently large positive numbers $N_0, R$, and $S$ (depending on $p, q, \alpha, n$, and $W$), the generators $\{ \psi^{(i)} \}_{1 \leq i \leq 2^n -1} $ of an MRA wavelet system satisfy $\int_{\R^n} x^{\gamma} \psi^{(i)} (x) \, dx =0$ for all multi-indices $\gamma$ with $|\gamma| \leq N_0$, and $|D^{\gamma} \psi^{(i)} (x)| \leq C (1+|x|)^{-R}$ for all $|\gamma| \leq S$. Also suppose $\Phi_0$ satisfies $|D^{\gamma} \Phi_0 (x)| \leq C (1+|x|)^{-R}$ for all $|\gamma| \leq S$.  Let $\vec{s}^{\,(i)}_Q = \langle \vec{f}, \Phi_Q \rangle$ if $\ell(Q)=1$ and $\vec{s}^{\,(i)}_Q = \langle \vec{f}, \psi^{(i)}_Q \rangle$ if $\ell(Q)<1$, and let $\vec{s}^{\,(i)} = \{ \vec{s}^{\,(i)}_Q \}_{Q \in \cD_+}$. Then 
\[ \Vert \vec{f} \Vert_{F^{\alpha q}_p ( W )} \approx \sum_{i=1}^{2^n -1} \Vert\vec{s}^{\,(i)} \Vert_{f^{\alpha q}_p ( W )}  . \]
\end{Thm}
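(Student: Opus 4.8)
The proof parallels that of Theorem \ref{wavechar}, using the inhomogeneous counterparts of the results of Sections \ref{normeqavg} and \ref{relavgnonavg}. Fix a sequence $\{A_Q\}_{Q\in\mathcal{D}}$ of reducing operators of order $p$ for $W$; by Lemma \ref{doublemma} it is strongly doubling of order $(\beta,p)$ for some $\beta>0$. By the inhomogeneous analogue of Theorem \ref{nonavnormeq} (its function-space part applied to $\vec{f}$, and its sequence-space part applied to each $\vec{s}^{\,(i)}$), the assertion is equivalent to the inhomogeneous averaging-space statement
\[
\Vert\vec{f}\Vert_{F^{\alpha q}_p(\{A_Q\})}\approx\sum_{i=1}^{2^n-1}\Vert\vec{s}^{\,(i)}\Vert_{f^{\alpha q}_p(\{A_Q\})},
\]
which is the inhomogeneous analogue of Theorem \ref{avgwavechar}.

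To obtain the inhomogeneous versions of Theorems \ref{thirdAQest}, \ref{ADmatrixbnded}, \ref{avgnormeq} — and hence of Theorem \ref{avgwavechar} — one transcribes the homogeneous arguments, using the map $E$ of Section \ref{inhomog}, for which $\Vert\vec{s}\Vert_{f^{\alpha q}_p(\{A_Q\})}=\Vert E\vec{s}\Vert_{\dot{f}^{\alpha q}_p(\{A_Q\})}$, together with the reduction \eqref{reduction} to pass to the scalar, unweighted, inhomogeneous theory of \cite[\S 12]{FJ2}. The only new feature is that the index set is $\mathcal{D}_+$ and that cubes $Q$ with $\ell(Q)=1$ carry no cancellation: in the inhomogeneous smooth-molecule definition condition (M1) is imposed only for $\ell(Q)<1$, and Lemma \ref{convest} is supplemented by its variant for an analyzing function of ``$\Phi$-type'' (Schwartz, integrable, nonvanishing at the origin). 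That variant is precisely Lemma \ref{convest}(ii), valid whenever the molecule sits at the finer scale so that only the molecule's vanishing moments enter, together with the observation that a moment-free function such as the scaling function $\Phi_0$ may serve as a molecule at scale $1$ (with $N=-1$); all of these estimates are contained in \cite[Appendix B]{FJ2}.

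Granting these, the displayed equivalence follows as in Theorem \ref{avgwavechar}. For the direction $\lesssim$: the conditions on $N_0,R,S$ make $\{\Phi_0(\cdot-k)\}_{k\in\Z^n}\cup\{\psi^{(i)}_Q\}_{\ell(Q)<1,\,1\le i\le 2^n-1}$ a family of inhomogeneous smooth molecules indexed by $\mathcal{D}_+$, so the MRA wavelet identity $\vec{f}=\sum_k\langle\vec{f},\Phi_{0,k}\rangle\Phi_0(\cdot-k)+\sum_i\sum_{\ell(Q)<1}\langle\vec{f},\psi^{(i)}_Q\rangle\psi^{(i)}_Q$ and the inhomogeneous Theorem \ref{thirdAQest} give $\Vert\vec{f}\Vert_{F^{\alpha q}_p(\{A_Q\})}\le c\sum_i\Vert\vec{s}^{\,(i)}\Vert_{f^{\alpha q}_p(\{A_Q\})}$. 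For the direction $\gtrsim$: choose $(\Psi,\psi)\in\mathcal{A}_+$ dual to $(\Phi,\ffi)$ and let $\vec{s}=\{\vec{s}_Q\}_{Q\in\mathcal{D}_+}$ be the $(\Phi,\ffi)$-transform sequence of $\vec{f}$, so that $\Vert\vec{s}\Vert_{f^{\alpha q}_p(\{A_Q\})}\approx\Vert\vec{f}\Vert_{F^{\alpha q}_p(\{A_Q\})}$ by the inhomogeneous Theorem \ref{avgnormeq} applied to $(\Psi,\psi)$; substituting the expansion \eqref{phitranideninhomo} of $\vec{f}$ into $\vec{s}^{\,(i)}_Q=\langle\vec{f},\psi^{(i)}_Q\rangle$ (and into $\langle\vec{f},\Phi_{0,Q}\rangle$ for $\ell(Q)=1$) yields $\vec{s}^{\,(i)}=B^{(i)}\vec{s}$, where $b^{(i)}_{QP}$ equals, up to conjugation, $\langle\Psi_P,\psi^{(i)}_Q\rangle$ or $\langle\psi_P,\psi^{(i)}_Q\rangle$ (with $\psi^{(i)}_Q$ replaced by $\Phi_{0,Q}$ when $\ell(Q)=1$, and with $\Psi_P$ used when $\ell(P)=1$). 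By the convolution estimates of the previous paragraph, $B^{(i)}\in\mathbf{ad}^{\alpha,q}_p(\beta)$ in the inhomogeneous sense, hence is bounded on $f^{\alpha q}_p(\{A_Q\})$ by the inhomogeneous Theorem \ref{ADmatrixbnded}; thus $\sum_i\Vert\vec{s}^{\,(i)}\Vert_{f^{\alpha q}_p(\{A_Q\})}\le c\Vert\vec{s}\Vert_{f^{\alpha q}_p(\{A_Q\})}\le c\Vert\vec{f}\Vert_{F^{\alpha q}_p(\{A_Q\})}$. Together with $\lesssim$ this proves the displayed equivalence, and by the first paragraph the theorem follows.

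The step needing the most care is the coarsest scale $\ell(Q)=1$ in the molecule and almost-diagonality estimates, where none of $\Phi,\Psi,\Phi_0$ has vanishing moments: one must verify that the resulting bounds still conform to Definition \ref{Defalmostdiag} restricted to $\mathcal{D}_+$. This works because when a scale-$1$ cube meets a strictly finer cube the decay in the scale ratio is supplied by the finer object's cancellation (a wavelet, or a genuine $\mathcal{A}$-function $\ffi$ or $\psi$), while two scale-$1$ cubes have scale ratio $1$, so that only spatial decay — furnished by the $(1+|x|)^{-R}$ bounds on $\Phi_0$ and on the wavelet generators — is needed. Everything else is a line-by-line transcription of the homogeneous proofs of Sections \ref{normeqavg}--\ref{relavgnonavg}.
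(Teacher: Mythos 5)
Your argument is essentially the paper's own: reduce, via the inhomogeneous analogues of Theorem \ref{nonavnormeq}, to the inhomogeneous version of Theorem \ref{avgwavechar}, and prove that by transcribing the homogeneous machinery (inhomogeneous molecules exempt from (M1) at scale $1$, almost diagonal matrices handled through the extension map $E$ and Theorem \ref{ADmatrixbnded}, and the observation that Lemma \ref{convest} uses vanishing moments only for the function attached to the smaller cube). The proposal is correct and matches the paper's route, just spelled out in more detail than the paper's brief remarks in Section \ref{inhomog}.
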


\begin{Thm}\label{equivwithlpinhomo}
Suppose $1 < p < \infty$ and $W \in A_p(\R^n)$.  Then $F^{0 2}_p (W) = L^p(W)$, with equivalent norms.
\end{Thm}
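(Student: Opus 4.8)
The plan is to establish the two inclusions $L^p(W)\subseteq F^{02}_p(W)$ and $F^{02}_p(W)\subseteq L^p(W)$, each with the corresponding norm bound, by following the homogeneous argument of Section~\ref{Lpequiv} and handling the single low-frequency piece $\Phi\ast\vec f$ separately.

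For $L^p(W)\subseteq F^{02}_p(W)$: given $\vec f\in L^p(W)$, the Littlewood--Paley part of $\|\vec f\|_{F^{02}_p(W)}$, namely $\big\|(\sum_{j\ge1}|W^{1/p}\varphi_j\ast\vec f|^2)^{1/2}\big\|_{L^p}$, is dominated by $\|\vec f\|_{\dot F^{02}_p(W)}$, which by Theorem~\ref{firstdirection} (whose hypotheses involve only (\ref{admiss1})--(\ref{admiss2})) is at most $C\|\vec f\|_{L^p(W)}$. For the remaining term $\|\Phi\ast\vec f\|_{L^p(W)}$, I observe that since $\Phi\in\cS$ the convolution operator $\vec f\mapsto\Phi\ast\vec f$ is a Calder\'on--Zygmund operator: its scalar kernel $\Phi(x-y)$ satisfies $|\Phi(x-y)|\le C|x-y|^{-n}$ and the H\"ormander smoothness estimate, and $\Phi\ast$ is $L^2$-bounded because $\hat\Phi$ is bounded. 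Hence Goldberg's theorem on $L^p(W)$-boundedness of Calder\'on--Zygmund operators for matrix $A_p$ weights \cite{G} gives $\|\Phi\ast\vec f\|_{L^p(W)}\le C\|\vec f\|_{L^p(W)}$; equivalently, this bound also follows by running the argument of Theorem~\ref{firstdirection} with the single $\C^m$-valued kernel $x\mapsto\Phi(x)$ in place of the $\ell^2_m(\Z)$-valued kernel $K$. Adding the two estimates yields $\|\vec f\|_{F^{02}_p(W)}\le C\|\vec f\|_{L^p(W)}$.

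For the reverse inclusion I argue by duality exactly as in Theorem~\ref{seconddirection}. First one records the inhomogeneous analogue of Lemma~\ref{Sbnded}: if $(\Psi,\psi)\in\cA_+$ and $G=\{\vec g_j\}_{j\ge0}$ satisfies $\|G\|_{L^p_W(\ell^2)}:=\big\|(\sum_{j\ge0}|W^{1/p}\vec g_j|^2)^{1/2}\big\|_{L^p}<\infty$, then $S_+(G):=\Psi\ast\vec g_0+\sum_{j\ge1}\psi_j\ast\vec g_j$ lies in $L^p(W)$ with $\|S_+(G)\|_{L^p(W)}\le C\|G\|_{L^p_W(\ell^2)}$; this follows by estimating $\|\Psi\ast\vec g_0\|_{L^p(W)}\le C\|\vec g_0\|_{L^p(W)}\le C\|G\|_{L^p_W(\ell^2)}$ with the Calder\'on--Zygmund bound just discussed, and estimating the tail $\sum_{j\ge1}\psi_j\ast\vec g_j$ by Lemma~\ref{Sbnded} (taking $\vec g_j=\vec0$ for $j\le0$). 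Given $\vec f\in F^{02}_p(W)$, choose $(\Psi,\psi)\in\cA_+$ furnishing the reproducing identity $\vec f=\Psi\ast\Phi\ast\vec f+\sum_{j\ge1}\psi_j\ast\varphi_j\ast\vec f$ (valid in $\cS'$, as in (\ref{phitranideninhomo})), set $\vec F_N=\Psi\ast\Phi\ast\vec f+\sum_{j=1}^N\psi_j\ast\varphi_j\ast\vec f$, and apply the $S_+$ estimate with $\vec g_0=\Phi\ast\vec f$, $\vec g_j=\varphi_j\ast\vec f$ for $1\le j\le N$, and $\vec g_j=\vec0$ otherwise, to obtain $\|\vec F_N\|_{L^p(W)}\le C\|\vec f\|_{F^{02}_p(W)}$ uniformly in $N$. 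Using the bounded overlap of the Fourier supports of the $\widehat{\psi_j}$ and of $\hat\Phi$, one checks (just as in the proof of Theorem~\ref{seconddirection}) that $\{\vec F_N\}$ is Cauchy in $L^p(W)$ and converges to $\vec f$ in $F^{02}_p(W)$, hence in $\cS'$. Since $L^p(W)\hookrightarrow\cS'$, the $L^p(W)$-limit of $\vec F_N$ must equal $\vec f$, so $\vec f\in L^p(W)$, and letting $N\to\infty$ gives $\|\vec f\|_{L^p(W)}\le C\|\vec f\|_{F^{02}_p(W)}$.

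The main obstacle is the one already overcome in the homogeneous case: the Calder\'on--Zygmund bound $\|\Phi\ast\vec f\|_{L^p(W)}\le C\|\vec f\|_{L^p(W)}$ for a matrix $A_p$ weight, here imported from \cite{G} or re-derived as a simplified instance of Theorem~\ref{firstdirection}. With that in hand, the only remaining work is the routine but slightly delicate convergence bookkeeping for the partial sums $\vec F_N$, identical in spirit to the proof of Theorem~\ref{seconddirection}.
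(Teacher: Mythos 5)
Your proposal is correct, but it takes a more modular route than the paper's. The paper proves Theorem \ref{equivwithlpinhomo} by redefining the vector-valued operator $T$ of Section \ref{Lpequiv} with $\varphi_0$ replaced by $\Phi$ and the index restricted to $j\geq 0$, checking that the new $\ell^2$-valued kernel still satisfies the Calder\'{o}n--Zygmund size estimate \eqref{phisizeest} and that $T$ is $L^2$-bounded, and then rerunning the whole Coifman--Fefferman/Goldberg argument to get an inhomogeneous version of Theorem \ref{firstdirection}; the converse is then obtained by repeating the duality argument of Lemma \ref{Sbnded} with $\Z$ replaced by $\{j\geq 0\}$ and using the inhomogeneous reproducing formula. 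You instead peel off the single low-frequency term: the $j\geq 1$ part of the $F^{0 2}_p(W)$ norm is dominated by the homogeneous norm and handled by the already-proved Theorem \ref{firstdirection}, and $\Vert \Phi\ast\vec f\Vert_{L^p(W)}\leq C\Vert\vec f\Vert_{L^p(W)}$ is a separate scalar-kernel bound; likewise your $S_+$ splits into $\Psi\ast\vec g_0$ plus a tail covered verbatim by Lemma \ref{Sbnded}. This buys you reuse of the homogeneous results with no re-examination of the Goldberg machinery, at the cost of importing the matrix-weighted Calder\'{o}n--Zygmund theorem from \cite{G} for $\Phi\ast$; note that this import is heavier than necessary, since the paper's Lemma \ref{convolutionest} (with $j=0$ and $\varphi$ replaced by $\Phi$, exactly as used in the proof of Proposition \ref{inhomogred}) already gives that convolution bound using only Goldberg's maximal function. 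The paper's single-operator approach keeps the two frequency regimes unified and avoids any appeal to the low-frequency convolution estimate, but requires checking that every step of the weighted CZ argument survives the substitution of $\Phi$ for $\varphi_0$.

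One small repair in your limiting argument: you identify the $L^p(W)$-limit of $\vec F_N$ with $\vec f$ by invoking continuous embeddings $L^p(W)\hookrightarrow\cS'$ and $F^{0 2}_p(W)\hookrightarrow\cS'$, neither of which is established in the paper. It is cleaner to argue as in the paper's proof of Theorem \ref{seconddirection}: by the first inclusion you have already proved, $L^p(W)$ embeds boundedly into $F^{0 2}_p(W)$, so the $L^p(W)$-limit $\vec H$ of $\vec F_N$ is also its limit in $F^{0 2}_p(W)$; since $\vec F_N\to\vec f$ in $F^{0 2}_p(W)$, uniqueness of limits there gives $\vec H=\vec f$ (with no polynomial ambiguity in the inhomogeneous setting), and letting $N\to\infty$ in the uniform bound finishes the proof.
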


The proofs of Theorems \ref{mainresultinhomo}, \ref{wavecharinhomo}, and \ref{equivwithlpinhomo} are virtually the same as for the homogeneous spaces, by replacing $\varphi_0$ with $\Phi$ and restricting to $j \geq 0$ and $Q \in \cD_+$.  The only property of $\varphi_0$ we used, that $\Phi$ does not satisfy, is that $\varphi_0$ has vanishing moments of all orders.  However, the vanishing moment property of $\varphi_0$ was only needed when dealing with $Q$ having $\ell(Q)>1$, which we do not consider in the inhomogeneous context.  For example, in Theorem \ref{firstAQimbed}, we only use that $\widehat{\varphi_0}$ has support in $B(0, 2)$, which is satisfied by $\Phi$ also.  In the inhomogeneous context, almost diagonal matrices are indexed by $Q,P \in \cD_+$ only, but otherwise their definition is the same.  Their boundedness on $f^{\alpha q}_p ( \{ A_Q \} )$ follows by applying Theorem \ref{ADmatrixbnded} to $E\vec{s}$, defined above. A family of inhomogeneous smooth molecules is defined as before, but only for $\ell(Q) \leq 1$, and molecules $m_Q$ for $\ell(Q)=1$ are not required to satisfy the vanishing moment condition (M1).  For $\ell(P)=1$, the estimates in (\ref{muleqnuest}) for $\varphi_j \ast m_P$ for $j \geq 1$ (or for $\Phi \ast m_P$, replacing $\varphi_0 \ast m_P$) do not require vanishing moments on $m_P$.  Similarly, the estimate (\ref{mugeqnuest}) for $j=0$ and $\ell(P) <1$, but with $\varphi_0$ replaced by $\Phi$, still hold, because this estimate does not require vanishing moments for $\Phi$.  (In general, uses the vanishing moment condition only for the function associated with the smaller cube.)  With these observations, the proof of the inhomogeneous analogue of Theorem \ref{thirdAQest} goes through, using (\ref{phitranideninhomo}) in place of (\ref{phitraniden}) to obtain the inhomogeneous version of (\ref{convtriebelest}).  Similar modifications prove the analogues of Theorems \ref{avgnormeq} and \ref{avgwavechar}.  We restrict Theorem \ref{AQ-Wseqspimbed} to $E\vec{s}$, as above, to obtain its inhomogeneous counterpart.  The proof of Theorem \ref{AQ-Wfuncspimbed} carries over because it only uses the property of $\Phi$ that $\hat{\Phi}$ is supported in $B(0,2)$.  Corollary \ref{imbedd} holds for the inhomogeneous case simply by letting $f_j$ be $0$ for $j<0$. In this way, Theorems \ref{mainresultinhomo} and \ref{wavecharinhomo} follow.

For Theorem \ref{equivwithlpinhomo}, we define $T$ by replacing $\varphi_0$ by $\Phi$ and restricting to $j \geq0$.  Since $\Phi(x)$ satisfies \eqref{admiss1} and \eqref{admiss2}, we still have the Calder\'{o}n-Zygmund estimate \eqref{phisizeest} for the corresponding kernel $K$. The properties of $\Phi$ and $\varphi$ yield the $L^2$ boundedness of $T$ by Plancherel's theorem.  This $L^2$ boundedness and the pointwise estimates are all that is needed for the rest of the Coifman-Fefferman and Goldberg argument, yielding the inhomogeneous version of Theorem \ref{firstdirection}.  The duality argument for Lemma \ref{Sbnded} holds with $\Z$ replaced by $\{ j \in \Z: j \geq 0\}$. Using \eqref{phitranideninhomo} instead of \eqref{phitraniden} then gives the inhomogeneous converse estimate as in Theorem \ref{seconddirection}, completing the proof of Theorem \ref{equivwithlpinhomo}. 

We clarify the relation between the inhomogeneous and homogeneous spaces, at least for $\alpha >0$ and $1 \leq p < \infty$, in Lemma \ref{comparison} below.  Its proof is based on the following lemma.

\begin{Lemma} \label{convolutionest}
Suppose $1\leq p<\infty, W \in A_p$, and $|\varphi(x)| \leq C (1+|x|)^{-(n+1)}$.  Let $\varphi_j (x) = 2^{jn} \varphi (2^j x)$ for $j \in \Z$.  If $\vec{f} \in L^p(W)$, then $\varphi_j \ast \vec{f} \in L^p (W)$ and 
\[  \Vert \varphi_j \ast \vec{f} \, \Vert_{L^p(W)} \leq C \Vert \vec{f} \, \Vert_{L^p(W)},  \]
for some positive constant $C=C(W, \varphi, p)$.
\end{Lemma}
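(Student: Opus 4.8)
The plan is to reduce the matrix‑weighted convolution bound to the boundedness, on the \emph{unweighted} space $L^p(\R^n)$, of a Goldberg‑type auxiliary maximal operator. Put $g(x)=|W^{1/p}(x)\vec f(x)|$, so that $g\in L^p(\R^n)$ with $\|g\|_{L^p}=\|\vec f\|_{L^p(W)}$. Writing $W^{1/p}(x)\vec f(y)=\bigl(W^{1/p}(x)W^{-1/p}(y)\bigr)\bigl(W^{1/p}(y)\vec f(y)\bigr)$ and using that $W^{-1/p}(y)$ is self‑adjoint and a.e.\ invertible, one gets the pointwise domination
\[ \bigl|W^{1/p}(x)(\varphi_j\ast\vec f)(x)\bigr|\ \le\ \int_{\R^n}|\varphi_j(x-y)|\,\bigl\|W^{1/p}(x)W^{-1/p}(y)\bigr\|\,g(y)\,dy\ =:\ (\mathcal K_j g)(x), \]
valid wherever the right‑hand side is finite. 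The estimate to be proved will show $\mathcal K_j g<\infty$ a.e., which simultaneously justifies that $\varphi_j\ast\vec f$ is defined a.e.; so it suffices to show $\|\mathcal K_j g\|_{L^p(\R^n)}\le C\|g\|_{L^p(\R^n)}$ with $C$ independent of $j$.

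Next I would exploit the decay $|\varphi(x)|\le C(1+|x|)^{-(n+1)}$ via a dyadic annular decomposition of $\varphi_j$: on $2^{k-1-j}\le|z|<2^{k-j}$ (and on $|z|<2^{-j}$) one has $|\varphi_j(z)|\le C\,2^{jn}2^{-k(n+1)}$, hence
\[ |\varphi_j(z)|\ \le\ C\sum_{k=0}^{\infty}2^{-k}\,\frac{1}{|B(0,2^{k-j})|}\,\chi_{B(0,2^{k-j})}(z). \]
Since each ball $B(x,2^{k-j})$ sits inside a cube $Q\ni x$ with $|Q|\approx|B(x,2^{k-j})|$, summing the geometric series in $k$ gives, uniformly in $j$,
\[ (\mathcal K_j g)(x)\ \le\ C\,M^{\sharp}_W g(x),\qquad M^{\sharp}_W g(x):=\sup_{Q\ni x}\frac{1}{|Q|}\int_Q\bigl\|W^{1/p}(x)W^{-1/p}(y)\bigr\|\,g(y)\,dy, \]
the supremum over all cubes containing $x$. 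Thus the lemma follows once $M^{\sharp}_W$ is shown bounded on $L^p(\R^n)$.

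For $1<p<\infty$ this is a reformulation of the boundedness of Goldberg's auxiliary maximal operator, \cite[\S3]{G}: from $\|W^{1/p}(x)W^{-1/p}(y)\|\le c_m\sum_{i=1}^m|W^{1/p}(x)W^{-1/p}(y)e_i|$ one bounds $M^{\sharp}_W g\le c_m\sum_i M_W(g\,e_i)$, where $M_W$ is the operator shown $L^p(\R^n;\C^m)$‑bounded in \cite{G} (equivalently, inserting a reducing operator, $\|W^{1/p}(x)W^{-1/p}(y)\|\le\|W^{1/p}(x)A_Q^{-1}\|\,\|A_QW^{-1/p}(y)\|$ for $x,y\in Q$, and applying H\"older's inequality together with \eqref{equivApcond} and \eqref{chgolineq}); since $\|g\,e_i\|_{L^p}=\|g\|_{L^p}$ this closes the case. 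For $p=1$ one avoids the supremum: by Tonelli,
\[ \|\varphi_j\ast\vec f\|_{L^1(W)}\ \le\ \int_{\R^n}(\mathcal K_j g)(x)\,dx\ =\ \int_{\R^n}g(y)\Bigl(\int_{\R^n}|\varphi_j(z)|\,\|W(y+z)W^{-1}(y)\|\,dz\Bigr)\,dy, \]
and by the annular decomposition the inner integral is controlled, for a.e.\ $y$, by $\sup_{r>0}\tfrac{1}{|B(y,r)|}\int_{B(y,r)}\|W(x)W^{-1}(y)\|\,dx$, which is $\le C$ for a.e.\ $y$: every $B(y,r)$ lies in $3Q$, where $Q$ is the dyadic cube containing $y$ with $2^{-1}\ell(Q)<r\le\ell(Q)$, and $|3Q|\approx|B(y,r)|$, so the $A_1$ condition \eqref{defAp2} (with $p=1$) applied to the \emph{countable} family $\{3Q:Q\in\mathcal D\}$ yields the bound for a.e.\ $y$. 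Hence $\|\varphi_j\ast\vec f\|_{L^1(W)}\le C\|g\|_{L^1}=C\|\vec f\|_{L^1(W)}$.

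The crux is the pair of steps $(\mathcal K_j g)\lesssim M^{\sharp}_W g$ together with the $L^p$‑boundedness of $M^{\sharp}_W$: for $p>1$ this is where the full strength of the matrix $A_p$ hypothesis enters, through Goldberg's maximal‑function estimates (which themselves rest on \eqref{equivApcond}--\eqref{chgolineq}); for $p=1$ the only delicate point is upgrading the $A_1$ averaging bound — which a priori holds only for a.e.\ center $y$ in each \emph{fixed} cube — to a bound over all radii simultaneously for a.e.\ $y$, which is precisely why the countable family of tripled dyadic cubes is used.
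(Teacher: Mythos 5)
Your proof is correct and takes essentially the same route as the paper: the decay of $\varphi$ is used through a dyadic annular decomposition of $\varphi_j$, the case $1<p<\infty$ is reduced to the unweighted $L^p$-boundedness of Goldberg's auxiliary maximal operator, and the case $p=1$ is handled by Fubini together with the $A_1$ condition applied to a countable family of cubes to control all radii for a.e.\ $y$. The only cosmetic difference is that you pass to the operator norm $\Vert W^{1/p}(x)W^{-1/p}(y)\Vert$ and recover Goldberg's estimate via $\Vert B\Vert \leq c_m \sum_{i=1}^m |Be_i|$, whereas the paper applies $M_w$ directly to the vector-valued function $W^{1/p}\vec{f}$ without taking norms.
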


\begin{proof}  First suppose $p>1$.  Recall the maximal operator $M_w$, introduced by Goldberg, defined by 
\[ M_w \vec{f} (x) = \sup_{B: x\in B} \frac{1}{|B|} \int_B |W^{-1/p} (x) W^{-1/p} (y) \vec{f} (y) | \, dy.   \]
Goldberg \cite[Theorem 3.2]{G} proves that if $1<p<\infty$ and $W$ is an $A_p$ weight, then $M_w$ is bounded on the unweighted, vector-valued space $L^p(\R^n)$ .  

Since matrix multiplication commutes with scalar multiplication,   
\[ \left| W^{1/p} \varphi_j \ast \vec{f} (x) \, \right|   \leq \int_{\R^n} |\varphi_j (x-y) W^{1/p} (x) \vec{f}(y)| \, dy .\]
Let $A_0 (x) = B(x, 2^{-j})$ and, for $k \geq 1$, let $A_k (x) = B(x, 2^{k-j}) \setminus B(x, 2^{k-j-1})$.  Then $|\varphi_j (x-y)| \leq c \, 2^{jn} \, 2^{-k(n+1)}$ on $A_k$, so
\[ \left| W^{1/p} \varphi_j \ast \vec{f} (x) \, \right|  \leq   C \sum_{k=0}^{\infty} \frac{2^{jn}}{2^{k(n+1)}} \int_{A_k (x)}  | W^{1/p} (x) \vec{f}(y)| \, dy \]
\[ \leq  C  \sum_{k=0}^{\infty} 2^{-k} \frac{1}{|B(x, 2^{k-j})|} \int_{B(x, 2^{k-j})}  | W^{1/p} (x) W^{-1/p}(y) W^{1/p}(y) \vec{f}(y)| \, dy \]
\[  \leq C   \sum_{k=0}^{\infty} 2^{-k} M_w ( W^{1/p} \vec{f})(x) =  C  M_w ( W^{1/p} \vec{f})(x) . \]
Hence, 
\[ \Vert \varphi_j \ast \vec{f} \, \Vert_{L^p(W)}  \leq C \Vert  M_w ( W^{1/p} \vec{f}) \Vert_{L^p} \leq C \Vert  W^{1/p} \vec{f} \Vert_{L^p} 
=  C \Vert \vec{f} \, \Vert_{L^p(W)} ,\]
by the boundedness of $M_w$.

Now let $p=1$.  Using $|W(x) \vec{f}(y)| = |W(x) W^{-1}(y) W(y)\vec{f}(y)| \leq \Vert W(x) W^{-1}(y) \Vert |W(y) \vec{f}(y)|$ and Fubini's theorem, we obtain 
\[ \Vert \varphi_j \ast \vec{f} \Vert_{L^1 (W)} \leq  \int_{\R^n} \int_{\R^n} |\varphi_j (x-y)| |W(x) \vec{f}(y) | \, dy \, dx \] 
\[  \leq \int_{\R^n} |W(y) \vec{f}(y) | \int_{\R^n} |\varphi_j (x-y)| \Vert W(x) W^{-1}(y) \Vert \, dx \, dy. \]
Let $[W]_{A_1}$denote the supremum on the left side of \eqref{defAp2} when $p=1$.  Let $\{ Q_i \}_{i=1}^{\infty}$ be an enumeration of the cubes in $\R^n$ with sides parallel to the coordinate axes, centers $\vec{z} = (z_1, z_2, \dots , z_n) \in \Q^n$, and side length $\ell(Q_i) \in \Q_+ = \Q\cap (0, \infty)$.  Then there exists a set $E \subset \R^n$ such that for all $y \in \R^n \setminus E$ and all $i \in \N$, we have $\frac{1}{|Q_i|} \int_{Q_i} \Vert W(x) W^{-1} (y) \Vert \, dx \leq [W]_{A_1}$.  Define $A_k (y)$ for $k \geq 0$ as above for $A_k(x)$.  Then for all $y \in \R^n \setminus E$,
\[ \int_{\R^n} |\varphi_j (x-y)| \Vert W(x) W^{-1}(y) \Vert \, dx \leq c \sum_{k=0}^{\infty} \frac{2^{jn}}{2^{k(n+1)}} \int_{A_k (y)} \Vert W(x) W^{-1}(y) \Vert \, dx . \]
For each $k$, we can find $i \in \N$ such that $B(y, 2^{k-j}) \subseteq Q_i$ and $|Q_i| \leq c 2^{(k-j)n}$, with $c$ independent of $k$ and $j$.  Then 
\[ \int_{A_k (y)} \Vert W(x) W^{-1}(y) \Vert \, dx \leq \int_{Q_i}   \Vert W(x) W^{-1}(y) \Vert \, dx \leq  c [W]_{A_1} 2^{(k-j)n}, \]
since $y \in Q_i \setminus E$.  Substituting above, we get 
\[ \int_{\R^n} |\varphi_j (x-y)| \Vert W(x) W^{-1}(y) \Vert \, dx \leq c \sum_{k=0}^{\infty} \frac{2^{jn}}{2^{k(n+1)}}  2^{(k-j)n} =  c\sum_{k=0}^{\infty} 2^{-k} = c, \]
for all $y \not\in E$.  Hence, $\Vert \varphi_j \ast \vec{f} \Vert_{L^1 (W)} \leq  c \int_{\R^n} |W(y) \vec{f}(y) | \, dy$. 
\end{proof}

\begin{Lemma} \label{comparison}
Suppose $\alpha >0, 1\leq p<\infty, 0<q \leq \infty, W \in A_p(\R^n)$, and $\vec{f} \in \cS (\R^n)$.  Then $\vec{f} \in  F^{\alpha q}_p(W)$ if and only if $\vec{f} \in L^p(W)$ and $\vec{f}$ (or the equivalence class mod $\cP$ of $\vec{f}$) belongs to $\dot{F}^{\alpha q}_p(W)$, and we have
\begin{equation} \label{comparisonnorm}
 \Vert \vec{f} \Vert_{F^{\alpha q}_p(W)} \approx  \Vert \vec{f} \Vert_{L^p(W)} + \Vert \vec{f} \Vert_{\dot{F}^{\alpha q}_p(W)} . 
\end{equation}
\end{Lemma}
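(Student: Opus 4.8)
The plan is to compare the three quasi-norms frequency band by band, with Lemma~\ref{convolutionest} as the only real tool. Write $\widetilde{g}(x)=\overline{g(-x)}$, so that $\widehat{\widetilde{g}}=\overline{\widehat{g}}$. Since a matrix $A_p$ weight is doubling, summing the doubling estimate over dyadic annuli shows $\cS(\R^n)\subseteq L^p(W)$ with $\Vert\vec{g}\Vert_{L^p(W)}\le C\sup_x(1+|x|)^M|\vec{g}(x)|$ once $M$ is large; in particular every $\vec{f}\in\cS$ lies in $L^p(W)$, and $\Vert\vec{f}\Vert_{\dot{F}^{\alpha q}_p(W)}$ (which involves only the polynomial-annihilating convolutions $\varphi_j\ast\vec{f}$) is well defined on the class of $\vec{f}$ mod $\cP$. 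One inclusion is immediate: if $\vec{f}\in L^p(W)\cap\dot{F}^{\alpha q}_p(W)$, then $\Vert\Phi\ast\vec{f}\Vert_{L^p(W)}\le C\Vert\vec{f}\Vert_{L^p(W)}$ by Lemma~\ref{convolutionest} (with $\Phi$ in the role of $\varphi$ and $j=0$), while the second term of $\Vert\vec{f}\Vert_{F^{\alpha q}_p(W)}$, a sum over $j\ge1$, is pointwise dominated by the corresponding sum over all $j\in\Z$, hence by $\Vert\vec{f}\Vert_{\dot{F}^{\alpha q}_p(W)}$; thus $\Vert\vec{f}\Vert_{F^{\alpha q}_p(W)}\le C(\Vert\vec{f}\Vert_{L^p(W)}+\Vert\vec{f}\Vert_{\dot{F}^{\alpha q}_p(W)})$. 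It remains to prove the reverse estimate assuming $\vec{f}\in F^{\alpha q}_p(W)$.

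For the $L^p(W)$ part I would use the continuous inhomogeneous Calder\'on reproducing formula
\[
\vec{f}=\widetilde{\Psi}\ast(\Phi\ast\vec{f})+\sum_{j\ge1}\widetilde{\psi}_j\ast(\varphi_j\ast\vec{f}),
\]
valid for $\vec{f}\in\cS$ with convergence in $\cS$, obtained from the partition of unity preceding (\ref{phitranideninhomo}) by taking inverse Fourier transforms; here $(\Psi,\psi)\in\cA_+$ and $\widetilde{\psi}_j(x)=2^{jn}\widetilde{\psi}(2^jx)$ with $\widetilde{\psi}\in\cS$. Applying Lemma~\ref{convolutionest} to each summand gives $\Vert\widetilde{\psi}_j\ast(\varphi_j\ast\vec{f})\Vert_{L^p(W)}\le C\Vert\varphi_j\ast\vec{f}\Vert_{L^p(W)}=C2^{-j\alpha}\Vert 2^{j\alpha}W^{1/p}\varphi_j\ast\vec{f}\Vert_{L^p}\le C2^{-j\alpha}\Vert\vec{f}\Vert_{F^{\alpha q}_p(W)}$, the last step because the $j$-th term is pointwise at most the $\ell^q$-sum over $j\ge1$ appearing in the $F$-norm. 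Since $\alpha>0$ the series $\sum_j2^{-j\alpha}$ converges, so the tails of the reproducing series are summable in $L^p(W)$, and the triangle inequality in $L^p(W)$ yields $\Vert\vec{f}\Vert_{L^p(W)}\le C\Vert\Phi\ast\vec{f}\Vert_{L^p(W)}+C\sum_{j\ge1}2^{-j\alpha}\Vert\vec{f}\Vert_{F^{\alpha q}_p(W)}\le C\Vert\vec{f}\Vert_{F^{\alpha q}_p(W)}$. (For $1<p<\infty$ one could instead deduce this from the inhomogeneous analogue of Theorem~\ref{equivwithlpinhomo} together with the elementary domination, over $j\ge1$, of the unweighted $\ell^2$ square function by the $2^{j\alpha}$-weighted $\ell^q$ one, since $2^{j\alpha}\ge1$ there.)

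For $\Vert\vec{f}\Vert_{\dot{F}^{\alpha q}_p(W)}$ I split $\sum_{j\in\Z}=\sum_{j\ge1}+\sum_{j\le0}$; the first piece is the second term of $\Vert\vec{f}\Vert_{F^{\alpha q}_p(W)}$. For the second, the quasi-triangle inequality in $L^p(\ell^q)$ (with exponent $u=\min(1,q)$, legitimate since $p\ge1$) together with $\sum_{j\le0}2^{j\alpha u}<\infty$ reduces everything to a uniform bound $\Vert\varphi_j\ast\vec{f}\Vert_{L^p(W)}\le C\Vert\vec{f}\Vert_{F^{\alpha q}_p(W)}$ for $j\le0$. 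For $j=0$ this is $\Vert\varphi_0\ast\vec{f}\Vert_{L^p(W)}\le C\Vert\vec{f}\Vert_{L^p(W)}\le C\Vert\vec{f}\Vert_{F^{\alpha q}_p(W)}$ by Lemma~\ref{convolutionest} and the bound just proved. For $j\le-1$, by (\ref{admiss2}) one has $\supp\widehat{\varphi_j}\subseteq\{|\xi|\le5/3\}$, so $\widehat{\Phi}$ is bounded below there and $\varphi_j\ast\vec{f}=\kappa_j\ast(\Phi\ast\vec{f})$ with $\widehat{\kappa_j}=\widehat{\varphi_j}/\widehat{\Phi}$; changing variables, $\kappa_j(x)=2^{jn}G_j(2^jx)$, where $G_j$ is the inverse Fourier transform of $\eta\mapsto\widehat{\varphi}(\eta)/\widehat{\Phi}(2^j\eta)$, a smooth function supported in $\{1/2\le|\eta|\le2\}$ whose $C^N$-norms are bounded uniformly in $j\le0$ (differentiating $\widehat{\Phi}(2^j\cdot)$ brings factors $2^{j|\gamma|}\le1$, and $\widehat{\Phi}$ is bounded below on $\{|\xi|\le5/3\}$), whence $|G_j(x)|\le C(1+|x|)^{-(n+1)}$ uniformly in $j$. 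Since the constant in Lemma~\ref{convolutionest} depends on the convolution kernel only through its $(1+|x|)^{-(n+1)}$-decay constant (by inspection of its proof), this gives $\Vert\varphi_j\ast\vec{f}\Vert_{L^p(W)}\le C\Vert\Phi\ast\vec{f}\Vert_{L^p(W)}\le C\Vert\vec{f}\Vert_{F^{\alpha q}_p(W)}$ uniformly in $j\le-1$. Combining the two inclusions gives the stated set equality and the norm equivalence (\ref{comparisonnorm}).

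I expect the friction to be technical rather than conceptual: checking that Lemma~\ref{convolutionest} holds with a constant uniform over families of kernels sharing a common $(1+|x|)^{-(n+1)}$ bound (needed for the $j$-dependent kernels $\kappa_j$), justifying convergence of the reproducing series in $L^p(W)$ (where the doubling of $W\in A_p$ enters), and isolating the single frequency index $j=0$, at the junction of the ``$\Phi$-part'' and the ``$\varphi_j$-part,'' where $\supp\widehat{\varphi_0}$ is not contained in the set on which $\widehat{\Phi}$ is bounded below.
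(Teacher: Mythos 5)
Your proposal is correct and takes essentially the same route as the paper, whose proof simply invokes the classical argument (FJW, pp.\ 42--43) with Lemma \ref{convolutionest} substituted for Young's inequality: you reproduce exactly that scheme (easy direction by domination, the $L^p(W)$ bound via the inhomogeneous reproducing formula and the geometric series in $2^{-j\alpha}$, and the low frequencies $j\le 0$ rewritten as convolutions of $\Phi\ast\vec{f}$ against uniformly controlled kernels). The technical points you flag — in particular that the constant in Lemma \ref{convolutionest} depends on the kernel only through its $(1+|x|)^{-(n+1)}$ decay constant, so it applies uniformly to the $j$-dependent kernels $\kappa_j$ — are indeed what the paper's terse proof implicitly relies on, and your verification of them is sound.
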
 

\begin{proof}
Substituting the estimate of Lemma \ref{convolutionest} for the standard inequality $\Vert \varphi \ast f \Vert_{L^p (\R^n)} \leq \Vert \varphi  \Vert_{L^1 (\R^n)}\Vert f  \Vert_{L^p (\R^n)}$, the proof follows exactly as the usual proof, outlined in \cite{FJW}, pp. 42-43, so we omit the details.
\end{proof}

\section{Equivalence with Sobolev spaces}\label{Sobolev}

Many of the basic properties of the spaces $\dot{F}^{\alpha q}_p(W)$ can be demonstrated using the results obtained above. In particular, we show how the Riesz potential acts on $\dot{F}^{\alpha q}_p(W)$ and also an equivalence of the matrix-weighted Tribel-Lizorkin spaces with the matrix-weighted Sobolev spaces. 

For $\beta \in \R$, the Riesz potential of order $\beta$ is defined formally as the Fourier multiplier operator $I_{\beta}$ with multiplier $|\xi|^{-\beta}$:  $(I_{\beta}f)\,\hat{}\,(\xi) = |\xi|^{-\beta} \hat{f}(\xi)$.  If $h \in \cS(\R^n)$ satisfies $D^{\alpha} h (0)=0$ for all multi-indices $\alpha$, then $|x|^{-\beta} h(x) \in \cS$.  Thus, by Fourier transform, $I_{\beta}$ maps $\cS_0$ to $\cS_0$.  Hence, $I_{\beta}$ is defined on $\cS^{\, \prime}/\cP = (\cS_0)^*$ by duality: $\langle I_{\beta} f, g\rangle = \langle  f, I_{\beta} g\rangle$ for $f \in \cS^{\, \prime}/\cP$ and $g \in \cS_0$.  We then define $I_{\beta}$ on vector-valued $\vec{f}\in \cS^{\, \prime}/\cP$ componentwise: $I_{\beta} \vec{f} = (I_{\beta}f_1, \dots, I_{\beta}f_m)^T$.

\begin{Prop} \label{Rieszpot}
Suppose $\alpha, \beta \in \R, 0<p<\infty, 0<q \leq \infty$, and $W \in A_p (\R^n)$.  Then $I_{\beta}$ maps $\dot{F}^{\alpha q}_p(W)$ to $\dot{F}^{\alpha + \beta, q}_p(W)$ continuously.
\end{Prop}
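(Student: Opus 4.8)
The plan is to reduce the statement to the elementary fact that, on the dyadic frequency annulus carrying $\widehat{\varphi_j}$, the Fourier multiplier $|\xi|^{-\beta}$ of $I_\beta$ is, up to a fixed smooth factor, just the constant $2^{-j\beta}$, which can then be absorbed into a new admissible function together with the weight $2^{j\alpha}$. Fix $\varphi \in \mathcal{A}$; by Theorem \ref{mainresult} the spaces $\dot{F}^{\alpha q}_p(W)$ and $\dot{F}^{\alpha + \beta, q}_p(W)$ may be computed with respect to any admissible function, so there is no loss in using $\varphi$ for both. Choose an auxiliary $\theta \in \cS(\R^n)$ with $\hat{\theta}$ supported in $\{2/5 \le |\xi| \le 5/2\}$ and $\hat{\theta} \equiv 1$ on $\{1/2 \le |\xi| \le 2\} \supseteq \supp \hat{\varphi}$. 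Put $\rho(\xi) = \hat{\theta}(\xi)\,|\xi|^{-\beta}$; since $\hat\theta$ is smooth and supported away from the origin, where $|\xi|^{-\beta}$ is smooth, $\rho$ is smooth and compactly supported, so $\kappa = \rho^{\vee} \in \cS(\R^n)$, and I set $\Lambda = \varphi \ast \kappa$, so that $\hat{\Lambda} = \hat{\varphi}\,\rho$.

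First I would verify that $\Lambda \in \mathcal{A}$. Indeed $\Lambda \in \cS(\R^n)$ and $\supp \hat{\Lambda} \subseteq \supp \hat{\varphi} \subseteq \{1/2 \le |\xi| \le 2\}$, so (\ref{admiss1}) and (\ref{admiss2}) hold; and on $\{3/5 \le |\xi| \le 5/3\}$ we have $\hat{\theta}(\xi) = 1$, hence $|\hat{\Lambda}(\xi)| = |\hat{\varphi}(\xi)|\,|\xi|^{-\beta} \ge c\min\{(3/5)^{-\beta}, (5/3)^{-\beta}\} > 0$, which is (\ref{admiss3}). Next I would record the identity
\[ \varphi_j \ast I_\beta \vec{f} = 2^{-j\beta}\, \Lambda_j \ast \vec{f}, \qquad j \in \Z, \quad \vec{f} \in \cS^{\, \prime}/\cP . \]
On the Fourier side, $\widehat{\Lambda_j}(\xi) = \hat{\varphi}(2^{-j}\xi)\,\rho(2^{-j}\xi) = 2^{j\beta}\,\widehat{\varphi_j}(\xi)\,\hat{\theta}(2^{-j}\xi)\,|\xi|^{-\beta}$, and since $\hat{\theta}(2^{-j}\xi) = 1$ on $\supp \widehat{\varphi_j} = \{2^{j-1} \le |\xi| \le 2^{j+1}\}$, this gives $|\xi|^{-\beta}\widehat{\varphi_j}(\xi) = 2^{-j\beta}\widehat{\Lambda_j}(\xi)$; the displayed identity then follows by pairing both sides with an arbitrary $g \in \cS_0$ and using the definition $\langle I_\beta \vec{f}, g \rangle = \langle \vec{f}, I_\beta g \rangle$, all pairings being legitimate because $I_\beta$ maps $\cS^{\, \prime}/\cP$ (and $\cS_0$) into itself, as noted before the statement, and the Fourier transforms of $\varphi_j$ and $\Lambda_j$ vanish near the origin.

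Then I would substitute this identity into the quasi-norm computed with $\varphi$:
\[ \Vert I_\beta \vec{f} \Vert_{\dot{F}^{\alpha + \beta, q}_p(W)} = \left\Vert \left( \sum_{j \in \Z} \left| 2^{j(\alpha + \beta)} 2^{-j\beta} W^{1/p} \Lambda_j \ast \vec{f} \right|^q \right)^{1/q} \right\Vert_{L^p(\R^n)} = \left\Vert \left( \sum_{j \in \Z} \left| 2^{j\alpha} W^{1/p} \Lambda_j \ast \vec{f} \right|^q \right)^{1/q} \right\Vert_{L^p(\R^n)} , \]
and the right-hand side is precisely $\Vert \vec{f} \Vert_{\dot{F}^{\alpha q}_p(W)}$ computed with the admissible function $\Lambda$, hence $\approx \Vert \vec{f} \Vert_{\dot{F}^{\alpha q}_p(W)}$ by Theorem \ref{mainresult}. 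This yields $\Vert I_\beta \vec{f} \Vert_{\dot{F}^{\alpha + \beta, q}_p(W)} \le c \Vert \vec{f} \Vert_{\dot{F}^{\alpha q}_p(W)}$, which, together with linearity of $I_\beta$, is exactly the asserted continuity.

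The case $\beta = 0$ is trivial, and nothing in the argument requires $\beta > 0$. No analytic input beyond the results of Sections 2--3 is needed; the only points requiring care are the choice of the auxiliary function $\theta$ and the verification that $\Lambda$ is admissible --- in particular the non-degeneracy (\ref{admiss3}), which is why I insist that $\hat{\theta} \equiv 1$ on all of $\{1/2 \le |\xi| \le 2\}$, an annulus containing $\{3/5 \le |\xi| \le 5/3\}$ --- together with the bookkeeping needed to make the convolution identity meaningful modulo polynomials. I expect this last, purely routine, bookkeeping to be the only mild obstacle.
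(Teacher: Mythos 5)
Your proof is correct and takes essentially the same approach as the paper: your $\Lambda$ is precisely $I_{\beta}\varphi$ (the paper simply notes that $|\xi|^{-\beta}$ is smooth and nonvanishing on $\supp \hat{\varphi}$, so $I_{\beta}\varphi \in \mathcal{A}$), and both arguments rest on the scaling identity $\varphi_j \ast I_{\beta}\vec{f} = 2^{-j\beta}(I_{\beta}\varphi)_j \ast \vec{f}$ together with the independence of the quasi-norm from the choice of admissible function in Theorem \ref{mainresult}. Your cutoff function $\theta$ just makes explicit the (correct) verification that $I_{\beta}\varphi$ is Schwartz and admissible, a point the paper leaves implicit.
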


\begin{proof}
Let $\varphi \in \mathcal{A}$ be the test function in the definition of $\dot{F}^{\alpha q}_p(W)$.  Since $|\xi|^{-\beta}$ is smooth and nonvanishing on the support of $\hat{\varphi}$, we have $I_{\beta} \varphi \in \mathcal{A}$.  Note that $\varphi_j \ast (I_{\beta} \vec{f}) = (I_{\beta} \varphi_j) \ast \vec{f}$, by Fourier transform.  Defining the dilates $(I_{\beta} \varphi)_j (x) = 2^{jn} (I_{\beta} \varphi) (2^j x)$ as usual, it follows that $I_{\beta} \varphi_j = 2^{-j \beta} (I_{\beta} \varphi)_j$ for each $j \in \Z$.  Hence,
 \[ \Vert \vec{f} \Vert_{\dot{F}^{\alpha + \beta, q}_p (W)} = \left\Vert \left(  \sum_{j \in \Z} \left(   2^{j (\al + \beta)} \vert \, W^{1/p} \ffi_j \ast I_{\beta} \vec{f} \, \vert \right)^q  \right)^{1/q} \right\Vert_{L^p(\R^n)}  \]
 \[ = \left\Vert \left(  \sum_{j \in \Z} \left(   2^{j \al} \vert \, W^{1/p} (I_{\beta} \ffi)_j \ast \vec{f} \, \vert \right)^q  \right)^{1/q} \right\Vert_{L^p(\R^n)}  \leq c     \Vert \vec{f} \Vert_{\dot{F}^{\alpha , q}_p (W)}, \]
by the fact from Theorem \ref{mainresult} that the spaces $\dot{F}^{\alpha q}_p(W)$ are independent of the choice of test function $\varphi \in \mathcal{A}$.
\end{proof}


Let $\partial_{\ell}$ denote the first order distributional partial derivative in the variable $x_{\ell}$, i.e., $\partial_{\ell} f = \frac{\partial f}{\partial x_{\ell}}$, for $\ell=1, 2, \dots, n$.  Let $\partial_{\ell} \vec{f} = (\partial_{\ell} f_1, \dots, \partial_{\ell} f_m )^T$ for $\vec{f} \in \cS^{\, \prime}/\cP$.   

\begin{Prop} \label{homogred}
Suppose $\alpha \in \R, 0<p<\infty, 0<q \leq \infty, W \in A_p (\R^n)$, and $\vec{f} \in \cS^{\, \prime}/\cP (\R^n)$.  Then $\vec{f} \in \dot{F}^{\alpha q}_p(W)$ if and only if $\partial_{\ell} \vec{f} \in \dot{F}^{\alpha-1, q}_p(W)$ for all $\ell = 1, 2, \dots, n$, and we have
\begin{equation} \label{homogrednorm}
\Vert \vec{f}  \Vert_{\dot{F}^{\alpha q}_p(W)} \approx \sum_{\ell =1}^n \Vert \partial_{\ell} \vec{f}  \Vert_{\dot{F}^{\alpha-1, q}_p(W)}.
\end{equation}
\end{Prop}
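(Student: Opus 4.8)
The strategy is to isolate one Fourier-analytic estimate and deduce both implications from it: for any $\eta \in \cS(\R^n)$ satisfying only \eqref{admiss1} and \eqref{admiss2} (so $\supp\widehat{\eta} \subseteq \{1/2 \le |\xi| \le 2\}$, but $\eta$ need \emph{not} satisfy the nondegeneracy \eqref{admiss3}), one should have, for every $\gamma \in \R$ and every $\vec{g} \in \dot{F}^{\gamma q}_p(W)$,
\beq \label{etaclaim}
\left\Vert \left( \sum_{j \in \Z} \left| 2^{j\gamma} W^{1/p} \eta_j \ast \vec{g} \right|^q \right)^{1/q} \right\Vert_{L^p(\R^n)} \le c \, \Vert \vec{g} \Vert_{\dot{F}^{\gamma q}_p(W)} .
\eeq
Granting \eqref{etaclaim}, the proof is short. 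For ``$\Rightarrow$'', note $\varphi_j \ast \partial_\ell \vec{f} = (\partial_\ell \varphi_j) \ast \vec{f} = 2^j (\partial_\ell \varphi)_j \ast \vec{f}$, so that $2^{j(\alpha-1)} W^{1/p} \varphi_j \ast \partial_\ell \vec{f} = 2^{j\alpha} W^{1/p} (\partial_\ell \varphi)_j \ast \vec{f}$; since $\eta := \partial_\ell \varphi$ is Schwartz with $\widehat{\eta}(\xi) = c\, \xi_\ell \widehat{\varphi}(\xi)$ (a constant depending on the Fourier normalization) supported in the same annulus, \eqref{etaclaim} with $\gamma = \alpha$ gives $\Vert \partial_\ell \vec{f} \Vert_{\dot{F}^{\alpha-1,q}_p(W)} \le c \Vert \vec{f} \Vert_{\dot{F}^{\alpha q}_p(W)}$. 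For ``$\Leftarrow$'', use $\sum_{\ell=1}^n \xi_\ell^2/|\xi|^2 = 1$: defining $\Theta^{(\ell)}$ by $\widehat{\Theta^{(\ell)}}(\xi) = c_0 (\xi_\ell/|\xi|^2)\widehat{\varphi}(\xi)$ — a well-defined Schwartz function because $\widehat{\varphi}$ is supported away from the origin, and satisfying \eqref{admiss1}--\eqref{admiss2} — a Fourier-transform computation (using the $0$-homogeneity of $\xi_\ell/|\xi|^2$ against the dilation) gives $\varphi_j \ast \vec{f} = 2^{-j} \sum_{\ell=1}^n \Theta^{(\ell)}_j \ast \partial_\ell \vec{f}$ for the appropriate $c_0$; hence $2^{j\alpha} W^{1/p} \varphi_j \ast \vec{f} = \sum_\ell 2^{j(\alpha-1)} W^{1/p} \Theta^{(\ell)}_j \ast \partial_\ell \vec{f}$, and the quasi-triangle inequality in $L^p(\ell^q)$ together with $n$ applications of \eqref{etaclaim} ($\eta = \Theta^{(\ell)}$, $\gamma = \alpha-1$, $\vec{g} = \partial_\ell \vec{f}$) yields $\Vert \vec{f} \Vert_{\dot{F}^{\alpha q}_p(W)} \le c \sum_\ell \Vert \partial_\ell \vec{f} \Vert_{\dot{F}^{\alpha-1,q}_p(W)}$. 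Combining the two inequalities gives the biconditional and \eqref{homogrednorm}. (One checks along the way that $\partial_\ell$ is well defined on $\cS'/\cP$, since it preserves $\cP$ and maps $\cS_0$ into $\cS_0$, so that all these identities make sense modulo polynomials.)

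The content is entirely in \eqref{etaclaim}, and the main obstacle is exactly that $\partial_\ell \varphi$ and $\Theta^{(\ell)}$ generically fail \eqref{admiss3}, so one cannot simply invoke the $\varphi$-independence statement of Theorem \ref{mainresult}; instead one must re-run the almost-diagonal machinery, observing that it uses only \eqref{admiss1}--\eqref{admiss2}. Concretely, let $\{A_Q\}$ be a sequence of reducing operators of order $p$ for $W$, which is strongly doubling by Lemma \ref{doublemma}. Using \eqref{phitraniden}, write $\vec{g} = \sum_{P} \langle \vec{g}, \varphi_P \rangle \psi_P$ with $\psi \in \mathcal{A}$, so that $\{\psi_P\}$ is a family of smooth $(N,K,M,\delta)$-molecules with arbitrarily large parameters, and $\eta_j \ast \vec{g} = \sum_P \langle \vec{g}, \varphi_P \rangle\, \eta_j \ast \psi_P$. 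The estimates of Lemma \ref{convest} hold verbatim with $\eta$ in place of $\varphi$ (their proof uses only \eqref{admiss1}--\eqref{admiss2}), so, as in the proof of Theorem \ref{thirdAQest}, $|Q|^{1/2}|\eta_j \ast \psi_P(x)| \le C\omega_{QP}$ for $Q \in \mathcal{D}_j$ and $x \in Q$, with $\omega_{QP}$ as in Definition \ref{Defalmostdiag} for the exponent $\gamma$; hence
\[ \sup_{x \in Q} |Q|^{1/2} |A_Q \eta_j \ast \vec{g}(x)| \le C \sum_P \omega_{QP} \Vert A_Q A_P^{-1} \Vert\, |A_P \langle \vec{g}, \varphi_P \rangle| = C (G s_A)_Q , \]
where $G = \{\omega_{QP}\Vert A_Q A_P^{-1}\Vert\}$ is bounded on the scalar space $\dot{f}^{\gamma q}_p$ by (the proof of) Theorem \ref{ADmatrixbnded}, and $s_A = \{|A_P \langle \vec{g}, \varphi_P \rangle|\}$.

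To conclude \eqref{etaclaim}, estimate pointwise: for $x \in Q \in \mathcal{D}_j$,
\[ 2^{j\gamma}|W^{1/p}(x)\eta_j \ast \vec{g}(x)| \le \gamma_j(x)\, k_j(x), \qquad \gamma_j := \sum_{Q \in \mathcal{D}_j} \Vert W^{1/p}A_Q^{-1}\Vert \chi_Q, \quad k_j := \sum_{Q \in \mathcal{D}_j} 2^{j\gamma}\Big(\sup_{y \in Q}|A_Q \eta_j \ast \vec{g}(y)|\Big)\chi_Q , \]
where $\gamma_j$ is the function from Corollary \ref{imbedd} and $E_j(k_j) = k_j$. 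Then Corollary \ref{imbedd} gives $\Vert\{\gamma_j k_j\}\Vert_{L^p(\ell^q)} \le c\Vert\{k_j\}\Vert_{L^p(\ell^q)}$, while, since $|Q| = 2^{-jn}$ for $Q \in \mathcal{D}_j$ and using the previous display, $\Vert\{k_j\}\Vert_{L^p(\ell^q)} \le C\Vert G s_A\Vert_{\dot{f}^{\gamma q}_p} \le C\Vert s_A\Vert_{\dot{f}^{\gamma q}_p} = C\Vert\{\langle\vec{g},\varphi_P\rangle\}\Vert_{\dot{f}^{\gamma q}_p(\{A_Q\})} \le C\Vert\vec{g}\Vert_{\dot{F}^{\gamma q}_p(W)}$, the last step by Theorem \ref{mainresult}. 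This proves \eqref{etaclaim} and hence the proposition. (Alternatively, the ``$\Leftarrow$'' direction could be derived from Proposition \ref{Rieszpot} with $\beta=1$ once one knows the Riesz transforms are bounded on $\dot{F}^{\alpha-1,q}_p(W)$, but that boundedness is itself an instance of \eqref{etaclaim}, so it does not circumvent the core estimate.)
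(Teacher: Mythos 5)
Your argument is correct, but it is organized quite differently from the paper's. The paper's forward direction works on the sequence-space side: it writes the $\varphi$-transform coefficients of $\partial_\ell \vec{f}$ as $\ell(Q)^{-1}$ times a matrix $B$ applied to the coefficients of $\vec{f}$, with $b_{QP} = \frac{\ell(Q)}{\ell(P)}\langle \varphi_Q, (\partial_\ell\varphi)_P\rangle$ shown to be almost diagonal via a Parseval computation, and then invokes Theorems \ref{ADmatrixbnded} and \ref{mainresult}; its reverse direction iterates the forward one to reach $\partial^2_\ell \vec{f}$, identifies $\sum_\ell \partial^2_\ell \vec{f}$ with $c\,I_{-2}\vec{f}$, and inverts with the Riesz potential via Proposition \ref{Rieszpot}. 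You instead isolate a single one-sided Littlewood--Paley estimate for a possibly degenerate generator $\eta$ (Schwartz, Fourier support in the annulus, no condition \eqref{admiss3}), prove it by re-running the molecular and almost-diagonal machinery at the function level (Lemma \ref{convest} and a Theorem \ref{thirdAQest}-type argument, which indeed use only \eqref{admiss1}--\eqref{admiss2}) together with the passage from $\{A_Q\}$ to $W$ exactly as in Corollaries \ref{imbedd} and \ref{W-AQinequality}, and then deduce both directions from first-order multiplier identities: $\eta = \partial_\ell\varphi$ for one direction, and $\widehat{\Theta^{(\ell)}} = c\,\xi_\ell |\xi|^{-2}\widehat{\varphi}$ (essentially $I_2\partial_\ell\varphi$) with $\varphi_j \ast \vec{f} = 2^{-j}\sum_\ell \Theta^{(\ell)}_j \ast \partial_\ell\vec{f}$ for the other. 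The point you correctly flag is the crux: since $\partial_\ell\varphi$ and $\Theta^{(\ell)}$ generically fail \eqref{admiss3}, the $\varphi$-independence in Theorem \ref{mainresult} cannot be quoted directly, and one must verify that the underlying estimates never use the nondegeneracy -- which they do not, a fact the paper itself hints at when it remarks that Lemma \ref{convest} could replace its Parseval computation. What your route buys: both implications are treated symmetrically at first order (no detour through second derivatives or $I_{\pm 2}$), and your $\eta$-estimate is a reusable statement (it immediately gives Proposition \ref{Rieszpot}-type bounds and boundedness of Riesz transforms on these spaces). What the paper's route buys: it avoids proving any estimate for degenerate generators, relying only on results already stated (Theorems \ref{mainresult}, \ref{ADmatrixbnded}, Proposition \ref{Rieszpot}, Lemma \ref{doublemma}) plus one short direct computation, so the proof of the proposition itself is shorter.
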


\begin{proof}
First suppose $\vec{f} \in \dot{F}^{\alpha q}_p(W)$ and let $\ell \in \{ 1, 2, \dots, n\}$.  For $\varphi \in \mathcal{A}$ and $\psi \in \mathcal{A}$ as in (\ref{phitraniden}), let $\vec{s} = \{  \vec{s}_Q \}_{Q \in \mathcal{D}}$, where $\vec{s}_Q= \langle \vec{f}, \psi_Q \rangle$.  Define a sequence $\vec{t}_{\ell} = \{ \vec{t}_{\ell, Q} \}_{Q \in \cD}$ by 
\[ \vec{t}_{\ell, Q} = \langle \partial_{\ell} \vec{f}, \varphi_Q \rangle  = - \langle \vec{f}, \partial_{\ell} \varphi_Q \rangle . \]
Let $\{A_Q\}_{Q \in \mathcal{D}}$ be a sequence of reducing operators of order $p$ for $W$.  By Theorem \ref{mainresult},
\[ \Vert \partial_{\ell} \vec{f} \Vert_{\dot{F}^{\alpha-1, q}_p(W)} \approx \Vert   \vec{t}_{\ell} \Vert_{\dot{f}^{\alpha-1, q}_p(\{A_Q\})}
=  \left\Vert \left( \sum_{Q \in \mathcal{D}} \left( |Q|^{- \frac{\alpha -1}{n} - \frac{1}{2}} |A_Q \vec{t}_{\ell, Q} | \chi_Q    \right)^q \right)^{1/q}  \right\Vert_{L^p} . \]
Applying (\ref{phitraniden}) to $\partial_{\ell} \varphi_Q$ yields 
\[  \partial_{\ell} \varphi_Q  = \sum_{P \in \mathcal{D}} \langle \partial_{\ell} \varphi_Q , \varphi_P \rangle \psi_P = -  \sum_{P \in \mathcal{D}} \langle  \varphi_Q , \partial_{\ell} \varphi_P \rangle \psi_P \]
\[ = -  \sum_{P \in \mathcal{D}}\ell(P)^{-1}  \langle  \varphi_Q , (\partial_{\ell} \varphi)_P \rangle \psi_P =  - \ell(Q)^{-1} \sum_{P \in \mathcal{D}} b_{QP} \psi_P, \]
for $(\partial_{\ell} \varphi)_P  (x) = |P|^{-1/2} (\partial_{\ell} \varphi) ((x-x_P)/\ell(P))$ (consistent with (\ref{defphiQ})) and $b_{QP} = \frac{\ell(Q)}{\ell(P)} \langle  \varphi_Q , (\partial_{\ell} \varphi)_P \rangle$.  Letting $B = \{b_{QP} \}_{Q,P \in \mathcal{D}}$, and substituting above, we see that 
\[ \vec{t}_{\ell, Q} =  \ell(Q)^{-1} \langle \vec{f},  \sum_{P \in \mathcal{D}} b_{QP} \psi_P \rangle = \ell(Q)^{-1} \sum_{P \in \mathcal{D}} b_{QP} \vec{s}_P = \ell(Q)^{-1} (B \vec{s})_Q .\]
Therefore, we obtain 
\[  \Vert \partial_{\ell} \vec{f} \Vert_{\dot{F}^{\alpha-1, q}_p(W)} \approx \Vert  B\vec{s} \Vert_{\dot{f}^{\alpha, q}_p(\{A_Q\})} . \]
By \eqref{admiss1}, \eqref{admiss2} and Parseval's formula, $\langle  \varphi_Q , (\partial_{\ell} \varphi)_P \rangle =0$ unless $1/2 \leq \ell(Q)/\ell(P) \leq 2$, and in that case,
\[ \langle  \varphi_Q , (\partial_{\ell} \varphi)_P \rangle = \frac{|P|^{1/2}}{|Q|^{1/2}} \int_{\R^n} \hat{\varphi} (\xi) \overline{(\partial_{\ell} \varphi)\,^{\hat{}}\, (\ell(P) \xi/\ell(Q) ) } e^{-i \left(\frac{x_Q - x_P}{\ell(Q)}  \right)\cdot \xi} \, d \xi . \]
Since Schwartz functions have rapidly decaying Fourier transforms, we see that $B$ is almost diagonal, i.e., $B \in {\bf ad}^{\alpha, q}_p (\beta)$, for any possible $\alpha, q, p$, and $\beta$. (Alternatively, one could apply Lemma \ref{convest}.) Since $A_p$ weights are doubling, Lemma \ref{doublemma} and Theorem \ref{ADmatrixbnded} show that $B$ acts boundedly on $\dot{f}^{\alpha, q}_p(\{A_Q\})$.  Thus, we obtain
\begin{equation}
\label{E:1}   
\Vert \partial_{\ell} \vec{f} \Vert_{\dot{F}^{\alpha-1, q}_p(W)} \leq c \Vert  \vec{s} \Vert_{\dot{f}^{\alpha, q}_p(\{A_Q\})} \approx \Vert \vec{f}  \Vert_{\dot{F}^{\alpha q}_p(W)}, 
\end{equation}
where the last equivalence in \eqref{E:1} is by Theorem \ref{mainresult}, since $\psi \in \mathcal{A}$.

Now suppose $\partial_{\ell} \vec{f} \in \dot{F}^{\alpha-1, q}_p(W)$ for all $\ell \in \{ 1, 2, \dots, n\}$.  Applying the first direction, which was just proved, we have $\partial^2_{\ell} \vec{f} \in \dot{F}^{\alpha-2, q}_p(W)$ for all $\ell$.  Then $I_{-2} \vec{f} = c \sum_{\ell=1}^n \partial^2_{\ell} \vec{f} \in \dot{F}^{\alpha-2, q}_p(W)$ and by \eqref{E:1}
\[\Vert I_{-2} \vec{f} \Vert_{\dot{F}^{\alpha-2, q}_p(W)} \leq c \sum_{\ell=1}^n \Vert \partial_{\ell} \vec{f}  \Vert_{\dot{F}^{\alpha-1, q}_p(W)}.\]
Then by Proposition \ref{Rieszpot}, $\vec{f} = I_2 I_{-2} \vec{f} \in \dot{F}^{\alpha, q}_p(W)$ with $\Vert \vec{f} \Vert_{\dot{F}^{\alpha, q}_p(W)} \leq c \sum_{\ell=1}^n \Vert \partial_{\ell} \vec{f}  \Vert_{\dot{F}^{\alpha-1, q}_p(W)}$.
\end{proof}

\begin{Rem}
By iteration, Proposition \ref{homogred} can be generalized to any higher order mixed partial derivative  $D^\beta = \partial_1^{\beta_1} \partial_2^{\beta_2} \cdot \partial_n^{\beta_n}$ with $\sum_{i=1}^n \beta_i = |\beta|$ to obtain, for $k \in \N$,
$$
\|\vec{f} \, \|_{\dot{F}^{\alpha q}_p(W)} \approx \sum_{|\beta|=k}  \|D^\beta \vec{f} \, \|_{\dot{F}^{\alpha - k, q}_p (W)}.
$$
\end{Rem}

The inhomogeneous analogue of Proposition \ref{homogred} is the following.

\begin{Prop} \label{inhomogred}
Suppose $\alpha >1, 1 \leq p <\infty, 0<q \leq \infty, W \in A_p (\R^n)$, and $\vec{f} \in \cS^{\, \prime}(\R^n)$.  Then $\vec{f} \in F^{\alpha q}_p(W)$ if and only if $\vec{f} \in L^p(W)$ and $\partial_{\ell} \vec{f} \in F^{\alpha-1, q}_p(W)$ for all $\ell = 1, 2, \dots, n$, and we have
\begin{equation} \label{inhomogredest}
\Vert \vec{f}  \Vert_{F^{\alpha q}_p(W)} \approx \Vert \vec{f} \Vert_{L^p(W)} + \sum_{\ell =1}^n \Vert \partial_{\ell} \vec{f}  \Vert_{F^{\alpha-1, q}_p(W)}.
\end{equation}
\end{Prop}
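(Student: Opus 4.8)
The plan is to prove both implications, together with the two-sided estimate \eqref{inhomogredest}, by mirroring the proof of Proposition~\ref{homogred}: replace the homogeneous $\ffi$-transform identity \eqref{phitraniden} and Theorem~\ref{mainresult} by their inhomogeneous counterparts \eqref{phitranideninhomo} and Theorem~\ref{mainresultinhomo}, and dispatch the extra $L^p(W)$ term by hand. Fix $(\Phi,\ffi)\in\cA_+$, pick $(\Psi,\psi)\in\cA_+$ furnishing the reproducing formula $\vec f=\Psi\ast\Phi\ast\vec f+\sum_{j\ge1}\psi_j\ast\ffi_j\ast\vec f$, and let $\{A_Q\}_{Q\in\cD}$ be a sequence of reducing operators of order $p$ for $W$.

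For the forward implication, assume $\vec f\in F^{\alpha q}_p(W)$. Since $\Phi\ast\vec f\in L^p(W)$ by definition, Lemma~\ref{convolutionest} gives $\Psi\ast\Phi\ast\vec f\in L^p(W)$; and since for each $j\ge1$ we have the pointwise bound $|2^{j\alpha}W^{1/p}\ffi_j\ast\vec f|\le\big(\sum_{k\ge1}|2^{k\alpha}W^{1/p}\ffi_k\ast\vec f|^q\big)^{1/q}$, Lemma~\ref{convolutionest} and $\alpha>0$ yield $\sum_{j\ge1}\|\psi_j\ast\ffi_j\ast\vec f\|_{L^p(W)}\le C\sum_{j\ge1}2^{-j\alpha}\|\vec f\|_{F^{\alpha q}_p(W)}<\infty$; as $1\le p<\infty$, summing shows $\vec f\in L^p(W)$ with $\|\vec f\|_{L^p(W)}\le C\|\vec f\|_{F^{\alpha q}_p(W)}$. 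For the derivative estimate, let $\vec s=\{\vec s_Q\}_{Q\in\cD_+}$ be the $\ffi$-transform coefficients of $\vec f$, and let $\vec t_\ell=\{\vec t_{\ell,Q}\}_{Q\in\cD_+}$ with $\vec t_{\ell,Q}=-\langle\vec f,\partial_\ell\Phi_Q\rangle$ if $\ell(Q)=1$ and $\vec t_{\ell,Q}=-\langle\vec f,\partial_\ell\ffi_Q\rangle$ if $\ell(Q)<1$; these are the $\ffi$-transform coefficients of $\partial_\ell\vec f$. Expanding $\partial_\ell\Phi_Q$ and $\partial_\ell\ffi_Q$ by \eqref{phitranideninhomo} exactly as in Proposition~\ref{homogred} gives $\vec t_{\ell,Q}=\ell(Q)^{-1}(B\vec s)_Q$ for an inhomogeneous almost diagonal matrix $B=\{b_{QP}\}_{Q,P\in\cD_+}$; the Parseval/Schwartz-decay verification is unchanged, and no vanishing moments of $\partial_\ell\Phi_Q$ are needed because inhomogeneous molecules attached to cubes of side length $1$ require none. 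Since $\ell(Q)^{-1}=|Q|^{-1/n}$, this reads $\|\vec t_\ell\|_{f^{\alpha-1,q}_p(\{A_Q\})}=\|B\vec s\|_{f^{\alpha q}_p(\{A_Q\})}$, which the inhomogeneous form of Theorem~\ref{ADmatrixbnded} bounds by $C\|\vec s\|_{f^{\alpha q}_p(\{A_Q\})}$; two applications of Theorem~\ref{mainresultinhomo} then give $\|\partial_\ell\vec f\|_{F^{\alpha-1,q}_p(W)}\le C\|\vec f\|_{F^{\alpha q}_p(W)}$.

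For the converse, assume $\vec f\in L^p(W)$ and $\partial_\ell\vec f\in F^{\alpha-1,q}_p(W)$ for all $\ell$. Then $\|\Phi\ast\vec f\|_{L^p(W)}\le C\|\vec f\|_{L^p(W)}$ by Lemma~\ref{convolutionest}. For the high-frequency part, set $\widehat{\mu^{(\ell)}}(\xi)=-i\xi_\ell|\xi|^{-2}\widehat{\ffi}(\xi)$; as $\widehat{\ffi}$ is supported in $\{1/2\le|\xi|\le2\}$, $\mu^{(\ell)}\in\cS$ with $\widehat{\mu^{(\ell)}}$ supported in the same annulus, and a Fourier-transform computation gives $\ffi_j\ast\vec f=\sum_{\ell=1}^n 2^{-j}(\mu^{(\ell)})_j\ast\partial_\ell\vec f$ for $j\ge1$, hence $2^{j\alpha}W^{1/p}\ffi_j\ast\vec f=\sum_\ell 2^{j(\alpha-1)}W^{1/p}(\mu^{(\ell)})_j\ast\partial_\ell\vec f$. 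Thus the high-frequency part of $\|\vec f\|_{F^{\alpha q}_p(W)}$ is at most $\sum_\ell\big\|\big(\sum_{j\ge1}|2^{j(\alpha-1)}W^{1/p}(\mu^{(\ell)})_j\ast\partial_\ell\vec f|^q\big)^{1/q}\big\|_{L^p}$, and it remains to bound each summand by $C\|\partial_\ell\vec f\|_{F^{\alpha-1,q}_p(W)}$. This is the one-directional ``change of test function'' estimate, valid for the (degenerate) $\mu^{(\ell)}$ because the non-degeneracy hypothesis \eqref{admiss3} is never used for this inequality direction: one expands $\partial_\ell\vec f$ by \eqref{phitranideninhomo}, applies Lemma~\ref{convest} (which needs only Schwartz decay and vanishing moments of $\mu^{(\ell)}$, both automatic from its Fourier support), and invokes the inhomogeneous versions of Theorem~\ref{thirdAQest}, Theorem~\ref{AQ-Wfuncspimbed}, and Corollary~\ref{W-AQinequality}, exactly as in the proof that $F^{\alpha-1,q}_p(W)$ is independent of the choice of $(\Phi,\ffi)\in\cA_+$. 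Combining the two parts, and using the forward implication for the reverse inequalities, yields \eqref{inhomogredest}, hence also the claimed equivalence of membership.

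The step I expect to be the main obstacle is this last estimate of the converse: one must verify carefully that the machinery behind the independence of the test function (Theorems~\ref{firstAQimbed}, \ref{thirdAQest}, \ref{AQ-Wfuncspimbed} and the arguments in their proofs, which rely on the sampling theorem, Goldberg's maximal operator $M_w$, and the Fefferman--Stein inequality) still applies when the second ``test function'' $\mu^{(\ell)}$ fails \eqref{admiss3}, and that it does so uniformly over $1\le p<\infty$ and $0<q\le\infty$ (in particular with the $L^{p/A}$-maximal-function trick of Theorem~\ref{firstAQimbed} when $q<1$ or $p=1$). Everything else is a routine transcription of the homogeneous arguments, the only genuinely new bookkeeping being the separation of the $L^p(W)$ term---handled by Lemma~\ref{convolutionest} together with the geometric decay $\sum_{j\ge1}2^{-j\alpha}<\infty$---from the Triebel--Lizorkin part.
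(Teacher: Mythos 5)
Your argument is essentially correct, but it follows a genuinely different route from the paper's, which proves Proposition \ref{inhomogred} in a few lines by reduction to results already in hand: Lemma \ref{comparison} identifies $F^{\beta q}_p(W)$ with $L^p(W)\cap\dot F^{\beta q}_p(W)$ for $\beta>0$ (applied at levels $\alpha$ and $\alpha-1$, which is exactly why $\alpha>1$ is assumed), Proposition \ref{homogred} supplies the lifting at the homogeneous level (its converse direction being handled there by $I_2I_{-2}$ via Proposition \ref{Rieszpot}), and Lemma \ref{convolutionest} with $\partial_\ell\Phi$ in place of $\ffi$ controls the single low-frequency term $\Vert\Phi\ast\partial_\ell\vec f\,\Vert_{L^p(W)}$. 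You instead rerun the lifting directly at the inhomogeneous level: your forward step re-proves, through the summation $\vec f=\Psi\ast\Phi\ast\vec f+\sum_{j\ge1}\psi_j\ast\ffi_j\ast\vec f$, precisely the half of Lemma \ref{comparison} that is needed (membership in $L^p(W)$, using $\alpha>0$, $p\ge1$, and Lemma \ref{convolutionest}) and then repeats the almost-diagonal argument of Proposition \ref{homogred} with \eqref{phitranideninhomo} and Theorem \ref{mainresultinhomo}; your converse replaces the Riesz-potential step by the factorization $\ffi_j\ast\vec f=2^{-j}\sum_\ell(\mu^{(\ell)})_j\ast\partial_\ell\vec f$ with $\widehat{\mu^{(\ell)}}(\xi)=-i\xi_\ell|\xi|^{-2}\hat{\ffi}(\xi)$, plus a one-sided change-of-test-function estimate for the degenerate $\mu^{(\ell)}$. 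That last step is sound: as you observe, \eqref{admiss3} enters only through the construction of the dual family used to expand $\partial_\ell\vec f$, which you perform with the genuine pair $(\Phi,\ffi)$, while Theorem \ref{firstAQimbed}, Corollary \ref{W-AQinequality}, Lemma \ref{convest}, and Theorem \ref{thirdAQest} use only \eqref{admiss1}--\eqref{admiss2} together with the vanishing moments furnished by the annular Fourier support; the paper itself exploits exactly this flexibility in Sections \ref{Lpequiv} and \ref{inhomog}, where $\Phi$ likewise fails \eqref{admiss3}. What your route buys is that the converse avoids Lemma \ref{comparison} and the homogeneous spaces altogether (and does not visibly use $\alpha>1$), at the cost of redoing machinery the paper already has; the paper's route is much shorter but is tied to $\alpha>1$. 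Two details you should still record to make the write-up airtight: the identification of the $L^p(W)$-limit of the partial sums with $\vec f$ (via $L^p(W)\subset L^1_{\mathrm{loc}}$ for $W\in A_p$ and the distributional convergence of \eqref{phitranideninhomo}), and the almost-diagonality of your inhomogeneous matrix $B$ at the unit scale, where the spatial decay comes from Schwartz decay rather than Fourier-support disjointness and where $\hat\Phi$ interacts with $\widehat{\ffi_j}$ only for finitely many $j$; both are routine.
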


\begin{proof}
First suppose $\vec{f} \in F^{\alpha q}_p(W)$.  By Lemma \ref{comparison}, $\vec{f} \in L^p(W)$ and $\vec{f} \in  \dot{F}^{\alpha q}_p(W)$, with \eqref{comparisonnorm}.  Let $1 \leq \ell \leq n$.  By Proposition \ref{homogred}, $\partial_{\ell} \vec{f} \in \dot{F}^{\alpha-1, q}_p(W)$, with $\Vert \partial_{\ell} \vec{f} \Vert_{\dot{F}^{\alpha-1, q}_p(W)} \leq c \Vert \vec{f} \Vert_{\dot{F}^{\alpha, q}_p(W)}\leq c \Vert \vec{f} \Vert_{F^{\alpha, q}_p(W)}$.  Then 
\[ \Vert \Phi \ast \partial_{\ell} \vec{f} \Vert_{L^p(W)} = \Vert (\partial_{\ell} \Phi) \ast  \vec{f} \Vert_{L^p(W)} \leq c  \Vert   \vec{f} \Vert_{L^p(W)} , \]
by Lemma \ref{convolutionest} with $j=0$ and $\varphi$ replaced by $\partial_{\ell} \Phi \in \cS$.  Also,
\[  \left\Vert \left(  \sum_{j =1}^{\infty} \left(   2^{j (\al -1)} \vert \, W^{1/p} \ffi_j \ast \partial_{\ell} \vec{f} \, \vert \right)^q  \right)^{1/q} \right\Vert_{L^p(\R^n)} \leq \Vert \partial_{\ell} \vec{f}  \Vert_{\dot{F}^{\alpha-1, q}_p(W)} . \]
Then by definition, $\partial_{\ell} \vec{f} \in F^{\alpha-1, q}_p(W)$ with $\Vert \partial_{\ell} \vec{f} \Vert_{F^{\alpha-1, q}_p(W)} \leq c \Vert \vec{f} \Vert_{F^{\alpha q}_p(W)} $.

Now suppose $\vec{f} \in L^p(W)$ and $\partial_{\ell} \vec{f} \in F^{\alpha-1, q}_p(W)$ for all $\ell = 1, 2, \dots, n$.  Since $\alpha >1$, Lemma \ref{comparison} gives that $\partial_{\ell} \vec{f} \in \dot{F}^{\alpha-1, q}_p(W)$ for each $\ell$.  By Proposition \ref{homogred}, $\vec{f} \in \dot{F}^{\alpha, q}_p(W)$. Since $\vec{f} \in L^p(W)$, we obtain $\vec{f} \in F^{\alpha q}_p(W)$ by Lemma \ref{comparison} again.  Checking the norm estimates associated with these embeddings gives the other direction of \eqref{inhomogredest}. 
\end{proof}

We obtain Proposition \ref{Sobolevequiv} from Lemma \ref{comparison} and Propositions \ref{homogred} and \ref{inhomogred}.  

\begin{proofof} Proposition \ref{Sobolevequiv}.
First suppose $k=1$.  By Lemma \ref{comparison}, $\vec{f} \in F^{1 2}_p(W)$ if and only if $\vec{f} \in L^p(W)$ and $\vec{f} \in \dot{F}^{1 2}_p (W)$, with \eqref{comparisonnorm}.  By Proposition \ref{homogred}, $\vec{f} \in \dot{F}^{1 2}_p (W)$ if and only if $\partial_{\ell} \vec{f} \in \dot{F}^{0 2}_p (W)$, with \eqref{homogrednorm}.  By Theorem \ref{equivwithlp}, $\dot{F}^{0 2}_p (W) = L^p(W)$ with equivalent norms.  Therefore, $\vec{f} \in F^{1 2}_p(W)$ if and only if $\vec{f} \in L^p(W)$ and $\partial_{\ell} \vec{f} \in L^p(W)$ for $\ell \in \{1, 2, \dots, n\}$, with 
\[ \Vert \vec{f} \Vert_{F^{1 2}_p (W)} \approx \Vert \vec{f} \Vert_{L^p(W)} + \sum_{\ell=1}^n  \Vert \partial_{\ell} \vec{f} \Vert_{L^p(W)}= \Vert \vec{f} \Vert_{L^p_1 (W)} . \]

The case of general $k$ now follows easily by induction.  Assuming the result for some $k\geq 1$, then by Proposition \ref{inhomogred}, $\vec{f} \in F^{k+1, 2}_p (W)$ if and only if $\vec{f} \in L^p(W)$ and $\partial_{\ell} \vec{f} \in F^{k, 2}_p (W)$ for $\ell =1, \dots, n$.  By the inductive assumption, $\partial_{\ell} \vec{f} \in F^{k, 2}_p (W)$ if and only if $D^{\beta} \partial_{\ell} \vec{f} \in  L^p(W)$ for all $\beta$ such that $|\beta| \leq k$, with appropriate equivalence of norms.  This yields the induction step and completes the proof.
\end{proofof}

\bibliographystyle{amsplain}

\bibliography{matrixwttriebel5black}
 
\end{document}